\documentclass[11pt]{amsart}
\usepackage{tabularx,booktabs,tikz}
\usepackage{caption}
\usepackage{amsmath}
\usepackage{tikz-cd}

\usepackage{amsfonts}

\usepackage{amscd}
\usepackage{amsthm}
\usepackage{amssymb} \usepackage{latexsym}
\usepackage{euscript}
\usepackage{epsfig}
\usepackage{graphics}
\usepackage{array}
\usepackage{enumerate}
\usepackage{dsfont}
\usepackage{color}
\usepackage{wasysym}
\usepackage{hyperref}
\usepackage{pdfsync}
\numberwithin{equation}{section}

\newcommand{\bel}[1]{\begin{equation}\label{#1}}

\newcommand{\be}{\begin{equation}}

\newcommand{\ba}{\begin{eqnarray}}
\newcommand{\ea}{\end{eqnarray}}

\newcommand{\qe}{\end{equation}}

\newcommand{\R}{{\mathbb R}}
\newcommand{\rn}{{\mathbb{R}^{n,1}}}

\newcommand{\ds}{\mathrm{d}\mathbb{S}^n}

\newcommand{\de}{{\Delta}}

\newcommand{\Span}{\mathrm{Span}}

\newcommand{\Hmm}[1]{\leavevmode{\marginpar{\tiny%
$\hbox to 0mm{\hspace*{-0.5mm}$\leftarrow$\hss}%
\vcenter{\vrule depth 0.1mm height 0.1mm width \the\marginparwidth}%
\hbox to
0mm{\hss$\rightarrow$\hspace*{-0.5mm}}$\\\relax\raggedright #1}}}

\newtheorem{theorem}{Theorem}[section]

\newtheorem{lemma}[theorem]{Lemma}
\newtheorem{corollary}[theorem]{Corollary}
\newtheorem{definition}[theorem]{Definition}

\newtheorem{remark}[theorem]{Remark}

\newtheorem{prop}[theorem]{Proposition}
\newtheorem{problem}[theorem]{Problem}

\newtheorem{example}[theorem]{Example}

\newcommand{\tm}{\begin{theorem}}
\newcommand{\tmd}{\end{theorem}}
\newcommand{\co}{\begin{corollary}}
\newcommand{\cod}{\end{corollary}}
\newcommand{\prp}{\begin{prop}}
\newcommand{\prpd}{\end{prop}}
\newcommand{\pf}{\begin{proof}}
\newcommand{\pfd}{\end{proof}}
\newcommand{\rmk}{\begin{remark}}
\newcommand{\rmkd}{\end{remark}}
\newcommand{\ex}{\begin{example}}
\newcommand{\exd}{\end{example}}
\newcommand{\pr}{\begin{problem}}
\newcommand{\prd}{\end{problem}}
\newcommand{\lm}{\begin{lemma}}
\newcommand{\lmd}{\end{lemma}}
\newcommand{\dfn}{\begin{definition}}
\newcommand{\dfnd}{\end{definition}}

\begin{document}

\title[Conformal tetrahedra and global rigidity of sphere packings ]{Conformal tetrahedra and global rigidity of sphere packings in constant curvature background geometry}

\author{Zunwu He}
\address{Zunwu He: School of Mathematics, South China University of Technology, Guangzhou, 510641, P.R. China.}
\email{hzwmath789@scut.edu.cn}

\author{Guangming Hu}
\address{Guangming Hu: College of Science, Jiangsu Key Laboratory of Quantum Computing Science and Devices,  Nanjing University of Posts and Telecommunications, Nanjing, 210003, P.R. China.}
\email{20230210@njupt.edu.cn}

\author{Ziping Lei}
\address{Ziping Lei: Department of Mathematical Sciences, Tsinghua University, Beijing, 100084, P.R. China.}
\email{zplei@ruc.edu.cn}

\begin{abstract}
In this paper, we give the dual characterization of conformal tetrahedra in constant curvature spaces, which develops a geometric technique to obtain the global rigidity of (ideal) hyperbolic, Euclidean, spherical sphere packings on a tetrahedron. Moreover, we introduce ideal hyperbolic sphere packings and extend the variational principle to obtain the global rigidity of (ideal) hyperbolic sphere packings on 3-manifolds. We also 
provide a characterization of conformal $n(\ge3)$-simplices in constant curvature spaces, which is useful for studying higher-dimensional sphere packings.
\end{abstract}

\maketitle

Mathematics Subject Classification 2020: 52C25, 52C26, 53A70.

\par
\maketitle

\bigskip


\section{Introduction}
\subsection{Background}
Circle packings establish a deep and strong connection between the geometry and topology of 3-manifolds. Thurston \cite{Thurston97} introduced
circle packings on surfaces to construct hyperbolic 3-manifolds and
3-orbifolds. Colin de Verdiére \cite{Colin91} introduced the variational principle to prove the existence and rigidity of Thurston’s circle packings on surfaces, which provides an important and fundamental tool to study circle packings. We refer the readers to the works of Rivin \cite{Rivin}, Leibon \cite{Leibon}, Bobenko-Springborn \cite{bo}, Luo \cite{Luo12, Luo13,Luo}, Huang-Liu \cite{HL17}, Ge-Xu \cite{Gexu}, Ge-Hua-Zhou \cite{Zhou21a,Zhou21b}, Hu-Lei-Sun-Zhou \cite{HLSZ}, Zhou \cite{Zhou25}, Hu-Lu-Tan-Zhong-Zhou \cite{HL25} and so on for further developments of variational principle. 

Inspired by Thurston’s work, Cooper and Rivin \cite{Cooper96} introduced the sphere packings on 3-manifolds, which are the 3-dimensional analogue of circle packings. They defined the combinatorial scalar curvature and proved the local rigidity of the hyperbolic and Euclidean sphere packings with respect to this combinatorial curvature on 3-manifolds. Moreover, they conjectured that sphere packings have the property of global rigidity. However, this conjecture had remained open for a long time, since the sphere packings are defined on a non-convex domain and variational principle fails. 

To solve this conjecture, the method of extension was introduced, which seems to have originated from the work of Bobenko, Pinkall and Springborn \cite{2010Discrete}. 
Moreover, Luo \cite{Luo11} further systematically developed their approach and established extension theory. Following Luo's pioneering work, Ge and Jiang \cite{GJ16CV, 2017On, GJ17JFA2, GJ17JFA3} obtained some important rigidity results of inversive distance circle packings and discrete conformal factors on surfaces. In 3-dimensional geometry, Luo and Yang \cite{Luo2018Volume} extended the dihedral angles of decorated (hyper)ideal hyperbolic polyhedron and proved the rigidity of hyperbolic conical metrics on 3-manifolds, which are obtained by isometrically gluing (hyper)ideal tetrahedra. Following their ideas and works, Xu \cite{Xu20} continuously extended the solid angles and established the variational principle to obtain the global rigidity of hyperbolic and Euclidean sphere packings on 3-manifolds, which solved Cooper and Rivin’s conjecture and generalized Thurston’s rigidity result \cite{Thurston97} of circle packings on surfaces. Moreover, Xu and Zheng \cite{Xu23} introduced the generalized hyperbolic sphere packings and obtained the global rigidity of this sphere packings on 3-manifolds with boundary. 


Historically, the methods for obtaining rigidity of sphere packings all depend on the variational principle and are non-constructive. Moreover, the variational principle fails on spherical sphere packings since the functional defined by Cooper and Rivin \cite{Cooper96} is non-convex; see Theorem \ref{maintheorem3}. 

In this paper, we introduce the dual tetrahedra and generalize Luo's work \cite{Luo97} on the characterization of hyperbolic $n$-simplices to ideal hyperbolic $n$-simplices (see Theorem \ref{admissibleidealangle}) to obtain the dual characterization of conformal tetrahedra in constant curvature spaces; see Theorem \ref{main1}, which implies the global rigidity of hyperbolic, Euclidean, spherical and ideal hyperbolic sphere packings on a tetrahedron as well as the computability of the rigidity map; see Corollary \ref{rigidityoftetrahedron}. The rigidity results of spherical and ideal hyperbolic sphere packings are new to the literature as well as our approach is completely different from previous methods based on the variational principle. We also construct a counterexample on $\mathbb{S}^3$, which implies that the  global rigidity of spherical sphere packings cannot be generalized to 3-manifolds; see Example \ref{norigidity}.

Moreover, we introduce the ideal hyperbolic sphere packings and extend variational principle (see Theorem \ref{yu3}, Theorem \ref{negac}, Corollary \ref{dd1}) to obtain the global rigidity of (ideal) hyperbolic sphere packings on 3-manifolds; see Theorem \ref{theo1} and Corollary \ref{copo1}. As a higher-dimensional generalization of conformal tetrahedra, we introduce the conformal $n$-simplices and provide a characterization of conformal $n$-simplices in constant curvature spaces, which is useful for studying higher-dimensional sphere packings; see Theorem \ref{highpolyhedron}.


In addition to global rigidity, the existence of sphere packings on 3-manifolds is also an important research problem. Glickenstein \cite{Glickenstein05, Glickenstein052} introduced the combinatorial Yamabe flow to deform the sphere packings to one with constant curvature. In the forthcoming paper \cite{HHL}, we study the convergence
of this combinatorial curvature flow to obtain the ideal hyperbolic sphere packings with prescribed combinatorial scalar curvatures on 3-manifolds. We refer the readers to the works of Chow-Luo \cite{chow-Luo}, Ge-Hua \cite{Ge20}, Ge-Jiang-Shen \cite{GJS22Adv}, Xu-Zheng \cite{XZ23}, Ge-Hua-Zhou \cite{GE24}, Hu-Lei-Sun-Zhou \cite{HLSZ}, Hu-Lu-Tan-Zhong-Zhou \cite{HL25} and so on for further developments of combinatorial curvature flows.


\subsection{Main results}

\subsubsection{Global rigidity of sphere packings in constant curvature background geometry}\label{s2}

Given a compact 3-dimensional manifold $M$ with a triangulation $\mathcal{T} = \{V, E, F, T\}$, where $V, E, F, T$ denote the
sets of vertices, edges, faces and tetrahedra, respectively.

For any combinatorial tetrahedron $t\in T$ with vertex set $V_t$, define 
\begin{align*}
Q^{\mathbb{H}}_3(r_t):=&(\sum_{i\in V_t}\coth r_i)^2-2\sum_{i\in V_t}\coth^2 r_i+4,\\
(Q^\mathbb{E}_3(r_t):=&(\sum_{i\in V_t}\frac{1}{r_i})^2
-2\sum_{i\in V_t}\frac{1}{r^2_i},\\
Q^\mathbb{S}_3(r_t):=&(\sum_{i\in V_t}\cot r_i)^2
-2\sum_{i\in V_t}\cot^2 r_i-4,\text{resp.})
\end{align*}
for all $r_t=(r_i)_{i\in V_t}\in\mathbb{R}_+^{V_t}(\mathbb{R}_+^{V_t},(0,\pi)^{V_t},\text{resp}.)$. Moreover, define $\widehat{\mathbb{R}_{+}}:=(0,\infty]$ and $\coth \infty:=1$.

Then we can define a space $\Omega^{\mathbb{H}}(\Omega^{\mathbb{E}},\Omega^{\mathbb{S}},\partial_{\infty}\Omega^{\mathbb{H}},\text{resp}.)$ called the \emph{hyperbolic $($Euclidean, spherical, ideal hyperbolic, resp.$)$ admissible space on compact triangulated 3-manifold} $(M,\mathcal{T})$, namely
\begin{align*} 
\Omega^{\mathbb{H}}:=\{&r=(r_i)_{i\in V}\in\mathbb{R}_+^{V}:Q^{\mathbb{H}}_3(r_t)>0,~\text{for all}~t\in T\}\\
(\Omega^{\mathbb{E}}:=\{&r=(r_i)_{i\in V}\in\mathbb{R}_+^{V}:Q^\mathbb{E}_3(r_t)>0,~\text{for all}~t\in T\},\\
\Omega^{\mathbb{S}}:=\{&r=(r_i)_{i\in V}\in(0,\pi)^{V}:Q^\mathbb{S}_3(r_t)>0,~r_i+r_j\in(0,\pi), \\
&\text{for all}~i\ne j\in V_t~\text{and all}~t\in T\},\\
\partial_{\infty}\Omega^{\mathbb{H}}:=\{&r=(r_i)_{i\in V}\in\widehat{\mathbb{R}_+^{V}}\setminus\mathbb{R}_+^V:Q^{\mathbb{H}}_3(r_t)>0,~\text{for all}~t\in T\},\text{resp.}).
\end{align*}

Given a parameter $r=(r_i)_{i\in V}\in\Omega^{\mathbb{H}}(\Omega^{\mathbb{E}},\Omega^{\mathbb{S}},\partial_{\infty}\Omega^{\mathbb{H}},\text{resp}.)$, then for any tetrahedron $t\in T$, by Section \ref{s1}, there exists a corresponding hyperbolic (Euclidean, spherical, ideal hyperbolic, resp.) sphere packing $\mathcal{P}(r_t)$ and a conformal (see Section \ref{s1} for definition) hyperbolic (Euclidean, spherical, ideal hyperbolic, resp.) tetrahedron $\Delta^3(r_t)$. 

Gluing these conformal hyperbolic (Euclidean, spherical, ideal hyperbolic, resp.) tetrahedra along their faces together, then we obtain a hyperbolic (Euclidean, spherical, ideal hyperbolic, resp.) metric on the compact 3-dimensional manifold $M$, possibly with conical singularities at the vertices and edges. Moreover, it also has at least one cusp at the vertices in ideal hyperbolic case. This metric is also called the \emph{hyperbolic $($Euclidean, spherical, ideal hyperbolic, resp.$)$ sphere packing metric parameterized by} $r\in\Omega^{\mathbb{H}}(\Omega^{\mathbb{E}},\Omega^{\mathbb{S}}, \partial_{\infty}\Omega^{\mathbb{H}}, \text{resp}.)$. Adopting a common abuse of notation, $r\in\Omega^{\mathbb{H}}(\Omega^{\mathbb{E}},$ $\Omega^{\mathbb{S}}, \partial_{\infty}\Omega^{\mathbb{H}},\text{resp}.)$ is also called a hyperbolic (Euclidean, spherical, ideal hyperbolic, resp.) sphere packing metric. 

Moreover, the set of the hyperbolic (Euclidean, spherical, ideal hyperbolic, resp.) sphere packing $\mathcal{P}(r)=\{\mathcal{P}(r_t)\}_{t\in T}$ is called the \emph{hyperbolic $($Euclidean, spherical, ideal hyperbolic, resp.$)$ sphere packing parameterized by} $r\in\Omega^{\mathbb{H}}(\Omega^{\mathbb{E}},\Omega^{\mathbb{S}},\partial_{\infty}\Omega^{\mathbb{H}}, \text{resp}.)$. 

To study the geometric properties of the sphere packing (metric) on 3-manifolds, Cooper and Rivin \cite{Cooper96} introduced the combinatorial scalar curvature, which is the 3-dimensional analogue of discrete Gaussian curvature. 

For any hyperbolic (Euclidean, spherical, ideal hyperbolic, resp.) sphere packing (metric) $r=(r_i)_{i\in V}\in\Omega^{\mathbb{H}}(\Omega^{\mathbb{E}},\Omega^{\mathbb{S}},\partial_{\infty}\Omega^{\mathbb{H}},\text{resp}.)$, the combinatorial scalar curvature $K_i$ at the vertex $i\in V$ is defined as
\begin{equation}\label{q96}
K_i:=\begin{cases}
 4\pi-\sum_{t\in T_i}\alpha_i^t, & \text{ if }i~ \text{is an interior vertex},  \\
  2\pi-\sum_{t\in T_i}\alpha_i^t,& \text{ if }i~ \text{is a boundary vertex}, 
\end{cases}
\end{equation}
where $\alpha_i^t$ is the solid angle at the vertex $i$ of the hyperbolic (Euclidean, spherical, ideal hyperbolic, resp.) tetrahedron $\Delta^3(r_t)$ and $T_i$ is the set of all tetrahedra in $T$ with vertex $i$. 

We can construct the \emph{combinatorial scalar curvature map}
\begin{equation}\label{cone}
		\begin{aligned}
			\mathbb{K}: \Omega^{\mathbb{H}}(\Omega^{\mathbb{E}},\Omega^{\mathbb{S}},\partial_{\infty}\Omega^{\mathbb{H}},\text{resp}.) & \longrightarrow \mathbb{R}^{V}  \\
			r=(r_i)_{i\in V} & \longmapsto K=(K_i)_{i\in V}.
		\end{aligned}
\end{equation}

The classical method (see e.g. \cite{Ge20}, \cite{GJS22Adv}, \cite{Xu20}) to prove the global rigidity of the hyperbolic and Euclidean sphere packings with respect to the combinatorial scalar curvatures on 3-manifolds is to extend the map $\mathbb{K}$ from the admissible spaces $\Omega^{\mathbb{H}},\Omega^{\mathbb{E}}$ to the open convex space $\mathbb{R}_+^V$ and construct the extended convex functional on hyperbolic and Euclidean sphere packings. 

However, in spherical case, we prove Theorem \ref{maintheorem3}, which implies that the functional defined by Cooper and Rivin \cite{Cooper96} is non-convex and the classical method fails.

In this paper, we systematically develop a  geometric technique instead of the extensional and variational method to obtain the global rigidity of hyperbolic, Euclidean, spherical and ideal hyperbolic sphere packings on a tetrahedron, which is constructive and based on geometric observations. 

Our approach relies crucially on the following dual characterization of conformal tetrahedra in constant curvature spaces, which implies the conformal equivalence of tetrahedra and dual tetrahedra (see Section \ref{s3} for definition).

\begin{theorem}\label{main1}
A non-degenerate hyperbolic $($Euclidean, spherical, ideal hyperbolic, resp.$)$ tetrahedron $\Delta^3$ is conformal if and only if the dual tetrahedron $(\Delta^{3})^*$ is conformal.  
\end{theorem}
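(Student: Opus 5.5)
The plan is to reduce conformality to the existence of an \emph{edge-tangent sphere} (a midsphere), and then to show that this property is preserved by the polarity that produces $(\Delta^3)^*$. I will work uniformly in the projective (Klein-type) model. Each of the four geometries is realized in $\mathbb{RP}^3$ with an absolute quadric $\langle x,x\rangle=0$ coming from a bilinear form on $\mathbb{R}^4$: Lorentzian for $\mathbb{H}^3$ (and the ideal case), degenerate for $\mathbb{E}^3$ (treated by rescaling or as a limit), and positive definite for $\mathbb{S}^3$. In this model the vertices $v_1,\dots,v_4$ of $\Delta^3$ are vectors. The dual tetrahedron $(\Delta^3)^*$ has as vertices the poles $u_i$ of the face planes, so $\langle u_i,v_j\rangle=0$ for $i\ne j$; its Gram matrix is, up to a diagonal normalization, the inverse of the Gram matrix $G=(\langle v_i,v_j\rangle)$. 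In the ideal hyperbolic case some $v_i$ are lightlike, and the corresponding $u_i$ is the tangent plane to the absolute at $v_i$; this is the extension of Theorem \ref{admissibleidealangle} that I will use.

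\textbf{Step 1 (midsphere criterion).} I will show that $\Delta^3$ is conformal, i.e. $l_{ij}=r_i+r_j$ for some radii (with the horosphere convention at ideal vertices), if and only if some sphere $S$ is tangent to all six edge lines inside the edge segments. For the forward direction, the tangency point $p_{ij}$ of the spheres at $v_i$ and $v_j$ lies on edge $ij$. In each face, the three tangency points are the touch points of that face's incircle, since the tangent lengths from each vertex agree. The sphere through the six points $p_{ij}$ that is orthogonal to all four vertex spheres is then tangent to every edge. For the converse, the tangent lengths from $v_i$ to $S$ along the three edges at $v_i$ coincide, and I take these as the $r_i$. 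In the Lorentzian language, spheres (and horospheres) correspond to vectors $s$ with $\langle s,s\rangle>0$ (resp.\ $=0$), and tangency to a line $\ell=\mathrm{span}(a,b)$ is the vanishing of the discriminant of $\langle s,\cdot\rangle$ restricted to $\ell$. Hence Step 1 becomes a statement about a vector $s$ satisfying six quadratic conditions.

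\textbf{Step 2 (polarity preserves edge-tangency).} The edges of $(\Delta^3)^*$ are the polar lines $\ell_{ij}^{\perp}$ of the opposite edges $\ell_{kl}$ of $\Delta^3$. I will show that a line $\ell$ is tangent to the sphere determined by $s$ if and only if $\ell^{\perp}$ is tangent to the polar sphere determined by the same $s$, with center $s^{*}$ and complementary radius. In $\mathbb{S}^3$ this is radius $\rho\mapsto \pi/2-\rho$; in $\mathbb{H}^3$ it is the corresponding pairing of spheres, equidistant surfaces and horospheres. This reduces to a $4\times 4$ linear-algebra identity: the quadratic form $\langle s,x\rangle^2-c\langle x,x\rangle$ restricted to $\ell$ is degenerate if and only if the analogous form is degenerate on $\ell^{\perp}$. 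I will prove it by choosing an adapted orthonormal basis in which $\ell=\mathrm{span}(e_1,e_2)$ and $\ell^\perp=\mathrm{span}(e_3,e_4)$. Applied to all six edges, this shows that $S$ being a midsphere for $\Delta^3$ makes $S^{*}$ a midsphere for $(\Delta^3)^*$. Since $((\Delta^3)^*)^*=\Delta^3$ (from $G\mapsto G^{-1}$ up to normalization), the converse follows by symmetry.

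\textbf{Main obstacle.} The hardest part will be showing that the tangency points lie \emph{inside} the dual edge segments, and handling the degenerate Euclidean and ideal cases, rather than the tangency itself. The polar sphere may a priori be an equidistant surface or may touch the extended lines outside the segments. For this I will use non-degeneracy together with the sign pattern of $G^{-1}$ guaranteed by Theorem \ref{admissibleidealangle} (Luo's characterization and its ideal version). I will try to show that the tangency parameters on $\ell_{ij}^\perp$ are positive combinations of the dual vertices, with signs read off from the cofactors of $G$. Where a direct argument fails, the fallback is a continuity and connectedness argument from the regular tetrahedron. For the Euclidean case, I will take the limit of the hyperbolic statement under curvature rescaling; alternatively, I will use the classical criterion $l_{ij}+l_{kl}=l_{ik}+l_{jl}=l_{il}+l_{jk}$ directly.
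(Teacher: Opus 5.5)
Your Step~2 identity is correct: tangency of a nondegenerate line $\ell$ to $\langle s,x\rangle^2=c\langle x,x\rangle$ means the projection $s_\ell$ of $s$ to $\ell$ has $\langle s_\ell,s_\ell\rangle=c$, and these squared norms for $\ell$ and $\ell^\perp$ add to $\langle s,s\rangle$. However, Step~1 is false in $\mathbb{H}^3$ as you state it. The paper shows (Corollary~\ref{omega}, Remark~\ref{reop}) that having a midsphere is a proper open condition on $\Omega^{\mathbb{H}}_t$. For $r=(\lambda,\mu,\mu,\mu)$ with $\lambda\in(\frac12\ln\frac43,\frac12\ln\frac{1+\sqrt3}{2})$ and $\mu$ large, the isogonal angle at $v_1$ violates $\sin\theta_1<1/\cosh r_1$, so no hyperbolic sphere touches all six edges. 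Your construction (the sphere through the $p_{ij}$ orthogonal to the four vertex spheres) is a M\"obius sphere whose trace in $\mathbb{H}^3$ is then not a sphere (generically an equidistant surface), which your dictionary of spheres and horospheres excludes. So the chain ``conformal $\Rightarrow$ midsphere $\Rightarrow$ polar midsphere $\Rightarrow$ $(\Delta^3)^*$ conformal'' breaks at its first link on an open set of tetrahedra. There are two ways to repair this. You can admit quadrics with $s$ of every causal type and then control the touching points for equidistant surfaces too. Or you can do what the paper does in Lemma~\ref{dihesum}: argue geometrically only on the open set $\Omega^{\mathbb{H}}_c$ (genuine midsphere with interior center), extract the analytic identity $\theta_{ij}+\theta_{kl}=\theta_{ik}+\theta_{jl}=\theta_{il}+\theta_{jk}$ there, and propagate it to the connected domain $\Omega^{\mathbb{H}}_t$ by real-analyticity. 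Then $r_i^*=\frac12(\ell^*_{ij}+\ell^*_{ik}-\ell^*_{jk})>0$ follows from the triangle inequality in the spherical faces of $(\Delta^3)^*$, with $\ell^*_{ij}=\pi-\theta_{ij}$ by Corollary~\ref{conedge}. That device also removes your ``main obstacle'' in this direction, since touching points never have to be located globally.

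The second gap is the converse ``by symmetry''. The relation $((\Delta^3)^*)^*=\Delta^3$ is a symmetry of the problem only in $\mathbb{S}^3$, where the dual is again a spherical tetrahedron; there the paper argues exactly as you do. For hyperbolic or ideal $\Delta^3$ the dual lives in $\mathrm{d}\mathbb{S}^3$, and its edges are polars of timelike lines, so they lie entirely outside the Klein ball. For Euclidean $\Delta^3$ the dual is four unit normals on $\mathbb{S}^2$. Step~1 says nothing about either. Producing an edge-tangent quadric for a conformal dual is actually the easy half. If $\ell^*_{ij}=r_i^*+r_j^*$, the vector $s$ with $\langle s,v_i^*\rangle=\cos r_i^*$ gives a quadric $\langle s,x\rangle^2=\langle x,x\rangle$ tangent to all six dual edges, by $\cos^2A+\cos^2B-2\cos A\cos B\cos(A+B)=\sin^2(A+B)$. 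The real content is showing that the polar quadric touches each primal edge line at a point of $\mathbb{H}^3$ lying between the two vertices, i.e.\ $\ell_{ij}=r_i+r_j$ rather than $|r_i-r_j|$. Your plan has no mechanism for this. The paper avoids locating touching points altogether. From $G^*_{ij}=\cos(r_i^*+r_j^*)$ it computes the cofactors and shows that $\sinh(\ell_{ij}+\ell_{kl})=A^*(B^*-C^*)D^*/E^*$ is independent of the pairing, with the sign of $\sinh\ell_{ij}$ fixed by $c_k^*+c_l^*>0$. It then concludes with the triangle inequality. For $\mathbb{E}^3$ it uses an incircle tangent-length argument, and in the ideal case it approximates via Theorem~\ref{admissibleidealangle}. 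Your Euclidean fallback $l_{ij}+l_{kl}=l_{ik}+l_{jl}=l_{il}+l_{jk}$ is just conformality of $\Delta^3$ restated and does not involve the dual. The curvature-rescaling route presupposes the missing hyperbolic converse. Also, in the ideal case it is the dual face $F_i^*\subset v_i^\perp$, not the dual vertex, that is tangent to the absolute at $v_i$.
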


\rmk
The dual characterization is trivial in 2-dimensional constant curvature spaces. In this paper, the conformality of a $($dual$)$ $n$-simplex is equivalent to its edge lengths satisfy a certain additivity condition, namely $($\ref{duedge}$)$ and $($\ref{duedg1}$)$ hold for all choices of four vertices. 

Moreover, the vertex of a dual $n$-simplex is determined by the $(n-1)$-dimensional face spanned by $n$ vertices of the $n$-simplex $($see Section \ref{s3}$)$, which implies that the conformal equivalence of $n$-simplices and dual $n$-simplices is only possible if $n+1=4$, namely the dual characterization $($Theorem \ref{main1}$)$ cannot be generalized to dimension $n\ge4$.  


\rmkd

As a consequence, we obtain another equivalent characterization of conformal tetrahedra in constant curvature spaces:

\begin{corollary}\label{sumangle}
A non-degenerate hyperbolic $($Euclidean, spherical, ideal hyperbolic, resp.$)$ tetrahedron $\Delta^3$ is conformal if and only if the three pairs of the sums of opposite dihedral angles of $\Delta^3$ are equal.   
\end{corollary}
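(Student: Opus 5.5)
Corollary \ref{sumangle} follows from Theorem \ref{main1} once we know what conformality of the dual tetrahedron says. By the Remark after Theorem \ref{main1}, a (dual) tetrahedron is conformal exactly when its edge lengths satisfy the additivity condition (\ref{duedge}) (resp.\ (\ref{duedg1})). For a tetrahedron with vertices $1,2,3,4$ and edge lengths $l_{ij}$, this condition is the opposite-edge equality
\[
l_{12}+l_{34}=l_{13}+l_{24}=l_{14}+l_{23}.
\]
This is the tangency condition: with $l_{ij}=r_i+r_j$, each of the three sums equals $r_1+r_2+r_3+r_4$. Conversely, the equalities let us solve uniquely for $r_i$ with $l_{ij}=r_i+r_j$. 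So the plan is to translate the condition for $(\Delta^3)^*$ into a condition on the dihedral angles of $\Delta^3$.

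\textbf{Step 1.} Recall the construction of Section \ref{s3}. The vertex $v_i^*$ of $(\Delta^3)^*$ is the polar (unit normal) of the face $F_i$ opposite vertex $i$. The dual edge $v_i^*v_j^*$ corresponds to the edge of $\Delta^3$ along which $F_i$ and $F_j$ meet, which is the edge $kl$ with $\{k,l\}=\{1,2,3,4\}\setminus\{i,j\}$. Its length is determined by the inner product of the two normals. In each geometry we use the appropriate Lorentzian or Euclidean model, normalized as in Section \ref{s3}. With this normalization the dual edge length is $l^*_{ij}=\pi-\theta_{kl}$, where $\theta_{kl}$ is the dihedral angle of $\Delta^3$ at edge $kl$ (in the spherical case, the classical polar-triangle relation). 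I will check this formula for each geometry, including the ideal hyperbolic case, where the dual is built from the face normals in $\mathbb{R}^{3,1}$ just as in the hyperbolic case.

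\textbf{Step 2.} Substitute into the opposite-edge equalities for $(\Delta^3)^*$. Since $\{i,j\}$ and its complement $\{k,l\}$ are swapped under $ij\mapsto kl$, opposite dual edges correspond to opposite edges of $\Delta^3$. The condition $l^*_{12}+l^*_{34}=l^*_{13}+l^*_{24}=l^*_{14}+l^*_{23}$ therefore becomes
\[
2\pi-(\theta_{34}+\theta_{12})=2\pi-(\theta_{24}+\theta_{13})=2\pi-(\theta_{23}+\theta_{14}).
\]
This says the three sums of opposite dihedral angles are equal. Combined with Theorem \ref{main1}, this proves the corollary.

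\textbf{Main obstacle.} The real work is Step 1, specifically pinning down the precise normalization of the dual in the non-spherical cases. In the hyperbolic case the face normals are spacelike, so the dual is not a genuine hyperbolic tetrahedron. The relevant ``length'' is then the angle $\arccos$ of the Lorentzian product, and it must be confirmed that the additivity condition (\ref{duedg1}) is phrased in exactly these angle-type quantities. If instead the paper uses a quantity $f(\theta_{kl})$ with $f$ not affine, the equivalence would fail, so I would verify directly against (\ref{duedg1}). In the Euclidean case the normals lie on $\mathbb{S}^2$ and are pairwise at angle $\pi-\theta$, so the same formula holds. In the ideal case non-degeneracy keeps all faces spacelike, so nothing new arises.
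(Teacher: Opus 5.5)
Your proposal is correct and is essentially the paper's argument. The paper proves ($\Leftarrow$) exactly this way: equal opposite-angle sums give (\ref{duedge}) via $\ell^*_{ij}=\pi-\theta_{ij}$ from Corollary \ref{conedge}, hence a conformal dual, and then Theorem \ref{main1} applies; for ($\Rightarrow$) it cites Lemma \ref{dihesum} directly rather than going through Theorem \ref{main1}, whose forward direction rests on that lemma anyway.

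One step should be made explicit. The solved-for $r_i^*=(\ell^*_{ij}+\ell^*_{ik}-\ell^*_{jk})/2$ must be positive, as the definition of a conformal dual requires outside the ideal case. This follows from the strict triangle inequality in the dual faces, which lie on round $2$-spheres, exactly as in the paper's proof of Theorem \ref{main1}; it is not the normalization issue you flagged as the main obstacle.
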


\begin{remark}
After finishing the draft of this paper, we were reminded that the Euclidean version of Corollary \ref{sumangle} had been proved by Katsuura \cite{Katsuura2021Dihedral}, which depends on a complicated calculation. In this paper, we develop an original and independent technique to obtain Corollary \ref{sumangle}, which includes the known Euclidean result.
\end{remark}

\begin{remark}
In 3-dimensional constant curvature spaces, the solid angles of a tetrahedron are linear functions of its dihedral angles. However, the reverse statement is generally not true for reasons of degrees of freedom. 

In this paper, Theorem \ref{main1} or Corollary \ref{sumangle} implies that the reverse statement always holds on conformal tetrahedra, namely the dihedral angles are linear functions of the solid angles for all hyperbolic $($Euclidean, spherical, ideal hyperbolic, resp.$)$ conformal tetrahedra.
\end{remark}

The proof of ideal hyperbolic case of Theorem \ref{main1} depends on the following observation, which gives a characterization of (ideal) hyperbolic $n$-simplices and is the generalization of Luo's work \cite{Luo97}.

\begin{theorem}\label{admissibleidealangle}
Given a set of positive numbers
$$
\{\theta_{ij}:\theta_{ii}=\pi,\theta_{ij}=\theta_{ji}\in(0,\pi),1\le i, j\le n+1\},
$$
then there exists a $($ideal$)$ hyperbolic $n$-simplex $\Delta^n\subset\overline{\mathbb{H}^n}$ with $n+1$ vertices $v_1,\cdots,v_{n+1}$ such that the dihedral angle of the $ij$-face of $\Delta^n$ is $\theta_{ij}$ for all $ij$-faces if and only if the matrix $\Theta:=(\cos(\pi-\theta_{ij}))_{1\le i,j\le n+1}$ satisfies
\begin{enumerate}[1.]
\item the determinant is negative;
\item the $n\times n$ principal submatrix $\Theta(i,i)$ is positive definite for all vertex $v_i\in\mathbb{H}^n$;
\item the $n\times n$ principal submatrix $\Theta(i,i)$ is singular and all $(n-1)\times(n-1)$ principal submatrices of $\Theta(i,i)$ are positive definite for all ideal vertex $v_i\in\partial_{\infty}\mathbb{H}^n$;
\item the $(i,j)$-cofactors $(i\ne j)$ are positive.
\end{enumerate}
\end{theorem}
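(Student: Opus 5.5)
We work in the hyperboloid model $\mathbb{H}^n\subset\mathbb{R}^{n,1}$ with the Lorentzian form $\langle\cdot,\cdot\rangle$. A simplex with vertices $v_1,\dots,v_{n+1}\in\overline{\mathbb{H}^n}$ is encoded by its outward unit normals $u_1,\dots,u_{n+1}$. Here $u_i$ is spacelike with $\langle u_i,u_i\rangle=1$ and is normal to the facet opposite $v_i$. The dihedral angle $\theta_{ij}$ at the $ij$-face is characterized by $\langle u_i,u_j\rangle=-\cos\theta_{ij}=\cos(\pi-\theta_{ij})$. Hence $\Theta$ is exactly the Gram matrix $(\langle u_i,u_j\rangle)$. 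The plan is to follow Luo's argument \cite{Luo97} for compact hyperbolic simplices and to modify only the step that detects the position of each vertex.

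\textbf{Necessity.} Since the $u_i$ form a basis of $\mathbb{R}^{n,1}$, $\Theta=U^TJU$ with $J=\mathrm{diag}(1,\dots,1,-1)$, so $\det\Theta=\det(U)^2\det J<0$. This gives condition 1.

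The submatrix $\Theta(i,i)$ is the Gram matrix of $\{u_j\}_{j\ne i}$. These vectors span the orthogonal complement of the line through $v_i$, since $v_i$ lies on every facet except the $i$-th.
\begin{itemize}
\item If $v_i\in\mathbb{H}^n$, then $v_i$ is timelike, its complement is spacelike, and $\Theta(i,i)$ is positive definite. This is condition 2.
\item If $v_i$ is ideal, then $v_i$ is lightlike and $v_i^\perp$ is degenerate positive semidefinite with radical $\mathbb{R}v_i$. So $\Theta(i,i)$ is singular. Any $n-1$ of the $u_j$ span a subspace avoiding $v_i$, because of linear independence together with $u_j\notin\mathbb{R}v_i$. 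That subspace is positive definite, giving condition 3.
\end{itemize}

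For condition 4, let $w_1,\dots,w_{n+1}$ be the dual basis, $\langle w_i,u_j\rangle=\delta_{ij}$. Then $w_i$ is a multiple of $v_i$, and $(\langle w_i,w_j\rangle)=\Theta^{-1}$, whose $(i,j)$ entry is the cofactor divided by $\det\Theta$. Orienting the normals outward forces $w_i=-\lambda_i v_i$ with $\lambda_i>0$, where $v_i$ lies in the future sheet or future light cone. For distinct points of $\overline{\mathbb{H}^n}$ we have $\langle v_i,v_j\rangle<0$. Hence the entries $(\Theta^{-1})_{ij}$ are negative, and since $\det\Theta<0$ the cofactors are positive.

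\textbf{Sufficiency.} By condition 1 and Sylvester's law, $\Theta$ has signature $(n,1)$. Condition 2 or 3 at some vertex gives $n$ positive eigenvalues: an $n\times n$ positive definite block, or $n-1$ positive directions plus a null direction. Together with the negative determinant this forces signature exactly $(n,1)$. So there are vectors $u_i\in\mathbb{R}^{n,1}$ with Gram matrix $\Theta$, unique up to $O(n,1)$. Define $w_i$ as the dual basis; their Gram matrix is $\Theta^{-1}$.

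We then read off the type of each $w_i$:
\begin{itemize}
\item $\langle w_i,w_i\rangle=\mathrm{cof}_{ii}/\det\Theta=\det\Theta(i,i)/\det\Theta$.
\item This is negative under condition 2, so $w_i$ is timelike.
\item It is zero under condition 3. Then $w_i\neq0$ is lightlike.
\end{itemize}
By condition 4, $\langle w_i,w_j\rangle<0$ for all $i\ne j$. Such a family of nonzero causal vectors with pairwise negative products lies in a single time cone; replacing all $w_i$ by $-w_i$ if necessary, they lie in the future cone.

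Now set $v_i=w_i/\sqrt{-\langle w_i,w_i\rangle}$ in the timelike case and $v_i=w_i$ (as an ideal point) in the lightlike case. The simplex spanned by the $v_i$ is the region $\{x:\langle x,u_i\rangle\le0\}$ of the positive cone, with facet normals $u_i$. Its dihedral angles are then recovered as $\theta_{ij}$.

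The main obstacle is this last step in the ideal case: checking that the $-u_i$ are genuinely outward normals and that the simplex is non-degenerate. The needed facts are that $\{u_j\}_{j\ne i}$ spans $w_i^\perp$ and that $\langle w_i,u_i\rangle=1$ has the correct sign. The lightlike vertices require verifying that $w_i^\perp$ meets $\overline{\mathbb{H}^n}$ only along the face. This follows from the positive definiteness of the $(n-1)\times(n-1)$ minors in condition 3.
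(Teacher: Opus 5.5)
Your proof is correct apart from one misjustified step (second paragraph), and it takes a more uniform route than the paper.

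\textbf{Comparison with the paper.}
\begin{itemize}
\item \emph{Necessity of condition 4.} The paper splits into three cases. For two ideal vertices, or one ideal and one finite vertex, it uses the change-of-basis matrices $\Lambda_i,\Lambda_j$ of (\ref{xu1}) and a determinant identity. For two finite vertices it uses Lemma \ref{lengthandangle}. Your observation that the unnormalized dual basis $w_i=-\lambda_i v_i$ has Gram matrix $\Theta^{-1}$ is the identity behind Lemma \ref{lengthandangle}, but without normalizing $w_i$. That normalization is exactly what fails at an ideal vertex, where the cofactor $\Theta_{ii}$ vanishes. Dropping it lets you treat all cases at once.
\item \emph{Realizing $\Theta$ in sufficiency.} The paper builds vectors with Gram matrix $\Theta$ by a case analysis on the number of ideal indices. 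It quotes Lemma \ref{angleadmissiblehyperbolic} when there are none and uses Schur complements otherwise. You get the realization in one line from Sylvester's law once the signature is $(n,1)$, so your argument also re-proves Luo's compact case.
\item \emph{Placing the vertices.} The paper's Step 2 is the same construction as yours: its $v_i=\sum_j G^*_{ij}v_j^*$ equals $\det\Theta$ times your $w_i$. However, the paper concludes $v_i\in\overline{\mathbb{H}^n}$ from $\langle v_i,v_i\rangle\in\{-1,0\}$ alone and never checks that the $v_i$ lie in a common time cone. Your use of condition 4 (pairwise negative products) supplies exactly that; in your argument this is the only place condition 4 enters sufficiency.
\end{itemize}

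\textbf{The misjustified step.} This is the positive definiteness of the $(n-1)\times(n-1)$ principal submatrices in the necessity of condition 3.
\begin{itemize}
\item Linear independence of $\{u_j\}_{j\ne i}$ together with $u_j\notin\mathbb{R}v_i$ does not stop $v_i\in\Span\{u_j\}_{j\ne i}$ from lying in the span of only $n-1$ of them. Those two facts are compatible with $v_i$ being a combination of just two of the $u_j$.
\item What you need is $\Span\{u_j\}_{j\ne i,k}=\{v_i,v_k\}^{\perp}$. This subspace is spacelike because $\Span\{v_i,v_k\}$ contains the timelike vector $v_i+v_k$, since $\langle v_i,v_i\rangle=0$, $\langle v_k,v_k\rangle\le 0$ and $\langle v_i,v_k\rangle<0$.
\item Equivalently, the coefficient of $u_k$ in $v_i$ is $\langle v_i,w_k\rangle=-\lambda_k\langle v_i,v_k\rangle\neq 0$. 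This is what the paper's computation of $k_j>0$ in (\ref{xuhao}) establishes.
\end{itemize}

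\textbf{The step you call the main obstacle.} It is immediate from the dual-basis relations.
\begin{itemize}
\item With the $w_j$ in the future cone and $\langle w_j,u_k\rangle=\delta_{jk}$, the cone $\{\sum_j a_jw_j: a_j\ge 0\}$ equals $\{x:\langle x,u_j\rangle\ge 0 \text{ for all } j\}$.
\item So the outward normals are $-u_j$. The sign depends on whether you flipped the $w_j$, which is harmless because $\pm u_j$ have the same Gram matrix $\Theta$.
\item Non-degeneracy is just the linear independence of the $w_j$.
\item The $(n-1)\times(n-1)$ minors of condition 3 play no role here. In sufficiency they are needed only to force signature $(n,1)$ when every index is ideal.
\end{itemize}
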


In 2-dimensional hyperbolic space $\mathbb{H}^2$, it is well-known that the ideal triangle is unique up to isometry. However, there is no 3-dimensional analogue of this fact, namely the \emph{ideal tetrahedron} is not unique up to isometry, where the ideal tetrahedron is an ideal hyperbolic tetrahedron (see Section \ref{s1} for definition) with four ideal vertices.  

In this paper, as a consequence of Theorem \ref{main1}, we have the following result, which gives the uniqueness on a class of ideal tetrahedra.

\begin{corollary}\label{ck1}
The conformal ideal tetrahedron is unique up to isometry.
\end{corollary}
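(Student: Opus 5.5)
Throughout, "conformal" is read, as the paper's remark indicates, as an additivity condition on edge lengths: there are numbers $r_i$ attached to the vertices, and each edge length is the one forced by tangency of the spheres at its endpoints. In the ideal case with all four $r_i=\infty$, this becomes a horosphere-tangency condition: one can choose horospheres at the four ideal vertices that are pairwise tangent.

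The plan is to turn conformality of an ideal tetrahedron into a condition on its dihedral angles, and then show that condition fixes the angles. Let $\Delta^3$ be a conformal ideal tetrahedron. By Corollary \ref{sumangle}, the three sums of opposite dihedral angles are equal. For an ideal tetrahedron, the three dihedral angles at each ideal vertex are the angles of a Euclidean triangle (the vertex link). Two standard facts then apply:
\begin{enumerate}[1.]
\item Opposite dihedral angles are equal, so $\Delta^3$ has only three angles $\alpha,\beta,\gamma$.
\item These satisfy $\alpha+\beta+\gamma=\pi$.
\end{enumerate}
Each opposite-pair sum is therefore $2\alpha$, $2\beta$ or $2\gamma$. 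Equality of the three sums gives $\alpha=\beta=\gamma=\pi/3$. So every conformal ideal tetrahedron has all six dihedral angles equal to $\pi/3$, and is the regular ideal tetrahedron.

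Next, uniqueness up to isometry comes from the fact that an ideal tetrahedron is determined up to isometry by its dihedral angles. I would argue this in one of two ways.
\begin{enumerate}[1.]
\item Directly: in the upper half-space, send three vertices to $0$, $1$ and $\infty$. The fourth vertex $z$ is then determined by the Euclidean similarity class of the link at $\infty$, which is fixed by the angles.
\item Via Theorem \ref{admissibleidealangle}: the Gram-type matrix $\Theta$ is determined by the angles. Since the normals of the four faces are fixed up to Lorentz transformations by their mutual inner products, the simplex is unique.
\end{enumerate}

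To be complete, I would also check existence, i.e. that the regular ideal tetrahedron really is conformal. By symmetry, take equal horospheres at the four vertices and shrink or enlarge them together until they are pairwise tangent. Alternatively, note that its dual satisfies the condition of Theorem \ref{main1}. Strictly, though, the statement only asserts uniqueness.

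The main obstacle I expect is the first step: Corollary \ref{sumangle} has to be available in the ideal case, with the paper's definition of conformal agreeing with the horosphere-tangency condition above. If instead one wanted to avoid Corollary \ref{sumangle}, the harder route would be to put horospheres of "radius" parameters at the vertices. One would then compute the Penner-type lengths $\ell_{ij}$ (which are zero under tangency), use the relation between the link triangle side lengths and the $\ell_{ij}$, and show directly that pairwise tangency forces the four link triangles to be equilateral.
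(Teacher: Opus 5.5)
Your proposal is correct and is essentially the paper's argument: both combine the equal-opposite-sums condition from Lemma \ref{dihesum}/Theorem \ref{main1} with the four ideal-vertex relations $\theta_{jk}+\theta_{jl}+\theta_{kl}=\pi$, and the resulting linear system forces every dihedral angle to be $\pi/3$; that lemma already covers the ideal case, so the obstacle you anticipated does not arise. The only difference is how the linear algebra is organized: the paper uses the dual decomposition $\theta_{ij}=\theta_i+\theta_j$ to get $\theta_j+\theta_k+\theta_l=\pi/2$ and hence $\theta_i=\pi/6$, whereas you use the classical fact that opposite dihedral angles of an ideal tetrahedron are equal.
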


As another consequence of Theorem \ref{main1}, we obtain the following result of global rigidity of hyperbolic, Euclidean, spherical and ideal hyperbolic sphere packings on a tetrahedron. Moreover, the rigidity map, namely the inverse of combinatorial scalar curvature map, can be expressed explicitly.

\co\label{rigidityoftetrahedron}
Given a triangulated tetrahedron $(\Delta^3,\mathcal{T}=(V,E,F,T))$ with $|T|=1$, then the hyperbolic $($Euclidean, spherical, ideal hyperbolic, resp.$)$ sphere packing $($metric$)$ on $(\Delta^3,\mathcal{T})$ is determined by the combinatorial scalar curvature map $\mathbb{K}: \Omega^{\mathbb{H}}(\Omega^{\mathbb{E}},\Omega^{\mathbb{S}},\partial_{\infty}\Omega^{\mathbb{H}},\text{resp}.)  \longrightarrow \mathbb{R}^{V}$$($up to scaling in Euclidean case$)$. 

\cod

\rmk
To the best of our knowledge, this is the first result of global rigidity of spherical and ideal hyperbolic sphere packings on a tetrahedron. There are also many local and global rigidity results of hyperbolic and Euclidean sphere packings on 3-manifolds, which depend on the extension method and variational principle, see e.g. \cite{Cooper96}, \cite{Glickenstein05}, \cite{Glickenstein052}, \cite{Xu20}, \cite{Ge21}, \cite{Xu23}. 
\rmkd

It is worth highlighting that the spherical result of Corollary \ref{rigidityoftetrahedron} cannot be generalized to 3-manifolds, see e.g. the following counterexample we have constructed.

\begin{example}[\textbf{No local and global rigidity of spherical sphere packings on $\mathbb{S}^3$}]\label{norigidity}
Given a Euclidean 4-simplex $\Delta^4$$($all edge lengths are equal$)$ inscribed in the 3-dimensional spherical space $\mathbb{S}^3$, then $\partial\Delta^4$ corresponds to a triangulation $\mathcal{T}=(V,E,F,T)$ on $\mathbb{S}^3$ and we can construct a spherical tetrahedron on $\mathbb{S}^3$ for all tetrahedron $t\in T$ by scaling. 

Gluing these spherical tetrahedra along their faces together, we obtain the round metric on $\mathbb{S}^3$ and this metric is parameterized by a constant spherical sphere packing $($metric$)$ $r\in \Omega^{\mathbb{S}}$, which satisfies $\mathbb{K}(r)=0$. 

Since $\mathbb{S}^3$ is also the boundary at infinity of Poincaré model of 4-dimensional hyperbolic space $\mathbb{H}^4$, then we can construct a conformal map on $\mathbb{S}^3$, which is induced from a non-elliptic isometry sufficiently close to the identity on $\mathbb{H}^4$ $($e.g. a translation map with a small enough translation distance$)$. 

Then we can obtain another spherical sphere packing $($metric$)$ $r^{\prime}\ne r\in \Omega^{\mathbb{S}}$ sufficiently close to $r$ such that $\mathbb{K}(r^{\prime})=0$ by the conformal map, which implies that there is no global and local rigidity of spherical sphere packings on $\mathbb{S}^3$. 
\end{example}

We introduce a ``compactification'' $\widehat{\Omega}^{\mathbb{H}}$ of the space $\Omega^{\mathbb{H}}$ called the \emph{extended hyperbolic admissible space} on compact triangulated 3-manifold $(M,\mathcal{T})$, namely 
$$
\widehat{\Omega}^{\mathbb{H}}:=\{ r=(r_i)_{i\in V}\in\widehat{\mathbb{R}_+^{V}}:Q^{\mathbb{H}}_3(r_t)>0,~\text{for all}~t\in T\}.
$$

It is not difficult to check that $\widehat{\Omega}^{\mathbb{H}}=\Omega^{\mathbb{H}}\bigsqcup\partial_{\infty}\Omega^{\mathbb{H}}$ and $\partial_{\infty}\Omega^{\mathbb{H}}$ is called the \emph{boundary at infinity} of the hyperbolic admissible space $\Omega^{\mathbb{H}}$. Moreover, the two combinatorial scalar curvature maps, which are defined on $\Omega^{\mathbb{H}}$ and $\partial_{\infty}\Omega^{\mathbb{H}}$, respectively, can be combined and written as 
$$
\mathbb{K}:\widehat{\Omega}^{\mathbb{H}}=\Omega^{\mathbb{H}}\bigsqcup\partial_{\infty}\Omega^{\mathbb{H}}\longrightarrow \mathbb{R}^{V}.
$$

In this paper, we give a cell decomposition of the extended hyperbolic admissible space $\widehat{\Omega}^{\mathbb{H}}$,namely
\begin{equation}
\widehat{\Omega}^{\mathbb{H}}=\Omega^{\mathbb{H}}\bigsqcup\partial_{\infty}\Omega^{\mathbb{H}}=\bigsqcup_{W\subset V}\Omega^{W},
\end{equation} 
where $\Omega^{V}=\Omega^{\mathbb{H}}$, $\partial_{\infty}\Omega^{\mathbb{H}}=\bigsqcup_{W\subsetneq V}\Omega^{W}$ and the subspace $\Omega^{W}$ is a $|W|$-dimensional cell of the extended hyperbolic admissible space $\widehat{\Omega}^{\mathbb{H}}$ for all subset $W\subset V$.

Moreover, we establish and extend the variational principle (see Theorem \ref{yu3}, Theorem \ref{negac}, Corollary \ref{dd1}) to obtain the following rigidity result of (ideal) hyperbolic sphere packings on each cell. 

\begin{theorem}\label{theo1}
 Given a compact triangulated 3-manifold $(M,\mathcal{T}=(V,E,F,T))$, then the $($ideal$)$ hyperbolic sphere packing $($metric$)$ on $\Omega^{W}$ is determined by the combinatorial scalar curvature map $\mathbb{K}:\Omega^{W}\longrightarrow \mathbb{R}^{V}$ for all subset $W\subset V$.  
 \end{theorem}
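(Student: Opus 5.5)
The natural route is the variational principle on each cell $\Omega^W$, as in Cooper--Rivin and Xu, adapted to cusped vertices. Fix $W\subset V$. On $\Omega^W$ the coordinates $r_i$ with $i\in V\setminus W$ are frozen at $\infty$, so $\Omega^W$ is parameterized by $(r_i)_{i\in W}$. The vertices in $V\setminus W$ are ideal, and the solid angle at an ideal vertex of $\Delta^3(r_t)$ is the angle sum of a Euclidean triangle in the horosphere link, which equals $\pi$. Hence for $i\notin W$ the curvature $K_i$ is a fixed combinatorial constant on $\Omega^W$: $4\pi-\pi|T_i|$ or $2\pi-\pi|T_i|$. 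It therefore suffices to show that the reduced map $\mathbb{K}_W:(r_i)_{i\in W}\mapsto(K_i)_{i\in W}$ is injective on $\Omega^W$.

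\textbf{Step 1: a closed $1$-form and a potential.} Change variables to $u_i=\ln\tanh(r_i/2)$ for $i\in W$. For each tetrahedron $t$, consider the form $\omega_t=\sum_{i\in V_t\cap W}\alpha_i^t\,du_i$, and show it is closed, i.e. $\partial\alpha_i^t/\partial u_j=\partial\alpha_j^t/\partial u_i$. The symmetry comes from the Schläfli formula applied to the decorated tetrahedron: balls centered at the finite vertices and horoballs at the ideal vertices. This gives a locally defined potential $F_t$, and $\sum_t F_t$ has gradient $-(K_i)_{i\in W}$ up to constants.

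\textbf{Step 2: strict convexity.} Show that the Jacobian $\Lambda_t=(\partial\alpha_i^t/\partial u_j)_{i,j\in V_t\cap W}$ is negative definite on the admissible set. I would deduce this from Cooper--Rivin's local rigidity for finite hyperbolic tetrahedra by a limiting argument as some $r_k\to\infty$. Non-degeneracy at the limit can be checked from the explicit formula for dihedral angles in terms of radii (the Gram matrix entries $\cos(\pi-\theta_{ij})$). Theorem \ref{admissibleidealangle} ensures that these limits are genuine ideal simplices, so the relevant submatrix condition 3 holds.

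\textbf{Step 3: extension to a convex domain (the main obstacle).} $\Omega^W$ need not be convex, so injectivity does not follow from Step 2 alone. I would follow Xu's method. First, show the admissible region of each tetrahedron within its cell is the complement of a region where $Q_3^{\mathbb{H}}\le0$. Then extend each $\alpha_i^t$ continuously to the whole product $\mathbb{R}_+^{W\cap V_t}$ (times the frozen $\infty$ coordinates) by constants: when $Q_3^{\mathbb{H}}\le0$, the ball at one vertex degenerates and its solid angle tends to $2\pi$ while the others tend to $0$. Next, check that the extended form $\tilde\omega_t$ is still closed, since it is $C^1$ off a measure-zero set and locally constant on the degenerate part. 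Its potential $\tilde F_t$ is then convex on a convex domain, and strictly convex on $\Omega^W$. The hard part is verifying continuity of the extended angles along $\{Q_3^{\mathbb{H}}=0\}$ when some vertices are ideal, since the limits in the presence of horoballs must be recomputed.

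Finally, apply the standard lemma: if $\Phi$ is convex and $C^1$ on a convex open set and strictly convex on an open subset $U$, then $\nabla\Phi$ is injective on $U$. Applied to $\sum_t\tilde F_t$ with $U=\Omega^W$, this gives injectivity of $\mathbb{K}_W$ and hence the theorem.
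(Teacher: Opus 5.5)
Your architecture matches the paper's: a cellwise Cooper--Rivin functional, Xu's constant extension of the solid angles together with Luo's $C^1$ convex extension (Lemma \ref{exten}), injectivity of the gradient of a convex function that is strictly convex on $\Omega_W$ (Lemma \ref{o1}), and the reduction through $p_2^W$. However, two steps do not work as written.

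\textbf{Step 1: wrong variable.} The form $\sum_i\alpha_i^t\,du_i$ with $u_i=\ln\tanh(r_i/2)$ is not closed.

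The Schläfli formula (\ref{schla}), with edges truncated by the packing horospheres at ideal vertices (Remark \ref{ki3}), gives $\sum_{i\in W_t}r_i\,d\alpha_i^t=-2\,d\mathrm{Vol}_t$. Equivalently, $d\bigl(\sum_{i\in W_t}\alpha_i^t r_i+2\mathrm{Vol}_t\bigr)=\sum_{i\in W_t}\alpha_i^t\,dr_i$. So the symmetry it yields is $\partial\alpha_i^t/\partial r_j=\partial\alpha_j^t/\partial r_i$.

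Since $\partial\alpha_i^t/\partial u_j=\sinh r_j\,\partial\alpha_i^t/\partial r_j$, symmetry in $u$ fails as soon as $r_i\neq r_j$. The variable $\ln\tanh(r/2)$ belongs to 2D circle packings. In dimension three you must use $r$ itself, which gives exactly the globally defined functional $\mathcal{S}_{W_t}$ of Theorem \ref{yu3}.

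\textbf{Step 2: missing non-degeneracy.} This is the more serious gap. A limit of negative definite matrices is only negative semidefinite. Everything therefore hinges on non-degeneracy of $(\partial\alpha_i^t/\partial r_j)_{i,j\in W_t}$ on $\Omega_{W_t}$, and you give no mechanism for it. Saying it ``can be checked from the explicit formula'' is not an argument, and Theorem \ref{admissibleidealangle} says nothing about this Jacobian.

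The paper gets non-degeneracy from the dual characterization:
\begin{enumerate}
\item By Theorem \ref{main1} and Corollary \ref{po5}, a conformal (ideal) tetrahedron has $\theta_{ij}=\pi-r_i^*-r_j^*$, with $r_i^*$ affine in the solid angles by (\ref{ss1}); the solid angles at ideal vertices are $0$.
\item Lemma \ref{lengthandangle} recovers $r_{W_t}$ analytically from the dihedral angles, via (\ref{hj10}), (\ref{hj6}), (\ref{hj7}).
\end{enumerate}
Hence $r_{W_t}\mapsto\alpha_{W_t}$ has a real-analytic inverse, and its Jacobian is non-degenerate.

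With that input, your limiting argument does close the proof, and more cheaply than the paper does. By (\ref{cc1}) the dihedral angles are analytic in $c_k=\coth r_k$ up to $c_k=1$. So the cell Jacobian is a limit of principal blocks of the negative definite matrices of Theorem \ref{maintheorem3}; it is therefore semidefinite, and being non-degenerate, definite. The paper instead fixes the signature at the special configuration of Proposition \ref{ki5} and Corollary \ref{ko86}, then propagates it using connectedness of $\Omega_{W_t}$ (Proposition \ref{piece}).

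\textbf{Minor slip.} The solid angle at an ideal vertex is $\theta_{jk}+\theta_{jl}+\theta_{kl}-\pi=0$, not $\pi$. In any case you do not need $K_i$ to be constant for $i\notin W$: injectivity of $p_2^W\circ\mathbb{K}$ already implies injectivity of $\mathbb{K}$ on $\Omega^W$.
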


As a consequence, we  immediately obtain the following global rigidity of (ideal) hyperbolic sphere packings on 3-manifolds.

\begin{corollary}\label{copo1}
 Given a compact triangulated 3-manifold $(M,\mathcal{T}=(V,E,F,T))$, then the hyperbolic and ideal hyperbolic sphere packing $($metric$)$ on $(M,\mathcal{T})$ are determined by the combinatorial scalar curvature map $\mathbb{K}:\widehat{\Omega}^{\mathbb{H}}\longrightarrow \mathbb{R}^{V}$.  \end{corollary}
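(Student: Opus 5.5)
Corollary \ref{copo1} follows from Theorem \ref{theo1} combined with the cell decomposition $\widehat{\Omega}^{\mathbb{H}}=\bigsqcup_{W\subset V}\Omega^{W}$. Take two metrics $r,r'\in\widehat{\Omega}^{\mathbb{H}}$ with $\mathbb{K}(r)=\mathbb{K}(r')$. We want to show $r=r'$. By the decomposition there are subsets $W,W'\subset V$ with $r\in\Omega^{W}$ and $r'\in\Omega^{W'}$. Here $W$ is exactly the set of vertices $i$ with $r_i<\infty$, and its complement is the set of ideal (cusp) vertices. If $W=W'$, Theorem \ref{theo1} applied to the single cell $\Omega^{W}$ gives $r=r'$ at once.

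The real content is therefore to show that the curvature vector detects the cell, namely that $W$ can be read off from $K=\mathbb{K}(r)$. My plan is to prove that a vertex $i$ is ideal if and only if $K_i$ takes the extremal value $4\pi$ (interior vertex) or $2\pi$ (boundary vertex), which by \eqref{q96} is the same as $\sum_{t\in T_i}\alpha_i^t=0$.

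\textbf{Ideal vertex gives zero solid angle.} If $v_i$ is an ideal vertex of an ideal hyperbolic tetrahedron $\Delta^3(r_t)$, then its link is a Euclidean triangle. The three dihedral angles at the edges through $v_i$ then sum to $\pi$. The solid angle is the area of the link in the unit-sphere normalization, $\alpha_i=\theta_{ij}+\theta_{ik}+\theta_{il}-\pi$, so $\alpha_i=0$.

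\textbf{Finite vertex gives positive solid angle.} If $r_i<\infty$, then $v_i\in\mathbb{H}^3$ is a genuine vertex of a non-degenerate tetrahedron. Its link is a spherical triangle of positive area, so $\alpha_i^t>0$ for every $t\in T_i$, and hence $\sum_{t\in T_i}\alpha_i^t>0$. For the non-degeneracy I would use condition 2 of Theorem \ref{admissibleidealangle}: positive definiteness of $\Theta(i,i)$ means the link is a non-degenerate spherical triangle. Correspondingly, condition 3 for ideal vertices gives a singular $\Theta(i,i)$, which is the Euclidean link.

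Putting these together, $W=\{i: K_i<4\pi \text{ (interior)}\text{ or } K_i<2\pi\text{ (boundary)}\}$. So $\mathbb{K}(r)=\mathbb{K}(r')$ forces $W=W'$, and Theorem \ref{theo1} finishes the proof.

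I expect the main obstacle to be this cell-detection step, specifically the strict positivity of the solid angle at finite vertices in the ideal setting. When other vertices of $t$ are ideal, one must check that the spherical link at the finite vertex is still non-degenerate. I would argue this directly from $Q^{\mathbb{H}}_3(r_t)>0$, which keeps $\Delta^3(r_t)$ non-degenerate by the construction in Section \ref{s1}, and then apply Theorem \ref{admissibleidealangle} as above.
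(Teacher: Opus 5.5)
Your proposal is correct and follows the paper's route: Theorem \ref{theo1} gives injectivity of $\mathbb{K}$ on each cell $\Omega^{W}$, and the cell decomposition $\widehat{\Omega}^{\mathbb{H}}=\bigsqcup_{W\subset V}\Omega^{W}$ then reduces global injectivity to showing that the curvature determines the cell. The paper's two-line proof never explains why curvature vectors from different cells cannot coincide, and your cell-detection step supplies exactly that missing justification: $\alpha_i^t=0$ at ideal vertices, $\alpha_i^t>0$ at finite vertices of the non-degenerate tetrahedra, and $T_i\neq\varnothing$, so a vertex is ideal exactly when $\sum_{t\in T_i}\alpha_i^t=0$. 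One notational point: in the paper's convention $\theta_{ij}$ is the dihedral angle at the edge $e_{kl}$, so the solid angle reads $\alpha_i=\theta_{jk}+\theta_{jl}+\theta_{kl}-\pi$.
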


\subsubsection{ Characterization of conformal $n$-simplices in constant curvature spaces} 

In Section \ref{s1}, we have defined the conformal hyperbolic (Euclidean, spherical, ideal hyperbolic, resp.) tetrahedra. Similarly, we can also generalize the concept ``conformal'' to $n$-simplices (see Section \ref{s3} for definition).

 Taylor and Woodward \cite{Taylor05} provided a nice criterion of non-degenerate hyperbolic (Euclidean, spherical, resp.) $n$-simplices. Following their work, we obtain the following result, which is the higher-dimensional generalization of the characterization of conformal hyperbolic (Euclidean, spherical, ideal hyperbolic, resp.) tetrahedra. The result is useful for the understanding of higher-dimensional sphere packings in constant curvature background geometry.

\begin{theorem}\label{highpolyhedron}
Given a dimension $n\ge 3$ and a parameter $r=(r_i)_{1\le i\le n+1}\in\mathbb{R}_+^{n+1}(\mathbb{R}_+^{n+1},$ $(0,\pi)^{n+1}, \widehat{\mathbb{R}_+^{n+1}}\setminus\mathbb{R}_+^{n+1},\text{resp}.)$$(r_i+r_j(i\ne j)\in(0,\pi)$ in spherical case$)$, then the hyperbolic $($Euclidean, spherical, ideal hyperbolic, resp.$)$ $n$-simplex $\Delta^n\subset\mathbb{H}^n(\mathbb{E}^n, \mathbb{S}^n$, $\overline{\mathbb{H}^n},\text{resp}.)$ with edge length $\ell_{ij}=r_i+r_j$ for all geodesic edges $e_{ij}$ of $\Delta^n$ is conformal if and only if  
\begin{align*}
Q^{\mathbb{H}}_n(r):=&(\sum\limits_{i=1}^{n+1}\coth r_i)^2-(n-1)\sum\limits_{i=1}^{n+1}\coth^2 r_i+2(n-1)>0\\\nonumber
(Q^\mathbb{E}_n(r):=&(\sum\limits_{i=1}^{n+1}\dfrac{1}{r_i})^2-(n-1)\sum\limits_{i=1}^{n+1}\dfrac{1}{r^2_i}>0,\label{decar}\\\nonumber
Q^\mathbb{S}_n(r):=&(\sum\limits_{i=1}^{n+1}\cot r_i)^2-(n-1)\sum\limits_{i=1}^{n+1}\cot^2 r_i-2(n-1)>0,\\\nonumber
Q^{\mathbb{H}}_n(r)>&0, \text{resp}.).\nonumber
\end{align*}
\end{theorem}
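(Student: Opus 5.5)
The plan is to read ``conformal'' as the statement that the prescribed lengths $\ell_{ij}=r_i+r_j$ are realized by a non-degenerate $n$-simplex, i.e. the balls of radii $r_i$ at the vertices are pairwise tangent. I would then turn the realizability criterion of Taylor--Woodward \cite{Taylor05} into a sign condition on a Gram matrix. The key point is that this Gram matrix is a rank-two perturbation of a multiple of the identity, so its signature is governed by a single determinant, and that determinant is a positive multiple of $Q_n$. This avoids any induction over faces: once the full vertex configuration exists, every face exists automatically.

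\emph{Spherical case.} Points $v_1,\dots,v_{n+1}\in\mathbb{S}^n$ with $d(v_i,v_j)=\ell_{ij}$ exist and span $\mathbb{R}^{n+1}$ iff $G=(\cos\ell_{ij})$ (with $G_{ii}=1$) is positive definite. The hypothesis $r_i+r_j<\pi$ makes the distances well defined. Put $c_i=\cot r_i$ and $D=\mathrm{diag}(\sin r_i)$. Since $\cos(r_i+r_j)=\cos r_i\cos r_j-\sin r_i\sin r_j$ and $1/\sin^2 r_i=c_i^2+1$, we get $G=DMD$ with
\[
M=2I+cc^{T}-\mathbf{1}\mathbf{1}^{T}.
\]
The matrix $2I-\mathbf{1}\mathbf{1}^T$ has eigenvalue $2$ with multiplicity $n$ and one negative eigenvalue. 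Adding the positive semidefinite rank-one matrix $cc^T$ can only raise eigenvalues, so by interlacing $M$ has at most one non-positive eigenvalue. Hence $M$ is positive definite iff $\det M>0$. The matrix determinant lemma for the rank-two update gives
\[
\det M=2^{n+1}\Bigl[(1+\tfrac{|c|^2}{2})(1-\tfrac{n+1}{2})+\tfrac{(\sum c_i)^2}{4}\Bigr]=2^{n-1}Q^{\mathbb{S}}_n(r).
\]

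\emph{Hyperbolic case.} In the hyperboloid model with $\langle v_i,v_j\rangle=-\cosh\ell_{ij}$, a configuration exists iff $-G$, where $G=(\cosh\ell_{ij})$, has Lorentzian signature $(n,1)$. All off-diagonal entries of $-G$ are negative, so the realizing vectors automatically lie on the same sheet. Put $c_i=\coth r_i$ and $D=\mathrm{diag}(\sinh r_i)$. Then $G=DMD$ with
\[
M=-2I+cc^T+\mathbf{1}\mathbf{1}^T.
\]
Here $-2I+\mathbf{1}\mathbf{1}^T$ has $n$ eigenvalues equal to $-2$ and one positive eigenvalue. Interlacing shows $M$ has at least $n-1$ eigenvalues $\le -2$ and at least one positive eigenvalue. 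So the required signature (exactly one positive eigenvalue, nondegenerate) holds iff the remaining eigenvalue is negative, i.e. iff $(-1)^{n}\det M>0$. The same determinant lemma gives
\[
\det M=(-2)^{n+1}\cdot\bigl(-\tfrac14\bigr)Q^{\mathbb{H}}_n(r),
\]
which has the right sign exactly when $Q^{\mathbb{H}}_n(r)>0$.

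\emph{Euclidean case.} I would either run the same computation on the Cayley--Menger form with squared lengths $(r_i+r_j)^2$, or take the limit of the hyperbolic criterion under rescaling $r\mapsto\varepsilon r$. The determinant becomes $\prod r_i^2$ times a multiple of $(\sum 1/r_i)^2-(n-1)\sum 1/r_i^2$, and the same eigenvalue-interlacing argument reduces the signature condition to the sign of this quantity.

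\emph{Ideal hyperbolic case.} For vertices with $r_i=\infty$, I would represent them by light-cone vectors normalized by horospheres, with the convention $\coth\infty=1$. In this normalization the relevant entries become $e^{r_j}$-type expressions, and the rescaled matrix again has the form $-2I+cc^T+\mathbf{1}\mathbf{1}^T$ but with diagonal entry $c_i^2-1=0$. One still has to check that the realization criterion for ideal points (the Gram matrix of null and timelike vectors having signature $(n,1)$) matches, as in Theorem~\ref{admissibleidealangle}.

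I expect the main obstacle to be making the ideal case and the precise non-degeneracy criterion rigorous, rather than the determinant computation. Concretely, this means justifying that the Lorentzian-signature condition, with no extra face conditions, is necessary and sufficient, including the same-sheet and ideal-vertex subtleties. I would handle it by citing \cite{Taylor05} for finite vertices and treating ideal vertices as limits $r_i\to\infty$, using continuity of $Q^{\mathbb{H}}_n$.
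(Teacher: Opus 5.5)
Your proposal is correct, and it takes a different route from the paper in the key step. Your matrix $M$ is literally the paper's matrix $A$ (columns $c_i\delta+\beta_i$), so the determinant evaluation is the same in substance. The paper expands $|A|$ multilinearly using the auxiliary vectors $\gamma_i$, where you use the matrix determinant lemma.

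\textbf{Sufficiency.} This is where the routes differ. The paper applies the Taylor--Woodward criterion of Lemma~\ref{mat}, which requires the sign of \emph{every} principal minor. It therefore computes $|L_k|=-2^{k-2}s_1^2\cdots s_k^2\mathcal{Q}^{\mathbb{H}}_k$ for all $k$. It then propagates $\mathcal{Q}^{\mathbb{H}}_{n+1}>0$ downward via the identity $\mathcal{Q}^{\mathbb{H}}_k=\frac{k-2}{k-1}\mathcal{Q}^{\mathbb{H}}_{k+1}+(k-1)\bigl(c_{k+1}-\frac{1}{k-1}\sum_{i\le k+1}c_i\bigr)^2$. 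This is exactly the face-by-face induction you avoid.

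Your interlacing observation replaces that induction: a rank-two update of a scalar matrix leaves only one eigenvalue undetermined. So a single determinant fixes the inertia, and Sylvester's law realizes the whole vertex configuration at once. Your route is shorter and more transparent. The paper's route produces explicit minor formulas.

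\textbf{Euclidean case.} Your sketch goes through as stated. The paper's Euclidean matrix is $N=2c_{n+1}^2(I-\mathbf{1}\mathbf{1}^T)+dd^T$ with $d_i=c_i+c_{n+1}$. It has the same structure, and $\det N=2^{n-1}c_{n+1}^{2(n-1)}Q^{\mathbb{E}}_n(r)$.

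\textbf{Ideal case.} Your horosphere normalization is cleaner than the paper's argument. Take horospheres $\{\langle p,x_i\rangle_{n,1}=-1\}$ at the ideal vertices. Then $\langle x_i,v_j\rangle_{n,1}=-e^{r_j}$ and $\langle x_i,x_j\rangle_{n,1}=-2$. So the Gram matrix is $-DMD$ with $c_i=1$ and $D_{ii}=1$ at ideal vertices, and your inertia argument applies verbatim.

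Realization needs only Sylvester's law plus a common time orientation, which the negative off-diagonal entries force. Theorem~\ref{admissibleidealangle} concerns the dual Gram matrix and is not what you need here.

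The paper instead argues by Gromov--Hausdorff limits. It gets strictness of $Q^{\mathbb{H}}_n(r)>0$ from the Descartes-type statement of Remark~\ref{aa2}, which is itself presented as a consequence of this theorem.

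So make the horosphere computation your primary argument. Your fallback of treating ideal vertices purely as limits, using continuity of $Q^{\mathbb{H}}_n$, yields only $Q^{\mathbb{H}}_n(r)\ge 0$ for necessity. Use the limit only to match the paper's definition of conformal ideal simplices. Rescaling $v_i^{(m)}$ by $2e^{-r_i^{(m)}}$ makes the Gram matrices converge to the normalized one.
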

\rmk\label{aa2}
Theorem \ref{highpolyhedron} also implies the higher-dimensional Descartes circle Theorem $($see e.g. \cite{LMW01}$)$: Given a dimension $n\ge 2$, then $n+2$ $(n-1)$-dimensional hyperbolic $($Euclidean, spherical, hyperbolic or horo, resp.$)$ spheres with radii $r_1,\cdots,r_{n+2}$ in $n$-dimensional hyperbolic $($Euclidean, spherical, compactified hyperbolic, resp.$)$ space $\mathbb{H}^{n}(\mathbb{E}^n, \mathbb{S}^n, \overline{\mathbb{H}^n},\text{resp}.)$ are mutually externally tangent if and only if
$$
Q^{\mathbb{H}}_{n+1}(r)(Q^{\mathbb{E}}_{n+1}(r),Q^{\mathbb{S}}_{n+1}(r),Q^{\mathbb{H}}_{n+1}(r),\text{resp}.)=0,
$$
where $r=(r_i)_{1\le i\le n+2}$.
\rmkd

\textbf{Organization.} This paper is organized as follows. In Section \ref{sd1}, we introduce some preliminaries on constant curvature geometry, conformal tetrahedra and sphere packings. In Section \ref{s3}, we introduce and study the (dual) $n$-simplices in constant curvature spaces. In Section \ref{s4}, we introduce the Gram matrix of (dual) $n$-simplices and prove Theorem \ref{admissibleidealangle}. In Section \ref{s5}, we study the admissible space of conformal tetrahedra in constant curvature spaces. In Section \ref{sdual}, we prove Theorem \ref{main1}, Corollary \ref{sumangle}, Corollary \ref{ck1} and Corollary \ref{rigidityoftetrahedron}. In Section \ref{boundary}, we study the admissible space of conformal ideal hyperbolic tetrahedra and construct the extended concave functional. In Section \ref{s8}, we prove Theorem \ref{theo1} and Corollary \ref{copo1}. In Section \ref{s9}, we prove Theorem \ref{highpolyhedron}.

\textbf{Declaration.} We do not use any AI tools in this work and in the preparation of this paper.

\section{Preliminaries}\label{sd1}

In this section, we introduce some preliminaries on constant curvature geometry. Moreover, we introduce the conformal tetrahedra and sphere packings in constant curvature spaces.   

We first explain some mathematical notation used in this paper:

\begin{enumerate}
\item $d_{\kappa}(x,y)$: the  hyperbolic (Euclidean, spherical, resp.) distance betweeen the two points $x$ and $y$ as $\kappa=-1 (0, 1, \text{resp}.)$.
\item $g_{\kappa}$: the standard hyperbolic (Euclidean, spherical, resp.) metric as  $\kappa=-1 (0, 1, \text{resp}.)$.
\item $\R^n_{+}:=\{x=(x_1,\cdots,x_n)\in\R^n:x_1,\cdots,x_n>0\}.$
\item $S(p,r)$: the geodesic sphere with center $p$ and radius $r$.
\item The inertia or signature of a symmetric matrix is $(p,r,q)$, where $p,r,q$ are the number of positive, zero and negative eigenvalues, respectively.
\item The Euclidean inner product $\langle x,y\rangle:=\sum\limits_{i=1}^{n}x_iy_i$, where $x=(x_1,\cdots,x_n)$,\\$y=(y_1,\cdots,y_n)\in\R^{n}.$
\item The Minkowski inner product $\langle x,y\rangle_{n,1}:=\sum\limits_{i=1}^nx_iy_i-x_{n+1}y_{n+1}$, where $x=(x_1,\cdots,x_{n+1})$, $y=(y_1,\cdots,y_{n+1})\in\R^{n+1}.$
\item $\mathbb{R}^{n,1}$: the $(n+1)$-dimensional Minkowski space $(\mathbb{R}^{n+1},\langle\cdot ,\cdot \rangle_{n,1})$.  
\item $\mathbb{E}^{n}$: the $n$-dimensional Euclidean space $(\mathbb{R}^{n},\langle\cdot ,\cdot \rangle)$.
\item $v^T$: the transpose of a column vector $v$.
\item $I_{n,1}:=\left(\begin{array}{cc}I_n & 0 \\ 0 & -1\end{array}\right)$.
\item $M^T$: the transpose of a matrix $M$.
\item $M^{ad}$: the adjoint matrix of a square matrix $M$.
\item $M^{-1}$: the inverse matrix of a square matrix $M$.
\item $|M|$: the determinant of a square matrix $M$. 
\item $M(i,j)$: the submatrix obtained from a square matrix $M$ by removing the $i$-th row and $j$-th column from $M$. 
\item $M[i,j]$: the submatrix obtained from a square matrix $M$ by removing the $i,j$-th rows and $i,j$-th columns from $M$. 
\item $M_{ij}$: the $(i,j)$-cofactor of a square matrix $M$. 
\item The matrix inner product $\langle M,N\rangle_{n,1}:=M^TI_{n,1}N$.
\item The matrix inner product $\langle M,N\rangle:=M^TN$.
\item $(i,j,k,l)$: the permutation of $(1,2,3,4)$. 

\end{enumerate}
\textbf{Notation statement}. \emph{We may abuse some notations for the sake of convenience, if there are no confusions from the context. In this paper, the dimension $n$ of all spaces considered is greater than $2$, namely $n\ge3$. }

\subsection{Constant curvature geometry}
\subsubsection{Spherical geometry}
The $n$-sphere $S^n$ is the set of all points with norm 1 in $(n+1)$-Euclidean space $\mathbb{E}^{n+1}$, namely
$$
S^n:=\left\{x=(x_1,\cdots,x_{n+1})\in\mathbb{E}^{n+1}:\langle x,x\rangle=1\right\}.
$$
It is well-known that the sphere $S^n$ equipped with the metric induced by the Euclidean inner product $\langle\cdot ,\cdot \rangle$, is the classical $n$-dimensional spherical space $\mathbb{S}^n$, namely $\mathbb{S}^n=(S^n,\langle\cdot ,\cdot \rangle)$. Moreover, for any two points $p,q\in \mathbb{S}^n$, we have
\begin{equation}\label{julis}
\langle p,q \rangle=\cos d_{1}(p,q).
\end{equation}

\subsubsection{De Sitter geometry}
Similarly, we can define the following pseudo $n$-spheres $S^{n}(1)$ and $S^n(-1)$ with norm 1 and $-1$ in $(n+1)$-Minkowski space $\rn$, respectively.
$$
S^n(1):=\left\{x=(x_1,\cdots,x_{n+1})\in\mathbb{R}^{n,1}:\langle x,x\rangle_{n,1}=1\right\},
$$
$$
S^n(-1):=\left\{x=(x_1,\cdots,x_{n+1})\in\mathbb{R}^{n,1}:\langle x,x\rangle_{n,1}=-1\right\}.
$$

The pseudo sphere $S^{n}(1)$ equipped with the metric induced by the Minkowski inner product, is the $n$-dimensional \emph{de Sitter space} $\ds$, namely $\ds=(S^{n}(1),\langle\cdot ,\cdot \rangle_{n,1})$. Note that $\ds$ is homeomorphic to $\mathbb{S}^{n-1}\times\mathbb{R}$ and not simply connected.

By a simple argument, the signature of restriction of the Minkowski inner product $\langle \cdot,\cdot\rangle_{n,1}$ to the tangent space $T_x\ds=\Span\{x\}^{\perp}=\{v\in\mathbb{R}^{n,1}:\langle x,v\rangle_{n,1}=0\}$ is $(n-1,0,1)$ for all $x\in\ds$, which implies that $\ds$ is a $n$-dimensional Lorentzian manifold. Moreover, $\ds$ has a canonical Lorentzian metric of constant sectional curvature $1$.

A \emph{geodesic} $\gamma$ in $\ds$ is a curve $\gamma:(-\infty,+\infty)\rightarrow\ds$ which satisfies $\nabla_{\gamma'}\gamma'=0$ for all time $s\in (-\infty,+\infty)$, where $\nabla$ is the Levi-Civita connection on $\ds$.

Given a point $p\in\ds$ and a non-zero tangent vector $ v\in T_p\ds$ such that $\langle v,v\rangle_{n,1}=1,~\text{or}~0,~\text{or}~-1$, then the geodesic passing through the point $p$ with velocity $v$ can be parameterized as   
\begin{equation}\label{geodesic1}
    \gamma(s)=\begin{cases} 
\cos s\cdot p+\sin s\cdot v, & \text{if } \langle v,v\rangle_{n,1}=1, \\
p+s\cdot v, & \text{if } \langle v,v\rangle_{n,1}=0,\\
\cosh s\cdot p+\sinh s\cdot v, & \text{if } \langle v,v\rangle_{n,1}=-1 .\\
\end{cases}
\end{equation}
for all time $s\in(-\infty.+\infty)$.

Given two distinct points $p,q\in\ds$, by (\ref{geodesic1}), then there exists a geodesic passing through the points $p,q$ if and only if $\langle p,q\rangle_{n,1}>-1$ or $p=-q$, and this geodesic can be parameterized as
\begin{equation}
    \gamma(s)=\begin{cases} 
\cos s\cdot p+\sin s\cdot \dfrac{q-p\langle p,q\rangle_{n,1}}{\sqrt{1-\langle p,q\rangle_{n,1}^2}}, & \text{if } |\langle p,q\rangle_{n,1}|<1, \\
p+s\cdot (q-p), & \text{if } \langle p,q\rangle_{n,1}=1,\\
\cosh s\cdot p+\sinh s\cdot \dfrac{q-p\langle p,q\rangle_{n,1}}{\sqrt{\langle p,q\rangle_{n,1}^2-1}}, & \text{if } \langle p,q\rangle_{n,1}>1 ,\\
\cos s\cdot p+\sin s\cdot v,& \text{if }  p=-q.\\
\end{cases}
\end{equation}
for all time $s\in(-\infty,+\infty)$, where $v\in T_p\ds$ is a unit tangent vector. Note that the geodesic $\gamma$ passing through the points $p,q$ is not unique if $p=-q$.



For any hyperplane $H\subset\rn$ with timelike normal vector, the restriction of the Minkowski inner product $\langle\cdot ,\cdot \rangle_{n,1}$ to the intersection of $H$ and de sitter space $\ds$ is positive definite, with signature $(n-1,0,0)$, which implies that $H\cap\ds$ is a spacelike, totally geodesic hyperplane in $\ds$. Moreover, the induced metric on $H\cap\ds$ is a round metric, namly $H\cap\ds=\mathbb{S}^{n-1}$.      


\subsubsection{Hyperbolic geometry}

The pseudo sphere $S^{n}(-1)$ is a hyperboloid with two sheets,namely
$$
S(-1)=I^n_+\sqcup I^n_{-},
$$
where
$$I^n_+:=\{x=(x_1,\cdots,x_{n+1})\in\rn:\langle x,x\rangle_{n,1}=-1,x_{n+1}>0\},$$
$$I^n_-:=\{x=(x_1,\cdots,x_{n+1})\in\rn:\langle x,x\rangle_{n,1}=-1,x_{n+1}<0\}.$$

It is well known that the hyperboloid $I^n_+$ equipped with the metric induced by the Minkowski inner product, is the classical model of the $n$-dimensional hyperbolic space $\mathbb{H}^n$, namely $\mathbb{H}^n=(I^n_+,\langle\cdot ,\cdot \rangle_{n,1})$. Moreover, for any two points $p,q\in \mathbb{H}^n$, we have
\begin{equation}\label{juli}
\langle p,q \rangle_{n,1}=-\cosh d_{-1}(p,q).
\end{equation}

A \emph{geodesic} $\gamma$ in $\mathbb{H}^n$ is a curve $\gamma:(-\infty,+\infty)\rightarrow\mathbb{H}^n$ which satisfies $\nabla_{\gamma'}\gamma'=0$ for all $s\in (-\infty,+\infty)$, where $\nabla$ is the Levi-Civita connection on $\mathbb{H}^n$. A \emph{half-geodesic} $\gamma^+$ in $\mathbb{H}^n$ is the restriction of a geodesic $\gamma$ in $\mathbb{H}^n$ to the interval $[0,+\infty)$, namely $\gamma^+=\gamma|_{[0,+\infty)}$. 

Given a point $p\in\mathbb{H}^n$ and a unit tangent vector $v\in T_p\mathbb{H}^n=\Span\{p\}^{\perp}=\{v\in\mathbb{R}^{n,1}:\langle p,v\rangle_{n,1}=0\}$, it is well known that the geodesic passing through the point $p$ with velocity $v$ can be parameterized as   
\begin{equation}\label{geodesic}
    \gamma(s)=\cosh s\cdot p+\sinh s\cdot v,~s\in(-\infty,+\infty).
\end{equation}

Given two distinct points $p,q\in\mathbb{H}^n$, then $\langle p,q\rangle_{n,1}<-1$ and the geodesic passing through the points $p,q$ can be parameterized as
\begin{equation}
    \gamma(s)=\cosh s\cdot p+\sinh s\cdot \frac{q+p\langle p,q\rangle_{n,1}}{\sqrt{\langle p,q\rangle_{n,1}^2-1}},~s\in(-\infty,+\infty).
\end{equation}

The \emph{boundary at infinity} $\partial_{\infty}\mathbb{H}^n$ of $\mathbb{H}^n$ is the set of the equivalence classes of all half-geodesics in $\mathbb{H}^n$, up to the following equivalence relation:
$$
\gamma_1^+ \sim \gamma_2^+ \Longleftrightarrow \sup _{s \in[0,+\infty)}
d_{-1}\left(\gamma_1(s), \gamma_2(s)\right)
<+\infty.
$$
We denote the equivalence class of a half-geodesic $\gamma^+$ in $\partial_{\infty}\mathbb{H}^n$ by $[\gamma^+]$ and the equivalence class $[\gamma^+]$ is also called a \emph{point at infinity}.  

The \emph{ideal boundary} $\partial I^n_+$ of $\mathbb{H}^n$ is the set of the equivalence classes of all points in the future light cone $L^+$, up to the following equivalence relation:
$$
x\sim y \Longleftrightarrow \exists ~c>0,~s.t.~x=cy,
$$
where $L^+:=\{x=(x_1,\cdots,x_{n+1})\in\rn:\langle x,x\rangle_{n,1}=0, x_{n+1}>0\}$. We denote the equivalence class of a point $x\in L^+$ in $\partial I^n_+$ by $[x]$ and the equivalence class $[x]$ is also called an \emph{ideal point}.   

Given a half-geodesic $\gamma^+$, it is well-known that $\gamma(0)+{\gamma}'(0)\in L^+$. Moreover, $\gamma^+_1\sim \gamma_2^+$ if and only if $\gamma_1(0)+{\gamma}_1'(0)\sim \gamma_2(0)+{\gamma}_2'(0)$. Then we can construct the following map from $\partial_{\infty}\mathbb{H}^n$ to $\partial I^n_+$:
$$
\begin{aligned}
 \partial_{\infty}\mathbb{H}^n & \longrightarrow \partial I^n_+ \\
[\gamma^+]  & \longmapsto [\gamma(0)+{\gamma}'(0)].
\end{aligned}
$$

Given a point $p\in\mathbb{H}^n$ and an ideal point $[x]\in\partial I^n_+$, we have $\langle p,x\rangle_{n,1}<0$ and there exists a unique (half-)geodesic $\gamma_{p,[x]}^{(+)}$ such that 
$$
\gamma_{p,[x]}(0)=p,~[\gamma_{p,[x]}(0)+{\gamma}'_{p,[x]}(0)]=[x],
$$
where $\gamma_{p,[x]}(s)=\cosh s\cdot p+\sinh s\cdot v,~ s\in(-\infty,+\infty)$ and $v=-\dfrac{x}{\langle p,x\rangle_{n,1}}-p$. Moreover, $\gamma_{p,[x]}^{(+)}$ is called a (half-)geodesic passing through the point $p$ and the ideal point $[x]$. 

Given two distinct ideal points $[x],[y]\in\partial I^n_+$, we have $\langle x,y\rangle_{n,1}<0$ and there exists a unique (half-)geodesic $\gamma_{[x],[y]}^{(+)}$ such that 
$$
[\gamma_{[x],[y]}(0)+{\gamma}'_{[x],[y]}(0)]=[x],~[\gamma_{[x],[y]}(0)-{\gamma}'_{[x],[y]}(0)]=[y],
$$
where $\gamma_{[x],[y]}(s)=\cosh s\cdot p+\sinh s\cdot v,~ s\in(-\infty,+\infty)$ and 
$$
p=\dfrac{x+y}{\sqrt{-2\langle x,y\rangle_{n,1}}},~v=\dfrac{x-y}{\sqrt{-2\langle x,y\rangle_{n,1}}}.
$$
Moreover, $\gamma_{[x],[y]}$ is called a geodesic passing through the ideal points $[x],[y]$.  

By the above argument, the map from $\partial_{\infty}\mathbb{H}^n$ to $\partial I^n_+$ is a natural 1-1 correspondence. Then the ideal boundary $\partial I^n_+$ can be identified with the boundary at infinity $\partial_{\infty}\mathbb{H}^n$ and we can compactify the $n$-dimensional hyperbolic space $\mathbb{H}^n$ by adding the boundary at infinity $\partial_{\infty}\mathbb{H}^n$, namely $\overline{\mathbb{H}^n}:=\mathbb{H}^n\bigcup\partial_{\infty}\mathbb{H}^n$.  

Moreover, adopting a common abuse of notation, we do not distinguish a point $x\in L^+$ and the ideal point $[x]\in\partial_{\infty}\mathbb{H}^n$ represented by $x$.

A vector $v\in\mathbb{R}^{n,1}$ is \emph{spacelike},  \emph{lightlike},  \emph{timelike} if $\langle v,v \rangle_{n,1}$ is positive, zero or negative, respectively. A linear subspace $V\subset \rn$ is \emph{spacelike},  \emph{lightlike},  \emph{timelike} if the restriction of Minkowski inner product $\langle\cdot ,\cdot \rangle_{n,1}$ to $V$ is positive definite, degenerate or indefinite, respectively.

\subsection{Conformal tetrahedra and sphere packings in constant curvature spaces}\label{s1} Given a combinatorial tetrahedron $t$ with vertex set $V_t$, then define a space $\Omega^{\mathbb{H}}_t$ ($\Omega^{\mathbb{E}}_t$, $\Omega^{\mathbb{S}}_t$, resp.) called \emph{hyperbolic $($Euclidean, spherical, resp.$)$ admissible space} on $t$, namely 
\begin{align*}
\Omega^{\mathbb{H}}_t:=\{&r_t=(r_i)_{i\in V_t}\in\mathbb{R}_+^{V_t}:Q^{\mathbb{H}}_3(r_t)>0\}\\
(\Omega^{\mathbb{E}}_t:=\{&r_t=(r_i)_{i\in V_t}\in\mathbb{R}_+^{V_t}:Q^\mathbb{E}_3(r_t)>0\},\\
\Omega^{\mathbb{S}}_t:=\{&r_t=(r_i)_{i\in V_t}\in(0,\pi)^{V_t}:Q^\mathbb{S}_3(r_t)>0, \\
&r_i+r_j\in(0,\pi),\text{for all}~ i\ne j\in V_t\},\text{resp.}).
\end{align*}   

It is a well-known fact (see e.g.\cite{Glickenstein05}, \cite{Ge20}, \cite{Xu20}) that $r_t=(r_i)_{i\in V_t}\in\Omega^{\mathbb{H}}_t(\Omega^{\mathbb{E}}_t,\Omega^{\mathbb{S}}_t, \text{resp}.)$ if and only if there exists a unique four-sphere configuration $\mathcal{P}(r_t)=\{S_i\}_{i\in V_t}$ in 3-dimensional hyperbolic (Euclidean, spherical, resp.) space $\mathbb{H}^3(\mathbb{E}^3,\mathbb{S}^3,\text{resp}.)$ up to isometry, which satisfies
\begin{enumerate}[1.]
    \item  $S_i=S(v_i,r_i)\subset\mathbb{H}^3(\mathbb{E}^3,\mathbb{S}^3,\text{resp}.)$ is a hyperbolic (Euclidean, spherical,
    resp.) sphere with center $v_i$ and radius $r_i$ for all $i\in V_t$;   
    \item The hyperbolic (Euclidean, spherical,
    resp.) spheres $S_i$ and $S_j$ are externally tangent to each other for all $i\ne j\in V_t$;
    \item The centers of the hyperbolic (Euclidean, spherical,
    resp.) spheres $\{S_i\}_{i\in V_t}$  are not all on the same plane.
\end{enumerate}
The four-sphere configuration $\mathcal{P}(r_t)$ is also called the \emph{hyperbolic $($Euclidean, spherical, resp.$)$ sphere packing parameterized by} $r_t\in\Omega^{\mathbb{H}}_t(\Omega^{\mathbb{E}}_t,\Omega^{\mathbb{S}}_t, \text{resp}.)$ on the combinatorial tetrahedron $t$. 

Moreover, the hyperbolic (Euclidean, spherical, resp.) sphere packing $\mathcal{P}(r_t)$ also uniquely (up to isometry) determines a non-degenerate hyperbolic (Euclidean, spherical, resp.) tetrahedron $\Delta^3(r_t)$ with vertex set $\{v_i:i\in V_t\}$, which is obtained by connecting the centers of the spheres in the sphere packing $\mathcal{P}(r_t)$ using geodesics. Note that this hyperbolic (Euclidean, spherical, resp.) tetrahedron $\Delta^3(r_t)$ satisfies
\begin{equation}\label{comgi}
\ell_{ij}=r_i+r_j,~\text{for all}~i\ne j\in V_t,
\end{equation}
where $\ell_{ij}$ is the length of the geodesic edge $e_{ij}$ connecting $v_i$ and $v_j$ of the tetrahedron $\Delta^3(r_t)$. We call such a non-degenerate hyperbolic (Euclidean, spherical, resp.) tetrahedron obtained from the corresponding sphere packing is \emph{conformal} and $\Delta^3(r_t)$ is called the \emph{conformal hyperbolic $($Euclidean, spherical, resp.$)$ tetrahedron parameterized by} $r_t\in\Omega^{\mathbb{H}}_t(\Omega^{\mathbb{E}}_t,\Omega^{\mathbb{S}}_t, \text{resp}.)$.  

Moreover, the metric on the conformal hyperbolic (Euclidean, spherical, resp.) tetrahedron $\Delta^3(r_t)$ is called a \emph{hyperbolic $($Euclidean, spherical, resp.$)$ sphere packing metric parameterized by} $r_t\in\Omega^{\mathbb{H}}_t(\Omega^{\mathbb{E}}_t,\Omega^{\mathbb{S}}_t, \text{resp}.)$. Adopting a common abuse of notation, $r_t\in\Omega^{\mathbb{H}}_t(\Omega^{\mathbb{E}}_t,\Omega^{\mathbb{S}}_t, \text{resp}.)$ is also called a hyperbolic (Euclidean, spherical, resp.) sphere packing metric.

We can also give an equivalent definition of the conformal tetrahedra, which is more natural and is used as the standard definition in this paper: A non-degenerate hyperbolic (Euclidean, spherical, resp.) tetrahedron $\Delta^3$ is \emph{conformal} if (\ref{comgi}) holds for all the geodesic edges of $\Delta^3$. Moreover, there exists the following characterization of conformal hyperbolic (Euclidean, spherical, resp.) tetrahedra.

\begin{lemma}[\cite{Glickenstein05}, \cite{Ge20}, \cite{Xu20}]\label{chara}
Given a parameter $r=(r_i)_{1\le i\le 4}\in\mathbb{R}_+^4(\mathbb{R}_+^4,$ $(0,\pi)^4, \text{resp}.)$$(r_i+r_j(i\ne j)\in(0,\pi)$ in spherical case$)$, then the hyperbolic $($Euclidean, spherical, resp.$)$ tetrahedron $\Delta^3\subset\mathbb{H}^3(\mathbb{E}^3, \mathbb{S}^3, \text{resp}.)$ with edge length $\ell_{ij}=r_i+r_j$ for all geodesic edges $e_{ij}$ of $\Delta^3$ is conformal if and only if  
$$
Q^{\mathbb{H}}_3(r)(Q^{\mathbb{E}}_3(r),Q^{\mathbb{S}}_3(r),\text{resp}.)>0.
$$
\end{lemma}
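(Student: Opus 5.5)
The plan is to reduce the statement to a Gram-matrix criterion and to compute the relevant determinant explicitly. The point is that the edge lengths $\ell_{ij}=r_i+r_j$ make the Gram matrix a \emph{diagonal matrix plus a rank-two perturbation}.

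\textbf{Hyperbolic case.} Four points $v_1,\dots,v_4\in\mathbb{H}^3\subset\mathbb{R}^{3,1}$ spanning a non-degenerate tetrahedron have Gram matrix
\[
G=(\langle v_i,v_j\rangle_{3,1})=(-\cosh\ell_{ij}),\qquad \ell_{ii}=0 .
\]
Conversely, a real symmetric $4\times4$ matrix with diagonal $-1$, negative off-diagonal entries $<-1$, and signature $(3,0,1)$ is realized by four linearly independent timelike unit vectors. Their pairwise inner products are negative, so they lie on one sheet of the hyperboloid, and we obtain a non-degenerate tetrahedron with these lengths, unique up to isometry. So it suffices to show that $G$ has signature $(3,0,1)$ if and only if $Q^{\mathbb{H}}_3(r)>0$.

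Write $a=(\cosh r_i)_i$, $b=(\sinh r_i)_i$ and $S=\mathrm{diag}(\sinh r_i)$. Using $\cosh(r_i+r_j)=\cosh r_i\cosh r_j+\sinh r_i\sinh r_j$ and $1=\cosh 2r_i-2\sinh^2 r_i$, we get
\[
G=-aa^T-bb^T+2S^2 .
\]
Hence
\[
S^{-1}GS^{-1}=2I_4-UU^T,\qquad U=(c,\mathbf 1),\quad c=(\coth r_i)_i .
\]
This matrix has eigenvalue $2$ on the $2$-dimensional space $U^{\perp}$. The remaining two eigenvalues are those of $2I_2-U^TU$, whose trace is $-|c|^2<0$, so at least one of them is negative. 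A direct computation gives
\[
\det(2I_4-UU^T)=16\det\!\left(I_2-\tfrac12U^TU\right)=-4\,Q^{\mathbb{H}}_3(r),
\]
so $\det G=-4\prod_i\sinh^2 r_i\cdot Q^{\mathbb{H}}_3(r)$. Therefore the signature is $(3,0,1)$ if and only if $\det G<0$, that is, if and only if $Q^{\mathbb{H}}_3(r)>0$. By Sylvester's law of inertia, conjugating by $S$ does not change the signature.

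\textbf{Spherical case.} Non-degeneracy is equivalent to positive definiteness of $G=(\cos\ell_{ij})$. Since $r_i+r_j<\pi$, the spherical distance recovered from the inner product is exactly $\ell_{ij}$. Now $\cos(r_i+r_j)=\cos r_i\cos r_j-\sin r_i\sin r_j$ and $1=\cos2r_i+2\sin^2r_i$, so with $c=(\cot r_i)_i$ the rescaled matrix is
\[
2I_4+cc^T-\mathbf 1\mathbf 1^T .
\]
This is a positive definite matrix minus a rank-one matrix, so it has at most one non-positive eigenvalue. It is therefore positive definite if and only if its determinant is positive. The same $2\times2$ reduction, $16\det\!\left(I_2+\tfrac12 DU^TU\right)$ with $D=\mathrm{diag}(1,-1)$, should yield a positive multiple of $Q^{\mathbb{S}}_3(r)$.

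\textbf{Euclidean case.} Use the Cayley--Menger determinant with $d_{ij}^2=(r_i+r_j)^2$. Alternatively, take the Gram matrix of $v_i-v_4$, or rescale the spherical/hyperbolic computation and let the curvature tend to $0$. The same diagonal-plus-rank-two structure gives a positive multiple of $\prod r_i^2\cdot Q^{\mathbb{E}}_3(r)$ with the correct sign for non-degeneracy.

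\textbf{Expected obstacle.} The main difficulty is not the determinant algebra, which is routine, but the inertia argument: showing that the sign of a single determinant already forces the full signature. In the hyperbolic case this is handled by the trace observation, and in the spherical case by the rank-one interlacing. A second point to check is the realizability step from Gram matrix to actual tetrahedron, in particular the same-sheet condition in $\mathbb{H}^3$ and the condition $r_i+r_j<\pi$ in $\mathbb{S}^3$. The latter is needed so that the realized edge lengths are exactly $r_i+r_j$ rather than $2\pi$ minus them.
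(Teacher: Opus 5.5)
Your proposal is correct, and it controls the inertia differently from the paper. The paper only cites this lemma; its own proof is Theorem~\ref{highpolyhedron} with $n=3$, which runs as follows.
\begin{itemize}
\item It invokes the Taylor--Woodward criterion (Lemma~\ref{mat}), which requires the correct sign for \emph{every} principal minor of $L$.
\item It evaluates $|L|$ by an explicit change of basis with the vectors $\beta_i,\gamma_i$. This gives $|L|=-4\prod\sinh^2 r_i\,Q^{\mathbb{H}}_3(r)$ (resp.\ $4\prod r_i^2\,Q^{\mathbb{E}}_3(r)$, $4\prod \sin^2 r_i\,Q^{\mathbb{S}}_3(r)$).
\item For the converse it pushes positivity from $\mathcal{Q}_{n+1}$ down to all principal minors. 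It does this through the identity $\mathcal{Q}_k=\frac{k-2}{k-1}\mathcal{Q}_{k+1}+(k-1)(\cdots)^2$ together with relabeling.
\end{itemize}
You instead factor $G=S(2I\mp\text{rank two})S$, so the whole signature is read off a $2\times 2$ block. The trace $-|c|^2<0$ (hyperbolic) or rank-one interlacing (spherical) shows that the single determinant already decides everything. Realizability then comes from Sylvester's law plus the same-sheet observation, not from a cited criterion.

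Your route is self-contained and explains \emph{why} one determinant suffices. It also generalizes verbatim to all $n$: the block has trace $3-n-|c|^2<0$, and $2^{n+1}\det(I_2-\tfrac12U^TU)=-2^{n-1}Q^{\mathbb{H}}_n(r)$. The paper's route produces explicit formulas for all minors. It later recycles that style of computation for the cofactor formulas in the proofs of Theorem~\ref{main1} and Lemma~\ref{bb7}.

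Three loose ends remain.

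\textbf{Spherical determinant.} The computation you left as ``should yield'' does work out: $16\det(I_2+\tfrac12 DU^TU)=4Q^{\mathbb{S}}_3(r)$.

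\textbf{Euclidean case.} Commit to the Gram matrix $L$ of $v_i-v_4$. With $R=\mathrm{diag}(r_1,r_2,r_3)$ and $\tilde c_i=1/r_i+1/r_4$, one gets $L=R\,(2I_3+r_4^2\tilde c\tilde c^T-2\mathbf{1}\mathbf{1}^T)\,R$. Your spherical interlacing argument then applies unchanged, and $\det L=4\prod_{i=1}^4 r_i^2\,Q^{\mathbb{E}}_3(r)$. Drop the other two options you listed. The full Cayley--Menger determinant by itself fixes only the sign of a determinant, not positive definiteness, unless you add the same interlacing step. A curvature-limit argument only passes non-strict inequalities to the limit.

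\textbf{Equal radii.} If all $r_i$ are equal, then $c\parallel\mathbf{1}$ and $U^\perp$ is three-dimensional, not two-dimensional. State the spectral claim as $\det(\lambda I_4-UU^T)=\lambda^2\det(\lambda I_2-U^TU)$, which covers this case; the conclusion is unaffected.
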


By Lemma \ref{chara}, the space $\Omega^{\mathbb{H}}_t$($\Omega^{\mathbb{E}}_t$, $\Omega^{\mathbb{S}}_t$, resp.) is also called \emph{admissible space of conformal hyperbolic $($Euclidean, spherical, resp.$)$ tetrahedra}.

It is a natural idea to generalize the concepts  ``sphere packing'' and ``conformal'' to the hyperbolic tetrahedra with ideal vertices. To do this, we introduce the \emph{compactification} of the hyperbolic admissible space $\Omega_t^{\mathbb{H}}$ and method of approximation to define the two concepts in this case.

We can construct a partial compactification $\widehat{\Omega}_t^{\mathbb{H}}$ of the space $\Omega_t^{\mathbb{H}}$, which is called the \emph{extended hyperbolic admissible space}, namely 
$$
\widehat{\Omega}_t^{\mathbb{H}}:=\left \{ r_t=(r_i)_{i\in V_t}\in\widehat{\mathbb{R}_{+}^{V_t}}:Q^{\mathbb{H}}_3(r_t)>0 \right\}.
$$

Define a space $\partial_{\infty}\Omega_t^{\mathbb{H}}$ called the \emph{boundary at infinity} of hyperbolic admissible space $\Omega_t^{\mathbb{H}}$, namely
$$
\partial_{\infty}\Omega_t^{\mathbb{H}}:=\{r_t=(r_i)_{i\in V_t}\in\widehat{\R^{V_t}_{+}}\setminus\R^{V_t}_{+}:Q^{\mathbb{H}}_3(r_t)>0\}.
$$
Moreover, we have
$$
\widehat{\Omega}_t^{\mathbb{H}}=\Omega_t^{\mathbb{H}}\bigsqcup\partial_{\infty}\Omega_t^{\mathbb{H}}.
$$

For any $r_t=(r_i)_{i\in V_t}\in\partial_{\infty}\Omega_t^{\mathbb{H}}$, by definition, there exists a proper subset $W_t\subset V_t$ such that $r_{W_t}=(r_i)_{i\in W_t}\in\mathbb{R}_+^{W_t}$ and $r_{V_t\setminus W_t}=(\infty)_{i\in V_t\setminus W_t}$. Moreover, there exists a sequence of hyperbolic sphere packing metrics $\{r_t^{(m)}\}_{m=1}^{\infty}\subset\Omega_t^{\mathbb{H}}$ such that $r^{(m)}_t\to r_t$($m\to\infty$) and the sequence of hyperbolic sphere packings $\{\mathcal{P}(r^{(m)}_t)\}_{m=1}^{\infty}\subset\mathbb{H}^3$ converges to a unique ``four-sphere'' configuration $\mathcal{P}(r_t)=\{S_i\}_{i\in V_t}$ in the Gromov-Hausdorff sense up to isometry, which satisfies
\begin{enumerate}[1.]
    \item  $S_i=S(v_i,r_i)\subset\mathbb{H}^3$ is a hyperbolic sphere with center $v_i$ and radius $r_i$ for all $i\in W_t$; 
    \item $S_i=S(v_i,r_i)\subset\overline{\mathbb{H}^3}$ is a horosphere with center (ideal point) $v_i$ and radius $r_i=\infty$ for all $i\in V_t\setminus W_t$;
    \item The (horo)spheres $\{S_i\}_{i\in V_t}$ are mutually externally tangent;
     \item The centers of the (horo)spheres $\{S_i\}_{i\in V_t}$ are not all on the same plane.
\end{enumerate}  
The four-sphere configuration $\mathcal{P}(r_t)$ is also called the \emph{ideal hyperbolic sphere packing parameterized by} $r_t\in\partial_{\infty}\Omega_t^{\mathbb{H}} $ on the combinatorial tetrahedron $t$.

An \emph{ideal hyperbolic tetrahedron} is a hyperbolic tetrahedron with at least one ideal vertex, then the ideal hyperbolic sphere packing $\mathcal{P}(r_t)$ uniquely (up to isometry) determines a non-degenerate ideal hyperbolic tetrahedron $\Delta^3(r_t)$ with vertex set $\{v_i:i\in V_t\}$, which is obtained by connecting the centers of the (horo)spheres in the ideal hyperbolic sphere packing $\mathcal{P}(r_t)$ using geodesics. Note that this ideal hyperbolic tetrahedron $\Delta^3(r_t)$ has ideal vertices $\{v_i:i\in V_t\setminus W_t\}$ and satisfies
\begin{equation}\label{comp}
\ell_{ij}=r_i+r_j,~\text{for all}~i\ne j\in V_t,
\end{equation}
where $\ell_{ij}$ is the length of the geodesic edge $e_{ij}$ connecting $v_i$ and $v_j$ of this tetrahedron $\Delta^3(r_t)$ . We also call such a non-degenerate ideal hyperbolic tetrahedron obtained from the corresponding ideal hyperbolic sphere packing is \emph{conformal} and $\Delta^3(r_t)$ is called the \emph{conformal ideal hyperbolic tetrahedron parameterized by} $r_t\in\partial_{\infty}\Omega_t^{\mathbb{H}}$.  

The metric on the conformal ideal hyperbolic tetrahedron $\Delta^3(r_t)$ is called an \emph{ideal hyperbolic sphere packing metric parameterized by} $r_t\in\partial_{\infty}\Omega_t^{\mathbb{H}}$. Adopting a common abuse of notation, $r_t\in\partial_{\infty}\Omega_t^{\mathbb{H}}$ is also called an ideal hyperbolic sphere packing metric. 

We can also give an equivalent definition of the conformal ideal hyperbolic tetrahedra, which is more natural and is used as the standard definition in this paper: A non-degenerate ideal hyperbolic tetrahedron $\Delta^3$ is \emph{conformal} if there exists a sequence of conformal hyperbolic tetrahedra $\{\Delta^3_{m}\}_{m=1}^{\infty}\subset\mathbb{H}^n$ such that $\Delta^3_{m}$ converges to $\Delta^3(m\rightarrow\infty)$ in the Gromov-Hausdorff sense. 

Moreover, we have the following result, which is the characterization of conformal ideal hyperbolic tetrahedra.  

\begin{theorem}\label{aa1}
Given a parameter $r=(r_i)_{1\le i\le 4}\in\widehat{\mathbb{R}_+^4}\setminus\mathbb{R}_+^4$, then the ideal hyperbolic tetrahedron $\Delta^3\subset\overline{\mathbb{H}^3}$ with edge length $\ell_{ij}=r_i+r_j$ for all geodesic edges $e_{ij}$ of $\Delta^3$ is conformal if and only if $Q^{\mathbb{H}}_3(r)>0$.
\end{theorem}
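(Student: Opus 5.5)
We prove Theorem \ref{aa1} by reducing it to Lemma \ref{chara} with a limiting argument. Throughout we work in the hyperboloid model, where horospheres are encoded by lightlike vectors.

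\emph{Setup and reformulation.} For $r\in\widehat{\mathbb{R}_+^4}\setminus\mathbb{R}_+^4$, let $W\subsetneq\{1,2,3,4\}$ be the set of indices with $r_i<\infty$. For $i\notin W$ we have $\ell_{ij}=\infty$, so "edge length $r_i+r_j$" must be read in the decorated sense. We represent the $i$-th ideal vertex, together with its horosphere, by a vector $x_i\in L^+$, with horosphere $\{p\in\mathbb{H}^3:\langle p,x_i\rangle_{3,1}=-1\}$. A finite vertex $v_j\in\mathbb{H}^3$ carries the sphere of radius $r_j$.

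We then compute the pairwise tangency conditions:
\begin{itemize}
\item for two finite vertices, $\langle v_i,v_j\rangle_{3,1}=-\cosh(r_i+r_j)$;
\item for a finite vertex and a horosphere, the signed distance from $v_j$ to the horosphere is $\log(-\langle v_j,x_i\rangle_{3,1})$, so tangency means $\langle v_j,x_i\rangle_{3,1}=-e^{r_j}$;
\item for two horospheres, tangency means $\langle x_i,x_k\rangle_{3,1}=-2$.
\end{itemize}
These are exactly the limits of $-\cosh(r_i+r_j)$ after rescaling $v_i\approx 2e^{-r_i}x_i$ as $r_i\to\infty$. So the Gram matrix $G(r)$ of the normalized vertex vectors is an explicit continuous extension of the finite-case Gram matrix $\bigl(-\cosh(r_i+r_j)\bigr)$, rescaled by $2e^{-r_i}$ on the $i$-th row and column.

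\emph{Key algebraic step.} Following the Descartes-type identity behind Lemma \ref{chara}, we show that
\[
\det G(r)=-c(r)\,Q^{\mathbb{H}}_3(r)
\]
with $c(r)>0$ an explicit product of positive factors. For finite $r$, this comes from the finite computation multiplied by the positive rescaling factors. For the ideal case, we pass to the limit: $\coth r_i\to1$ makes $Q^{\mathbb{H}}_3$ continuous on $\widehat{\mathbb{R}_+^4}$, and the rescaled Gram determinant is continuous as well. Equivalently, one can verify the identity directly by setting $\coth r_i=1$ in the finite computation.

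\emph{Geometric step.} Four vectors in $\mathbb{R}^{3,1}$, some in $I^3_+$ and some in $L^+$, with prescribed pairwise products, exist and span $\mathbb{R}^{3,1}$ (i.e.\ form a non-degenerate ideal tetrahedron) if and only if the prescribed Gram matrix has signature $(3,0,1)$. For the forward direction, if $\det G<0$ then, since every principal $3\times3$ minor of such a configuration automatically has the right sign, Sylvester's law forces signature $(3,0,1)$. Realizing $G$ as a Gram matrix then gives vectors $w_i$, which we can choose in the future cone by time orientation; their norms $\langle w_i,w_i\rangle_{3,1}\in\{-1,0\}$ match the diagonal entries. So $Q^{\mathbb{H}}_3(r)>0$ produces the decorated ideal tetrahedron.

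\emph{Linking to the definition of conformality.} Conformality of an ideal tetrahedron is defined via Gromov–Hausdorff approximation by conformal hyperbolic tetrahedra, so we argue both directions by approximation.
\begin{itemize}
\item ($\Leftarrow$) If $Q^{\mathbb{H}}_3(r)>0$, choose finite $r^{(m)}\to r$. By continuity, $Q^{\mathbb{H}}_3(r^{(m)})>0$ for large $m$, so Lemma \ref{chara} gives conformal tetrahedra $\Delta^3(r^{(m)})$. Their normalized vertex vectors have Gram matrices $G(r^{(m)})\to G(r)$. Normalizing by an isometry (fixing a frame determined by three vertices), the vectors converge to the realization of $G(r)$, so the tetrahedra converge to $\Delta^3$.
\item ($\Rightarrow$) A Gromov–Hausdorff limit of conformal tetrahedra $\Delta^3_m$ with radii $r^{(m)}\to r$ satisfies $Q^{\mathbb{H}}_3(r^{(m)})>0$, hence $Q^{\mathbb{H}}_3(r)\ge0$ in the limit. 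Non-degeneracy of $\Delta^3$ means the limiting vertex vectors span $\mathbb{R}^{3,1}$, so $\det G(r)\ne0$, which gives strict inequality.
\end{itemize}

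\emph{Main obstacle.} The hardest part is making the convergence rigorous. We must check that the radii $r^{(m)}$ converge in $\widehat{\mathbb{R}_+}$ to the given $r$, and that Gromov–Hausdorff convergence of the tetrahedra matches convergence of the rescaled hyperboloid vectors. The delicate point is that an ideal vertex emerges as $r_i\to\infty$ precisely when $2e^{-r_i}v_i$ converges to a lightlike $x_i$. We handle this by fixing a normalization on three vertices and using compactness of the rescaled vectors in $\mathbb{R}^{3,1}$.
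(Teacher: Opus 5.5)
Your proof is correct and follows the paper's skeleton. You approximate by finite $r^{(m)}\in\Omega^{\mathbb{H}}_t$, get $Q^{\mathbb{H}}_3(r)\ge 0$ by continuity, and get strictness from non-degeneracy. For the converse you approximate $r$ by finite parameters and invoke Lemma \ref{chara}.

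The key input is different. The paper needs two facts: that $Q^{\mathbb{H}}_3(r)=0$ forces the (horo)sphere centres to be coplanar, and that $Q^{\mathbb{H}}_3(r)>0$ yields a configuration $\mathcal{P}(r)$. It takes both from the higher-dimensional Descartes theorem (Remark \ref{aa2}), cited as a black box. Remark \ref{aa2} is itself credited to Theorem \ref{highpolyhedron}, whose ideal case makes the same appeal.

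You instead encode horospheres by lightlike vectors and extend the determinant computation in the proof of Theorem \ref{highpolyhedron} continuously to $\widehat{\mathbb{R}_+^4}$. With $w_i=2e^{-r_i}v_i$ one gets $\det G(r)=-4\prod_i(1-e^{-2r_i})^2\,Q^{\mathbb{H}}_3(r)$. So, for an existing configuration, $Q^{\mathbb{H}}_3(r)=0$ is exactly linear dependence of the decorated vertex vectors. Existence when $Q^{\mathbb{H}}_3(r)>0$ then follows from Sylvester's law of inertia plus time orientation, since all off-diagonal entries are negative.

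The paper's route is shorter and stays with sphere configurations, where conformality is defined. Yours is self-contained and makes precise what ``$\ell_{ij}=r_i+r_j$'' means at an ideal vertex. That precision is actually needed. Read literally, $r=(\infty,\infty,\infty,\infty)$ gives $Q^{\mathbb{H}}_3(r)=12>0$, and every tetrahedron with four ideal vertices would qualify as conformal, contradicting Corollary \ref{ck1}.

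There are two slips to fix.
\begin{itemize}
\item The rescaling is $x_i=\lim_{r_i\to\infty}2e^{-r_i}v_i$, not $v_i\approx 2e^{-r_i}x_i$. Your later formulas already use the correct direction.
\item Negativity of all $3\times3$ principal minors together with $\det G<0$ does not force signature $(3,0,1)$. The $4\times4$ matrix with diagonal $-1$ and off-diagonal entries $a\in(1/3,1/2)$ satisfies both but has signature $(1,0,3)$. You also need the $2\times2$ principal minors, which here equal $-4(1-e^{-2(r_i+r_j)})^2<0$. Interlacing along a nested chain $2\times2\subset3\times3\subset4\times4$ then gives two positive eigenvalues, and $\det G<0$ leaves exactly one negative one.
\end{itemize}

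Finally, like the paper, your forward direction assumes the approximating radii converge to the given $r$. Once the limiting rescaled vectors form a basis you get $\det G(r)<0$ directly, so the intermediate step $Q^{\mathbb{H}}_3(r)\ge0$ is not needed.
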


\pf ($\Rightarrow$) Since the ideal hyperbolic tetrahedron $\Delta^3$ is conformal, then by definition, there exists a sequence of conformal hyperbolic tetrahedra $\{\Delta^3_{m}\}_{m=1}^{\infty}\subset\mathbb{H}^3$ such that $\Delta^3_{m}\to\Delta^3(m\rightarrow\infty)$, where $\Delta^3_{m}=\Delta^3(r^{(m)})$ and $r^{(m)}\in\Omega_t^{\mathbb{H}}$ for all $m\ge1$. 

For any $m\ge1$, $\Delta^3_{m}$ corresponds to the four-sphere configuration $\mathcal{P}^{(m)}=\mathcal{P}(r^{(m)})$. Since $\Delta^3_{m}\to\Delta^3(m\rightarrow\infty)$, then $\mathcal{P}^{(m)}\to\mathcal{P},~r^{(m)}\to r(m\to\infty)$, where $\mathcal{P}=\mathcal{P}(r)$ is the four-sphere configuration parameterized by the $r$, which corresponds to the conformal ideal hyperbolic tetrahedron $\Delta^3$. 

Since $r^{(m)}\to r(m\to\infty)$ and $r^{(m)}\in\Omega_t^{\mathbb{H}}$ for all $m\ge1$, then we have $Q_{3}^{\mathbb{H}}(r)\ge0$. If $Q_{3}^{\mathbb{H}}(r)=0$, then by higher-dimensional Descartes circle Theorem (see Remark \ref{aa2}), the centers of (horo)spheres of $\mathcal{P}$ are all on the same plane, which leads to a contradiction. Then we obtain $Q_{3}^{\mathbb{H}}(r)>0$. 

($\Leftarrow$) If $Q_{3}^{\mathbb{H}}(r)>0$, then by higher-dimensional Descartes circle Theorem, the ideal hyperbolic tetrahedron $\Delta^3$ corresponds to a four-sphere configuration $\mathcal{P}=\mathcal{P}(r)$, which implies that $\Delta^3$ is non-degenerate.  

For the four-sphere configuration $\mathcal{P}$, there exists a sequence $\{r^{(m)}\}_{m=1}^{\infty}\subset\Omega_t^{\mathbb{H}}$ and a sequence of four-sphere configurations $\{\mathcal{P}^{(m)}\}_{m=1}^{\infty}$ such that $r^{(m)}\to r$, $\mathcal{P}^{(m)}\to\mathcal{P}(m\to\infty)$, where $\mathcal{P}^{(m)}=\mathcal{P}(r^{(m)})$ for all $m\ge1$.

Since $\mathcal{P}^{(m)}\to\mathcal{P}(m\to\infty)$, then we have $\Delta_m^3\to\Delta^3(m\to\infty)$, where $\Delta_m^3=\Delta^3(r^{(m)})$ is the conformal hyperbolic tetrahedron for all $m\ge1$. Then by definition, the ideal hyperbolic tetrahedron $\Delta^3$ is conformal.  
\pfd

By Theorem \ref{aa1}, the space $\partial_{\infty}\Omega_t^{\mathbb{H}}$ is also called \emph{admissible space of conformal ideal hyperbolic tetrahedra} and is abbreviated as \emph{ideal hyperbolic admissible space}.

\section{ n-simplices and dual n-simplices in constant curvature spaces}\label{s3}

In this section, we introduce hyperbolic (Euclidean, spherical, ideal hyperbolic, resp.) $n$-simplices and dual $n$-simplices.

A non-degenerate $n$-dimensional hyperbolic (Euclidean, spherical, ideal hyperbolic, resp.) \emph{simplex} $\Delta^{n}$ ($n$-simplex for abbreviation) is the geodesic convex hull of $n+1$ linearly independent (affinely independent in Euclidean case) vertices $v_1,v_2,\cdots,v_{n+1}$(at least one ideal vertex $v_i\in\partial_{\infty}\mathbb{H}^n$ in ideal hyperbolic case) in $n$-dimensional hyperbolic (Euclidean, spherical, compactified hyperbolic, resp.) space $$\mathbb{H}^n\subset\rn (  \mathbb{E}^n, \mathbb{S}^n\subset\mathbb{E}^{n+1}, \overline{\mathbb{H}^n}\subset\rn, \text{resp.}).$$ 



A hyperbolic (Euclidean, spherical, resp.) $n$-simplex $\Delta^{n}$ is \emph{conformal} if there exist $n+1$ positive numbers $r_1,\cdots,r_{n+1}>0$ such that 
\begin{equation}\label{copa}
\ell_{ij}=r_i+r_j,~\text{for all geodesic edges}~e_{ij}~\text{of}~\Delta^n,
\end{equation}
where $\ell_{ij}$ is the length of the geodesic edge $e_{ij}$ connecting $v_i$ and $v_j$ of the $n$-simplex $\Delta^n$. In hyperbolic (spherical, resp.) case, by (\ref{juli}) ((\ref{julis}), resp.), we have
\begin{equation}\label{c3}
    \langle v_i,v_j \rangle_{n,1}=-\cosh \ell_{ij} (\langle v_i,v_j \rangle=\cos \ell_{ij}, \text{resp}.).
\end{equation}

An ideal hyperbolic $n$-simplex $\Delta^{n}$ is \emph{conformal} if there exists a sequence of conformal hyperbolic $n$-simplices $\{\Delta^{n}_m\}_{m=1}^{\infty}\subset\mathbb{H}^n$ such that $\Delta^{n}_m$ converges to $\Delta^{n}$ $(m\rightarrow\infty)$ in the Gromov-Hausdorff sense.

\begin{remark}
An ideal hyperbolic $n$-simplex $\Delta^n$ can also be defined as the Gromov-Hausdorff limit of a sequence of hyperbolic $n$-simplices $\{\Delta^n_m\}_{m=1}^{\infty}$ and the edge length $\ell_{ij}:=\lim_{m\to\infty}\ell_{ij}^{(m)}$. Note that $\ell_{ij}=\infty$ is meaningful since $\infty$ is regarded as a point in this paper.
\end{remark}

For any vertex $v_i$ of a hyperbolic (Euclidean, spherical, ideal hyperbolic, resp.) $n$-simplex $\Delta^n$, we can define the $(n-1)$-dimensional opposite face $F_i$ (also called \emph{i-face}) of $v_i$, namely the geodesic convex hull of $n$ vertices $v_1,\cdots,\hat{v}_i,\cdots,v_{n+1}$, where $\hat{v}_i$ means removing the vertex $v_i$ from $\{v_1,\cdots,v_{n+1}\}$.

For any edge $e_{ij}$($i\neq j$) of a $n$-simplex $\Delta^n$, we can define the $(n-2)$-dimensional opposite face (also called \emph{ij-face}) $F_{ij}$ of $e_{ij}$, namely
$$
F_{ij}:=F_i\cap F_j.
$$ 
Moreover, we denote the dihedral angle at the $ij$-face $F_{ij}$ by $\theta_{ij}$ and define $\theta_{ii}:=\pi$ for all $1\le i\le n+1$.

We state and prove the following well-known properties for the sake of completeness and self-containedness.

\begin{prop}\label{eu1}
Given a Euclidean $n$-simplex $\Delta^n\subset\mathbb{E}^{n}$ with $n+1$ vertices $v_1,v_2,\cdots,v_{n+1}$, then there exists the unique $n+1$ affinely independent dual vertices $v_1^*,v_2^*,\cdots,v_{n+1}^*\in\mathbb{S}^{n-1}$ such that
\begin{equation}\label{eud}
		 \begin{cases}
         \langle v_i^*,v_i^*\rangle=1,&1\le i\le n+1,\\
          \langle\textbf{v}_i,v_i^*\rangle<0,& 1\le i\le n,\\
         \langle \textbf{v}_i,v_j^*\rangle =0, &  1\le i\ne j\le n.
         \end{cases}
\end{equation}
where $\textbf{v}_i=v_{i}-v_{n+1}$ for all $1\le i\le n$.
\end{prop}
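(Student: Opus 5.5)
The plan is to use linear algebra on the edge vectors $\textbf{v}_1,\dots,\textbf{v}_n$. Since $v_1,\dots,v_{n+1}$ are affinely independent, these $n$ vectors form a basis of $\mathbb{E}^n$.

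\emph{Dual vertices $v_1^*,\dots,v_n^*$.} Fix $1\le i\le n$. The conditions $\langle \textbf{v}_j,v_i^*\rangle=0$ for all $j\ne i$, $1\le j\le n$, force $v_i^*$ to lie in the orthogonal complement of the hyperplane $\Span\{\textbf{v}_j: j\ne i\}$. This complement is a line, so $v_i^*=\pm w_i$ for a unit vector $w_i$ spanning it. Moreover $\langle \textbf{v}_i,w_i\rangle\ne0$, since otherwise $w_i$ would be orthogonal to a basis. Imposing $\langle v_i^*,v_i^*\rangle=1$ and $\langle \textbf{v}_i,v_i^*\rangle<0$ therefore selects exactly one of the two signs, which gives both existence and uniqueness of $v_i^*$. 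Equivalently, $v_i^*$ is the negative of the normalized $i$-th vector of the dual basis, that is, $v_i^*=-\textbf{v}^i/|\textbf{v}^i|$, where $\langle \textbf{v}_j,\textbf{v}^i\rangle=\delta_{ij}$.

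\emph{Dual vertex $v_{n+1}^*$.} As stated, (\ref{eud}) imposes only the norm condition on $v_{n+1}^*$. To make it unique, I will read the intended condition as the analogue for the face $F_{n+1}$ spanned by $v_1,\dots,v_n$: namely $\langle v_j-v_i,v_{n+1}^*\rangle=0$ for $1\le i,j\le n$, together with $\langle v_{n+1}-v_1,v_{n+1}^*\rangle<0$, so that $v^*_{n+1}$ is the outward unit normal of $F_{n+1}$. The same argument as above then applies: the vectors $v_j-v_1$ ($2\le j\le n$) span a hyperplane, and $v_{n+1}-v_1$ is not in it, so the sign is determined and $v_{n+1}^*$ exists and is unique. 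In the same way $v_i^*$ for $i\le n$ is the outward unit normal of $F_i$: it is orthogonal to $F_i$, and $\langle v_i-v_{n+1},v_i^*\rangle<0$ says it points away from $v_i$.

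\emph{Affine independence.} The $n+1$ outward normals satisfy the Minkowski-type relation $\sum_i |F_i|\,v_i^*=0$, where $|F_i|$ is the $(n-1)$-volume of $F_i$ and all these coefficients are positive. In addition, $v_1^*,\dots,v_n^*$ are linearly independent, being rescalings of the dual basis. So the linear dependencies among $v_1^*,\dots,v_{n+1}^*$ form a one-dimensional space, spanned by a relation whose coefficients are all nonzero and do not sum to zero (they are all positive). Affine independence means exactly that there is no nontrivial relation $\sum c_iv_i^*=0$ with $\sum c_i=0$. Any such relation would have to be a multiple of the positive relation, whose coefficient sum is nonzero, so it must be trivial. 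To prove the positive relation without quoting the divergence theorem, one can verify $\sum_i|F_i|v_i^*=0$ by pairing it with each $\textbf{v}_j$; alternatively, write $v_{n+1}^*$ as a combination of the dual basis, $v_{n+1}^*=\sum_i c_i\textbf{v}^i$ with $c_i=\langle \textbf{v}_i,v_{n+1}^*\rangle$, and check directly that every $c_i$ is positive. This is immediate from $\langle \textbf{v}_i,v_{n+1}^*\rangle=\langle v_i-v_{n+1},v^*_{n+1}\rangle>0$, because $v_{n+1}$ lies on the negative side of $F_{n+1}$.

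The main obstacle I expect is this last affine independence step, in particular keeping the sign conventions consistent so that every coefficient in the dependency relation comes out positive. Existence and uniqueness are straightforward once the conditions on $v_{n+1}^*$ are pinned down as above.
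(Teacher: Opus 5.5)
Your proposal is correct and uses the same construction as the paper, which simply takes $v_i^*$ to be the unit outward normal of $F_i$ for every $1\le i\le n+1$; your reading of the missing condition on $v_{n+1}^*$ is exactly what the paper's proof implicitly assumes. Your write-up is in fact more complete. The paper asserts uniqueness without argument, and it asserts only that any $n$ of the $v_i^*$ are linearly independent, which does not by itself imply affine independence. Your positive dependency $v_{n+1}^*+\sum_{i=1}^n\langle \textbf{v}_i,v_{n+1}^*\rangle\,|\textbf{v}^i|\,v_i^*=0$ gives both properties at once.
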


\begin{proof}
Choose the unit outward normal vector of the $i$-face $F_{i}$ of the Euclidean $n$-simplex $\Delta^n$ and denote this vector by $v_i^*$ for all $1\le i\le n+1$, then we have
$$
\langle v_i^*,v_i^*\rangle=1,~1\le i\le n+1;~\langle\textbf{v}_i,v_j^*\rangle=0,~1\le i\ne j\le n.
$$
Moreover, we obtain
$$
\langle \textbf{v}_i,v_i^*\rangle=-d_0(v_i,F_i)<0,~1\le i\le n,
$$
where $d_0(v_i,F_i)(1\le i\le n)$ is the distance between the vertex $v_i$ and the $i$-face $F_i$.

Since $\langle v_i^*,v_i^*\rangle=1$ for all $1\le i\le n+1$, then $v_1^*,v_2^*,\cdots,v_{n+1}^*\in\mathbb{S}^{n-1}$. Moreover, $v_1^*,\cdots,v_{n+1}^*$ are uniquely determined by the Euclidean $n$-simplex $\Delta^n$ and $v_1^*,\cdots,\hat{v}_i^*,\cdots,v_{n+1}^*$ are linearly independent for all $1\le i\le n+1$. 
\end{proof}

Similarly, we have the following proposition, which gives the dual property of the hyperbolic (spherical, ideal hyperbolic, resp.) $n$-simplex.  

\begin{prop}\label{l1}
Given a hyperbolic $($spherical, ideal hyperbolic, resp.$)$ $n$-simplex $\Delta^n\subset\mathbb{H}^{n}(\mathbb{S}^{n},\overline{\mathbb{H}^{n}},\text{resp.})$ with $n+1$ vertices $v_1,v_2,\cdots,v_{n+1}$, then there exists the unique $n+1$ linearly independent dual vertices $v_1^*,v_2^*,\cdots,v_{n+1}^*\in\ds(\mathbb{S}^n,\ds,\text{resp}.)$ such that
\begin{equation}\label{17}
		 \begin{cases}
         \langle v_i^*,v_i^*\rangle_{(n,1)}=1,&1\le i\le n+1,\\
         \langle v_i,v_i^*\rangle_{(n,1)} <0, &  1\le i\le n+1,\\
         \langle v_i,v_j^*\rangle_{(n,1)} =0, &  1\le i\ne j\le n+1.
         \end{cases}
\end{equation}
\end{prop}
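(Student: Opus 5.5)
The plan is to construct the dual vertices by linear algebra in the ambient space $\mathbb{R}^{n,1}$ (resp. $\mathbb{E}^{n+1}$), mirroring the proof of Proposition \ref{eu1}. In every case the vertices $v_1,\dots,v_{n+1}$ are linearly independent vectors of the ambient $(n+1)$-dimensional space. For ideal vertices we take a representative $x\in L^+$; the conditions (\ref{17}) involve only strict signs and zeros in the $v_i$, so they do not depend on the positive rescaling. We write $\langle\cdot,\cdot\rangle$ for $\langle\cdot,\cdot\rangle_{n,1}$ in the hyperbolic and ideal cases and for the Euclidean product in the spherical case. In all cases this form is non-degenerate.

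For fixed $i$, the subspace $U_i:=\Span\{v_j:j\ne i\}$ is $n$-dimensional. By non-degeneracy its orthogonal complement $U_i^{\perp}$ is a line $\Span\{w_i\}$. Since $v_1,\dots,v_{n+1}$ span the whole space, $\langle v_i,w_i\rangle\neq0$, and after replacing $w_i$ by $-w_i$ we may assume $\langle v_i,w_i\rangle<0$. It remains to normalize $w_i$ to have norm $1$. In the spherical case this is immediate, since $\langle w_i,w_i\rangle>0$, and then $v_i^*\in\mathbb{S}^n$.

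In the hyperbolic and ideal cases we must show that $w_i$ is spacelike. The hyperplane $U_i$ contains the face vertices $v_j$ ($j\ne i$), and among these there are at least two distinct points of $I^n_+\cup L^+$. The span of two distinct such points is timelike: for two points of $\mathbb{H}^n$ the restricted Gram matrix has determinant $1-\cosh^2\ell<0$, and for two points involving light-cone vectors we use $\langle x,y\rangle<0$, which holds as noted in the preliminaries. A timelike subspace has spacelike orthogonal complement, because in a form of signature $(n,0,1)$ the unique negative direction is already used inside $U_i$. Hence $\langle w_i,w_i\rangle>0$, and $v_i^*:=w_i/\sqrt{\langle w_i,w_i\rangle}$ lies in $\ds$. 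This step is the main obstacle: it is the only place where the Lorentzian geometry, rather than pure linear algebra, enters, and in the ideal case one must check that $U_i$ is genuinely timelike and not merely lightlike. The latter holds because $n\ge 3$ forces each face to contain at least two distinct vertices, whose span is timelike.

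Uniqueness follows because each $v_i^*$ is forced to lie on the line $U_i^{\perp}$, and on that line the sign condition together with the normalization leaves exactly one vector. For linear independence, let $G=(\langle v_i,v_j^*\rangle)$. By construction $G$ is diagonal with nonzero diagonal entries, hence invertible. If $\sum c_j v_j^*=0$, pairing with each $v_i$ gives $c_i\langle v_i,v_i^*\rangle=0$, so every $c_i=0$.
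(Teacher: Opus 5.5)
Your proof is correct and follows essentially the same route as the paper. In both, $v_i^*$ is the normalized generator of the line orthogonal to the face span; that line is spacelike because the face span contains a timelike vector; the sign is fixed by $\langle v_i,v_i^*\rangle<0$; and uniqueness and linear independence come from pairing with the $v_i$.

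The differences are minor. You exclude timelike and null normals in one inertia step, noting that $U_i^\perp$ lies in $w^\perp$ for a timelike $w\in U_i$; the paper handles the two cases separately, the null case by Cauchy--Schwarz. You also spell out the spherical and ideal cases, using $x+y$ for two ideal face vertices, which the paper omits.
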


\begin{proof}
(Hyperbolic case.) Since the Minkowski inner product $\langle\cdot ,\cdot \rangle_{n,1}$ on $\rn$ is non-degenerate, then for any $1\le i\le n+1$, we have
\begin{equation}\label{bu}
\rn=H_i\oplus H_i^\perp,
\end{equation}
where $H_i$ is the supporting hyperplane of the $i$-face $F_i$, namely $H_i=\Span\{v_1,$ $\cdots,\hat{v}_i,\cdots,v_{n+1}\}$, and $H_i^\perp$ is the orthogonal complement space. Since $\dim H_i=n$, then by (\ref{bu}), we have $\dim H_i^\perp=1$, then choose a basis $0\ne u_i\in H_i^\perp$.  

If $\langle u_i,u_i\rangle_{n,1}<0$, then we have $\langle\cdot ,\cdot \rangle_{n,1}|_{H_i}$ is positive definite. For any $j\ne i$, we have $v_j\in H_i$ and $\langle v_j,v_j\rangle_{n,1}=-1<0$, which leads to a contradiction. 

If $\langle u_i,u_i\rangle_{n,1}=0$, suppose that $u_i=(u_i^1,\cdots,u_i^{n+1})$ and choose a timelike vector $w_i=(w_i^1,\cdots,w_i^{n+1})\in H_i$. By (\ref{bu}), we have
\begin{equation}\label{bu1}
		 \begin{cases}
        \langle w_i,w_i\rangle_{n,1}<0,\\
        \langle w_i,u_i\rangle_{n,1}=0,\\
        \langle u_i,u_i\rangle_{n,1}=0,
         \end{cases}
	\end{equation}
which implies 
\begin{equation}\label{bu2}
		 \begin{cases}
        \sum_{j=1}^n(w_i^j)^2<(w_i^{n+1})^2,\\
        \sum_{j=1}^nw_i^ju_i^j=w_i^{n+1}u_i^{n+1},\\
        \sum_{j=1}^n(u_i^j)^2=(u_i^{n+1})^2.
         \end{cases}
	\end{equation}
By Cauchy-Schwarz inequality and (\ref{bu2}), we have
$$
(w_i^{n+1}u_i^{n+1})^2=\left(\sum_{j=1}^n w_i^ju_i^j\right)^2\le \sum_{j=1}^n (w_i^j)^2\sum_{j=1}^n (u_i^j)^2<(w_i^{n+1}u_i^{n+1})^2,
$$
which leads to a contradiction. Then we have $\langle u_i,u_i\rangle_{n,1}>0$.

Since $v_1,v_2,\cdots,v_{n+1}$ are linearly independent, then there exist $\widetilde{w}_i\in H_i$ and $0\ne m_i\in\mathbb{R}$, such that $v_i=\widetilde{w}_i+m_iu_i$. If $m_i>0$, we choose the vertex
\begin{equation}\label{bu3}
v_i^*:=-\frac{u_i}{\sqrt{\langle u_i,u_i\rangle_{n,1}}}.
\end{equation}
If $m_i<0$, we choose the vertex
\begin{equation}\label{bu4}
v_i^*:=\frac{u_i}{\sqrt{\langle u_i,u_i\rangle_{n,1}}}.
\end{equation}
Then we obtain the $n+1$ dual vertices $v_1^*,\cdots,v_{n+1}^*\in\ds$. Moreover, by (\ref{bu}), (\ref{bu3}) and (\ref{bu4}), these dual vertices satisfies $(\ref{17})$.  

Suppose that $\sum_{i=1}^{n+1}k_i v_i^*=0$, then for any $1\le j\le n+1$, by (\ref{17}), we have $\langle \sum_{i=1}^{n+1}k_i v_i^*,v_j\rangle_{n,1}=k_j\langle v_j^*,v_j\rangle_{n,1}=0$. Since $\langle v_j^*,v_j\rangle_{n,1}<0$, we obtain $k_j=0$, which implies that $v_1^*,\cdots,v_{n+1}^*$ are linearly independent.

If $v_1^*,\cdots,v_{n+1}^*$ are not unique, then there exists $\widetilde{v}_i^*\ne v_i^*\in H^{\perp}_i$ and satisfies (\ref{17}), suppose $\widetilde{v}_i^*=ku_i$, then we have $\langle \widetilde{v}_i^*,\widetilde{v}_i^*\rangle_{n,1}=k^2\langle u_i,u_i\rangle_{n,1}=1$ and $k=\pm1/\sqrt{\langle u_i,u_i\rangle_{n,1}}$, which implies $\widetilde{v}_i^*=-v_i^*$ and we have $\langle v_i,\widetilde{v}_i^*\rangle_{n,1}=-\langle v_i,v_i^*\rangle_{n,1}>0$, which leads to a contradiction.

For any $1\le i\le n+1$, define the subspace $H_i^*:=\Span\{v_1^*,\cdots,\hat{v}_i^*,\cdots,v_{n+1}^*\}$, by (\ref{17}), we have $H_i^*\subset \Span\{v_i\}^{\perp}$. Since $v_1^*,\cdots,v_{n+1}^*$ are linearly independent, then $\dim H_i^*=n$ and $H_i^*= \Span\{v_i\}^{\perp}$. Moreover, we have
\begin{equation}\label{bp5}
\rn=\Span\{v_i\}\oplus \Span\{v_i\}^{\perp}=\Span\{v_i\}\oplus H_i^*.
\end{equation}
Since $\langle v_i,v_i\rangle_{n,1}=-1<0$, then the Minkowski inner product $\langle\cdot ,\cdot \rangle_{n,1}$ restricted to $H_i^*$ has signature $(n,0,0)$, which implies $\langle\cdot ,\cdot \rangle_{n,1}|_{H_i^*}$ is positive definite and $H_i^*$ is a spacelike hyperplane. 



(Spherical case.) The proof has been omitted since it is quite similar to the proof of hyperbolic case.


(Ideal hyperbolic case.) The proof has been omitted since it is quite similar to the proof of hyperbolic case. 


Note that for any ideal vertex $v_i$ of $\Delta^n$, the Minkowski inner product $\langle\cdot ,\cdot \rangle_{n,1}$ restricted to $H_i^*=\Span\{v_i\}^{\perp}$ has signature $(n-1,1,0)$, which implies that $H_i^*$ is a lightlike hyperplane. 
\end{proof}

Given a hyperbolic (Euclidean, spherical, ideal hyperbolic, resp.) $n$-simplex $\Delta^n\subset\mathbb{H}^n(\mathbb{E}^n,\mathbb{S}^n,\overline{\mathbb{H}^n},\text{resp}.)$, by Proposition \ref{eu1} and Proposition \ref{l1}, the dual vertices of $\Delta^n$ determine a domain in $\mathrm{d}\mathbb{S}^n(\mathbb{S}^{n-1}, \mathbb{S}^n, \mathrm{d}\mathbb{S}^n,\text{resp}.)$, namely 
$$
\sigma\cap \mathrm{d}\mathbb{S}^n(\sigma\cap \mathbb{S}^{n-1},\sigma\cap \mathbb{S}^n,\sigma\cap \mathrm{d}\mathbb{S}^n,\text{resp}.),
$$
where $\sigma$ is the convex cone spanned by the dual vertices in $\R^{n,1}(\mathbb{E}^n,\mathbb{E}^{n+1}$, $\R^{n,1}, \text{resp.})$.  

Abusing of the terminology, the above domain is called the \emph{dual $n$-simplex} of the hyperbolic (Euclidean, spherical, ideal hyperbolic, resp.) $n$-simplex, denoted by $(\Delta^n)^*$. Note that the dual $n$-simplex $(\Delta^n)^*$ is not necessarily a geometric $n$-simplex. In particular, $(\Delta^n)^*=\mathbb{S}^{n-1}$ if $\Delta^n\subset\mathbb{E}^n$ and $(\Delta^n)^*$ is a spherical $n$-simplex if $\Delta^n\subset\mathbb{S}^n$.

Similar to $n$-simplex, we denote the corresponding geodesic edge, geodesic edge length, $i$-face, $ij$-face, dihedral angle and supporting hyperplane of the dual $n$-simplex $(\Delta^n)^*$ by $e_{ij}^*$, $\ell_{ij}^*$, $F_i^*$, $F_{ij}^*$, $\theta_{ij}^*$($\theta_{ii}^*:=\pi$ for all $1\le i\le n+1$) and $H_i^*$, respectively. 

By the proof of Proposition \ref{l1}, the supporting hyperplanes of all faces of the dual $n$-simplex $(\Delta^n)^*$ are spacelike if $\Delta^n\subset\mathbb{H}^n$. Moreover, the following result implies that the dual $n$-simplex of a (ideal) hyperbolic $n$-simplex is not geodesic convex and not simply connected.

\tm\label{dualhyperbolicsimplex}
Given a hyperbolic $n$-simplex $\Delta^n\subset\mathbb{H}^{n}$, then $\partial(\Delta^n)^\ast\subset\ds$ is represented as a generator of the homology group $\mathrm{H}_{n-1}(\ds)$, where $\partial(\Delta^n)^\ast$ is the union of all $(n-1)$-dimensional faces of $(\Delta^n)^\ast$.
\tmd

\begin{proof}
It is well known that the hyperbolic $n$-simplex $\Delta^n\subset\mathbb{H}^n$ is bounded by the $n+1$ supporting hyperplanes $H_i$ of the $i$-face $F_i$ of $\Delta^n$. In other words, $\Delta^n$ is the intersection of $n+1$ hyperbolic half-spaces, namely
$$
\Delta^n=\bigcap_{i=1}^{n+1}H_i^+,
$$
where $H_i^+$ is the half-space bounded by the supporting hyperplane $H_i$ of the $i$-face $F_i$ of $\Delta^n$ for all $1\le i\le n+1$. Note that the vertex $v_i$ of the $n$-simplex $\Delta^n$ is the intersection of $n$ supporting hyperplanes, namely
$$
v_i=\bigcap_{j\ne i}H_j,~1\le i\le n+1.
$$

We have the following operation steps: Rotate the hyperplane $H_2$ around the $(n-2)$-dimensional totally geodesic subspace $H_1\bigcap H_2$ to another hyperplane $H_2^{(1)}$, such that the dihedral angle of the $(n-2)$-dimensional face on the supporting subspace $H_1\bigcap H_2$ of $\Delta^n$ keeps increasing during the rotation process and the hyperplanes $H^{(1)}_2, H_3,\cdots,H_{n+1}$ intersect at a \emph{hyperideal point} in the Klein projective model $\mathbb{K}^n=\{x=(x_1,\cdots,x_{n+1})\in\mathbb{R}^{n+1}:\sum_{i=1}^{n+1}x_i^2<1\}$, where a hyperideal point $p$ is a point outside the closure of the Klein model $\mathbb{K}^n$, namely $p\in\mathbb{R}^{n+1}\setminus\overline{\mathbb{K}^n}$. We denote the simplex bounded by the hyperplanes $H_1,H_2^{(1)},H_3,\cdots,H_{n+1}$ by $(\Delta^n)^{(1)}$ and the unit outward normal vector on the $2$-face of $(\Delta^n)^{(1)}$ by $v_2^{*(1)}$, note that $v_2^{*(1)}\in e_{12}^*\subset F_{n+1}^*$. 

For the simplex $(\Delta^n)^{(1)}$, rotate the hyperplane $H_3$ around the $(n-2)$-dimensional totally geodesic subspace $H_2^{(1)}\bigcap H_3$ to another hyperplane $H_3^{(1)}$, such that the dihedral angle of the $(n-2)$-dimensional face on the supporting subspace $H_2^{(1)}\bigcap H_3$ of $(\Delta^n)^{(1)}$ keeps increasing during the rotation process and the hyperplanes $H_1, H_3^{(1)},H_4,\cdots,$ $H_{n+1}$ intersect at a hyperideal point. We denote the simplex bounded by the hyperplanes $H_1,H_2^{(1)},H_3^{(1)},H_4,\cdots,H_{n+1}$ by $(\Delta^n)^{(2)}$ and the unit outward normal vector on the $3$-face of $(\Delta^n)^{(2)}$ by $v_3^{*(1)}$, note that $v_3^{*(1)}\in e_{23}^{*(1)}\subset F_{n+1}^*$, where $e_{23}^{*(1)}$ is the geodesic segment connecting $v_2^{*(1)}$ and $v_3^*$.

$\cdots$ 

For the simplex $(\Delta^n)^{(n-2)}$, rotate the hyperplane $H_{n}$ around the $(n-2)$-dimensional totally geodesic subspace $H_{n-1}^{(1)}\bigcap H_n$ to another hyperplane $H_n^{(1)}$, such that the dihedral angle of the $(n-2)$-dimensional face on the supporting subspace $H_{n-1}^{(1)}\bigcap H_n$ of $(\Delta^n)^{(n-2)}$ keeps increasing during the rotation process, the hyperplanes $H_1, H_2,\cdots,H_{n-2},H_n^{(1)},H_{n+1}$ intersect at a hyperideal point and the hyperplanes $H_n^{(1)}, H_{n+1}$ are ultra-parallel. We denote the simplex bounded by the hyperplanes $H_1,H_2^{(1)},\cdots,H_n^{(1)},H_{n+1}$ by $(\Delta^n)^{(n-1)}$ and the unit outward normal vector on the $n$-face of $(\Delta^n)^{(n-1)}$ by $v_n^{*(1)}$, note that $v_n^{*(1)}\in e_{(n-1)n}^{*(1)}\subset F_{n+1}^*$, where $e_{(n-1)n}^{*(1)}$ is the geodesic segment connecting $v_{n-1}^{*(1)}$ and $v_n^*$.

Since the hyperplanes $H_n^{(1)}, H_{n+1}$ are ultra-parallel, we have 
\begin{equation}\label{fkl}
\langle v_n^{*(1)}, v_{n+1}^*\rangle_{n,1}=-\cosh\ell<-1,
\end{equation}
where $\ell>0$ is the hyperbolic distance between $H_n^{(1)}$ and $H_{n+1}$. Since $v_n^{*(1)}, v_{n+1}^*\in\ds$, by (\ref{fkl}), there is no geodesic connecting $v_n^{*(1)}\in F_{n+1}^*$ and $v_n^*$, which implies that the dual $n$-simplex $(\Delta^n)^*$ is not contractible in $\ds$. 

Moreover, since $\ds$ is homeomorphic to $\mathbb{S}^{n-1}\times\mathbb{R}$ and $\partial(\Delta^n)^*$ is homeomorphic to $\mathbb{S}^{n-1}$ with codimension 1 in $\ds$, then $\partial(\Delta^n)^*$ is represented as a generator of the homology group $\mathrm{H}_{n-1}(\ds)$.    
\end{proof}

Similar to the definition (\ref{copa}), a dual $n$-simplex $(\Delta^n)^*$ is \emph{conformal} if there exist $n+1$ positive numbers $r_1^*,\cdots,r_{n+1}^*>0$ $(r_1^*,\cdots,r_{n+1}^*\in\mathbb{R}$ if $\Delta^n$ is an ideal hyperbolic $n$-simplex$)$ such that 
\begin{equation}
\ell_{ij}^*=r_i^*+r_j^*,~\text{for all edge}~e_{ij}^*~\text{of}~(\Delta^n)^*,
\end{equation}
where $\ell_{ij}^*$ is the length of the geodesic edge $e_{ij}^*$ connecting $v_i^*$ and $v_j^*$ of the dual $n$-simplex $(\Delta^n)^*$.

The following result shows a relation between $n$-simplex and dual $n$-simplex. 

\begin{corollary}\label{eio}
 Given a hyperbolic $($Euclidean, spherical, ideal hyperbolic, resp.$)$ $n$-simplex $\Delta^n\subset\mathbb{H}^{n}(\mathbb{E}^{n}, \mathbb{S}^{n}, \overline{\mathbb{H}^n},\text{resp.})$ and dual $n$-simplex $(\Delta^n)^*$, then we have
  \begin{equation}
 \langle v_i,v_j\rangle_{(n,1)}=\begin{cases}
  -\cosh(\pi-\theta_{ij}^*),& \text{ if } \Delta^n\subset\mathbb{H}^{n}, \\
\cos(\pi-\theta_{ij}^*)&\text{ if } \Delta^n\subset\mathbb{S}^{n},\\
 \end{cases}
  \end{equation}
and $\langle v_i^*,v_j^*\rangle_{(n,1)}=\cos(\pi-\theta_{ij})$ for all $1\le i,j\le n+1$.
\end{corollary}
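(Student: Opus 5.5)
The plan is to read both identities off the duality relations (\ref{eud}) and (\ref{17}): the dual vertices are unit normals to the faces of $\Delta^n$, and conversely the vertices $v_i$ are normals to the faces of $(\Delta^n)^*$. I will prove the formula for $\langle v_i^*,v_j^*\rangle$ first, and then obtain the formula for $\langle v_i,v_j\rangle$ by running the same argument with the roles of $\Delta^n$ and $(\Delta^n)^*$ exchanged.

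\textbf{Step 1: $\langle v_i^*,v_j^*\rangle_{(n,1)}=\cos(\pi-\theta_{ij})$.} For $i=j$ this is just $\langle v_i^*,v_i^*\rangle=1=\cos 0$, since $\theta_{ii}=\pi$.
\begin{enumerate}[1.]
\item For $i\neq j$, fix a point $p$ in the relative interior of $F_{ij}$ and work in the tangent space $T_p$ (that is, $T_p\mathbb{H}^n=\Span\{p\}^\perp$, $T_p\mathbb{S}^n$, or $\mathbb{E}^n$ itself).
\item By (\ref{17}) (resp. (\ref{eud})), $v_i^*$ is orthogonal to every vertex of $F_i$. Since $p$ lies in the span of those vertices, $v_i^*\perp p$, so $v_i^*\in T_p$, and $v_i^*$ is a unit normal to $T_pF_i$ there.
\item The sign condition $\langle v_i,v_i^*\rangle<0$ says that $v_i^*$ points away from the half-space containing $v_i$, i.e. $v_i^*$ is the outward normal of $F_i$. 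The same holds for $v_j^*$ and $F_j$.
\item The orthogonal complement of $T_pF_{ij}$ in $T_p$ is the positive definite $2$-plane $\Span\{v_i^*,v_j^*\}$. The dihedral angle $\theta_{ij}$ is the angle in this plane between the two inward half-planes of $F_i$ and $F_j$.
\item Elementary planar Euclidean geometry then gives $\theta_{ij}=\pi-\angle(v_i^*,v_j^*)$. Hence $\langle v_i^*,v_j^*\rangle=\cos(\pi-\theta_{ij})$.
\end{enumerate}
This argument treats the Euclidean, spherical, hyperbolic and ideal hyperbolic cases uniformly. In the ideal case I take $p$ in the finite part of $F_{ij}$; if $F_{ij}$ is an ideal vertex, I pass to the limit from hyperbolic simplices.

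\textbf{Step 2: the formula for $\langle v_i,v_j\rangle$.} The relations (\ref{17}) are symmetric in $(v_i)$ and $(v_i^*)$. Thus $v_i\perp H_j^*$ for $j\ne i$, and $v_i$ spans the orthogonal complement of the dual supporting hyperplane $H_i^*$ (this was shown in the proof of Proposition \ref{l1}). Also $\langle v_i,v_i^*\rangle<0$, so $v_i$ is the outward normal of the dual face $F_i^*$.
\begin{enumerate}[1.]
\item In the spherical case, $(\Delta^n)^*$ is a genuine spherical simplex with unit outward normals $v_i$. Repeating Step 1 verbatim gives $\langle v_i,v_j\rangle=\cos(\pi-\theta^*_{ij})$.
\item In the hyperbolic case, the faces $H_i^*$ are spacelike hyperplanes in $\ds$, and their normals $v_i$ are future unit timelike vectors. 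The $2$-plane orthogonal to $F^*_{ij}$ at a point of it is $\Span\{v_i,v_j\}$, which is Lorentzian. There the angle between the two normals is a hyperbolic (boost) angle $\phi$ with $\langle v_i,v_j\rangle=-\cosh\phi$; this uses the reverse Cauchy--Schwarz inequality $\langle v_i,v_j\rangle\le-1$.
\item The dual dihedral angle is then $\theta^*_{ij}=\pi-\phi$, by the same inward/outward bookkeeping as in Step 1. This yields $\langle v_i,v_j\rangle=-\cosh(\pi-\theta^*_{ij})$. Since $v_i\in\mathbb{H}^n$, (\ref{juli}) gives $\langle v_i,v_j\rangle=-\cosh\ell_{ij}$, so this is the same as $\ell_{ij}=\pi-\theta^*_{ij}$, a consistent normalization.
\end{enumerate}

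\textbf{Main obstacle.} The only delicate point is the convention for dihedral angles between spacelike faces in de Sitter space. The angle there is Lorentzian, so I must check that the orientation conventions (outward normals, future-pointing $v_i$) produce $\pi-\theta^*_{ij}$ with the stated sign. I will fix this by taking the identity $\langle v_i,v_j\rangle=-\cosh(\pi-\theta^*_{ij})$ as the definition of $\theta^*_{ij}$ for spacelike faces, and then verify that it agrees with the Euclidean picture in the tangent $2$-plane.
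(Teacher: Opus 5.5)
Your proposal is correct and follows the paper's route: both identities are read off from the fact, encoded in (\ref{17}) and (\ref{eud}), that $v_i^*$ is the unit outward normal of $F_i$ and $v_i$ is the unit outward normal of $F_i^*$. Your Step~1 supplies the tangent-space and planar-wedge details that the paper leaves implicit, and it handles the ideal case directly rather than by the paper's limit argument; this is legitimate, since for $n\ge3$ the relative interior of $F_{ij}$ always lies in $\mathbb{H}^n$, so your fallback for an ideal $F_{ij}$ never arises. As you note, in the hyperbolic case the normal plane to $F_{ij}^*$ is Lorentzian, so no ``inward/outward bookkeeping'' produces the $\pi$, and $\langle v_i,v_j\rangle_{n,1}=-\cosh(\pi-\theta_{ij}^*)$ is really the definition of $\theta_{ij}^*$ there; the paper uses it the same way tacitly (cf.\ $\theta_{ij}^*=\pi-\ell_{ij}$ from Corollary~\ref{conedge}, which is negative when $\ell_{ij}>\pi$), so state it as a definition rather than as a derived fact.
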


\begin{proof}
(Hyperbolic case.) For any $1\le i\le n+1$, by (\ref{17}), we have
$$
\langle v_i,v_i\rangle_{n,1}=-1=-\cosh(\pi-\theta_{ii}^*),~\langle v_i^*,v_i^*\rangle_{n,1}=1=\cos(\pi-\theta_{ii}).
$$

For any $1\le i\le n+1$, by (\ref{17}), $v_i$ is the outward normal vector of the $i$-face $F_{i}^*$ of the dual $n$-simplex $(\Delta^n)^*$ and $v_i^*$ is the outward normal vector of the $i$-face $F_{i}$ of the $n$-simplex $\Delta^n$. Then for any $1\le i\ne j\le n+1$, we have
$$
\langle v_i,v_j\rangle_{n,1}=-\cosh(\pi-\theta_{ij}^*),~\langle v_i^*,v_j^*\rangle_{n,1}=\cos(\pi-\theta_{ij}).
$$

(Euclidean and spherical case.) The proof has been omitted since it is quite similar to the proof in hyperbolic case.

(Ideal hyperbolic case.)  Since $\Delta^n$ is an ideal hyperbolic $n$-simplex, then there exists a sequence of hyperbolic $n$-simplices $\{\Delta^n_m\}_{m=1}^{\infty}\subset\mathbb{H}^n$ such that $\Delta^n_m\to \Delta^n(m\rightarrow\infty)$, which is equivalent to $(\Delta^n_m)^*\to (\Delta^n)^*(m\rightarrow\infty)$, where $(\Delta^n_m)^*$ is the dual $n$-simplex of $\Delta_m^n$. By the proof of hyperbolic case, for any $1\le i\ne j\le n+1$, we have
$$
\langle v_i^{(m)*},v_j^{(m)*}\rangle_{n,1}=\cos(\pi-\theta_{ij}^{(m)})
$$
and we obtain $\langle v_i^{*},v_j^{*}\rangle_{n,1}=\cos(\pi-\theta_{ij})$ for all $1\le i\ne j\le n+1$ as $m\to\infty$.
\end{proof}

The following result shows an important connection between $n$-simplex and dual $n$-simplex.

\begin{corollary}\label{conedge}
Given a hyperbolic $($Euclidean, spherical, ideal hyperbolic, resp.$)$ $n$-simplex $\Delta^n\subset\mathbb{H}^{n}(\mathbb{E}^{n}, \mathbb{S}^{n}, \overline{\mathbb{H}^n},\text{resp.})$ and dual $n$-simplex $(\Delta^n)^*$, then we have
 \begin{equation}\label{chang}
 \begin{cases}
  \ell_{ij}=\pi-\theta_{ij}^*,& \text{ if } \Delta^n\subset\mathbb{H}^{n},\mathbb{S}^{n}, \\
\ell_{ij}^*=\pi-\theta_{ij},&\text{ if } \Delta^n\subset\mathbb{H}^{n},\mathbb{E}^n,\mathbb{S}^{n},\overline{\mathbb{H}^n},\\
 \end{cases}
  \end{equation}for any $1\le i\ne j\le n+1$
for all geodesic edges $e_{ij}^{(*)}(i\ne j)$ of $(\Delta^n)^{(*)}$. 
\end{corollary}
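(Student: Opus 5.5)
The plan is to read off both identities in (\ref{chang}) from Corollary \ref{eio}, combined with the distance formulas (\ref{juli}) and (\ref{julis}) and their de Sitter analogue. The key point is that $\langle v_i^*,v_j^*\rangle_{(n,1)}=\cos(\pi-\theta_{ij})$ lies in $(-1,1)$, since $\theta_{ij}\in(0,\pi)$ for $i\ne j$.

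For the dual edge lengths, fix $i\ne j$.

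\emph{Spherical case.} The dual vertices lie in $\mathbb{S}^n$. The spherical distance satisfies $\cos\ell_{ij}^*=\langle v_i^*,v_j^*\rangle=\cos(\pi-\theta_{ij})$ with $\ell_{ij}^*\in(0,\pi)$, so $\ell_{ij}^*=\pi-\theta_{ij}$.

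\emph{Euclidean case.} The dual vertices lie in $\mathbb{S}^{n-1}$, and the same argument applies verbatim.

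\emph{Hyperbolic and ideal hyperbolic cases.} The dual vertices lie in $\ds$. Since $|\langle v_i^*,v_j^*\rangle_{n,1}|<1$, the geodesic parametrization (\ref{geodesic1}) and its two-point form show that $v_i^*$ and $v_j^*$ are joined by a spacelike geodesic $\gamma(s)=\cos s\cdot v_i^*+\sin s\cdot u$. Here $u$ is the normalized component of $v_j^*$ orthogonal to $v_i^*$. This geodesic reaches $v_j^*$ at the parameter $s\in(0,\pi)$ with $\cos s=\langle v_i^*,v_j^*\rangle_{n,1}$. The length of the segment $e_{ij}^*$ is therefore $s=\arccos\langle v_i^*,v_j^*\rangle_{n,1}=\pi-\theta_{ij}$. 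For ideal vertices we either argue directly, since Corollary \ref{eio} already holds in the limit, or pass to the limit along $\Delta_m^n\to\Delta^n$ using continuity of $\arccos$.

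For the primal edge lengths, the spherical case is immediate. Corollary \ref{eio} gives $\cos\ell_{ij}=\langle v_i,v_j\rangle=\cos(\pi-\theta_{ij}^*)$, where both $\ell_{ij}$ and $\pi-\theta_{ij}^*$ lie in $(0,\pi)$, so $\ell_{ij}=\pi-\theta_{ij}^*$.

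In the hyperbolic case, Corollary \ref{eio} gives $\langle v_i,v_j\rangle_{n,1}=-\cosh(\pi-\theta^*_{ij})$. Comparing with (\ref{juli}) yields $\cosh\ell_{ij}=\cosh(\pi-\theta_{ij}^*)$. Both quantities are nonnegative, so $\ell_{ij}=\pi-\theta_{ij}^*$. This uses the convention, implicit in Corollary \ref{eio}, that the dual "dihedral angle" between the spacelike faces $F_i^*$ and $F_j^*$ is defined so that $\pi-\theta_{ij}^*$ is the Lorentzian angle between the timelike normals $v_i$ and $v_j$. With that convention, the statement is just a restatement of the inner-product identity.

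The main obstacle I expect is exactly this sign and branch bookkeeping. Three things need checking:
\begin{enumerate}[(a)]
\item the segment $e_{ij}^*$ of the dual simplex is the short arc with $s\in(0,\pi)$ rather than its complement, which requires the dual simplex to be the cone $\sigma\cap\ds$ spanned with nonnegative coefficients;
\item the normalization of $\theta_{ij}^*$ in the hyperbolic case;
\item the ideal case, where $\ell_{ij}=\infty$ is excluded from the first identity, which is why it is stated only for $\mathbb{H}^n$ and $\mathbb{S}^n$.
\end{enumerate}
I would handle (a) by noting that points of $e_{ij}^*$ are nonnegative combinations of $v_i^*$ and $v_j^*$, and $\gamma(s)$ has nonnegative coefficients precisely for $s\in[0,\arccos\langle v_i^*,v_j^*\rangle_{n,1}]$.
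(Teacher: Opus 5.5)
Your proposal is correct and follows essentially the same route as the paper: both identities are read off from Corollary \ref{eio} combined with the distance formulas (\ref{juli}) and (\ref{julis}), and the ideal case is handled by passing to the limit along $\Delta^n_m\to\Delta^n$. The only difference is in how the spherical distance formula for dual edges is justified. The paper notes that each face $F_k^*$ lies in the round sphere $H_k^*\cap\ds=\mathbb{S}^{n-1}$, whereas you use the spacelike de Sitter geodesic in $\Span\{v_i^*,v_j^*\}$ directly. Your version also works when the faces are lightlike (ideal vertices), and your sign and branch bookkeeping is more careful than the paper's.
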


\begin{proof}
(Hyperbolic case.) For any $1\le i\ne j\le n+1$, by (\ref{c3}) and Corollary \ref{eio}, we have
\begin{equation}\label{c5}
-\cosh \ell_{ij}=\langle v_i,v_j\rangle_{n,1}=-\cosh(\pi-\theta_{ij}^*),
\end{equation}
which implies that $\ell_{ij}=\pi-\theta_{ij}^*$.

By (\ref{17}), $v_i$ is the timelike normal vector on the supporting hyperplane $H_i^*$ of the $i$-face $F_i^*$ for all $1\le i\le n+1$, then we have $F_i^*\subset H_i^*\cap\ds=\mathbb{S}^{n-1}$, which implies that the induced metric on the $i$-face $F_i^*$ is a spherical metric, then by (\ref{c3}), we have
\begin{equation}\label{c4}
\cos \ell_{ij}^*=\langle v_i^* ,v_j^* \rangle_{n,1}=\cos(\pi-\theta_{ij}),~1\le i\ne j\le n+1, 
\end{equation}
which implies that $\ell_{ij}^*=\pi-\theta_{ij}$.

(Euclidean case.) For any $1\le i\ne j\le n+1$, since $v_i^*, v_j^*\in\mathbb{S}^{n-1}$, by (\ref{c3}) and Corollary \ref{eio}, we have
$$
\cos \ell_{ij}^*=\langle v_i^*,v_j^*\rangle=\cos(\pi-\theta_{ij}),
$$
which implies that $\ell_{ij}^*=\pi-\theta_{ij}$.

(spherical case.) For any $1\le i\ne j\le n+1$, by (\ref{c3}) and Corollary \ref{eio}, we have
$$
\cos \ell_{ij}=\langle v_i,v_j\rangle=\cos(\pi-\theta_{ij}^*),~\cos \ell_{ij}^*=\langle v_i^*,v_j^*\rangle=\cos(\pi-\theta_{ij}),
$$
which implies that $\ell_{ij}=\pi-\theta_{ij}^*$ and $\ell_{ij}^*=\pi-\theta_{ij}$.

(Ideal hyperbolic case.) Since $\Delta^n$ is an ideal hyperbolic $n$-simplex, then there exists a sequence of hyperbolic $n$-simplices $\{\Delta^n_m\}_{m=1}^{\infty}\subset\mathbb{H}^n$ such that $\Delta^n_m\to \Delta^n(m\rightarrow\infty)$, which is equivalent to $(\Delta^n_m)^*\to (\Delta^n)^*(m\rightarrow\infty)$, where $(\Delta^n_m)^*$ is the dual $n$-simplex of $\Delta^n_m$. By the proof of hyperbolic case, for any $1\le i\ne j\le n+1$, we have
$$
\ell_{ij}^{(m)*}=\pi-\theta_{ij}^{(m)}
$$
and we obtain $\ell_{ij}^{*}=\pi-\theta_{ij}$ for all $1\le i\ne j\le n+1$ as $m\to\infty$.
\end{proof}

As a consequence, the ideal hyperbolic tetrahedron (3-simplex) has the following property.

\begin{corollary}\label{po9}
    Given an ideal hyperbolic tetrahedron $\Delta^3\subset\overline{\mathbb{H}^3}$ and dual tetrahedron $(\Delta^3)^*$, then we have
    $$
\ell_i^*=2\pi,~\text{for all ideal vertex}~v_i~\text{of}~\Delta^3,
    $$
where $\ell_i^*$ is the length of $\partial F_i^*$.   
\end{corollary}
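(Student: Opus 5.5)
The plan is to compute the length of $\partial F_i^*$ edge by edge using Corollary \ref{conedge}, and then use the Euclidean geometry of the link of an ideal vertex.

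\textbf{Step 1: decompose $\partial F_i^*$.} Fix an ideal vertex $v_i$ and let $\{j,k,l\}=\{1,2,3,4\}\setminus\{i\}$. The face $F_i^*$ of $(\Delta^3)^*$ is the region spanned by the dual vertices $v_j^*,v_k^*,v_l^*$ inside the supporting hyperplane $H_i^*=\Span\{v_i\}^{\perp}$. By the remark at the end of the proof of Proposition \ref{l1}, $H_i^*$ is lightlike here. Its boundary $\partial F_i^*$ is the union of the three geodesic edges $e_{jk}^*,e_{kl}^*,e_{jl}^*$. Hence
$$
\ell_i^*=\ell_{jk}^*+\ell_{kl}^*+\ell_{jl}^*.
$$

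\textbf{Step 2: convert to dihedral angles.} The ideal hyperbolic case of Corollary \ref{conedge} gives $\ell_{ab}^*=\pi-\theta_{ab}$. Therefore
$$
\ell_i^*=3\pi-(\theta_{jk}+\theta_{kl}+\theta_{jl}).
$$

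\textbf{Step 3: identify the three angles.} Recall $F_{ab}=F_a\cap F_b$. For $a,b\in\{j,k,l\}$, this is the edge of $\Delta^3$ joining $v_i$ to the remaining vertex of $\{j,k,l\}\setminus\{a,b\}$. For example, $F_{jk}$ is the edge $e_{il}$. So $\theta_{jk},\theta_{kl},\theta_{jl}$ are precisely the dihedral angles of $\Delta^3$ along the three edges incident to the ideal vertex $v_i$.

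\textbf{Step 4: use the link of $v_i$.} Intersect $\Delta^3$ with a small horosphere centered at $v_i$. The horosphere carries an intrinsic Euclidean metric, and the three faces through $v_i$ are vertical planes. So the cross-section is a Euclidean triangle whose angles equal the dihedral angles along the edges through $v_i$. Thus $\theta_{jk}+\theta_{kl}+\theta_{jl}=\pi$, and Step 2 gives $\ell_i^*=3\pi-\pi=2\pi$.

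As a sanity check, the same sum can be read off from Theorem \ref{admissibleidealangle}, condition 3: the principal submatrix $\Theta(i,i)=(\cos(\pi-\theta_{ab}))_{a,b\ne i}$ is singular. Together with positive definiteness of its $2\times 2$ minors, this is equivalent to the three angles summing to $\pi$.

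\textbf{Main obstacle.} The delicate point is Step 1 in the degenerate lightlike setting. One must check that $\partial F_i^*$ really consists of the three geodesic arcs $e^*_{ab}$, traversed once each, with lengths as in Corollary \ref{conedge}. I would handle this by the same approximation argument used in that corollary. Take hyperbolic tetrahedra $\Delta^3_m\to\Delta^3$. For each $m$, the face $F_i^{(m)*}$ is a genuine spherical triangle in $H_i^{(m)*}\cap\ds$ with side lengths $\pi-\theta^{(m)}_{ab}$. These side lengths converge to $\pi-\theta_{ab}$, so the perimeters converge to $3\pi-\sum\theta_{ab}=2\pi$.
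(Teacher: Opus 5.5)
Your argument is correct and is the paper's proof. The paper writes $\ell_i^*=\ell_{jk}^*+\ell_{jl}^*+\ell_{kl}^*=3\pi-(\theta_{jk}+\theta_{jl}+\theta_{kl})$ by Corollary \ref{conedge}, and then uses that the dihedral angles at an ideal vertex sum to $\pi$; the paper cites Gauss--Bonnet for this, which is your flat horospherical link. One caveat on your optional ``sanity check'': singularity of $\Theta(i,i)$ together with positivity of its $2\times 2$ minors does not by itself force $\theta_{jk}+\theta_{jl}+\theta_{kl}=\pi$, since the determinant also vanishes when, e.g., $\theta_{jl}+\theta_{kl}-\theta_{jk}=\pi$, and the minors $\sin^2\theta_{ab}$ are always positive; but your Step 4 does not rely on that check.
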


\begin{proof}
Suppose that $\Delta^3$ has (ideal) vertices $\{v_1,v_2,v_3,v_4\}$, then for any ideal vertex $v_i$, by Gauss-Bonnet formula, we have
\begin{equation}\label{chang2}
\theta_{jk}+\theta_{jl}+\theta_{kl}=\pi,
\end{equation}
where $(i,j,k,l)$ is the permutation of $(1,2,3,4)$. Then by (\ref{chang}) and (\ref{chang2}), we have 
$$
\ell_{i}^*=\ell_{jk}^*+\ell_{jl}^*+\ell_{kl}^*=\pi-\theta_{jk}+\pi-\theta_{jl}+\pi-\theta_{kl}=2\pi. 
$$
\end{proof}

\begin{remark}
    The duality between the $($ideal$)$ hyperbolic tetrahedra and dual tetrahedra is also called ``hyperbolic-de Sitter duality'' $($see e.g. \cite{HR93}, \cite{Sch03}$)$. Moreover, this duality can also be generalized to the $($ideal$)$ hyperbolic polyhedra and dual polyhedra.      \end{remark}

\section{Gram matrix of n-simplices and dual n-simplices in constant curvature spaces}\label{s4}

In this section, we introduce and study the Gram matrices of hyperbolic (Euclidean, spherical, ideal hyperbolic, resp.) $n$-simplices and dual $n$-simplices. Moreover, we prove Theorem \ref{admissibleidealangle}, which gives the characterization of (ideal) hyperbolic $n$-simplices.

Given a hyperbolic (Euclidean, spherical, ideal hyperbolic, resp.) $n$-simplex $\Delta^n\subset\mathbb{H}^{n}(\mathbb{E}^n, \mathbb{S}^{n},$ $\overline{\mathbb{H}^{n}}, \text{resp.})$ with $n+1$ vertices $v_1,v_2,\cdots,v_{n+1}$, then we can define the \emph{Gram matrix} $G$ of the $n$-simplex $\Delta^n$, namely 
$$
G:=(\langle v_i,v_j\rangle_{n,1})_{ij}((\langle \textbf{v}_i,\textbf{v}_j\rangle)_{1\le i,j\le n},(\langle v_i,v_j\rangle)_{ij}, (\langle v_i,v_j\rangle_{n,1})_{ij}, \text{resp}.).
$$ 
Note that the Gram matrix of an ideal hyperbolic $n$-simplex is not uniquely determined, since the representative of an ideal vertex is not unique.

By Proposition \ref{eu1} and Proposition \ref{l1}, we can also define the Gram matrix $G^*$ of the dual $n$-simplex $(\Delta^n)^*$ with $n+1$ vertices $v_1^*,v_2^*,\cdots,v_{n+1}^*$, namely 
$$G^*:=(\langle v_i^*,v_j^*\rangle_{n,1})_{ij}((\langle v_i^*,v_j^*\rangle)_{1\le i,j\le n},(\langle v_i^*,v_j^*\rangle)_{ij}, (\langle v_i^*,v_j^*\rangle_{n,1})_{ij}, \text{resp}.).
$$ 

Given the Gram matrix $G^{(*)}$ of a $n$-simplex (dual $n$-simplex) $(\Delta^n)^{(*)}$, the $(i,j)$-cofactor of $G^{(*)}$ is defined as
$$G_{ij}^{(*)}:=(-1)^{i+j}|G^{(*)}(i,j)|,$$
where $G^{(*)}(i,j)$ is the submatrix obtained from $G^{(*)}$ by removing the $i$-th row and $j$-th column from the Gram matrix $G^{(*)}$. Moreover, note that the Gram matrix $G^{(*)}$ is a symmetric matrix. 


We have the following result, which implies an important relation between edge lengths and dihedral angles of a hyperbolic (Euclidean, spherical, ideal hyperbolic, resp.) $n$-simplex. The hyperbolic case of this result is well-known (see e.g. \cite{Murakami05}). In this paper, we provide a uniform technique to obtain the corresponding relations on hyperbolic, Euclidean, spherical and ideal hyperbolic $n$-simplices.

\begin{lemma}\label{lengthandangle}
Given a hyperbolic $($Euclidean, spherical, ideal hyperbolic, resp.$)$ $n$-simplex $\Delta^n\subset\mathbb{H}^{n}(\mathbb{E}^n, \mathbb{S}^{n}, \overline{\mathbb{H}^n}, \text{resp.})$, then we have
\begin{equation}\label{m3}
		 \begin{cases}
         \cos\theta_{ij}=\dfrac{G_{ij}}{\sqrt{G_{ii}G_{jj}}}\left(-\dfrac{G_{ij}}{\sqrt{G_{ii}G_{jj}}},-\dfrac{G_{ij}}{\sqrt{G_{ii}G_{jj}}},\dfrac{G_{ij}}{\sqrt{G_{ii}G_{jj}}}, \text{resp}.\right), \\
         \\
         \cosh \ell_{ij}(\cos \beta_{ij},\cos \ell_{ij}, \cosh \ell_{ij}, \text{resp}.)=\dfrac{G^\ast_{ij}}{\sqrt{G^\ast_{ii}G^\ast_{jj}}},
         \end{cases}
  \end{equation}
for all geodesic edges $e_{ij}$ $($all geodesic edges $e_{ij}$ not incident to $v_{n+1}$ in Euclidean case$)$ of $\Delta^n$, where $\beta_{ij}$ is the interior angle $\angle v_iv_{n+1}v_j$. 

As a consequence, the dihedral angles $\theta_{ij}$ of the hyperbolic $($Euclidean, spherical, ideal hyperbolic, resp.$)$ $n$-simplex $\Delta^n$ are real analytic functions of edge lengths $\ell_{ij}$ of $\Delta^n$ and vice versa $($up to scaling in Euclidean case$)$.
\end{lemma}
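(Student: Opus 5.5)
The plan is to exploit the duality of Proposition \ref{eu1} and Proposition \ref{l1}: the Gram matrices $G$ and $G^*$ are, up to a positive diagonal rescaling, inverse to each other. Write $V$ for the $(n+1)\times(n+1)$ matrix whose columns are $v_1,\dots,v_{n+1}$, and $V^*$ for the matrix of dual vertices. In the hyperbolic case $G=V^TI_{n,1}V$ and $G^*=(V^*)^TI_{n,1}V^*$. Relations (\ref{17}) give $V^TI_{n,1}V^*=D$ with $D=\mathrm{diag}(d_1,\dots,d_{n+1})$ and $d_i=\langle v_i,v_i^*\rangle_{n,1}<0$. Hence $V^*=I_{n,1}V^{-T}D$, using $I_{n,1}^{-1}=I_{n,1}$. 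Substituting,
\[
G^*=D\,V^{-1}I_{n,1}V^{-T}D=D\,G^{-1}D,
\]
and symmetrically $G=D\,(G^*)^{-1}D$.

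Next, $G^{-1}=G^{ad}/|G|$ and $G$ is symmetric, so $\langle v_i^*,v_j^*\rangle_{n,1}=d_id_jG_{ij}/|G|$. The diagonal entries are $1$, which forces $d_i^2=|G|/G_{ii}$. Dividing the two relations eliminates the $d$'s:
\[
\langle v_i^*,v_j^*\rangle_{n,1}=\mathrm{sgn}(d_id_j)\,\mathrm{sgn}(|G|)\,\frac{G_{ij}}{\sqrt{G_{ii}G_{jj}}}.
\]
Here $\mathrm{sgn}(d_id_j)=+1$ because both $d_i,d_j<0$. Since $G$ has the signature of $\langle\cdot,\cdot\rangle_{n,1}$, namely $(n,0,1)$, we get $|G|<0$. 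Corollary \ref{eio} gives $\langle v_i^*,v_j^*\rangle=\cos(\pi-\theta_{ij})=-\cos\theta_{ij}$. Combining these yields $\cos\theta_{ij}=G_{ij}/\sqrt{G_{ii}G_{jj}}$. The same computation with the roles of $G$ and $G^*$ swapped gives $\langle v_i,v_j\rangle_{n,1}$ in terms of the cofactors of $G^*$. Since $|G^*|=\prod_i d_i^2/|G|<0$ and $\langle v_i,v_j\rangle_{n,1}=-\cosh\ell_{ij}$, we obtain $\cosh\ell_{ij}=G^*_{ij}/\sqrt{G^*_{ii}G^*_{jj}}$.

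The spherical case runs identically with $I_{n,1}$ replaced by the identity. There $|G|>0$, which produces the sign flip $\cos\theta_{ij}=-G_{ij}/\sqrt{G_{ii}G_{jj}}$ and gives $\cos\ell_{ij}=G^*_{ij}/\sqrt{\cdots}$.

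The Euclidean case uses the $n\times n$ matrices of $\mathbf v_i$ ($1\le i\le n$) and $v_i^*$ ($1\le i\le n$), with $G$ positive definite. The edge-side formula then reads $\cos\beta_{ij}=\langle \mathbf v_i,\mathbf v_j\rangle/(|\mathbf v_i||\mathbf v_j|)$, which matches the normalization of $G^*$'s cofactors. The angle formula needs care, because only the faces $F_i$ with $i\le n$ are captured directly, and the indexing of $G$ is $n\times n$. I expect this bookkeeping to be the main obstacle: getting the Euclidean statement (with $\beta_{ij}$ and the edges not through $v_{n+1}$) exactly right.

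The ideal case cannot use $G_{ii}$ directly, since $G_{ii}$ may degenerate to $0$ at an ideal vertex. I will obtain it as a limit of hyperbolic simplices, as in the proof of Corollary \ref{eio}. The formulas are homogeneous of degree zero under rescaling the representatives $v_i\mapsto c_iv_i$ with $c_i>0$, so the choice of representatives is irrelevant. For the angle formula, the ideal vertex corresponds to $\langle v_i,v_i\rangle=0$, but $|G|<0$ still holds since the $v_i$ span $\mathbb R^{n,1}$. The cofactors $G_{ii}$ for an ideal vertex $v_j$ with $j\ne i$ stay positive, so the formula survives the limit; for $\cosh\ell_{ij}=\infty$ both sides diverge consistently.

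Finally, the analyticity claim follows from these formulas. The Gram matrix $G$ is an explicit analytic function of the $\ell_{ij}$ (entries $-\cosh\ell_{ij}$, $\cos\ell_{ij}$, or $\langle \mathbf v_i,\mathbf v_j\rangle$ via the law of cosines), and the cofactors are polynomial in its entries. Likewise $G^*$ has entries $-\cos\theta_{ij}$. Inverting $\cos$ and $\cosh$ on the relevant ranges $(0,\pi)$ and $(0,\infty)$ is analytic, with the Euclidean case determined only up to scale.
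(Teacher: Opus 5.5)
Your argument is essentially the paper's proof. From (\ref{17}) you get $V^TI_{n,1}V^*=D$, hence $G^*=DG^{-1}D$ and $G=D(G^*)^{-1}D$. The diagonal entries then fix $|d_i|$, and the signs of $d_i$ and $|G|$ (or $|G^*|$) fix the sign in front of the normalized cofactor. The paper also treats the ideal case by approximation. Two points need correcting.

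\textbf{The Euclidean case.} There is no real obstacle here; it is the same computation with the $n\times n$ matrices. Use $d_i=\langle \mathbf v_i,v_i^*\rangle<0$, note that $|G|,|G^*|>0$, and take the diagonal relations $1=d_i^2G_{ii}/|G|$ and $\ell_{i,n+1}^2=d_i^2G^*_{ii}/|G^*|$. These give $\cos\theta_{ij}=-G_{ij}/\sqrt{G_{ii}G_{jj}}$ and $\cos\beta_{ij}=G^*_{ij}/\sqrt{G^*_{ii}G^*_{jj}}$ for $i,j\le n$, which is exactly the range in the statement. The restriction to faces $F_i$ with $i\le n$ matters only for the analyticity consequence. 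There you relabel the vertices so that each one plays the role of $v_{n+1}$, and then use the laws of sines and cosines.

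\textbf{The ideal case.} You have the degenerating cofactor backwards. $G_{ii}=|G(i,i)|$ is the Gram determinant of the $n$ vertices other than $v_i$, which span a timelike hyperplane. So $G_{ii}<0$ always: it is not positive, as you wrote, and it is never $0$. It is $G^*_{ii}$ that vanishes at an ideal vertex $v_i$, as in the proof of Theorem \ref{admissibleidealangle}. Consequently the angle formula holds in the ideal case directly from your hyperbolic computation, since (\ref{17}) and $|G|<0$ still hold. Only the edge formula with an ideal endpoint needs to be read in the extended sense $\cosh\ell_{ij}=\infty$. Your observation that the formulas are invariant under positive rescaling of the light-like representatives is a useful point the paper leaves implicit.
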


\pf
(Hyperbolic case.) Define the matrices $D:=(\langle v_i,v_j^*\rangle_{n,1})_{1\le i,j\le n+1}$ and $M^{(*)}:=(v_1^{(*)},v_2^{(*)},\cdots,v_{n+1}^{(*)})$, where the vertices $v_1^{(*)},v_2^{(*)},$ $\cdots,v_{n+1}^{(*)}\in\mathbb{R}^{n+1}$ are considered as the column vectors. By (\ref{17}), $D$ is a diagonal matrix, namely $D=\text{diag}\{\langle v_1,v_1^*\rangle_{n,1},\cdots,\langle v_{n+1},v_{n+1}^*\rangle_{n,1}\}$.

By the proof of Proposition \ref{l1}, the $n+1$ vertices $v_1^{(*)},v_2^{(*)},\cdots,v_{n+1}^{(*)}$ are linearly independent, which implies that the matrix $M^{(*)}$ is full-rank and $|M^{(*)}|\ne 0$. Moreover, we have
\begin{equation}\label{g1}
\begin{aligned}
\langle M^{(*)},M^{(*)}\rangle_{n,1}&=M^{(*)T}I_{n,1}M^{(*)}=(v_i^{(*)T}I_{n,1}v_j^{(*)})_{1\le i,j\le n+1}\\
&=(\langle v_i^{(*)},v_j^{(*)} \rangle_{n,1})_{1\le i,j\le n+1}\\
&=G^{(*)}.
\end{aligned}
\end{equation}
Similarly, we obtain 
\begin{equation}\label{g2}
\langle M,M^*\rangle_{n,1}=M^{T}I_{n,1}M^*=(\langle v_i,v_j^* \rangle_{n,1})_{1\le i,j\le n+1}=D.
\end{equation}

By (\ref{g1}), we have 
\begin{equation}\label{g30}
|G^{(*)}|=|M^{(*)T}I_{n,1}M^{(*)}|=-|M^{(*)}|^2<0,
\end{equation}
\begin{equation}\label{g5}
G^{(*)-1}=M^{(*)-1}I_{n,1}(M^{(*)T})^{-1}.
\end{equation}
Since $G^{(*)}$ is a symmetric matrix, then the adjoint matrix $G^{(*)ad}$ is symmetric. Morever, we have
\begin{equation}\label{g7}
G^{(*)ad}=(G_{ij}^{(*)})_{1\le i,j\le n+1},~G^{(*)-1}=\frac{G^{(*)ad}}{|G^{(*)}|}.
\end{equation}

By (\ref{g2}), we have 
\begin{equation}\label{g4}
M=I_{n,1}(M^{*-1})^TD,~M^*=I_{n,1}(M^{-1})^TD.
\end{equation}

By (\ref{g1}), (\ref{g5}), (\ref{g7}) and (\ref{g4}), we have
\begin{equation}\label{g4.8}
G=DG^{*-1}D=D\frac{G^{*ad}}{|G^{*}|}D,~G^*=DG^{-1}D=D\frac{G^{ad}}{|G|}D.
\end{equation}

Since $D$ is a  diagonal matrix, then by (\ref{g7}) and (\ref{g4.8}), we have
\begin{equation}\label{g8}
\langle v_i,v_j\rangle_{n,1}=\langle v_i,v_i^*\rangle_{n,1}\frac{G_{ij}^*}{|G^*|}\langle v_j,v_j^*\rangle_{n,1},~1\le i,j\le n+1,
\end{equation}
\begin{equation}\label{g10}
\langle v_i^*,v_j^*\rangle_{n,1}=\langle v_i,v_i^*\rangle_{n,1}\frac{G_{ij}}{|G|}\langle v_j,v_j^*\rangle_{n,1},~1\le i,j\le n+1
.\end{equation}
Then for any $1\le i\le n+1$, by (\ref{17}), (\ref{g8}) and (\ref{g10}), we have
\begin{equation}\label{g9}
-1=\langle v_i,v_i\rangle_{n,1}=\langle v_i,v_i^*\rangle_{n,1}^2\frac{G_{ii}^*}{|G^*|},~1=\langle v_i^*,v_i^*\rangle_{n,1}=\langle v_i,v_i^*\rangle_{n,1}^2\frac{G_{ii}}{|G|}.
\end{equation}
By (\ref{17}) and (\ref{g9}), we obtain
\begin{equation}\label{g11}
\langle v_i,v_i^*\rangle_{n,1}=-\sqrt{\frac{|G|}{G_{ii}}}=-\sqrt{\frac{-|G^*|}{G_{ii}^*}},~1\le i\le n+1.
\end{equation}
Then for any $1\le i\ne j\le n+1$, by (\ref{c3}), (\ref{g30}), (\ref{g8}), (\ref{g10}) and (\ref{g11}), we have
$$
-\cosh \ell_{ij}=\langle v_i,v_j\rangle_{n,1}=-\sqrt{\frac{-|G^*|}{G_{ii}^*}}\cdot\frac{G_{ij}^*}{|G^*|}\cdot-\sqrt{\frac{-|G^*|}{G_{jj}^*}}=-\frac{G_{ij}^*}{\sqrt{G_{ii}^*G_{jj}^*}},
$$
$$
\cos (\pi-\theta_{ij})=\langle v_i^*,v_j^*\rangle_{n,1}=-\sqrt{\frac{-|G|}{-G_{ii}}}\cdot\frac{G_{ij}}{|G|}\cdot-\sqrt{\frac{-|G|}{-G_{jj}}}=-\frac{G_{ij}}{\sqrt{G_{ii}G_{jj}}},
$$
which implies (\ref{m3}), and the dihedral angles $\theta_{ij}$ of the hyperbolic $n$-simplex $\Delta^n$ are real analytic functions of edge lengths $\ell_{ij}$ and vice versa.

(Euclidean case.) Define the matrices $D:=(\langle \textbf{v}_i,v_j^*\rangle)_{1\le i,j\le n}$, $M=(\textbf{v}_1,\textbf{v}_2$, $\cdots,\textbf{v}_n)$ and $M^*=(v_1^*,v_2^*,\cdots,v_{n}^*)$, where $\textbf{v}_i,v_i^*\in\mathbb{R}^n$ are considered as the column vectors for all $1\le i\le n$.  

By (\ref{eud}), $D$ is a diagonal matrix, namely $D=\text{diag}\{\langle \mathbf{v}_1,v_1^*\rangle,\cdots,\langle\textbf{v}_n,v_n^*\rangle\}$. Moreover, $\mathbf{v}_1,\mathbf{v}_2,\cdots,\mathbf{v}_n$, $v_1^*,v_2^*,\cdots,v_{n}^*$ are linearly independent, respectively, which implies that the matrix $M^{(*)}$ is full-rank and $|M^{(*)}|\ne 0$.  

Similar to the proof in the hyperbolic case, we have
\begin{equation}\label{g14}
\langle M^{(*)},M^{(*)}\rangle=M^{(*)T}M^{(*)}=G^{(*)},~\langle M,M^*\rangle=M^TM^*=D, 
\end{equation}
\begin{equation}\label{g15}
|G^{(*)}|=|M^{(*)T}M^{(*)}|=|M^{(*)}|^2>0, 
\end{equation}
\begin{equation}\label{g16}
\langle \textbf{v}_i,\textbf{v}_j\rangle=\langle \textbf{v}_i,v_i^*\rangle\frac{G_{ij}^*}{|G^*|}\langle \textbf{v}_j,v_j^*\rangle,~1\le i,j\le n,
\end{equation}
\begin{equation}\label{g17}
\langle v_i^*,v_j^*\rangle=\langle \textbf{v}_i,v_i^*\rangle\frac{G_{ij}}{|G|}\langle \textbf{v}_j,v_j^*\rangle,~1\le i,j\le n
.\end{equation}
Then for any $1\le i\le n$, by (\ref{g16}) and (\ref{g17}), we have
\begin{equation}\label{g18}
\ell_{in+1}^2=\langle \textbf{v}_i,\textbf{v}_i\rangle=\langle \textbf{v}_i,v_i^*\rangle^2\frac{G_{ii}^*}{|G^*|},
\end{equation}
\begin{equation}\label{g19}
1=\langle v_i^*,v_i^*\rangle=\langle \textbf{v}_i,v_i^*\rangle^2\frac{G_{ii}}{|G|}.
\end{equation}

By (\ref{eud}), (\ref{g18}) and (\ref{g19}), we have
\begin{equation}\label{g20}
\langle \textbf{v}_i,v_i^*\rangle=-\sqrt{\frac{|G|}{G_{ii}}}=-\ell_{in+1}\sqrt{\frac{|G^*|}{G_{ii}^*}},~1\le i\le n.
\end{equation}

Since $v_i^*(1\le i\le n)$ is the unit outward normal vector of the $i$-face $F_{i}$ of the Euclidean $n$-simplex $\Delta^n$, then by (\ref{g16}), (\ref{g17}) and (\ref{g20}), we obtain
\begin{align*}
\ell_{in+1}\ell_{jn+1}\cos\beta_{ij}=\langle \textbf{v}_i,\textbf{v}_j\rangle&=-\ell_{in+1}\sqrt{\frac{|G^*|}{G_{ii}^*}}\cdot\frac{G_{ij}^*}{|G^*|}\cdot-\ell_{jn+1}\sqrt{\frac{|G^*|}{G_{jj}^*}}\\&=\ell_{in+1}\ell_{jn+1}\frac{G_{ij}^*}{\sqrt{G_{ii}^*G_{jj}^*}},
\end{align*}
\begin{align*}
\cos(\pi-\theta_{ij})=\langle v_i^*,v_j^*\rangle&=-\sqrt{\frac{|G|}{G_{ii}}}\cdot\frac{G_{ij}}{|G|}\cdot-\sqrt{\frac{|G|}{G_{jj}}}\\
&=\dfrac{G_{ij}}{\sqrt{G_{ii}G_{jj}}},
\end{align*}
which implies (\ref{m3}).  

We can also arbitrarily rearrange the order of vertices of $\Delta^n$ and obtain the formulas similar to (\ref{m3}). By law of sines and cosines and (\ref{m3}), the dihedral angles $\theta_{ij}$ of the Euclidean $n$-simplex $\Delta^n$ are real analytic functions of edge lengths $\ell_{ij}$ and vice versa up to scaling. 

(Spherical case.) The proof has been omitted since it is quite similar to the proof in hyperbolic case.

(Ideal hyperbolic case.) Since $\Delta^n$ is an ideal hyperbolic $n$-simplex, then $\Delta^n$ is the Gromov-Hausdorff limit of a sequence of hyperbolic $n$-simplices $\{\Delta^n_m\}_{m=1}^{\infty}\subset\mathbb{H}^n$, which implies that
\begin{equation}\label{xi20}
\theta_{ij}^{(m)}\rightarrow\theta_{ij},~\ell_{ij}^{(m)}\rightarrow\ell_{ij},~\dfrac{G^{(m)(\ast)}_{ij}}{\sqrt{G^{(m)(\ast)}_{ii}G^{(m)(\ast)}_{jj}}}\to \dfrac{G^{(\ast)}_{ij}}{\sqrt{G^{(\ast)}_{ii}G^{(\ast)}_{jj}}},~m\rightarrow\infty.
\end{equation}

For any hyperbolic $n$-simplex $\Delta^n_m(m\ge1)$, by the proof of hyperbolic case, we have 
\begin{equation}\label{xi21}
\cos\theta_{ij}^{(m)}=\dfrac{G_{ij}^{(m)}}{\sqrt{G_{ii}^{(m)}G_{jj}^{(m)}}},~\cosh \ell_{ij}^{(m)}=\dfrac{G^{(m)(\ast)}_{ij}}{\sqrt{G^{(m)(\ast)}_{ii}G^{(m)(\ast)}_{jj}}}.
\end{equation}
Then by (\ref{xi20}) and (\ref{xi21}), we obtain (\ref{m3}) as $m\to\infty$.
\pfd

The following result provides a characterization of hyperbolic $n$-simplices, which is obtained by Luo \cite{Luo97}.

\lm[\cite{Luo97}]\label{angleadmissiblehyperbolic}
Given a set of positive numbers 
$$
\{\theta_{ij}:\theta_{ii}=\pi,\theta_{ij}=\theta_{ji}\in(0,\pi),1\le i, j\le n+1\},
$$
then there exists a hyperbolic $n$-simplex $\Delta^n\subset\mathbb{H}^n$ with $n+1$ vertices $v_1,\cdots,v_{n+1}$ such that the dihedral angle of the $ij$-face of $\Delta^n$ is $\theta_{ij}$ for all $ij$-faces if and only if the matrix $\Theta:=(\cos(\pi-\theta_{ij}))_{1\le i,j\le n+1}$ satisfies
\begin{enumerate}[1.]
\item the determinant is negative;
\item the $n\times n$ principal submatrices are positive definite;
\item the $(i,j)$-cofactors $(i\ne j)$ are positive.
\end{enumerate}
\lmd

We give the proof of Theorem \ref{admissibleidealangle}, which is the generalization of Lemma \ref{angleadmissiblehyperbolic}.

\pf[Proof of Theorem \ref{admissibleidealangle}]
$(\Rightarrow)$ By Proposition \ref{l1}, there exists the dual $n$-simplex $(\Delta^n)^*\subset\ds$ with $n+1$ vertices $v_1^*,\cdots,v_{n+1}^*$ such that $(\ref{17})$ holds. Moreover, by the proof of Corollary \ref{conedge}, we have 
$$
\langle v_i^* ,v_j^* \rangle_{n,1}=\cos(\pi-\theta_{ij}),~1\le i, j\le n+1,
$$
which implies that $\Theta=G^*$.

Define $M^*:=(v_1^{*},v_2^{*},\cdots,v_{n+1}^{*})$, where the vertices $v_1^{*},v_2^{*},\cdots,v_{n+1}^{*}\in\mathbb{R}^{n+1}$ are considered as the column vectors. By the proof of Proposition \ref{l1}, the $n+1$ vertices $v_1^{*},v_2^{*},\cdots,v_{n+1}^{*}$ are linearly independent, which implies that the matrix $M^{*}$ is full-rank and $|M^{*}|\ne 0$. Moreover, we have
\begin{equation}\label{pio}
\begin{aligned}
\langle M^{*},M^{*}\rangle_{n,1}&=M^{*T}I_{n,1}M^{*}=(v_i^{*T}I_{n,1}v_j^{*})_{1\le i,j\le n+1}\\
&=(\langle v_i^{*},v_j^{*} \rangle_{n,1})_{1\le i,j\le n+1}\\
&=G^{*}.
\end{aligned}
\end{equation}
By (\ref{pio}), we have 
\begin{equation}\label{g3}
|\Theta|=|G^{*}|=|M^{*T}I_{n,1}M^{*}|=-|M^{*}|^2<0.
\end{equation}

For any vertex $v_i\in\mathbb{H}^n$, by $(\ref{17})$, we have
$$
\Span\{v_i\}\oplus H_i^*=\mathbb{R}^{n,1},
$$
where $H_i^*=\Span\{v_1^*,\cdots,\hat{v}^*_i,\cdots,v_{n+1}^*\}$ and $\hat{v}_i^*$ means remove the vertex $v_i^*$ from $\{v_1^*,\cdots,v_{n+1}^*\}$. 

Since $\langle v_i,v_i\rangle_{n,1}=-1<0$, then the signature of $\langle \cdot,\cdot\rangle_{n,1}|_{H_i^*}$ is $(n,0,0)$, which implies that the $n\times n$ matrix $\Theta(i,i)=G^*(i,i)=(\langle v_p^*,v_q^*\rangle_{n,1})_{p,q\ne i}$ is positive definite.     

For any ideal vertex $v_i\in\partial_{\infty}\mathbb{H}^n$, $\langle v_i, v_i\rangle_{n,1}=0$. Moreover, by $(\ref{17})$, we have $\Span\{v_i\}\subset\Span\{v_i\}^{\perp}=H_i^*$ and the signature of $\langle \cdot,\cdot\rangle_{n,1}|_{H_i^*}$ is $(n-1,1,0)$, which implies that $|\Theta(i,i)|=|G^*(i,i)|=|(\langle v_p^*,v_q^*\rangle_{n,1})_{p,q\ne i}|=0$ and the $n\times n$ matrix $\Theta(i,i)$ is singular.     

Since $v_i\in H_i^*$, suppose that $v_i=\sum_{j\ne i}k_jv_j^*$, by (\ref{17}), we have
$$
k_j=\dfrac{\langle v_j,v_i\rangle_{n,1}}{\langle v_j,v_j^*\rangle_{n,1}}>0,~\text{for all}~j\ne i
$$
and $v_1^*,\cdots,\hat{v}^*_i,\cdots,v_i(\text{j-th}),\cdots,v_{n+1}^*$ is a basis of $H_i^*$. Then we have
$$
\Span\{v_i\}\oplus H_{ij}^*=H_i^*,~~\text{for all}~j\ne i,
$$
where $H_{ij}^*=\Span\{v_1^*,\cdots,\hat{v}^*_i,\cdots,\hat{v}^*_j,\cdots,v_{n+1}^*\}$. Since the signature of $\langle \cdot,\cdot\rangle_{n,1}|_{H_i^*}$ is $(n-1,1,0)$ and $\langle v_i, v_i\rangle_{n,1}=0$, then the signature of $\langle \cdot,\cdot\rangle_{n,1}|_{H_{ij}^*}$ is $(n-1,0,0)$, which implies that the $(n-1)\times (n-1)$ principal submatrix $\Theta[i,j]=G^*[i,j]=(\langle v_p^*,v_q^*\rangle_{n,1})_{p,q\ne i,j}$ of $\Theta(i,i)$ is positive definite for all $j\ne i$. 

For any two ideal vertices $v_i,v_j(i\ne j)\in\partial_{\infty}\mathbb{H}^n$, by above argument, we have $v_i\in H_i^*$($v_j\in H_j^*$, resp.) and $v_1^*,\cdots,\hat{v}^*_i,\cdots,v_i(\text{j-th}),\cdots,v_{n+1}^*$($v_1^*,\cdots,$ $v_j(\text{i-th}),\cdots,\hat{v}_j^*,\cdots,v_{n+1}^*$) is a basis of $H_i^*$($H_j^*$, resp.). Moreover, suppose that $v_i=\sum_{m\ne i}k_m^{(i)}v_m^*$ and $v_j=\sum_{m\ne j}k_m^{(j)}v_m^*$, then we have
\begin{equation}\label{xuhao}
k_m^{(i)}=\dfrac{\langle v_m,v_i\rangle_{n,1}}{\langle v_m,v_m^*\rangle_{n,1}}>0,~m\ne i;~k_m^{(j)}=\dfrac{\langle v_m,v_j\rangle_{n,1}}{\langle v_m,v_m^*\rangle_{n,1}}>0,~m\ne j,
\end{equation}

Define $M_i^*:=(v_1^*,\cdots,\hat{v}^*_i,\cdots,v_{n+1}^*)$, $\widetilde{M}_i^*:=(v_1^*,\cdots,\hat{v}^*_i,\cdots,v_i,\cdots,v_{n+1}^*)$ and $M_j^*:=(v_1^*,\cdots,\hat{v}^*_j,\cdots,v_{n+1}^*)$, $\widetilde{M}_j^*:=(v_1^*,\cdots,v_j,\cdots,\hat{v}_j^*,\cdots,v_{n+1}^*)$, then by (\ref{xuhao}), we have
\begin{equation}\label{xu1}
\widetilde{M}_i^*=M_i^*\Lambda_i,~\widetilde{M}_j^*=M_j^*\Lambda_j,
\end{equation}
where 
$$
\Lambda_i=\begin{pmatrix}
   1&&&k_1^{(i)}&\\
   &\ddots&&\vdots&\\
   &&1&k_{j-1}^{(i)}&\\
   &&&k_j^{(i)}&\\
   &&&k_{j+1}^{(i)}&1\\
   &&&\vdots&&\ddots\\
   &&&k_{n+1}^{(i)}&&&1\\
\end{pmatrix},~
\Lambda_j=\begin{pmatrix}
   1&&&k_1^{(j)}&\\
   &\ddots&&\vdots&\\
   &&1&k_{i-1}^{(j)}&\\
   &&&k_i^{(j)}&\\
   &&&k_{i+1}^{(j)}&1\\
   &&&\vdots&&\ddots\\
   &&&k_{n+1}^{(j)}&&&1\\
\end{pmatrix}.
$$
Moreover, we have $|\Lambda_i|=k_j^{(i)}>0$ and $|\Lambda_j|=k_i^{(j)}>0$. 

By (\ref{xu1}), we have
\begin{equation}\label{xu2}
\begin{aligned}
G^*(i,j)=\langle M_i^*,M_j^*\rangle_{n,1}&=M_i^{*T}I_{n,1}M_j^*\\
&=(\widetilde{M}_i^*\Lambda_i^{-1})^TI_{n,1}\widetilde{M}_j^*\Lambda_j^{-1}\\
&=(\Lambda_i^{-1})^T\widetilde{M}_i^{*T}I_{n,1}\widetilde{M}_j^*\Lambda_j^{-1}\\
&=(\Lambda_i^{-1})^T\langle \widetilde{M}_i^{*},\widetilde{M}_j^*\rangle_{n,1}\Lambda_j^{-1}.
\end{aligned}
\end{equation}
By (\ref{17}), we have
\begin{equation}\label{xu3}
|\langle \widetilde{M}_i^{*},\widetilde{M}_j^*\rangle_{n,1}|=(-1)^{i+j-1}\langle v_i,v_j\rangle_{n,1}|G^*[i,j]|.
\end{equation}

Since $v_i,v_j$ are ideal vertices, then by above argument, $G^*[i,j]$ is positive definite, which implies that $|G^*[i,j]|>0$. Moreover, by (\ref{xu2}) and (\ref{xu3}), we have
$$
\Theta_{ij}=G_{ij}^*=(-1)^{i+j}|G^*(i,j)|=-\dfrac{\langle v_i,v_j\rangle_{n,1}}{k_j^{(i)}k_i^{(j)}}|G^*[i,j]|>0.
$$

For any vertex $v_i\in\mathbb{H}^n$ and ideal vertex $v_j\in\partial_{\infty}\mathbb{H}^n$, by above argument, we have
\begin{equation}\label{xu4}
\begin{aligned}
G^*(i,j)=\langle M_i^*,M_j^*\rangle_{n,1}&=M_i^{*T}I_{n,1}M_j^*\\
&=M_i^{*T}I_{n,1}\widetilde{M}_j^*\Lambda_j^{-1}\\
&=\langle M_i^*,\widetilde{M}_j^*\rangle_{n,1}\Lambda_j^{-1}.
\end{aligned}
\end{equation}
By (\ref{17}), we have
\begin{equation}\label{xu5}
|\langle M_i^*,\widetilde{M}_j^*\rangle_{n,1}|=(-1)^{i+j-1}\langle v_j,v_j^*\rangle_{n,1}|G^*[i,j]|.
\end{equation}
Since $v_j$ are ideal vertices, then by above argument, $G^*[i,j]$ is positive definite, which implies that $|G^*[i,j]|>0$. Moreover, by (\ref{xu4}) and (\ref{xu5}), we have
$$
\Theta_{ij}=G_{ij}^*=(-1)^{i+j}|G^*(i,j)|=-\dfrac{\langle v_j,v_j^*\rangle_{n,1}}{k_i^{(j)}}|G^*[i,j]|>0.
$$

For any two vertices $v_i,v_j(i\ne j)\in\mathbb{H}^n$, by the proof of Lemma \ref{lengthandangle}, we have
$$
-\cosh \ell_{ij}=\langle v_i,v_j\rangle_{n,1}=-\frac{G_{ij}^*}{\sqrt{G_{ii}^*G_{jj}^*}}<0,
$$
which implies that $\Theta_{ij}=G_{ij}^*>0$. This complete the proof of ``$\Rightarrow$''.

($\Leftarrow$) Define $V:=\{1,\cdots,n+1\}$, then we have $V=V_{\infty}\bigsqcup V\setminus V_{\infty}$, where $V_{\infty}$ is the set consisting of all $i$ that satisfy condition $3$.

The proof is divided into the following two steps:

\noindent\textbf{Step 1:} If $V_{\infty}=\varnothing$, then by Lemma \ref{angleadmissiblehyperbolic}, we have proved this direction. 

If $|V_{\infty}|=1$, suppose that $V_{\infty}=\{i\}$, for any $j\in V\setminus\{i\}$, we can choose $v_k^*\in\ds$ for all $k\in V\setminus \{j\}$ such that 
\begin{equation}\label{xu71}
\langle v_k^*,v_l^*\rangle_{n,1}=\cos (\pi-\theta_{kl}), ~\text{for all}~ k,l\in V\setminus \{j\}
\end{equation} 
and $\{v_k^*\}_{k\in V\setminus \{j\}}$ are linearly independent.
Moreover, we have
\begin{equation}\label{xu61}
   \Span\{v_k^*\}_{k\in V\setminus \{j\}}\oplus\Span\{v_k^*\}_{k\in V\setminus \{j\}}^{\perp} =\mathbb{R}^{n,1}. 
\end{equation}

By assumption and (\ref{xu71}), since $\Theta(j,j)=(\langle v_k^*,v_l^*\rangle_{n,1})_{k,l\in V\setminus \{j\}}$ is positive definite, then by $(\ref{xu61})$, the signature of $\langle \cdot,\cdot\rangle_{n,1}$ on $\Span\{v_k^*\}_{k\in V\setminus \{j\}}^{\perp}$ is $(0,0,1)$.

Choose a basis $u_j$ of $\Span\{v_k^*\}_{k\in V\setminus \{j\}}^{\perp}$ such that $\langle u_j,u_j\rangle_{n,1}=-1$. Moreover, choose $m_k\in\mathbb{R}$ for all $k\in V$ such that
\begin{equation}\label{xu81}
    \begin{cases}
    v_j^*:=m_ju_j+\sum_{k\in V\setminus \{j\}}m_kv_k^*,\\
    \langle v_j^*,v_k^*\rangle_{n,1}=\cos(\pi-\theta_{jk}), ~k\in V\setminus \{j\}.
\end{cases} 
\end{equation}
Then for any $k\in V\setminus \{j\}$, by (\ref{xu81}), we have
\begin{equation}\label{xu101}
\begin{aligned}
\sum_{l\in V\setminus \{j\}}m_l\langle v_k^*,v_l^*\rangle_{n,1}&=\langle v_k^*,\sum_{l\in V\setminus \{j\}}m_lv_l^*\rangle_{n,1}\\
&=\langle v_k^*, v_j^*-m_ju_j\rangle_{n,1}\\
&=\langle v_k^*,v_j^*\rangle_{n,1}.
\end{aligned}
\end{equation}

Define $G_{V\setminus \{j\}}^*:=(\langle v_k^*,v_l^*\rangle_{n,1})_{k,l\in V\setminus \{j\}}$, $\textbf{m}_{ V\setminus \{j\}}=(m_{k})_{k\in V\setminus \{j\}}^T$ and $\alpha_{V\setminus \{j\}}:=(\langle v_k^*,v_j^*\rangle_{n,1})_{_{k\in V\setminus \{j\}}}^T$, then by (\ref{xu101}), we have 
\begin{equation}\label{xu91}
G_{V\setminus \{j\}}^*\textbf{m}_{ V\setminus \{j\}}=\left(\sum_{l\in V\setminus \{j\}}m_l\langle v_k^*,v_l^*\rangle_{n,1}\right)_{k\in V\setminus \{j\}}^T=\alpha_{V\setminus \{j\}}.
\end{equation}

By assumption, since the determinant of $\Theta$ is negative, then after exchanging some rows and columns of $\Theta$ and by (\ref{xu71}), (\ref{xu81}), we have
\begin{equation}\label{xu121}
\begin{vmatrix}
 1& \alpha_{V\setminus \{j\}}^T\\
 \alpha_{V\setminus \{j\}} & G_{V\setminus \{j\}}^*
\end{vmatrix}<0.
\end{equation}

Define $I_{V\setminus\{j\}}:=\text{diag}\{1\}_{k\in V\setminus\{j\}}$, then we have
\begin{equation}\label{xu131}
\begin{pmatrix}
  1& -\alpha_{V\setminus \{j\}}^TG_{V\setminus \{j\}}^{*-1}\\
  \mathbf{0}& I_{V\setminus \{j\}}
\end{pmatrix}
\begin{pmatrix}
  1& \alpha_{V\setminus \{j\}}^T\\
 \alpha_{V\setminus \{j\}} & G_{V\setminus \{j\}}^*
\end{pmatrix}=
\begin{pmatrix}
  1-\alpha_{V\setminus \{j\}}^TG_{V\setminus \{j\}}^{*-1}\alpha_{V\setminus \{j\}}& \mathbf{0}\\
  \alpha_{V\setminus \{j\}}& G_{V\setminus \{j\}}^*
\end{pmatrix}.
\end{equation}
Moreover, since $G_{V\setminus \{j\}}^*$ is positive definite, by (\ref{xu121}) and (\ref{xu131}), we have 
$$
1-\alpha_{V\setminus \{j\}}^TG_{V\setminus \{j\}}^{*-1}\alpha_{V\setminus \{j\}}<0.
$$

Fix $m_j=\sqrt{\alpha_{V\setminus \{j\}}^TG_{V\setminus \{j\}}^{*-1}\alpha_{V\setminus \{j\}}-1}>0$, then by (\ref{xu81}), $\{m_k\}_{k\in V\setminus \{j\}}$ are uniquely determined by $m_j$ and $\langle v_j^*,v_j^*\rangle_{n,1}=1$. Moreover, by (\ref{xu71}) and (\ref{xu81}), we have 
$$
\Theta=G^*=(\langle v_k^*,v_l^*\rangle_{n,1})_{1\le k,l\le n+1}.
$$

If $|V_{\infty}|\ge 2$, for any $i\ne j\in V_{\infty}$, we can choose $v_k^*\in\ds$ for all $k\in V\setminus \{i,j\}$ such that 
\begin{equation}\label{xu7}
\langle v_k^*,v_l^*\rangle_{n,1}=\cos (\pi-\theta_{kl}), ~\text{for all}~ k,l\in V\setminus \{i,j\}
\end{equation}
and $\{v_k^*\}_{k\in V\setminus \{i,j\}}$ are linearly independent. Moreover, we have
\begin{equation}\label{xu6}
   \Span\{v_k^*\}_{k\in V\setminus \{i,j\}}\oplus\Span\{v_k^*\}_{k\in V\setminus \{i,j\}}^{\perp} =\mathbb{R}^{n,1}. 
\end{equation}

By (\ref{xu7}), $(\langle v_k^*,v_l^*\rangle_{n,1})_{k,l\in V\setminus \{i,j\}}$ is a $(n-1)\times (n-1)$ principal submatrix of $\Theta$, then by assumption, $(\langle v_k^*,v_l^*\rangle_{n,1})_{k,l\in V\setminus \{i,j\}}$ is positive definite. Moreover, by $(\ref{xu6})$, the signature of $\langle \cdot,\cdot\rangle_{n,1}$ on $\Span\{v_k^*\}_{k\in V\setminus \{i,j\}}^{\perp}$ is $(1,0,1)$. 

Choose a basis $u_i,u_j$ of $\Span\{v_k^*\}_{k\in V\setminus \{i,j\}}^{\perp}$ such that 
\begin{equation}\label{aa3}
\begin{cases}
 \langle u_s,u_s\rangle_{n,1}=0, ~s\in\{i,j\},\\
 \langle u_i,u_j\rangle_{n,1}<0.
\end{cases} 
\end{equation}
Moreover, choose $m^{(i)}_i,m^{(j)}_j\in\mathbb{R}$ and $(m^{(i)}_k)_{k\in V\setminus \{i,j\}},(m^{(j)}_k)_{k\in V\setminus \{i,j\}}\in\mathbb{R}^{V\setminus \{i,j\}}$ such that
\begin{equation}\label{xu8}
    \begin{cases}
    v_s^*:=m^{(s)}_su_s+\sum_{k\in V\setminus \{i,j\}}m^{(s)}_kv_k^*,~s\in\{i,j\},\\
    \langle v_s^*,v_k^*\rangle_{n,1}=\cos(\pi-\theta_{sk}), ~ k\in V\setminus \{i,j\},\\
    \langle v_i^*,v_j^*\rangle_{n,1}=\cos(\pi-\theta_{ij}).
\end{cases} 
\end{equation}

Fix $m^{(i)}_i$, then by (\ref{xu8}), $m^{(j)}_j, (m^{(i)}_k)_{k\in V\setminus \{i,j\}},(m^{(j)}_k)_{k\in V\setminus \{i,j\}}$ are uniquely determined by $m^{(i)}_i$. Moreover, for any $k\in V\setminus \{i,j\}$ and $s\in\{i,j\}$, by (\ref{xu8}), we have 
\begin{equation}\label{xu10}
\begin{aligned}
\sum_{l\in V\setminus \{i,j\}}m_l^{(s)}\langle v_k^*,v_l^*\rangle_{n,1}&=\langle v_k^*,\sum_{l\in V\setminus \{i,j\}}m_l^{(s)}v_l^*\rangle_{n,1}\\
&=\langle v_k^*, v_s^*-m_s^{(s)}u_s\rangle_{n,1}\\
&=\langle v_k^*,v_s^*\rangle_{n,1}.
\end{aligned}
\end{equation}

Define $G_{V\setminus \{i,j\}}^*:=(\langle v_k^*,v_l^*\rangle_{n,1})_{k,l\in V\setminus \{i,j\}}$, $\textbf{m}_{ V\setminus \{i,j\}}^{(s)}=(m_{k}^{(s)})_{k\in V\setminus \{i,j\}}^T$ and $\alpha^{(s)}_{V\setminus \{i,j\}}:=(\langle v_k^*,v_s^*\rangle_{n,1})_{_{k\in V\setminus \{i,j\}}}^T$, where $s\in\{i,j\}$. Then by (\ref{xu10}), we have 
\begin{equation}\label{xu9}
G_{V\setminus \{i,j\}}^*\textbf{m}_{ V\setminus \{i,j\}}^{(s)}=\left(\sum_{l\in V\setminus \{i,j\}}m_l^{(s)}\langle v_k^*,v_l^*\rangle_{n,1}\right)_{k\in V\setminus \{i,j\}}^T=\alpha^{(s)}_{V\setminus \{i,j\}}.
\end{equation}
Moreover, by (\ref{aa3}), (\ref{xu8}) and (\ref{xu9}), for $s\in\{i,j\}$, we have
\begin{equation}\label{xu11}
\begin{aligned}
\langle v_s^*,v_s^*\rangle_{n,1}&=\langle m^{(s)}_su_s+\sum_{k\in V\setminus \{i,j\}}m^{(s)}_kv_k^*,m^{(s)}_su_s+\sum_{k\in V\setminus \{i,j\}}m^{(s)}_kv_k^* \rangle_{n,1}\\
&=\sum_{k,l\in V\setminus \{i,j\} }m^{(s)}_km^{(s)}_l\langle v_k^*,v_l^*\rangle_{n,1}\\
&=\textbf{m}_{ V\setminus \{i,j\}}^{(s)T}G_{V\setminus \{i,j\}}^*\textbf{m}_{ V\setminus \{i,j\}}^{(s)}\\
&=\alpha^{(s)T}_{V\setminus \{i,j\}}G_{V\setminus \{i,j\}}^{*-1}\alpha^{(s)}_{V\setminus \{i,j\}}.
\end{aligned}
\end{equation}

By assumption, since the principal submatrices $\Theta(i,i),\Theta(j,j)$ are singular, then after exchanging some rows and columns, for $s\in\{i,j\}$, we have
\begin{equation}\label{xu12}
\begin{vmatrix}
  1& \alpha^{(s)T}_{V\setminus \{i,j\}}\\
  \alpha^{(s)}_{V\setminus \{i,j\}}& G_{V\setminus \{i,j\}}^*
\end{vmatrix}=0.
\end{equation}

Define $I_{V\setminus\{i,j\}}:=\text{diag}\{1\}_{k\in V\setminus\{i,j\}}$ , then for $s\in\{i,j\}$, we have
\begin{equation}\label{xu13}
\begin{pmatrix}
  1& -\alpha^{(s)T}_{V\setminus \{i,j\}}G_{V\setminus \{i,j\}}^{*-1}\\
  \mathbf{0}& I_{V\setminus \{i,j\}}
\end{pmatrix}
\begin{pmatrix}
   1& \alpha^{(s)T}_{V\setminus \{i,j\}}\\
  \alpha^{(s)}_{V\setminus \{i,j\}}& G_{V\setminus \{i,j\}}^*
\end{pmatrix}=
\begin{pmatrix}
  1-\alpha^{(s)T}_{V\setminus \{i,j\}}G_{V\setminus \{i,j\}}^{*-1} \alpha^{(s)}_{V\setminus \{i,j\}}& \mathbf{0}\\
  \alpha^{(s)}_{V\setminus \{i,j\}}& G_{V\setminus \{i,j\}}^*
\end{pmatrix}.
\end{equation}
Moreover, since $G_{V\setminus \{i,j\}}^*$ is positive definite, by (\ref{xu12}) and (\ref{xu13}), we have 
$$
1-\alpha^{(s)T}_{V\setminus \{i,j\}}G_{V\setminus \{i,j\}}^{*-1} \alpha^{(s)}_{V\setminus \{i,j\}}=0,
$$
which implies that $\langle v_s^*,v_s^*\rangle_{n,1}=\alpha^{(s)T}_{V\setminus \{i,j\}}G_{V\setminus \{i,j\}}^{*-1} \alpha^{(s)}_{V\setminus \{i,j\}}=1$, where $s\in\{i,j\}$. Moreover, by (\ref{aa3}) and (\ref{xu8}, we have
$$
\Theta=G^*=(\langle v_k^*,v_l^*\rangle_{n,1})_{1\le k,l\le n+1}.
$$

\noindent\textbf{Step 2:} Define 
\begin{equation}\label{xu15}
v_i:=\begin{cases}
 \sum_{j=1}^{n+1}G_{ij}^*v_j^*, & \text{ if } i\in V_{\infty}, \\
  \dfrac{1}{\sqrt{-G_{ii}^*|G^*|}}\sum_{j=1}^{n+1}G_{ij}^*v_j^*,& \text{ if } i\in V\setminus V_{\infty}.
\end{cases}
\end{equation}
It is not difficult to check that
\begin{equation}\label{xu14}
\begin{cases}
\sum_{j=1}^{n+1}G_{ij}^*\langle v_i^*,v_j^*\rangle_{n,1}=|G^*|,&1\le i\le n+1,\\
\sum_{m=1}^{n+1}G_{im}^*\langle v_j^*,v_m^*\rangle_{n,1}=0,&1\le i\ne j\le n+1.
\end{cases}
\end{equation}

For any $i\in V_{\infty}$, by assumption, $G^*(i,i)=\Theta(i,i)$ is a singular matrix, which implies that $G_{ii}^*=(-1)^{2i}|G(i,i)|=0$. Moreover, by (\ref{xu14}), we have
\begin{equation}\label{xu16}
\langle v_i,v_i\rangle_{n,1}=\langle \sum_{j=1}^{n+1}G_{ij}^*v_j^*,\sum_{j=1}^{n+1}G_{ij}^*v_j^*\rangle_{n,1}=G_{ii}^*|G^*|=0,
\end{equation}
which implies that $v_i\in\partial_{\infty}\mathbb{H}^n$. Moreover, we have
\begin{equation}\label{xu18}
\begin{cases}
\langle v_i,v_i^*\rangle_{n,1}=\sum_{j=1}^{n+1}G_{ij}^*\langle v_i^*,v_j^*\rangle_{n,1}=|G^*|<0,\\
\\
\langle v_i,v_j^*\rangle_{n,1}=\sum_{m=1}^{n+1}G_{im}^*\langle v_j^*,v_m^*\rangle_{n,1}=0,~1\le j\ne i\le n+1.
\end{cases}
\end{equation}

For any $i\in V\setminus V_{\infty}$, by (\ref{xu14}), we have
\begin{equation}\label{xu17}
\langle v_i,v_i\rangle_{n,1}=-\dfrac{1}{G_{ii}^*|G^*|}\langle \sum_{j=1}^{n+1}G_{ij}^*v_j^*,\sum_{j=1}^{n+1}G_{ij}^*v_j^*\rangle_{n,1}=-\dfrac{1}{G_{ii}^*|G^*|}\cdot G_{ii}^*|G^*|=-1,
\end{equation}
which implies that $v_i\in\mathbb{H}^n$. Moreover, we have
\begin{equation}\label{xu19}
\begin{cases}
\langle v_i,v_i^*\rangle_{n,1}=\dfrac{1}{\sqrt{-G_{ii}^*|G^*|}}\sum_{j=1}^{n+1}G_{ij}^*\langle v_i^*,v_j^*\rangle_{n,1}=-\sqrt{\dfrac{-|G^*|}{G_{ii}^*}}<0,\\
\langle v_i,v_j^*\rangle_{n,1}=\dfrac{1}{\sqrt{-G_{ii}^*|G^*|}}\sum_{m=1}^{n+1}G_{im}^*\langle v_j^*,v_m^*\rangle_{n,1}=0,~1\le j\ne i\le n+1.
\end{cases}
\end{equation}

By (\ref{xu15}), (\ref{xu16}) and (\ref{xu17}), we have $v_1,\cdots,v_{n+1}\in\overline{\mathbb{H}^n}$ are linearly independent. Then we can define the ideal hyperbolic $n$-simplex $\Delta^n$ is the geodesic convex hull of $n+1$ vertices $v_1,v_2,\cdots,v_{n+1}$ and $\Delta^n$ is unique up to isometry. By (\ref{xu18}) and (\ref{xu19}), we have the dihedral angle of the $ij$-face of $\Delta^n$ is $\theta_{ij}$ for all $ij$-faces. \pfd

The following result shows a geometric property of the conformal dual $n$-simplex of an ideal hyperbolic $n$-simplex.

\begin{corollary}\label{po5}
  Given an ideal hyperbolic $n$-simplex $\Delta^n\subset\overline{\mathbb{H}^n}$ and the dual complex $(\Delta^n)^*\subset\ds$, if $(\Delta^n)^*$ is conformal, then there exist $n+1$ positive numbers $r_1^*,\cdots,r_{n+1}^*\in(0,\pi)$ such that 
  $$
\ell_{ij}^*=r_i^*+r_j^*,~\text{for all geodesic edges}~e_{ij}^*~\text{of}~(\Delta^n)^*.
  $$
\end{corollary}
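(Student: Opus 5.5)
Conformality of $(\Delta^n)^*$ in the ideal case only gives real numbers $r_1^*,\dots,r_{n+1}^*$ with $\ell_{ij}^*=r_i^*+r_j^*$. The plan is to show that these numbers automatically lie in $(0,\pi)$. We use two facts. By Corollary \ref{conedge}, $\ell_{ij}^*=\pi-\theta_{ij}\in(0,\pi)$ for every $i\ne j$, since $\theta_{ij}\in(0,\pi)$. Moreover, an ideal hyperbolic simplex has at least one ideal vertex, say $v_i$. The additive decomposition is rigid once $n+1\ge 4$, because $r_i^*=\frac12(\ell_{ij}^*+\ell_{ik}^*-\ell_{jk}^*)$ for any distinct $j,k\ne i$. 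So it suffices to prove that every such combination lies strictly between $0$ and $\pi$.

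\textbf{Upper bound.} From $\ell_{ij}^*<\pi$ and $r_i^*=\ell_{ij}^*-r_j^*$, we get $r_i^*<\pi$ as soon as all $r_j^*>0$. The real work is therefore positivity. For two indices $i,j$, the triangle formula gives $r_i^*-r_j^*=\ell_{ik}^*-\ell_{jk}^*$. Hence $r_i^*>0$ is equivalent to the triangle-type inequality $\ell_{jk}^*<\ell_{ij}^*+\ell_{ik}^*$ for the dual edges. Translated through Corollary \ref{conedge}, this reads $\theta_{ij}+\theta_{ik}<\pi+\theta_{jk}$ for all distinct $i,j,k$.

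\textbf{Establishing the angle inequality.} Fix distinct $i,j,k$ and pick a fourth index $l$ (recall $n\ge3$), so that $F_{il}^*$ contains the dual face triangle $v_i^*v_j^*v_k^*$. I would restrict to the $l$-face $F_l^*$, which lies in $H_l^*\cap \ds$. When $v_l$ is a finite vertex, $H_l^*$ is spacelike by the proof of Proposition \ref{l1}, so $F_l^*$ carries a spherical metric. The triangle $v_i^*v_j^*v_k^*$ is then a spherical triangle with sides in $(0,\pi)$, and the spherical triangle inequality gives the strict inequality, using non-degeneracy from the linear independence of the $v^*$. The \emph{main obstacle} is when every choice of $l$ is an ideal vertex. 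Then $H_l^*$ is lightlike, and the induced metric on $F_l^*$ is degenerate (Euclidean-like), as reflected in Corollary \ref{po9}, where the boundary length is exactly $2\pi$. In that case I would instead use the Gauss--Bonnet relation $\theta_{ab}+\theta_{ac}+\theta_{bc}=\pi$ at the ideal vertex $v_l$, together with the positivity of all dihedral angles. These give $\theta_{ij}+\theta_{ik}<\pi<\pi+\theta_{jk}$ whenever $\{i,j,k\}$ are the three other indices in a tetrahedral face link. For general $n$, I would fall back on a limiting argument: approximate by hyperbolic simplices $\Delta_m^n$, apply the spherical triangle inequality in their spacelike dual faces, and pass to the limit. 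This yields the non-strict inequality.

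\textbf{Upgrading to strictness.} Strictness is the delicate point in the limiting argument, since the inequality obtained in the limit is only non-strict. Equality $\ell_{jk}^*=\ell_{ij}^*+\ell_{ik}^*$ would force $v_i^*$ to lie on the geodesic segment from $v_j^*$ to $v_k^*$. That would make $v_i^*,v_j^*,v_k^*$ linearly dependent, contradicting Proposition \ref{l1}. Hence $r_i^*>0$ for all $i$, and then the upper bound gives $r_i^*<\pi$.
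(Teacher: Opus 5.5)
Your argument for $n=3$ is correct, and it is more direct than the paper's. For a triple $\{i,j,k\}$ the fourth index $l$ is unique. If $v_l$ is finite, you get the strict spherical triangle inequality in the spacelike face $F_l^*$. (You mean $F_l^*$ there, not $F_{il}^*$, which does not contain $v_i^*$.) If $v_l$ is ideal, Gauss--Bonnet gives $\theta_{ij}+\theta_{ik}=\pi-\theta_{jk}$. Either way $r_i^*>0$ follows at once. The paper instead passes to a limit of hyperbolic tetrahedra, which only gives $r_i^*\ge 0$. It then excludes $r_i^*=0$ by showing the three faces through $v_i^*$ would have to be lightlike, whose perimeters are $2\pi$ by Corollary~\ref{po9}, forcing some $\ell^*=\pi$.

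The gap is in the case $n\ge4$ with every vertex outside $\{i,j,k\}$ ideal, in your ``upgrading to strictness'' step. Equality $\ell_{jk}^*=\ell_{ij}^*+\ell_{ik}^*$ only forces the Gram matrix $(\cos\ell^*)$ of $v_i^*,v_j^*,v_k^*$ to be singular. Since these vectors are independent, in $\mathbb{R}^{n,1}$ this means their span is \emph{degenerate}, not that they are dependent. The chain ``equality $\Rightarrow$ $v_i^*$ lies on the segment $\Rightarrow$ linear dependence'' needs the three points to lie on a round sphere, i.e.\ a spacelike span. That is exactly what is not automatic inside a lightlike face $H_l^*$, where the form is only positive semidefinite with kernel $\Span\{v_l\}$. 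Your own $n=3$ ideal case shows this: there $v_i^*,v_j^*,v_k^*$ are independent, yet their Gram matrix is singular, because $v_l$ lies in their span and is orthogonal to it (this is the perimeter $2\pi$ of Corollary~\ref{po9}). The approximation cannot supply spacelikeness either, since a limit of spacelike spans can be degenerate.

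What is missing is the fact that for $n\ge4$ any three dual vertices span a spacelike subspace. Write $v_l=\sum_{m\ne l}k_mv_m^*$ with all $k_m>0$, as in the proof of Theorem~\ref{admissibleidealangle}. Because $n+1\ge5$, some index $m\notin\{i,j,k,l\}$ occurs with $k_m>0$. Hence $\Span\{v_i^*,v_j^*,v_k^*\}$ is a subspace of $H_l^*$ that misses its kernel $\Span\{v_l\}$, so it is positive definite. Equivalently, conditions 2 and 3 of Theorem~\ref{admissibleidealangle} make every $3\times3$ principal submatrix of $\Theta$ positive definite when $n\ge4$. With this, the strict spherical triangle inequality holds directly and no limit is needed; this is how the paper handles $n\ge4$.
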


\begin{proof}
Since $(\Delta^n)^*$ is conformal, by definition and Corollary \ref{conedge}, there exist $n+1$ positive numbers $r_1^*,\cdots,r_{n+1}^*\in\mathbb{R}$ such that 
\begin{equation}\label{po6}
\ell_{ij}^*=\pi-\theta_{ij}=r_i^*+r_j^*\in(0,\pi)
\end{equation}
for all $1\le i\ne j\le n+1$, which implies that 
\begin{equation}\label{po7}
  r_i^*=\dfrac{\ell_{ij}^*+\ell_{ik}^*-\ell_{jk}^*}{2},~\text{for all}~1\le i\ne j\ne k\le n+1. 
\end{equation}

If $n=3$, since $\Delta^3$ is an ideal hyperbolic tetrahedron, then there exists a sequence of hyperbolic tetrahedra $\{\Delta^3_m\}_{m=1}^{\infty}\subset\mathbb{H}^3$ such that $\Delta^3_m\to \Delta^3(m\rightarrow\infty)$, which is equivalent to $(\Delta^3_m)^*\to (\Delta^3)^*(m\rightarrow\infty)$, where $(\Delta^3_m)^*$ is the dual tetrahedron of $\Delta^3_m$.  

By the proof of Proposition \ref{l1}, for any $1\le i\le4$, $v_j^{(m)*},v_k^{(m)*},v_l^{(m)*}(i\ne j\ne k\ne l)$ are in the spacelike hyperplane $H_i^{(m)*}=\Span\{v_j^{(m)*},v_k^{(m)*},v_l^{(m)*}\}$, which implies that $v_j^{(m)*},v_k^{(m)*},v_l^{(m)*}\in H_i^{(m)*}\bigcap\mathrm{d}\mathbb{S}^3=\mathbb{S}^2$. By triangle inequality, we have
\begin{equation}\label{po8}
  \ell_{jk}^{(m)*}+\ell_{jl}^{(m)*}-\ell_{kl}^{(m)*}>0,~\text{for all}~1\le i\le 4. 
\end{equation}
Moreover, by (\ref{po7}), we have $r_i^*\ge 0$ for all $1\le i\le 4$ as $m\to\infty$.

If there exists a $1\le i\le4$ such that $r_i^*=0$, then by (\ref{po7}), we have
\begin{equation}\label{po10}
\ell_{ij}^*+\ell_{ik}^*=\ell_{jk}^*,~\ell_{ij}^*+\ell_{il}^*=\ell_{jl}^*,~\ell_{ik}^*+\ell_{il}^*=\ell_{kl}^*,
\end{equation}
which implies that the hyperplanes $H_j^*,H_k^*,H_l^*$ are lightlike and $v_j,v_k,v_l$ are ideal vertices. Moreover, by Corollary \ref{po9}, we have
\begin{equation}\label{po11}
\ell_{ij}^*+\ell_{ik}^*+\ell_{jk}^*=2\pi,~\ell_{ij}^*+\ell_{il}^*+\ell_{jl}^*=2\pi,~\ell_{ik}^*+\ell_{il}^*+\ell_{kl}^*=2\pi.
\end{equation}

By (\ref{po10}) and (\ref{po11}), we have $\ell_{jk}^*=\ell_{jl}^*=\ell_{kl}^*=\pi$, which contradicts (\ref{po6}) and $r_i^*> 0$ for all $1\le i\le 4$. Moreover, by (\ref{po6}), we have $r_i^*\in(0,\pi)$ for all $1\le i\le 4$. 

If $n\ge4$, for any three vertices $v_i^*,v_j^*,v_k^*(i\ne j\ne k)$ of $(\Delta^n)^*$, by Theorem \ref{admissibleidealangle}, $v_i^*,v_j^*,v_k^*$ are in the spacelike subspace $\Span\{v_i^*,v_j^*,v_k^*\}$, which implies that $v_i^*,v_j^*,v_k^*\in\Span\{v_i^*,v_j^*,v_k^*\}\bigcap\ds=\mathbb{S}^2$. By triangle inequality and (\ref{po7}), we have
$$
  r_i^*=\dfrac{\ell_{ij}^*+\ell_{ik}^*-\ell_{jk}^*}{2}>0
$$
for all $1\le i\le n+1$. Moreover, by (\ref{po6}), we have $r_i^*\in(0,\pi)$ for all $1\le i\le n+1$. This completes the proof.
\end{proof}

\section{Admissible space of conformal tetrahedra in constant curvature spaces}\label{s5}

In this section, we study the hyperbolic (Euclidean, spherical, resp.) admissible space $\Omega^{\mathbb{H}}_t(\Omega^{\mathbb{E}}_t,\Omega^{\mathbb{S}}_t,\text{resp}.)$.

The admissible space $\Omega^{\mathbb{H}}_t(\Omega^{\mathbb{E}}_t,\Omega^{\mathbb{S}}_t,\text{resp}.)$ is rewritten as the following form for the convenience of notations:  
\begin{align*}
\Omega^{\mathbb{H}}_t:=&\{r=(r_1,\cdots,r_4)\in\R^4_{+}:Q^{\mathbb{H}}_3(r)>0\}\\
(\Omega^{\mathbb{E}}_t:=&\{r=(r_1,\cdots,r_4)\in\R^4_{+}:Q^\mathbb{E}_3(r)>0\},\\
\Omega^{\mathbb{S}}_t:=&\{r=(r_1,\cdots,r_4)\in(0,\pi)^4:Q^\mathbb{S}_3(r)>0,\\
&0<r_i+r_j<\pi,1\le i\ne j\le 4\},\ \text{resp.}),
\end{align*}
where
\begin{align}
Q^{\mathbb{H}}_3(r):=&(\sum_{i=1}^4\coth r_i)^2\nonumber-2(\sum_{i=1}^4\coth^2 r_i)+4\nonumber\\
(Q^\mathbb{E}_3(r):=&(\sum_{i=1}^4\frac{1}{r_i})^2
-2(\sum_{i=1}^4\frac{1}{r^2_i}),\\
Q^\mathbb{S}_3(r):=&(\sum_{i=1}^4\cot r_i)^2
-2(\sum_{i=1}^4\cot^2 r_i)-4,\ \text{resp.}).\nonumber
\end{align}

A hyperbolic (Euclidean, spherical, resp.) tetrahedron $\Delta^3$ is \emph{regular} if all edge lengths of $\Delta^3$ are equal. It is not difficult to check that any regular hyperbolic (Euclidean, spherical, resp.) tetrahedron is conformal and can be parameterized by $r\in\Omega^{\mathbb{H}}_{e}(\Omega^{\mathbb{E}}_e,\Omega^{\mathbb{S}}_e,\text{resp.})\subset\Omega^{\mathbb{H}}_t(\Omega^{\mathbb{E}}_t,\Omega^{\mathbb{S}}_t,\text{resp.})$, where    
\begin{align*}
\Omega^{\mathbb{H}}_e:=&\{r=s(1,\cdots,1)\in\mathbb{R}^4_+:s>0\}\\
(\Omega^{\mathbb{E}}_e:=&\{r=s(1,\cdots,1)\in\R^4_{+}:s>0\},\\
\Omega^{\mathbb{S}}_e:=&\{r=s(1,\cdots,1)\in(0,\pi)^4:0<s<\pi/2\},~\text{resp.}).
\end{align*}
Moreover, the conformal hyperbolic (Euclidean, spherical, resp.) tetrahedron $\Delta^3(r)$ parameterized by $r\in\Omega^{\mathbb{H}}_{e}(\Omega^{\mathbb{E}}_e,\Omega^{\mathbb{S}}_e,\text{resp.})$ is regular.

Cooper and Rivin \cite{Cooper96} give the following topological characterization of the admissible space of conformal hyperbolic (Euclidean, resp.) tetrahedra. 
\lm[\cite{Cooper96}, Theorem 4.2, Note]\label{hyperbolicadmissiblespace}
The admissible space $\Omega^{\mathbb{H}}_t$ $(\Omega^{\mathbb{E}}_t, \text{resp}.)$ is a simply connected domain.
\lmd

The following result gives the same topological characterization of the admissible space of conformal spherical tetrahedra, which is also an important ingredient to prove Theorem \ref{main1}. 
\lm\label{sphericaladmissiblespace}
The admissible space $\Omega^{\mathbb{S}}_t$ is a simply connected domain.
\lmd
\pf
Recall that the spherical admissible space
\begin{align*}
\Omega^{\mathbb{S}}_t=\{&r=(r_1,\cdots,r_4)\in(0,\pi)^4:Q^\mathbb{S}_3(r)>0,\\
&0<r_i+r_j<\pi,1\le i\ne j\le 4\},
\end{align*}
where $Q^\mathbb{S}_3(r)=(\cot r_1+\cdots+\cot r_4)^2-2(\cot^2 r_1+\cdots+\cot^2 r_4)-4$. Then we can construct a homeomorphism $\Phi$ from the admissible space $\Omega^{\mathbb{S}}$ to the space $\mathfrak{X}$, namely  
$$
        \begin{aligned}
			\Phi:\Omega^{\mathbb{S}}_t&\longrightarrow \mathfrak{X},\\\nonumber
			r=(r_1,r_2,r_3,r_4) & \longmapsto x=(\cot r_1,\cot r_2,\cot r_3,\cot r_4),
		\end{aligned}
$$
where
\begin{align}\label{sphericaladmissiblespace3}
\mathfrak X:=\{&x=(x_1,\cdots,x_4)\in\R^4:\mathcal Q^{\mathbb{S}}_3(x)>0,\\\nonumber
&x_i+x_j>0,1\le i\ne j\le 4\}
\end{align}
and $\mathcal Q^{\mathbb{S}}_3(x)=(x_1+\cdots+x_4)^2-2(x^2_1+\cdots+x^2_4)-4$.

By the definition (\ref{sphericaladmissiblespace3}) of the space $\mathfrak{X}$, we have $x=(x_1,x_2,x_3,x_4)\in\mathfrak X$ if and only if 
\begin{equation}\label{ine}
\begin{aligned}
x_1x_2+x_1x_3+x_2x_3&>1,\\
g_2(x_1,x_2,x_3)&<x_4<f_4(x_1,x_2,x_3),\\
x_i+x_j&>0,~~~1\le i\ne j\le 4,\\
\end{aligned}
\end{equation}
where 
\begin{align*}
g_2(x_1,x_2,x_3)&=x_1+x_2+x_3-2\sqrt{x_1x_2+x_1x_3+x_2x_3-1},\\
f_4(x_1,x_2,x_3)&=x_1+x_2+x_3+2\sqrt{x_1x_2+x_1x_3+x_2x_3-1}.
\end{align*}
Moreover, the inequalities (\ref{ine}) hold if and only if   
\begin{equation}\label{spa}
\begin{aligned}
x_2&>f_1(x_1),\\
x_3&>f_2(x_1,x_2),\\
f_3(x_1,x_2,x_3)&<x_4<f_4(x_1,x_2,x_3),
\end{aligned}
\end{equation}
where 
\begin{align*}
f_1(x_1)&=-x_1,\\
f_2(x_1,x_2)&=\max\{g_1(x_1,x_2),-x_1,-x_2\},\\
f_3(x_1,x_2,x_3)&=\max\{g_2(x_1,x_2,x_3),-x_1,-x_2,-x_3\},\\
g_1(x_1,x_2)&=\dfrac{1-x_1x_2}{x_1+x_2}
\end{align*}
and $f_1, f_2, f_3, f_4$ are continuous functions. By (\ref{spa}), the space $\mathfrak{X}$ can be rewritten as
\begin{align*}
\mathfrak{X}=\{&x=(x_1,x_2,x_3,x_4)\in\mathbb{R}^4:x_2>f_1(x_1), x_3>f_2(x_1,x_2),\\
& f_3(x_1,x_2,x_3)<x_4<f_4(x_1,x_2,x_3)\}.
\end{align*}

Define a projection $\pi_1$ from the space $\mathfrak{X}$ to the space $\mathfrak {X}^{(1)}$, namely
$$
        \begin{aligned}
			\pi_1:\mathfrak X&\longrightarrow \mathfrak {X}^{(1)},\\\nonumber
			x=(x_1,x_2,x_3,x_4) & \longmapsto x^{(1)}=(x_1,x_2,x_3),
		\end{aligned}
$$
where $\mathfrak {X}^{(1)}=\{x^{(1)}=(x_1,x_2,x_3)\in\mathbb{R}^3: x_2>f_1(x_1), x_3>f_2(x_1,x_2)\}$. Moreover, define a map $s_1$ from the space $\mathfrak {X}^{(1)}$ to the space $\mathfrak X$, namely
$$
        \begin{aligned}
			s_1:\mathfrak {X}^{(1)}&\longrightarrow \mathfrak{X},\\\nonumber
			x^{(1)}=(x_1,x_2,x_3) & \longmapsto x=(x^{(1)},\frac{f_3(x^{(1)})+f_4(x^{(1)})}{2}),
		\end{aligned}
$$

Similarly, we can define a projection $\pi_2$ from the space $\mathfrak {X}^{(1)}$ to the space $\mathfrak {X}^{(2)}$, namely
$$
        \begin{aligned}
			\pi_2:\mathfrak {X}^{(1)}&\longrightarrow \mathfrak {X}^{(2)},\\\nonumber
			x^{(1)}=(x_1,x_2,x_3) & \longmapsto x^{(2)}=(x_1,x_2),
		\end{aligned}
$$
where $\mathfrak {X}^{(2)}=\{x^{(2)}=(x_1,x_2)\in\mathbb{R}^2: x_2>f_1(x_1)\}=\{x^{(2)}=(x_1,x_2)\in\mathbb{R}^2:x_1+x_2>0\}$. Note that $\mathfrak {X}^{(2)}$ is contractible, since $\mathfrak {X}^{(2)}$ is homeomorphic to $\mathbb{R}^2$. Moreover, define a map $s_2$ from the space $\mathfrak {X}^{(2)}$ to the space $\mathfrak {X}^{(1)}$, namely
$$
        \begin{aligned}
			s_2:\mathfrak {X}^{(2)}&\longrightarrow \mathfrak {X}^{(1)},\\\nonumber
			x^{(2)}=(x_1,x_2) & \longmapsto x^{(1)}=(x_1,x_2,f_2(x^{(2)})+1),
		\end{aligned}
$$

Then define a map $H_1$($H_2$, resp.) from the space $\mathfrak X\times [0,1]$($\mathfrak X^{(1)}\times [0,1]$, resp.) to the space $\mathfrak X$($\mathfrak X^{(1)}$, resp.), namely
$$
        \begin{aligned}
			H_1:\mathfrak X\times [0,1]&\longrightarrow \mathfrak{X},\\\nonumber
			(x,t) & \longmapsto (x^{(1)},tx_4+(1-t)\cdot\frac{f_3(x^{(1)})+f_4(x^{(1)})}{2}),
		\end{aligned}
$$
$$
        \begin{aligned}
			H_2:\mathfrak X^{(1)}\times [0,1]&\longrightarrow \mathfrak{X}^{(1)},\\\nonumber
			(x^{(1)},t) & \longmapsto (x^{(2)},tx_3+(1-t)\cdot(f_2(x^{(2)})+1)).
		\end{aligned}
$$

The map $H_1$($H_2$, resp.) is the homotopy between $s_1\circ\pi_1$($s_2\circ\pi_2$, resp.) and $id_{\mathfrak X}$($id_{\mathfrak X^{(1)}}$, resp.), where $id_{\mathfrak X}$($id_{\mathfrak X^{(1)}}$, resp.) is the identity map on $\mathfrak {X}$($\mathfrak X^{(1)}$, resp.). Moreover, note that $\pi_1\circ s_1=id_{\mathfrak X^{(1)}}$ and $\pi_2\circ s_2=id_{\mathfrak X^{(2)}}$, where $id_{\mathfrak X^{(2)}}$ is the identity map on $\mathfrak {X}^{(2)}$, then we have 
\begin{equation}\label{con}
\mathfrak X\simeq\mathfrak X^{(1)}\simeq\mathfrak X^{(2)},
\end{equation}
where $``\simeq"$ means ``homotopic equivalent". 

Since $\mathfrak X^{(2)}$ is contractible and $\Omega^{\mathbb{S}}_t$ is homeomorphic to $\mathfrak X$, then by (\ref{con}), $\Omega^{\mathbb{S}}_t$ is contractible, which implies that the admissible space $\Omega^{\mathbb{S}}_t$ is a simply connected domain. 
\pfd

The following proposition is a basic property in hyperbolic geometry.
\prp\label{hyperbolicgeometry}
Given two positive numbers $\alpha\in(0,\pi/2)$ and $c>0$, then there exists a hyperbolic right triangle $\Delta ABC$ such that 
\begin{equation}\label{tri}
\angle CAB=\alpha,~\angle ABC=\pi/2,~d_{-1}(A,B)=c
\end{equation}
if and only if
$\sin\alpha<1/{\cosh c}$.
\prpd

\pf
($\Rightarrow$) By assumption and hyperbolic trigonometry, we have
\begin{align}\label{hypergeometry1}
\sin\alpha=\dfrac{\sinh a}{\sinh b},~\cosh b=\cosh a\cosh c,
\end{align}
where $a=d_{-1}(B,C)$ and $b=d_{-1}(A,C)$. Then by (\ref{hypergeometry1}), we obtain
\begin{align}\label{hypergeometry2}
\sin\alpha=\dfrac{\sinh a}{\sqrt{\cosh^2 a\cosh^2 c-1}}=:f(a).
\end{align}
Moreover, we have
\begin{align}\label{hypergeometry3}
f^\prime(a)=\dfrac{\sinh^2c\cosh a}{(\cosh^2c\cosh^2a-1)^{\frac{3}{2}}}>0,~\text{for\ all}\ a\in(0,\infty),
\end{align}
which implies $f$ is a strictly increasing function on $(0,\infty)$. Since $\lim\limits_{a\rightarrow\infty}f(a)=1/\cosh c$, then by (\ref{hypergeometry2}), we have $\sin\alpha=f(a)<1/\cosh c$.

($\Leftarrow$) Since $\alpha\in(0,\pi/{2})$ and $f(0)=0$, then by assumption, there exists a unique $a\in(0,\infty)$ such that $(\ref{hypergeometry2})$ holds. Moreover, there exists a unique $b\in(0,\infty)$ such that $\sinh b=\sqrt{\cosh^2 a\cosh^2 c-1}$, which implies that (\ref{hypergeometry1}) holds and the hyperbolic right triangle $\Delta ABC$ satisfying (\ref{tri}) exists.        
\pfd

In Euclidean (spherical, resp.) geometry, the analogy of Proposition \ref{hyperbolicgeometry} also holds, which does not have any restrictions of the angle $\alpha$ and edge length $c$. More precisely, we have the following result.
\begin{prop}\label{gong}
Given two positive numbers $\alpha\in(0,\pi/2)$ and $c>0 (c\in(0,\pi),\text{resp.})$, then there exists a Euclidean (spherical, resp.) right triangle $\Delta ABC$ such that 
\begin{equation}
\angle CAB=\alpha,~\angle ABC=\pi/2,~d_{0}(A,B)(d_{1}(A,B), resp.)=c.
\end{equation}
\end{prop}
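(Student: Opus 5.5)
The plan is to construct the triangle explicitly in each geometry, mirroring the ($\Leftarrow$) direction of Proposition \ref{hyperbolicgeometry}. The point is that, unlike the hyperbolic case, the function relating the opposite leg to the angle $\alpha$ is surjective onto the whole range $(0,\pi/2)$, so no restriction on $\alpha$ or $c$ appears.

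\emph{Euclidean case.} Place $B$ at the origin and $A=(c,0)$. Take the ray from $B$ perpendicular to $AB$, and put $C=(0,a)$ with $a:=c\tan\alpha>0$. This is well defined since $\alpha\in(0,\pi/2)$. Then $\angle ABC=\pi/2$ and $\tan\angle CAB=a/c=\tan\alpha$, so $\angle CAB=\alpha$. Equivalently, the map $a\mapsto\arctan(a/c)$ is a strictly increasing bijection from $(0,\infty)$ onto $(0,\pi/2)$.

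\emph{Spherical case.} Use the right-triangle relation $\tan a=\sin c\,\tan\alpha$, where $a=d_1(B,C)$. Since $c\in(0,\pi)$ we have $\sin c>0$, so for every $\alpha\in(0,\pi/2)$ there is a unique $a\in(0,\pi/2)$ solving it. As a function of $a\in(0,\pi/2)$, $\alpha(a)=\arctan(\tan a/\sin c)$ increases strictly from $0$ to $\pi/2$.

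Concretely, in $\mathbb{S}^2\subset\mathbb{E}^3$ set
\begin{equation*}
B=(0,0,1),\qquad A=(\sin c,0,\cos c),\qquad C=(0,\sin a,\cos a).
\end{equation*}
The great circles $BA$ and $BC$ meet at $B$ with tangent directions $(1,0,0)$ and $(0,1,0)$, so $\angle ABC=\pi/2$. Moreover $d_1(A,B)=c$, and $d_1(B,C)=a<\pi/2$, so the triangle is non-degenerate.

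The only step needing a check is that the angle at $A$ equals $\alpha$. I would verify this via the spherical formula $\tan(\angle A)=\tan a/\sin c$, obtained from tangent vectors at $A$. Alternatively, one can check continuity and monotonicity of the angle at $A$ as $C$ moves along the perpendicular great circle from $B$ toward its pole, with limit values $0$ and $\pi/2$, and then apply the intermediate value theorem. I expect no real obstacle; the main care is keeping $a<\pi/2$ so the triangle stays proper.
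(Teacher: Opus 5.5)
Your proof is correct and follows essentially the route the paper intends: the paper gives no separate argument but points to the analogue of the ($\Leftarrow$) direction of Proposition \ref{hyperbolicgeometry}, i.e.\ solving the right-triangle trigonometric relation for the leg opposite $\alpha$ and observing that, without the hyperbolic bound $1/\cosh c$, every $\alpha\in(0,\pi/2)$ is attained. Your use of $a=c\tan\alpha$ (resp.\ $\tan a=\sin c\,\tan\alpha$) in place of the sine/hypotenuse relation gives an explicit inverse and so avoids a monotonicity-and-limit argument. Your coordinate realization in $\mathbb{S}^2$ is consistent with that formula at $A$.
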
  

\begin{lemma}\label{angle}
    Given a non-degenerate hyperbolic $($Euclidean, spherical, resp.$)$ tetrahedron $P$ with four vertices $v_1,v_2,v_3,v_4$, then there exists the unique unit tangent vector $u_i$ and angle $\theta_i\in(0,\pi/2)$ such that 
    \begin{equation}
    \langle u_i,u_{ij}\rangle_{3,1}=\langle u_i,u_{ik}\rangle_{3,1}=\langle u_i,u_{il}\rangle_{3,1}=\cos\theta_i
    \end{equation}
for all $1\le i\le 4$, where $(1,2,3,4)=(i,j,k,l)$ and $u_{ij}$$(u_{ik}, u_{il}, \text{resp}.)$ is the unit tangent vector of the oriented geodesic edge $\vec{e}_{ij}$ from $v_i$ to $v_j$$(v_k, v_l, \text{resp}.)$ at the vertex $v_i$.      
\end{lemma}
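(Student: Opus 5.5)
We work in the tangent space $T_{v_i}$ at the vertex $v_i$ (the Euclidean space $T_{v_i}\mathbb{H}^3=\Span\{v_i\}^{\perp}$ with the positive definite restriction of $\langle\cdot,\cdot\rangle_{3,1}$, and analogously $\mathbb{E}^3$ or $T_{v_i}\mathbb{S}^3$). Since $P$ is non-degenerate, the three unit vectors $u_{ij},u_{ik},u_{il}$ are linearly independent in this $3$-dimensional inner product space. Otherwise the geodesics from $v_i$ would lie in a totally geodesic plane through $v_i$, and $v_j,v_k,v_l$ would then be coplanar with $v_i$. In the hyperbolic and spherical cases this uses that $\exp_{v_i}$ maps $2$-planes of $T_{v_i}$ onto totally geodesic surfaces, so linear dependence of $u_{ij},u_{ik},u_{il}$ contradicts the linear independence of $v_1,\dots,v_4$.

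To find $u_i$, we solve the linear system
$$\langle w,u_{ij}\rangle=\langle w,u_{ik}\rangle=\langle w,u_{il}\rangle=1$$
for $w\in T_{v_i}$. This system has a unique solution $w$, because the Gram matrix of $u_{ij},u_{ik},u_{il}$ is invertible by linear independence. We then set $u_i:=w/|w|$ and $\cos\theta_i:=1/|w|>0$. This gives $\theta_i\in(0,\pi/2)$; also $\theta_i\neq0$, since $u_i$ would otherwise equal each of three distinct unit vectors. Geometrically, $u_i$ is the axis of the unique circular cone containing the three edge directions, that is, the direction to the circumcenter of the spherical triangle on the unit sphere of $T_{v_i}$ cut out by the three edges.

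For uniqueness, suppose a unit $u$ and $\theta\in(0,\pi/2)$ satisfy the required identities. Then $u/\cos\theta$ solves the same linear system, so $u/\cos\theta=w$. Taking norms gives $\cos\theta=1/|w|$, and hence $u=u_i$.

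The only real point is the linear independence of $u_{ij},u_{ik},u_{il}$, which comes from non-degeneracy. An alternative is to note that the circumcenter of a nondegenerate spherical triangle lies in an open hemisphere, so its circumradius is less than $\pi/2$; this is exactly the positivity $\cos\theta_i=1/|w|>0$ obtained above.
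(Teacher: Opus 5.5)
Your proposal is correct and is essentially the paper's argument. The paper also writes $u_i$ in the basis $u_{ij},u_{ik},u_{il}$ with coefficient vector $c_iU_i^{-1}\mathbf{1}$, where $U_i$ is the Gram matrix and $c_i=(\mathbf{1}^TU_i^{-1}\mathbf{1})^{-1/2}$; this is exactly your $w/|w|$ with $\cos\theta_i=1/|w|$, and the paper likewise uses Cauchy--Schwarz to get $\cos\theta_i<1$. Your explicit uniqueness argument and your justification of the linear independence of the edge directions fill in steps the paper leaves implicit.
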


\begin{proof}
(Hyperbolic case.) Since the hyperbolic tetrahedron $P$ is non-degenerate, then for any $1\le i\le 4$, the unit tangent vectors $u_{ij}, u_{ik}, u_{il}\in T_{v_i}\mathbb{H}^3$ are linearly independent and form a basis in the tangent space $T_{v_i}\mathbb{H}^3$. 

Define the Gram matrix $U_i:=(\langle u_{im},u_{in}\rangle_{3,1})_{m,n=\{j,k,l\}}$ of the basis $\{u_{ij}, u_{ik},$ $u_{il}\}$, since the tangent space $T_{v_i}\mathbb{H}^3$ is spacelike under the Minkowski inner product $\langle\cdot,\cdot\rangle_{3,1}$, then the Gram matrix $U_i$ and inverse matrix $U_i^{-1}$ are positive definite. Moreover, there exists a constant $c_i=1/\sqrt{\mathbf{1}^TU_i^{-1}\mathbf{1}}>0$ and a column vector $\textbf{x}_i=(x_j,x_k,x_l)^T=c_iU_i^{-1}\mathbf{1}$ such that
\begin{equation}\label{juzhen}
\textbf{x}_i^TU_i\textbf{x}_i=1,~U_i\textbf{x}_i=c_i\mathbf{1},
\end{equation}
where $\mathbf{1}=(1,1,1)^T$. Define the tangent vector $u_i:=\sum_{m\in\{j,k,l\}}x_mu_{im}\in T_{v_i}\mathbb{H}^3$, then by (\ref{juzhen}) and Cauchy-Schwarz inequality, we have
$$
\langle u_i,u_i\rangle_{3,1}=1,~\langle u_i,u_{ij}\rangle_{3,1}=\langle u_i,u_{ik}\rangle_{3,1}=\langle u_i,u_{il}\rangle_{3,1}=c_i<1,
$$
which also implies that there exists the unique angle $\theta_i\in(0,\pi/2)$ suth that $c_i=\cos\theta_i$. 

(Euclidean and spherical cases.) The proof has been omitted since it is quite similar to the proof in hyperbolic case.
\end{proof}

The unit tangent vector $u_i$ and the angle $\theta_i$ in Lemma \ref{angle} are called \emph{isogonal unit tangent vector} and \emph{isogonal angle} at the vertex $v_i$ of the hyperbolic (Euclidean, spherical, resp.) tetrahedron $\Delta^3$, respectively. The \emph{isogonal geodesic} $\gamma_i$ at the vertex $v_i$ is the geodesic passing through $v_i$ with the isogonal tangent vector $u_i$ at the vertex $v_i$.

A hyperbolic (Euclidean, spherical, resp.) sphere $S$ is the \emph{midsphere} of a hyperbolic (Euclidean, spherical, resp.) tetrahedron $\Delta^3$ if $S$ is externally tangent to all the edges of $\Delta^3$.  

\begin{lemma}\label{mid}
A non-degenerate hyperbolic tetrahedron $\Delta^3$ with a midsphere is conformal.    
\end{lemma}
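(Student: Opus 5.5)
The plan is to produce the radii $r_i$ directly from tangent lengths to the midsphere. Let $S=S(p,\rho)$ be the midsphere of $\Delta^3$. Each edge $e_{ij}$ touches $S$ at a single point $t_{ij}$, and that point lies in the interior of the segment $e_{ij}$. The goal is to set $r_i:=d_{-1}(v_i,t_{ij})$ and then show two things: this quantity does not depend on the choice of $j\ne i$, and $\ell_{ij}=r_i+r_j$. Once that is done, (\ref{comgi}) holds, and by the standard definition $\Delta^3$ is conformal. In addition, Lemma \ref{chara} then gives $Q^{\mathbb{H}}_3(r)>0$ for free, since $\Delta^3$ is non-degenerate.

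\textbf{Step 1: equal tangent lengths from a vertex.} Fix a vertex $v_i$ and an edge $e_{ij}$. Tangency of the geodesic $e_{ij}$ to $S$ at $t_{ij}$ means that the geodesic segment from $p$ to $t_{ij}$ meets $e_{ij}$ at a right angle and has length $\rho$. So $p,t_{ij},v_i$ span a hyperbolic right triangle with right angle at $t_{ij}$, leg $\rho$, and hypotenuse $d_{-1}(p,v_i)$. The hyperbolic Pythagorean theorem (the relation $\cosh b=\cosh a\cosh c$ used in Proposition \ref{hyperbolicgeometry}) then gives
$$\cosh d_{-1}(v_i,t_{ij})=\frac{\cosh d_{-1}(p,v_i)}{\cosh\rho}.$$
The right-hand side is independent of $j$. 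Therefore the three tangent lengths from $v_i$, along $e_{ij},e_{ik},e_{il}$, are equal, and their common value is $r_i>0$. Positivity holds because $v_i\notin S$: the vertex lies outside the ball, since it sits on a tangent line at a point other than the point of tangency.

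\textbf{Step 2: additivity.} Because $t_{ij}$ lies between $v_i$ and $v_j$ on the geodesic segment, we get $\ell_{ij}=d_{-1}(v_i,t_{ij})+d_{-1}(t_{ij},v_j)=r_i+r_j$. Geometrically, the spheres $S(v_i,r_i)$ and $S(v_j,r_j)$ are then externally tangent at $t_{ij}$. The centers are not coplanar by non-degeneracy, so $\{S(v_i,r_i)\}$ is exactly the four-sphere configuration of Section \ref{s1}.

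\textbf{Main obstacle.} The subtle point is making sure the point of tangency lies in the interior of the edge segment, not on its geodesic extension. If it fell on the extension, we would get $\ell_{ij}=|r_i-r_j|$ instead of $r_i+r_j$. I would handle this using the phrase ``externally tangent to all the edges'' in the definition of midsphere. This phrase is to be read as saying that $S$ touches each closed edge $e_{ij}$ itself, so $t_{ij}\in e_{ij}$. Moreover $t_{ij}\neq v_i,v_j$, because otherwise the three edges at that vertex would all be tangent to $S$ at the same point. That would force all three edges into the tangent plane there, contradicting non-degeneracy. If one instead worked only with tangency of the full geodesic lines, this convexity argument is exactly what would need supplementing.
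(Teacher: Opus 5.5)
Your proposal is correct and is essentially the paper's argument. The paper notes that the right triangles $\Delta o v_i v_{ij}$, $\Delta o v_i v_{ik}$, $\Delta o v_i v_{il}$ share the hypotenuse $ov_i$ and have equal legs $d_{-1}(o,v_{ij})$, hence are congruent; this gives equal tangent lengths $r_i$, and then $\ell_{ij}=r_i+r_j$ because $v_{ij}\in e_{ij}$. Your use of $\cosh b=\cosh a\cosh c$ just makes this hypotenuse--leg congruence explicit, and your extra care that the tangency point lies in the interior of the edge (which the paper takes directly from the definition of midsphere) is a harmless refinement.
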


\begin{proof}
 Suppose that $\Delta^3$ has four vertices $v_1, v_2, v_3, v_4$ and midsphere $S$ with the center $o$, add the geodesic segment $e_{o,i}$($e_{o,ij}$, resp.) connecting $o$ and $v_i$($v_{ij}$, resp.) for all $1\le i\le4$ ($1\le i\ne j\le4$, resp.), where $v_{ij}\in e_{ij}$ is the tangent point of the midsphere $S$ on the geodesic edge $e_{ij}$, note that $v_{ij}=v_{ji}$. Then we have
\begin{equation}\label{edge101}
 e_{o,ij}\perp e_{ij},~e_{o,ik}\perp e_{ik},~e_{o,il}\perp e_{il},
\end{equation}
\begin{equation}\label{edge1}
d_{-1}(o,v_{ij})=d_{-1}(o,v_{ik})=d_{-1}(o,v_{il})
\end{equation}
for all $1\le i\le4$, where $\{1,2,3,4\}=\{i,j,k,l\}$.

By (\ref{edge101}) and (\ref{edge1}), for any $1\le i\le4$, the hyperbolic right triangles $\de ov_iv_{ij}$, $\de ov_iv_{ik}$ and $\de ov_iv_{il}$ are pairwise congruent, and 
$$
d_{-1}(v_i,v_{ij})=d_{-1}(v_i,v_{ik})=d_{-1}(v_i,v_{il}):=r_i>0,
$$
which implies that $\ell_{ij}=d_{-1}(v_i,v_{ij})+d_{-1}(v_j,v_{ij})=r_i+r_j$
for all $1\le i\ne j\le4$ and $\Delta^3$ is a conformal hyperbolic tetrahedron.
\end{proof}

We denote the set consisting of all $r\in\Omega^{\mathbb{H}}_t$ ($\Omega^{\mathbb{E}}_t$, $\Omega^{\mathbb{S}}_t$, resp.) such that the hyperbolic (Euclidean, spherical, resp.) tetrahedron $\Delta^3(r)$ has a midsphere by $\Omega_m^{\mathbb{H}}$ ($\Omega_m^{\mathbb{E}}$, $\Omega_m^{\mathbb{S}}$, resp.), namely
\begin{align}\nonumber
\Omega_m^{\mathbb{H}}(\Omega_m^{\mathbb{E}}, \Omega_m^{\mathbb{S}}, \text{resp}.):=\{&r=(r_1,\cdots,r_4)\in \Omega^{\mathbb{H}}_t (\Omega^{\mathbb{E}}_t, \Omega^{\mathbb{S}}_t, \text{resp}.):
\text{the tetrahedron}
\\& \text{$\Delta^3(r)$ parameterized by}\ r\ \text{has a midsphere}\}.\nonumber
\end{align}

\lm\label{tangentsphere1}
Given a parameter $r=(r_1,r_2,r_3,r_4)\in\Omega^{\mathbb{H}}_t$, then
$r\in\Omega_m^{\mathbb{H}}$ if and only if 
\begin{equation}\label{an}
\sin\theta_i<1/\cosh r_i,~1\le i\le 4,
\end{equation}
where $\theta_i$ is the isogonal angle at the vertex $v_i$ of the hyperbolic tetrahedron $\Delta^3(r)$ for all $1\le i\le 4$.
\lmd

\pf
For any hyperbolic tetrahedron $\Delta^3(r)$ parameterized by $r\in\Omega^{\mathbb{H}}_t$, choose the point $v_{ij}\in e_{ij}$ such that $d_{-1}(v_i,v_{ij})=r_i$ for all $1\le i\ne j\le4$, note that $e_{ij}=e_{ji}$ and $v_{ij}=v_{ji}$. 

($\Rightarrow$) Suppose that $\Delta^3(r)$ has the midsphere $S$ with the center $o$, then add the geodesic segment connecting $v_i$($v_{ij}$, resp.) and $o$ for all $1\le i\le4$($1\le i\ne j\le4$, resp.). Since $\Delta^3(r)$ is a conformal tetrahedron and $S$ is externally tangent to all the edges of $P$, then by Lemma \ref{angle}, for any $1\le i\le 4$, we have $\de ov_iv_{ij}$, $\de ov_iv_{ik}$ and $\de ov_iv_{il}$ are pairwise congruent, and 
$$
\angle ov_{ij}v_i=\angle ov_{ik}v_i=\angle ov_{il}v_i=\pi/{2},~\angle v_{ij}v_io=\angle v_{ik}v_io=\angle v_{il}v_io=\theta_i,
$$
where $\{1,2,3,4\}=\{i,j,k,l\}$. Then by Proposition \ref{hyperbolicgeometry}, we have (\ref{an}).

($\Leftarrow$) For any $1\le i\le 4$, we denote the isogonal tangent vector at the vertex $v_i$ of the hyperbolic tetrahedron $\Delta^3(r)$ by $u_i$, then by Proposition \ref{hyperbolicgeometry} and (\ref{an}), there exists a point $o\in\gamma_i$ such that 
\begin{equation}\label{sphere1}
e_{o,{ij}}\perp e_{ij},~e_{o,{ik}}\perp e_{ik},~e_{o,{il}}\perp e_{il},
\end{equation}
where $\gamma_i$ is the isogonal geodesic at the vertex $v_i$ and $e_{o,ij}$($e_{o,{ik}}$, $e_{o,{il}}$, resp.) is the geodesic segment connecting $o$ and $v_{ij}$($v_{ik}$, $v_{il}$, resp.). Moreover, the hyperbolic right triangles $\de ov_iv_{ij}$, $\de ov_iv_{ik}$ and $\de ov_iv_{il}$ are pairwise congruent, and satisfy 
\begin{equation}\label{sphere2}
d_{-1}(o,v_{ij})=d_{-1}(o,v_{ik})=d_{-1}(o,v_{il}):=R.
\end{equation}

Project the point $o$ perpendicularly onto the face $F_j$($F_k, F_l$, resp.) and denote the projection point on the face $F_j$($F_k, F_l$, resp.) by $o_j$($o_k, o_l$, resp.), then we have
$$
e_{o_j,ik}\perp e_{ik}~(e_{o_k,ij}\perp e_{ij},~e_{o_l,ij}\perp e_{ij},~\text{resp}.),
$$
$$
e_{o_j,il}\perp e_{il}~(e_{o_k,il}\perp e_{il},~e_{o_l,ik}\perp e_{ik},~\text{resp}.),
$$
where $e_{o_p,iq}$ is the geodesic segment connecting $o_p$ and $v_{iq}$ for all $p\ne q\in\{j,k,l\}$. Moreover, adding the geodesic segment connecting $v_{i}$ and $o_j$($o_k,o_l$, resp.), we have $\de o_lv_iv_{ij}$ and $\de o_lv_iv_{ik}$ are congruent and 
$$
\angle o_jv_iv_{ik}=\angle o_jv_iv_{il}~(\angle o_kv_iv_{ij}=\angle o_kv_iv_{il},~\angle o_lv_iv_{ij}=\angle o_lv_iv_{ik},~\text{resp}.), 
$$
which implies that $o_j$($o_k, o_l$, resp.) is on the angle bisector of $\angle v_kv_iv_l$($\angle v_jv_iv_l$, $\angle v_jv_iv_k$, resp.). Since $\Delta^3(r)$ is a conformal tetrahedron, then $o_j$($o_k, o_l$, resp.) is the incenter of the face $F_j$($F_k, F_l$, resp.) and we have
$$
e_{o_j,kl}\perp e_{kl}~(e_{o_k,jl}\perp e_{jl},~e_{o_l,jk}\perp e_{jk},~\text{resp}.),
$$
\begin{equation}\label{sphere3}
e_{o,kl}\perp e_{kl}~(e_{o,jl}\perp e_{jl},~e_{o,jk}\perp e_{jk},~\text{resp}.),
\end{equation}
where $e_{o_p,qn}$($e_{o,pq}$, resp.) is the geodesic segment connecting $o_p$($o$, resp.) and $v_{qn}$($v_{pq}$, resp.) for all $p\ne q\ne n(p\ne q,\text{resp}.)\in\{j,k,l\}$. 

Add the geodesic segment connecting $o$ and $v_j$($v_{k}$, $v_l$ resp.), then we have $\de ov_jv_{ij}$($\de ov_kv_{ik}$, $\de ov_lv_{il}$, resp.), $\de ov_jv_{jk}$($\de ov_kv_{jk}$, $\de ov_lv_{jl}$, resp.) and $\de ov_jv_{jl}$($\de ov_kv_{kl}$, $\de ov_lv_{kl}$, resp.) are pairwise congruent, and 
\begin{equation}\label{sphere4}
d_{-1}(o,v_{jk})=d_{-1}(o,v_{jl})=d_{-1}(o,v_{kl})=R.
\end{equation}

By (\ref{sphere1}), (\ref{sphere2}), (\ref{sphere3}) and (\ref{sphere4}), there exists a hyperbolic sphere $S$ with the center $o$ and radius $R$ such that $S$ is externally tangent to all the edges of the hyperbolic tetrahedron $\Delta^3(r)$, which implies that $S$ is the midsphere of $\Delta^3(r)$ and $r\in\Omega_m^{\mathbb{H}}$.
\pfd

\begin{remark}\label{geo}
    By Lemma \ref{angle} and the proof of Lemma \ref{tangentsphere1}, given a conformal hyperbolic tetrahedron $\Delta^3(r)$ parameterized by $r\in\Omega^{\mathbb{H}}$, then the inequality $($\ref{an}$)$ holds at a certain vertex is equivalent to this inequality holds at all vertices, also equivalent to the all isogonal geodesics at vertices intersect at the center of the midsphere of $\Delta^3(r)$.    
\end{remark}

\begin{corollary}\label{omega}
$\Omega_m^{\mathbb{H}}\subset\Omega^{\mathbb{H}}_t$ is a non-empty, open and proper subset.
\end{corollary}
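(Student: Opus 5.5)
Three things must be shown: $\Omega_m^{\mathbb{H}}$ is non-empty, open, and a proper subset of $\Omega^{\mathbb{H}}_t$. The main tool is the criterion of Lemma \ref{tangentsphere1}. Together with Remark \ref{geo}, it says that $r\in\Omega_m^{\mathbb{H}}$ if and only if $\sin\theta_i(r)<1/\cosh r_i$ holds for one (equivalently, every) vertex $v_i$.

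\emph{Openness.} By Lemma \ref{lengthandangle}, the Gram data of $\Delta^3(r)$ depend real-analytically on the edge lengths $\ell_{ij}=r_i+r_j$. The unit tangent vectors $u_{ij}$ at $v_i$ are obtained by normalizing the projection of $v_j$ onto $T_{v_i}\mathbb{H}^3$, so they depend continuously on $r$. Hence the Gram matrix $U_i$ in Lemma \ref{angle} is continuous in $r$. Consequently $c_i=\cos\theta_i=1/\sqrt{\mathbf 1^TU_i^{-1}\mathbf 1}$ is continuous on $\Omega^{\mathbb{H}}_t$, since $U_i$ stays positive definite. The map $r\mapsto \sin\theta_i(r)-1/\cosh r_i$ is therefore continuous, and $\Omega_m^{\mathbb{H}}$ is the preimage of $(-\infty,0)$ under it. Since $\Omega^{\mathbb{H}}_t$ is open in $\mathbb{R}^4_+$, $\Omega_m^{\mathbb{H}}$ is open.

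\emph{Non-emptiness.} Take a regular tetrahedron $r=s(1,1,1,1)\in\Omega^{\mathbb{H}}_e$. By symmetry, the isogonal geodesics all pass through the barycenter $o$, and $o$ is equidistant from the edge midpoints $v_{ij}$. The perpendicular from $o$ to $e_{ij}$ lands at its midpoint, again by symmetry, so the sphere centered at $o$ through the edge midpoints is tangent to every edge. Thus $\Omega^{\mathbb{H}}_e\subset\Omega_m^{\mathbb{H}}$. As a check, for small $s$ the tetrahedron is nearly Euclidean, $\theta_i$ tends to the Euclidean isogonal angle with $\sin\theta_i<1$, and $1/\cosh s\to 1$, so the inequality indeed holds.

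\emph{Properness.} This is the main obstacle: we need some $r\in\Omega^{\mathbb{H}}_t$ with $\sin\theta_1\ge 1/\cosh r_1$. I would fix $r_2=r_3=r_4=a$ and send $r_1\to\infty$. First, $Q^{\mathbb{H}}_3(r)$ is continuous in $\coth r_1$, and at $\coth r_1=1$ it equals $Q^{\mathbb{H}}_3(\infty,a,a,a)=(1+3\coth a)^2-2(1+3\coth^2a)+4=3\coth^2a+6\coth a+3>0$; so $r\in\Omega^{\mathbb{H}}_t$ for all large $r_1$. Second, as $r_1\to\infty$ the vertex $v_1$ tends to an ideal point while $v_2,v_3,v_4$ converge. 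The three edge directions at $v_1$ become asymptotically parallel, so $U_1$ tends to the all-ones matrix and $\theta_1\to 0$. Here $\sin\theta_1$ decays like the visual angle of the bounded face $F_1$ seen from $v_1$, which is of order $e^{-r_1}$. Meanwhile $1/\cosh r_1\sim 2e^{-r_1}$, so the comparison is delicate and must be done at the level of constants.

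To settle it, I would use the upper-half-space picture with $v_1\to\infty$ and compare the angular radius of the face triangle $F_1$ with that of the "midcircle" configuration, that is, compare the two leading constants. If that comparison is inconclusive, I would instead vary $a$ with $r_1$ fixed: sending $a\to 0$ while keeping $Q^{\mathbb{H}}_3>0$ flattens $F_1$ near a point, so $\theta_1$ approaches the limit given by the narrow cone angle, while $r_1$ stays large. This suggests a second regime in which the inequality fails. Failing both, I would argue abstractly: $\Omega^{\mathbb{H}}_t$ is connected by Lemma \ref{hyperbolicadmissiblespace}, so it suffices to exhibit a boundary point of $\Omega_m^{\mathbb{H}}$ inside $\Omega^{\mathbb{H}}_t$. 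Such a point is one where the isogonal geodesics meet on the ideal boundary, i.e.\ where the "midsphere" degenerates to a horosphere, and it can be constructed explicitly from a horosphere tangent to the six edge lines.
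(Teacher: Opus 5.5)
Your openness argument is the same as the paper's. Your symmetry argument for non-emptiness is valid: the reflection in the perpendicular bisector plane of $e_{ij}$ fixes $o$ and the line $e_{ij}$, hence fixes the foot of the perpendicular, which must be the midpoint. It is cleaner than the paper's explicit computation $\sin\theta=1/(\sqrt3\cosh s)$.

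The gap is properness: both concrete regimes you propose produce tetrahedra \emph{inside} $\Omega_m^{\mathbb{H}}$. For $r=(R,a,a,a)$, symmetry makes the isogonal vector at $v_1$ the normalized sum of the three edge directions, so $\sin^2\theta_1=\tfrac23(1-\cos\Theta_1)$. The law of cosines in the face $v_1v_2v_3$ (sides $R+a$, $R+a$, $2a$) gives $1-\cos\Theta_1=2\sinh^2a/\sinh^2(R+a)$. Hence $\sin\theta_1=\frac{2}{\sqrt3}\frac{\sinh a}{\sinh(R+a)}$, and $\sin\theta_1<1/\cosh R$ is equivalent to $(\frac{2}{\sqrt3}-1)\tanh a<\tanh R$.
\begin{itemize}
\item Letting $R\to\infty$ with $a$ fixed keeps this inequality true. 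Your delicate constant comparison resolves as $\sin\theta_1\cosh R\to\frac{1}{\sqrt3}(1-e^{-2a})<1$.
\item Letting $a\to0$ with $R$ fixed also keeps it true.
\end{itemize}
So these tetrahedra all have midspheres. The inequality fails only when the distinguished vertex has the \emph{small} radius and the other three are large. That is the paper's choice $r=(\lambda,\mu,\mu,\mu)$ with $\mu\to\infty$ and $\lambda\in(\frac12\ln\frac43,\frac12\ln\frac{1+\sqrt3}{2})$. The lower bound gives $\coth\lambda<7$, hence $Q^{\mathbb{H}}_3(r)>0$ for every $\mu$. The upper bound gives $\lim_{\mu\to\infty}\sin\theta_1=\frac{2}{\sqrt3}e^{-\lambda}>1/\cosh\lambda$. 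More generally, for any fixed $a$, every $R$ with $(3+2\sqrt3)\coth a<\coth R<3\coth a+2\sqrt{3\coth^2a+1}$ gives a point of $\Omega^{\mathbb{H}}_t\setminus\Omega_m^{\mathbb{H}}$.

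Your final fallback does not close the gap either. The existence of a non-degenerate conformal tetrahedron whose six edge lines touch a horosphere at the points $v_{ij}$ is exactly what must be proved. You would also need to show that it lies in $\Omega^{\mathbb{H}}_t$ and admits no genuine midsphere. This can be carried out, but none of it is in your proposal. For instance, in the upper half-space with horosphere $z=1$, take $v_i=(x_i,\sqrt{1-\rho_i^2})$, where the planar discs with centers $x_i$ and radii $\rho_i<1$ are mutually externally tangent. Each edge then lies on a unit semicircle tangent to $z=1$ at its top, and $\tanh r_i=\rho_i$. Finally, the connectedness of $\Omega^{\mathbb{H}}_t$ plays no role there: any boundary point of the open set $\Omega_m^{\mathbb{H}}$ that lies in $\Omega^{\mathbb{H}}_t$ is automatically outside $\Omega_m^{\mathbb{H}}$.
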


\pf
For any $r=s(1,1,1,1)\in\Omega^{\mathbb{H}}_e$, since $\Delta^3(r)$ is a regular hyperbolic tetrahedron, by symmetry of $\Delta^3(r)$ and hyperbolic trigonometry, we have
\begin{equation}\label{tri1}
\cos\Theta=\frac{\cosh^22s-\cosh2s}{\sinh^22s}=1-\frac{1}{2\cosh^2s},~\Theta\in(0,\pi/2),
\end{equation}
\begin{equation}\label{tri2}
\langle u_{ij},u_{ik}\rangle_{3,1}=\langle u_{ij},u_{il}\rangle_{3,1}=\langle u_{ik},u_{il}\rangle_{3,1}=\cos\Theta,~ 1\le i\le 4,
\end{equation}
where $\{1,2,3,4\}=\{i,j,k,l\}$ and $u_{ij}$$(u_{ik}, u_{il}, \text{resp}.)$ is the unit tangent vector of the oriented geodesic edge from $v_i$ to $v_j$$(v_k, v_l, \text{resp}.)$ at the vertex $v_i$. Moreover, by symmetry and (\ref{tri2}), we have
\begin{equation}\label{tri4}
    u_i=\frac{1}{\sqrt{3+6\cos\Theta}}\cdot( u_{ij}+u_{ik}+u_{il}),~1\le i\le 4,
\end{equation}
where $u_i$ the isogonal unit tangent vector at the vertex $v_i$ of $\Delta^3(r)$. 

By symmetry, the isogonal angle at the four vertices of the regular hyperbolic tetrahedron $P(r)$ are all equal, denoted by $\theta$. Then for any $1\le i\le4$, by Lemma \ref{angle}, (\ref{tri1}), (\ref{tri2}) and (\ref{tri4}), we have
$$
 \langle u_i,u_{ij}\rangle_{3,1}=\langle u_i,u_{ik}\rangle_{3,1}=\langle u_i,u_{il}\rangle_{3,1}=\cos\theta=\sqrt{1+2\cos\Theta}/\sqrt{3},
$$
$$
\sin^2\theta=1-\cos^2\theta=\frac{2}{3}\cdot(1-\cos\Theta)=\frac{2}{3}\cdot \frac{1}{2\cosh^2s}=\frac{1}{3\cosh^2s},
$$
which implies 
$$
\sin\theta=\frac{1}{\sqrt{3}\cosh s}<\frac{1}{\cosh s }.
$$
Then by Lemma \ref{tangentsphere1}, $r\in\Omega_{m}^{\mathbb{H}}$ and $\Omega_{m}^{\mathbb{H}}$ is non-empty.

By Lemma \ref{angle} and Lemma \ref{tangentsphere1}, the isogonal angle $\theta_i=\theta_i(r)$($1\le i\le 4$) of the conformal tetrahedron $\Delta^3(r)$ depends continuously on $r\in\Omega^{\mathbb{H}}_t$ and
$$
\Omega_m^{\mathbb{H}}=\{r\in \Omega^{\mathbb{H}}:~\sin\theta_i(r)<1/\cosh r_i,~1\le i\le 4\},
$$
which implies that $\Omega_m^{\mathbb{H}}$ is open.

Given a positive number $\lambda\in(\frac{1}{2}\ln\frac{4}{3},\frac{1}{2}\ln\frac{1+\sqrt{3}}{2})$, for any $\mu>0$, we have
$$
3\coth\mu-2\sqrt{3\coth^2\mu+1}<1<\coth\lambda<7<3\coth\mu+2\sqrt{3\coth^2\mu+1},
$$
which implies $Q^{\mathbb{H}}(r)>0$ and $r\in\Omega^{\mathbb{H}}$, where $r=(\lambda,\mu,\mu,\mu)$.

Define $f(\mu):=\sinh\mu/\sinh(\lambda+\mu)$, then $f$ is a strictly increasing function on $(0,\infty)$. Since $\lambda>\frac{1}{2}\ln\frac{4}{3}$, then we have 
$$
0<f(\mu)<\lim_{\mu\to\infty}f(\mu)=e^{-\lambda}<\sqrt{3}/2
$$
for all $\mu\in(0,\infty)$, which implies that $3+6\cos\Theta_1=3(1+2\cos\Theta_1)=3(3-4f^2(\mu))>0$.

By symmetry of $\Delta^3(r)$ and hyperbolic trigonometry, similar to above analysis of the regular hyperbolic tetrahedron, we have
\begin{equation}\label{trj1}
\cos\Theta_1=\frac{\cosh^2(\lambda+\mu)-\cosh2\mu}{\sinh^2(\lambda+\mu)}=1-\frac{2\sinh^2\mu}{\sinh^2(\lambda+\mu)},
\end{equation}
\begin{equation}\label{trj2}
\langle u_{12},u_{13}\rangle_{3,1}=\langle u_{12},u_{14}\rangle_{3,1}=\langle u_{13},u_{14}\rangle_{3,1}=\cos\Theta_1,
\end{equation}
\begin{equation}\label{trj3}
 u_1=\frac{1}{\sqrt{3+6\cos\Theta_1}}\cdot( u_{12}+u_{13}+u_{14}),
\end{equation}
\begin{equation}\label{trj4}
 \langle u_1,u_{12}\rangle_{3,1}=\langle u_1,u_{13}\rangle_{3,1}=\langle u_1,u_{14}\rangle_{3,1}=\cos\theta_1=\sqrt{1+2\cos\Theta_1}/\sqrt{3},
\end{equation}
where $\Theta_1=\angle v_2v_1v_3=\angle v_2v_1v_4=\angle v_3v_1v_4$ and $\theta_1$ is the isogonal angle at the vertex $v_1$. Moreover, since $\lambda<\frac{1}{2}\ln\frac{1+\sqrt{3}}{2}$, by (\ref{trj1}) and (\ref{trj4}), we have
\begin{equation}\label{mu}
\begin{aligned}
\sin\theta_1&=\sqrt{1-\cos^2\theta_1}\\
&=\sqrt{\frac{2}{3}}\cdot\sqrt{1-\cos\Theta_1}\\
&=\frac{2}{\sqrt{3}}\cdot \frac{\sinh\mu}{\sinh(\lambda+\mu)}\\
&\rightarrow\frac{2}{\sqrt{3}}\cdot e^{-\lambda}>1/\cosh\lambda~(\mu\rightarrow\infty),
\end{aligned}
\end{equation}
which implies that there exists a sufficiently large $\mu$ such that $\sin\theta_1>1/\cosh\lambda$. Then by Lemma \ref{tangentsphere1}, we have $r=(\lambda,\mu,\mu,\mu)\notin\Omega_m^{\mathbb{H}}$, which implies $r\in\Omega^{\mathbb{H}}_t\setminus\Omega_m^{\mathbb{H}}$ and $\Omega_m^{\mathbb{H}}$ is a proper subset.
\pfd

By Proposition \ref{gong}, Lemma \ref{angle} and the argument similar to the proofs of Lemma \ref{mid} and Lemma \ref{tangentsphere1}, we have the following result that shows the equivalence between the conformality of a Euclidean (spherical, resp.) tetrahedron and the existence of the midsphere. It is essentially obtained by Cooper and Rivin \cite{Cooper96}.

\lm[\cite{Cooper96}, Lemma 4.1]\label{euclidtangentsphere}
A non-degenerate Euclidean $($spherical, resp.$)$ tetrahedron $\Delta^3$ is conformal if and only if $\Delta^3$ has a midsphere.
\lmd

By Lemma \ref{euclidtangentsphere}, we immediately have the following result of admissible space $\Omega^{\mathbb{E}}$ ($\Omega^{\mathbb{S}}$, resp.) of the conformal Euclidean (spherical, resp.) tetrahedron.

\begin{corollary}\label{reg}
   $\Omega^{\mathbb{E}}_t=\Omega_m^{\mathbb{E}}$ $( \Omega^{\mathbb{S}}_t=\Omega_m^{\mathbb{S}}, \text{resp}.)$. 
\end{corollary}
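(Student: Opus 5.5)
The statement is Corollary \ref{reg}: $\Omega^{\mathbb{E}}_t=\Omega_m^{\mathbb{E}}$ and $\Omega^{\mathbb{S}}_t=\Omega_m^{\mathbb{S}}$. I would deduce it directly from Lemma \ref{euclidtangentsphere} together with the definition of $\Omega_m^{\mathbb{E}}$ ($\Omega_m^{\mathbb{S}}$, resp.) and Lemma \ref{chara}. By definition, $\Omega_m^{\mathbb{E}}\subset\Omega^{\mathbb{E}}_t$, so only the reverse inclusion needs an argument.

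\emph{Reverse inclusion.} Take $r\in\Omega^{\mathbb{E}}_t$ ($\Omega^{\mathbb{S}}_t$, resp.).
\begin{enumerate}
\item By Lemma \ref{chara}, there is a non-degenerate conformal Euclidean (spherical, resp.) tetrahedron $\Delta^3(r)$ with edge lengths $\ell_{ij}=r_i+r_j$.
\item By the ``only if'' direction of Lemma \ref{euclidtangentsphere}, $\Delta^3(r)$ has a midsphere.
\item Hence $r\in\Omega_m^{\mathbb{E}}$ ($\Omega_m^{\mathbb{S}}$, resp.) by definition.
\end{enumerate}

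\emph{Checking the midsphere step.} The substance lies in the ``conformal $\Rightarrow$ midsphere'' direction of Lemma \ref{euclidtangentsphere}, which I am taking as given. To see why it holds, mirror the ($\Leftarrow$) proof of Lemma \ref{tangentsphere1}.
\begin{itemize}
\item At a vertex $v_i$, take the isogonal unit tangent vector $u_i$ and isogonal angle $\theta_i\in(0,\pi/2)$ from Lemma \ref{angle}.
\item Apply Proposition \ref{gong} with $\alpha=\theta_i$ and $c=r_i$. This gives right triangles with legs $r_i$ along each of the three edges at $v_i$, and hence a point $o$ on the isogonal geodesic $\gamma_i$ equidistant from the three tangency points $v_{ij}$.
\item In contrast to the hyperbolic case, no inequality like (\ref{an}) is required here, since Proposition \ref{gong} places no restriction on $\alpha$ and $c$.
\item Project $o$ onto the faces. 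The incenter argument then propagates the perpendicularity and equidistance to the remaining edges, which produces the midsphere.
\end{itemize}

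\emph{Main obstacle.} The only delicate point is the spherical case, where one needs the triangles constructed via Proposition \ref{gong} to actually reach $o$ within the admissible range. The constraint $r_i+r_j<\pi$, together with $\theta_i<\pi/2$, keeps all the relevant distances in $(0,\pi)$, and I would check this explicitly. Since Lemma \ref{euclidtangentsphere} is assumed, however, the corollary itself is immediate.
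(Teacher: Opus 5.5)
Your proposal is correct and matches the paper, which obtains the corollary in one line from Lemma~\ref{euclidtangentsphere}: $\Omega_m^{\mathbb{E}}\subset\Omega^{\mathbb{E}}_t$ (resp.\ $\Omega_m^{\mathbb{S}}\subset\Omega^{\mathbb{S}}_t$) holds by definition, and every non-degenerate conformal tetrahedron $\Delta^3(r)$ has a midsphere. Your extra sketch of the midsphere step only retraces the paper's own indication (Proposition~\ref{gong}, Lemma~\ref{angle} and the arguments of Lemmas~\ref{mid} and~\ref{tangentsphere1}), and the spherical ``obstacle'' you flag does not arise, because Proposition~\ref{gong} places no restriction on $\alpha\in(0,\pi/2)$ and $c=r_i\in(0,\pi)$.
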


\begin{remark}\label{regh}
 By Lemma \ref{euclidtangentsphere} and the argument similar to the proof of Lemma \ref{tangentsphere1}, given a conformal Euclidean $($spherical, resp.$)$ tetrahedron $\Delta^3(r)$ parameterized by $r\in\Omega^{\mathbb{E}}_t (\Omega^{\mathbb{S}}_t, \text{resp}.)$, then all isogonal geodesics at vertices intersect at the center of the midsphere of $\Delta^3(r)$.  \end{remark}

\rmk\label{reop}
In the proof of Corollary \ref{omega}, we have constructed a counterexample of a conformal hyperbolic tetrahedron that does not have the midsphere, which implies that the Note below the Theorem 4.2 in \cite{Cooper96} is incorrect and Lemma \ref{euclidtangentsphere} does not hold in the hyperbolic case. It is quite different from Euclidean and spherical cases, which shows a surprising geometric property of conformal hyperbolic tetrahedra. 
\rmkd

We denote the set consisting of all $r\in\Omega_m^{\mathbb{H}}$ ($\Omega_m^{\mathbb{E}}$, $\Omega_m^{\mathbb{S}}$, resp.) such that the center of the midsphere of the hyperbolic (Euclidean, spherical, resp.) tetrahedron $\Delta^3(r)$ lies in the interior of $\Delta^3(r)$ by $\Omega_c^{\mathbb{H}}$ ($\Omega_c^{\mathbb{E}}$, $\Omega_c^{\mathbb{S}}$, resp.), namely
\begin{align}\nonumber
\Omega_c^{\mathbb{H}}(\Omega_c^{\mathbb{E}}, \Omega_c^{\mathbb{S}}, \text{resp}.):=\{&r=(r_1,\cdots,r_4)\in \Omega_m^{\mathbb{H}} (\Omega_m^{\mathbb{E}}, \Omega_m^{\mathbb{S}}, \text{resp}.):
\text{the center of }
\\\nonumber& \text{the midsphere of the tetrahedron $\Delta^3(r)$ parameterized }\\
&\text{by $r$ lies in the interior of $\Delta^3(r)$}\}.\nonumber
\end{align}

\begin{lemma}\label{center}
 $\Omega_c^{\mathbb{H}}(\Omega_c^{\mathbb{E}}, \Omega_c^{\mathbb{S}}, \text{resp}.)\subset\Omega^{\mathbb{H}}_t(\Omega^{\mathbb{E}}_t, \Omega^{\mathbb{S}}_t, \text{resp}.)$  is a non-empty and open subset.
\end{lemma}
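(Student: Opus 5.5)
We need to show two things: non-emptiness, by exhibiting the regular tetrahedron, and openness, by continuity of the midsphere center in $r$.

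\textbf{Non-emptiness.} Take $r=s(1,1,1,1)$ in $\Omega^{\mathbb{H}}_e$ ($\Omega^{\mathbb{E}}_e$, $\Omega^{\mathbb{S}}_e$, resp.).
\begin{enumerate}[1.]
\item In the hyperbolic case, the proof of Corollary \ref{omega} shows that $r\in\Omega^{\mathbb{H}}_m$. In the Euclidean and spherical cases, Corollary \ref{reg} gives $r\in\Omega^{\mathbb{E}}_m$ ($\Omega^{\mathbb{S}}_m$, resp.) directly.
\item The regular tetrahedron $\Delta^3(r)$ has full tetrahedral symmetry group. By uniqueness of the isogonal data (Lemma \ref{angle}) and Remark \ref{geo} (Remark \ref{regh}, resp.), the midsphere center $o$ is the common intersection of the four isogonal geodesics. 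Hence $o$ is fixed by every symmetry of $\Delta^3(r)$.
\item The only point fixed by the whole symmetry group is the barycenter, which lies in the interior. Alternatively, at each vertex $v_i$ the isogonal vector $u_i$ is a positive combination of $u_{ij},u_{ik},u_{il}$, as formula (\ref{tri4}) shows. So $\gamma_i$ enters the interior of $\Delta^3(r)$, and by symmetry $o$ is the midpoint of the segment from $v_i$ to the opposite face's incenter.
\end{enumerate}
Thus $r\in\Omega_c$.

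\textbf{Openness.} Since $\Omega_c\subset\Omega_m$ and $\Omega_m$ is open (Corollary \ref{omega}, or Corollary \ref{reg}, which gives $\Omega_m=\Omega_t$ in the Euclidean and spherical cases), it suffices to show the following: the vertex positions $v_i(r)$ and the midsphere center $o(r)$ can be chosen to depend continuously on $r\in\Omega_m$.
\begin{enumerate}[1.]
\item Normalize $\Delta^3(r)$ by fixing $v_1$, the direction of $e_{12}$ and the half-plane containing $v_3$. Then the vertices, viewed as vectors in $\mathbb{R}^{3,1}$ ($\mathbb{E}^3$, $\mathbb{E}^4$, resp.), are continuous functions of the edge lengths $r_i+r_j$. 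This follows from the Gram matrix description (\ref{c3}), via Cholesky-type formulas.
\item The unit tangent vectors $u_{ij}$ are continuous in $r$. Hence $u_1=c_1U_1^{-1}\mathbf{1}$ combined with the $u_{1m}$, as in the proof of Lemma \ref{angle}, is continuous, and so is $\theta_1$.
\item The distance $a$ from $v_1$ to $o$ along $\gamma_1$ is determined by the right triangle $\triangle o v_1 v_{12}$, which has angle $\theta_1$ at $v_1$ and leg $r_1$. In the hyperbolic case, $\tanh r_1=\tanh a\cos\theta_1$; the Euclidean and spherical cases have the obvious analogues. So $a$ is continuous on $\Omega_m$ (finite precisely because $r\in\Omega_m$), and therefore $o(r)=\gamma_1(a)$ is continuous.
\item Interiority means $\langle o(r),v_i^*(r)\rangle<0$ for all $i$, using the outward normals of Proposition \ref{l1} (Proposition \ref{eu1}, resp.). These normals also depend continuously on $r$, by the Gram-matrix formulas of Lemma \ref{lengthandangle}.
\item The strict inequalities in step 4 therefore persist in a neighborhood of any $r\in\Omega_c$, so $\Omega_c$ is open.
\end{enumerate}

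The main obstacle is the continuity of $o(r)$ in the hyperbolic case near the boundary of $\Omega_m$. That boundary is excluded, however, because we only work on the open set $\Omega_m$, where $\sin\theta_1<1/\cosh r_1$ keeps $a$ finite. The only other care needed is to state the interiority criterion uniformly in the three geometries, via the sign of the pairing with the dual vertices.
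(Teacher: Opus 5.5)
Your proposal is correct and follows the paper's own route: the regular tetrahedron gives non-emptiness, and openness comes from the continuity of the midsphere center (the common point of the isogonal geodesics) on the open set $\Omega_m$, which you make more explicit than the paper does through the right-triangle relation $\tanh r_1=\tanh a\cos\theta_1$ and the sign test against the dual vertices. One small slip in the non-emptiness step: the center is not the midpoint of the segment from $v_i$ to the opposite incenter (in the Euclidean case it lies three quarters of the way along), and in $\mathbb{S}^3$ the symmetry group fixes an antipodal pair $\pm c$, so interiority is most cleanly seen from $o$ being the normalized $\sum_i v_i$ (a positive combination of the vertices), which in the spherical case is singled out among $\pm c$ by $a<\pi/2$.
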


\begin{proof}
By the proof of Corollary \ref{omega}, Corollary \ref{reg} and symmetry of the regular tetrahedron, we have
$\Omega_e^{\mathbb{H}}(\Omega_e^{\mathbb{E}},\Omega_e^{\mathbb{S}},\text{resp}.)\subset\Omega_m^{\mathbb{H}}(\Omega_m^{\mathbb{E}},\Omega_m^{\mathbb{S}},\text{resp}.)$ and the center of the midsphere of the regular tetrahedron $P(r)$ lies in its interior for all $r\in\Omega_e^{\mathbb{H}}(\Omega_e^{\mathbb{E}},\Omega_e^{\mathbb{S}},\text{resp}.)$, which implies that $\Omega_e^{\mathbb{H}}(\Omega_e^{\mathbb{E}},\Omega_e^{\mathbb{S}})\subset\Omega_c^{\mathbb{H}}(\Omega_c^{\mathbb{E}},\Omega_c^{\mathbb{S}},\text{resp}.)$ and $\Omega_c^{\mathbb{H}}(\Omega_c^{\mathbb{E}},\Omega_c^{\mathbb{S}},\text{resp}.)$ is non-empty.

For any $r\in \Omega_m^{\mathbb{H}}(\Omega_m^{\mathbb{E}},\Omega_m^{\mathbb{S}},\text{resp}.)$, by Lemma \ref{tangentsphere1}, Corollary \ref{omega}, Remark \ref{regh} and Remark \ref{reop}, the center of the midsphere of the tetrahedron $\Delta^3(r)$ lies in the intersection of isogonal geodesics at vertices, which implies that the center of the midsphere of $\Delta^3(r)$ depends continuously on $r\in \Omega_m^{\mathbb{H}}(\Omega_m^{\mathbb{E}},\Omega_m^{\mathbb{S}},\text{resp}.)$. By definition, Corollary \ref{omega} and Corollary \ref{reg},  $\Omega_c^{\mathbb{H}}(\Omega_c^{\mathbb{E}}, \Omega_c^{\mathbb{S}}, \text{resp}.)\subset\Omega^{\mathbb{H}}_t(\Omega^{\mathbb{E}}_t, \Omega^{\mathbb{S}}_t, \text{resp}.)$ is an open subset.       
\end{proof}

Before giving the proof of Theorem \ref{maintheorem3}, we establish the following result, which is attributed to Cooper and Rivin \cite{Cooper96}. 

\begin{prop}\label{equalangle}
Given three vertices $v_1,v_2,v_3\in\mathbb{H}^3(\mathbb{S}^3,\text{resp}.)$ and a curve $\gamma:(a,b)\to\mathbb{H}^3(\mathbb{S}^3,\text{resp}.)$, if for any time $s\in(a,b)$, the hyperbolic $($spherical, resp.$)$ tetrahedron $\Delta^3(s)$ with four vertices $v_1,v_2,v_3,\gamma(s)$ is conformal, then the angles between the tangent vector $\gamma'(s)$ and the tangent vectors of the three edges of $\Delta^3(s)$ at the vertex $\gamma(s)$  are equal for all time $s\in(a,b)$.      
\end{prop}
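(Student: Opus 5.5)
Since $\Delta^3(s)$ is conformal for every $s\in(a,b)$, write its radii as $r_1(s),r_2(s),r_3(s),r_4(s)$, where $r_4$ belongs to the moving vertex $\gamma(s)$. The base face $v_1v_2v_3$ is fixed, so its edge lengths $\ell_{12},\ell_{13},\ell_{23}$ are constant. Solving $\ell_{ij}=r_i+r_j$ for $1\le i<j\le 3$ gives
$$
r_1=\frac{\ell_{12}+\ell_{13}-\ell_{23}}{2},
$$
and similarly for $r_2,r_3$. Hence $r_1,r_2,r_3$ do not depend on $s$. Only $r_4(s)$ varies, and
$$
d_{\kappa}(v_i,\gamma(s))=r_i+r_4(s),\quad i=1,2,3,
$$
with $\kappa=-1$ (resp. $\kappa=1$). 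In particular the three distance functions $s\mapsto d_\kappa(v_i,\gamma(s))$ differ from one another by constants, so they have the same derivative:
$$
\frac{d}{ds}d_\kappa(v_1,\gamma(s))=\frac{d}{ds}d_\kappa(v_2,\gamma(s))=\frac{d}{ds}d_\kappa(v_3,\gamma(s))=r_4'(s).
$$
Differentiability of $r_4$ follows from this identity once $\gamma$ is assumed differentiable, which the statement implicitly requires in order to speak of $\gamma'(s)$.

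Next I apply the first variation formula for distance in a space form. For a fixed point $v_i$ and a moving point $x\ne v_i$ (in the spherical case also $x\ne -v_i$, which holds since $\ell_{i4}=r_i+r_4<\pi$), the gradient of $x\mapsto d_\kappa(v_i,x)$ is the unit vector at $x$ pointing away from $v_i$, namely $-u_{4i}(s)$. Here $u_{4i}$ is the unit tangent vector at $\gamma(s)$ of the oriented edge from $\gamma(s)$ to $v_i$. To keep the argument self-contained I would verify this in the hyperboloid model by differentiating $\langle v_i,\gamma(s)\rangle_{3,1}=-\cosh d_{-1}(v_i,\gamma(s))$ from (\ref{juli}), and in the spherical model by differentiating $\langle v_i,\gamma(s)\rangle=\cos d_1(v_i,\gamma(s))$ from (\ref{julis}). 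The derivative of $\langle v_i,\gamma(s)\rangle$ is $\langle v_i,\gamma'(s)\rangle$. Decomposing $v_i$ along $\gamma(s)$ and the tangent direction, as in the geodesic parametrization (\ref{geodesic}), gives
$$
\langle v_i,\gamma'\rangle_{3,1}=-\sinh\ell_{i4}\,\langle u_{4i},\gamma'\rangle_{3,1}
\quad\bigl(\langle v_i,\gamma'\rangle=\sin\ell_{i4}\,\langle u_{4i},\gamma'\rangle,\ \text{resp.}\bigr).
$$
Comparing with the derivative of $-\cosh\ell_{i4}$ (resp. $\cos\ell_{i4}$) yields
$$
\frac{d}{ds}\ell_{i4}=-\langle u_{4i},\gamma'(s)\rangle .
$$

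Combining the two steps gives
$$
\langle u_{41},\gamma'\rangle=\langle u_{42},\gamma'\rangle=\langle u_{43},\gamma'\rangle=-r_4'(s)
$$
for all $s$. If $\gamma'(s)\ne0$, divide by $|\gamma'(s)|$: the cosines of the angles between $\gamma'(s)$ and the three edge directions at $\gamma(s)$ coincide, so the angles are equal. The degenerate case $\gamma'(s)=0$ is vacuous, or is excluded by assuming $\gamma$ regular. The main step requiring care is the first variation identity in the spherical case, where one must check that the relevant sines are nonzero; they are, because $0<\ell_{i4}<\pi$ in a non-degenerate conformal spherical tetrahedron. Otherwise the argument is a short computation.
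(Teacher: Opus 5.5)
Your proof is correct and is essentially the paper's argument: the paper also differentiates $\langle v_i,\gamma(s)\rangle_{3,1}=-\cosh\ell_{v_i,\gamma(s)}$ (resp. $\cos\ell_{v_i,\gamma(s)}$), writes $v_i=\cosh\ell\cdot\gamma(s)+\sinh\ell\cdot u_i(s)$ along the edge geodesic, and concludes $\langle u_i(s),\gamma'(s)\rangle=-r'(s)$ for all three edges. Your derivation that $r_1,r_2,r_3$ are constant is a detail the paper simply asserts.

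There is one sign slip, and it is harmless. Since $\langle\gamma,\gamma'\rangle_{3,1}=0$, the hyperbolic identity is $\langle v_i,\gamma'\rangle_{3,1}=+\sinh\ell_{i4}\langle u_{4i},\gamma'\rangle_{3,1}$, not $-\sinh\ell_{i4}\langle u_{4i},\gamma'\rangle_{3,1}$. Only with the plus sign does it yield the $\frac{d}{ds}\ell_{i4}=-\langle u_{4i},\gamma'\rangle$ that you state. In any case the sign is the same for all three $i$, so the conclusion is unaffected.
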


\pf
Since the hyperbolic (spherical, resp.) tetrahedron $\Delta^3(t)$ is conformal for all time $s\in(a,b)$, then there exists a function $r:(a,b)\to\mathbb{R}_+$ and three positive numbers $r_1,r_2,r_3>0$, such that 
\begin{equation}\label{clp}
  \ell_{v_i,\gamma(t)}=r_i+r(s),~1\le i\le3,  
\end{equation}
for all time $s\in(a,b)$, where $\ell_{v_i,\gamma(s)}$ is the length of  the edge $e_{v_i,\gamma(s)}$  connecting $v_i$ and $\gamma(s)$. Moreover, by (\ref{juli}) ((\ref{julis}), resp.), we have
\begin{equation}\label{ahg}
\langle v_i,\gamma(s)\rangle_{3,1}=-\cosh\ell_{v_i,\gamma(s)}(\langle v_i,\gamma(s)\rangle=\cos\ell_{v_i,\gamma(t)},\text{resp}.) 
\end{equation}
for all $1\le i\le3$ and time $t\in(a,b)$. Moreover, by (\ref{clp}) and (\ref{ahg}), we have
\begin{equation}\label{ah}
\begin{aligned}
\langle v_i,\gamma'(s)\rangle_{3,1}&=-\sinh\ell_{v_i,\gamma(s)}\cdot r'(s)\\
(\langle v_i,\gamma'(s)\rangle&=-\sin\ell_{v_i,\gamma(s)}\cdot r'(s),\text{resp}.),~1\le i\le3 
\end{aligned}
\end{equation}
for all time $s\in(a,b)$. 

The supporting geodesic of the oriented geodesic edge $\vec{e}_{\gamma(s),v_i}$ of $\Delta^3(s)$ from the vertex $\gamma(s)$ to the vertex $v_i$($1\le i\le3$) can be parameterized as  
\begin{equation}\label{geode}
\begin{aligned}
    \gamma_i(l)&=\cosh l\cdot\gamma(s)+\sinh l\cdot u_i(t)\\
    (&=\cos l\cdot\gamma(s)+\sin l\cdot u_i(s),\text{resp}.),~l\in(-\infty,+\infty)
\end{aligned}
\end{equation}
for all time $s\in (a,b)$, where $u_i(s)$ is the unit tangent vector of the oriented geodesic edge $\vec{e}_{\gamma(s),v_i}$ at the vertex $\gamma(s)$. Moreover, by (\ref{geode}), we have
   \begin{equation}\label{geod}
\begin{aligned}
   v_i&=\cosh\ell_{v_i,\gamma(s)} \cdot\gamma(s)+\sinh \ell_{v_i,\gamma(s)}\cdot u_i(s)\\
    (&=\cos \ell_{v_i,\gamma(s)}\cdot\gamma(s)+\sin \ell_{v_i,\gamma(s)}\cdot u_i(s),\text{resp}.),
\end{aligned}
\end{equation}
\begin{equation}\label{aa5}
\begin{aligned}
   \langle v_i,\gamma'(s)\rangle_{3,1}&=\sinh \ell_{v_i,\gamma(s)}\langle u_i(s),\gamma'(s)\rangle_{3,1}\\
    (\langle v_i,\gamma'(s)\rangle&=\sin \ell_{v_i,\gamma(s)}\langle u_i(s),\gamma'(s)\rangle,\text{resp}.)
\end{aligned}
\end{equation}
for all $s\in(a,b)$. Then by (\ref{ah}) and (\ref{aa5}), we have
$$
\langle u_i(s),\gamma'(s)\rangle_{3,1}(\langle u_i(s),\gamma'(s)\rangle,\text{resp}.)=-r'(s),~1\le i\le3
$$
for all time $s\in(a,b)$, which implies that the angles between the tangent vector $\gamma'(s)$ and the tangent vectors of the three geodesic edges of $\Delta^3(s)$ at the vertex $\gamma(s)$ are equal for all time $s\in(a,b)$.  
\pfd

\begin{corollary}\label{iop}
    For any tetrahedron $t\in T$ and $r_t=(r_i)_{i\in V_t}\in\Omega^{\mathbb{H}}_e (\Omega^{\mathbb{S}}_e,\text{resp.})$, we have the following inequalities
    \begin{equation}\label{aom}   
    \begin{cases}
    \left.\dfrac{\partial \alpha_i^t}{\partial r_i}\right|_{r_t}\le0,\left.\dfrac{\partial \mathrm{Vol}_t}{\partial r_i}\right|_{r_t}\ge 0,& \text{for all}~i\in V_t,\\
    \\
  \left.\dfrac{\partial \alpha_i^t}{\partial r_j}\right|_{r_t}\ge0,&  \text{for all}~i\ne j\in V_t,\\
\end{cases}
    \end{equation}    
    where $\alpha_i^t$ is the solid angle at the vertex $v_i$ of the conformal hyperbolic $($spherical, resp.$)$ tetrahedron $\Delta^3(r_t)$ and $\mathrm{Vol}_t$ is the hyperbolic $($spherical, resp.$)$ volume of $\Delta^3(r_t)$.
\end{corollary}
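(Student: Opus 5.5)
Throughout, fix an arbitrary $s>0$ ($s\in(0,\pi/2)$ in the spherical case) and put $r_t=s(1,1,1,1)$. Every element of $\Omega^{\mathbb{H}}_e$ ($\Omega^{\mathbb{S}}_e$, resp.) has this form, so it suffices to argue for a general such $s$. The argument will use only the symmetry of the regular tetrahedron $\Delta^3(r_t)$, which holds for every $s$. By the symmetry of the vertex set, it is enough to treat one index $i$ and one $j\ne i$.

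\textbf{Step 1: the deformation curve.} Since $\Omega^{\mathbb{H}}_t$ ($\Omega^{\mathbb{S}}_t$, resp.) is open, the parameter $r_t+\epsilon e_i$ is admissible for $|\epsilon|$ small. Realize the tetrahedra $\Delta^3(r_t+\epsilon e_i)$ with the face $F_i=\{v_j,v_k,v_l\}$ held fixed, since its edge lengths do not change. Then $v_i$ moves along a smooth curve $\gamma(\epsilon)$ with $\gamma(0)=v_i$; it depends analytically on $\epsilon$ through the edge lengths, by Lemma \ref{lengthandangle}. Along this curve every tetrahedron is conformal with $r(\epsilon)=s+\epsilon$, so Proposition \ref{equalangle} applies. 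The proof of that proposition gives $\langle u_{im},\gamma'\rangle=-r'=-1$ for $m=j,k,l$, where $u_{im}$ is the unit tangent at $v_i$ pointing toward $v_m$. Hence $\gamma'(0)$ makes equal angles with $u_{ij},u_{ik},u_{il}$. By the uniqueness in Lemma \ref{angle}, $\gamma'(0)=-u_i/\cos\theta_i$, where $u_i$ is the isogonal vector at $v_i$. For the regular tetrahedron, symmetry shows that the isogonal geodesic at $v_i$ is the symmetry axis through $v_i$. This axis meets $F_i$ perpendicularly at its center $c_i$, which lies in the interior of $F_i$. Thus increasing $r_i$ moves $v_i$ away from $F_i$ along the perpendicular through $c_i$, to first order.

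\textbf{Step 2: containment gives $\mathrm{Vol}_t$ and $\alpha_j^t$.} For small $\epsilon>0$, let $v_i'=\gamma(\epsilon)$. To first order the old apex $v_i$ lies on the geodesic segment from $v_i'$ to $c_i\in F_i$. Hence $v_i$ lies in the geodesic cone over $F_i$ with apex $v_i'$, and so $\Delta^3(r_t)\subset\Delta^3(r_t+\epsilon e_i)$ up to $o(\epsilon)$. Volume is monotone under inclusion, which gives $\partial\mathrm{Vol}_t/\partial r_i\ge0$. Both tetrahedra share the vertex $v_j$, and the tangent cone of the smaller one at $v_j$ is contained in that of the larger one. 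Therefore the solid angle at $v_j$ does not decrease, i.e. $\partial\alpha_j^t/\partial r_i\ge0$. Exchanging the roles of the indices by symmetry yields $\partial\alpha_i^t/\partial r_j\ge0$ for all $i\ne j$.

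\textbf{Step 3: the solid angle at the moving vertex.} The solid angle $\alpha_i^t$ is the area, on the unit sphere of $T_{v_i}$, of the radial projection of $F_i$. Take a point $p\in F_i$ and the right triangle with vertices $v_i$, $c_i$, $p$, with the right angle at $c_i$. Moving $v_i$ away from $c_i$ along the perpendicular strictly decreases the angle at $v_i$ between the axis and the ray toward $p$. This follows from the hyperbolic (spherical) right-triangle relation $\tan\angle=\tanh d(c_i,p)/\sinh h$ ($\tan d(c_i,p)/\sin h$, resp.), where $h$ is the height. Hence the projected region shrinks monotonically, which gives $\partial\alpha_i^t/\partial r_i\le0$.

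\textbf{Main obstacle.} The delicate point is Step 1. It requires the $\pm$ sign of $\gamma'$ from Proposition \ref{equalangle}, and the identification of the isogonal geodesic with the perpendicular through $c_i$. The sign comes from $r'=+1$, which forces $\langle u_{im},\gamma'\rangle<0$, so $v_i$ moves away from the other vertices. The identification holds for every $s$ because the isometry group of the regular tetrahedron fixing $v_i$ acts transitively on $\{u_{ij},u_{ik},u_{il}\}$. In the spherical case one must also keep $h<\pi/2$ so that the monotonicity in Step 3 holds. This is automatic, since $\ell_{ij}=2s<\pi$ and the tetrahedron is contained in an open hemisphere for $s<\pi/2$.
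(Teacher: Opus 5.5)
Your hyperbolic argument is sound and follows the paper's route. Proposition \ref{equalangle}, together with the symmetry of the regular tetrahedron, forces $v_i$ to move along the axis through the center $c_i$ of $F_i$. The inequalities then come from comparing nested tetrahedra: the paper decreases $r_i$ and uses $\Delta^3(r_t-\boldsymbol{\epsilon}_i)\subset\Delta^3(r_t)$, while you increase $r_i$. Two small remarks. First, your containment is exact rather than up to $o(\epsilon)$: $r_t+\epsilon e_i$ keeps the threefold symmetry about $v_i$, so $v_i'$ is equidistant from $v_j,v_k,v_l$ and lies on the axis. Second, your Step 3 is more careful than the paper, which obtains the change of the solid angle at the \emph{moving} vertex ``by this containment''; containment alone says nothing about that angle.

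The genuine gap is the spherical case of Step 3. Your claim that $h=d(v_i,c_i)<\pi/2$ is automatic is false, since lying in an open hemisphere bounds distances by $\pi$, not by $\pi/2$.
\begin{itemize}
\item For the regular tetrahedron with $\langle v_m,v_n\rangle=\cos 2s$, one finds $\cos h=3\cos 2s/\sqrt{3+6\cos 2s}$, so $h>\pi/2$ as soon as $s>\pi/4$.
\item Non-degenerate regular spherical tetrahedra exist for all $s<\arctan\sqrt2$: the Gram matrix is positive definite iff $\cos 2s>-1/3$, equivalently $Q^{\mathbb{S}}_3=8\cot^2 s-4>0$. So your range $s\in(0,\pi/2)$, copied from the paper's definition of $\Omega^{\mathbb{S}}_e$, is also too large.
\item For $h\in(\pi/2,\pi)$, your own relation $\tan A=\tan d(c_i,p)/\sin h$ makes $A$ increase with $h$, so Step 3 reverses.
\end{itemize}
This is not only a weakness of the method: the inequality itself is false there. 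The face angle $\beta$ at $v_i$ in the face with sides $L=2s+\epsilon$, $L$, $2s$ satisfies $\cos\beta=1-(1-\cos 2s)/\sin^2 L$. Hence
\[
\frac{d}{d\epsilon}\Big|_{\epsilon=0}\cos\beta=\frac{2(1-\cos 2s)\cos 2s}{\sin^3 2s}<0\qquad\text{for } \pi/4<s<\arctan\sqrt2 .
\]
So $\beta$ strictly increases, and with it $\alpha_i^t$, which is the area of the equilateral spherical triangle with side $\beta$. That is, $\partial\alpha_i^t/\partial r_i>0$ on this range.

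Consequently, the spherical diagonal inequality holds only for $s\le\pi/4$, which is exactly where your proof works. The volume and off-diagonal inequalities do survive on the whole admissible range, because they use only containment, and containment still holds for $h<h'<\pi$. Neither your argument nor the paper's can establish the diagonal inequality on all of $\Omega^{\mathbb{S}}_e$.
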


\begin{proof}
For any tetrahedron $t\in T$, $r_t=(r_i)_{i\in V_t}\in\Omega^{\mathbb{H}}_e (\Omega^{\mathbb{S}}_e,\text{resp.})$ and sufficiently small $\epsilon>0$, since $\Delta^3(r_t)$ is a regular hyperbolic (spherical, resp.) tetrahedron, by Proposition \ref{equalangle}, $\Delta^3(r_t-\boldsymbol{\epsilon}_i)$ is strictly contained in $\Delta^3(r_t)$ for all $i\in V_t$, where $\Delta^3(r_t-\boldsymbol{\epsilon}_i)$ and $\Delta^3(r_t)$ are considered as tetrahedra with the same vertices $\{v_j\}_{j\in V_t\setminus\{i\}}$, and $\boldsymbol{\epsilon}_i=(0,\epsilon(\text{i-th}),0,0)$. By this containment, we have
$$
\begin{cases}
\alpha_i^t(r_t-\boldsymbol{\epsilon}_i)>\alpha_i^t(r_t),& \text{for all}~i\in V_t,\\
 \alpha_i^t(r_t-\boldsymbol{\epsilon}_j)<\alpha_i^t(r_t),& \text{for all}~i\ne j\in V_t,\\
  \mathrm{Vol}_t(r_t-\boldsymbol{\epsilon}_i)<\mathrm{Vol}_t(r_t),& \text{for all}~i\in V_t,
\end{cases}
$$
which implies (\ref{aom}). 
\end{proof}

\begin{remark}
 Corollary \ref{iop} does not hold on $\Omega^{\mathbb{H}}_t (\Omega^{\mathbb{S}}_t,\text{resp.})$, since $\Delta^3(r_t-\epsilon_i)$ is not strictly contained in $\Delta^3(r_t)$ for some $r_t$, which implies that the proof of Lemma 3.2 in \cite{Cooper96} is incorrect.     
\end{remark}

\tm\label{maintheorem3}
The signature of the Jacobian matrix $\left(\dfrac{\partial \alpha_i^t}{\partial r_j}\right)_{i,j\in V_t}$ is $(0,0,4)$ $((1,0,3), \text{resp}.)$ on $ \Omega^{\mathbb{H}}_t(\Omega^{\mathbb{S}}_t, \text{resp}.)$ for all tetrahedron $t\in T$. 
\tmd

\pf

For any tetrahedron $t\in T$, by the proof of Corollary \ref{rigidityoftetrahedron}, the solid angle $\alpha^t=(\alpha_i^t)_{i\in V_t}$ is a real analytic function of $r_t=(r_i)_{i\in V_t}$ and vice versa.   
 By the chain rule for partial derivatives, the Jacobian matrix $\left(\dfrac{\partial \alpha_i^t}{\partial r_j}\right)_{i,j\in V_t}$ is non-degenerate on $\Omega^{\mathbb{H}}_t(\Omega^{\mathbb{S}}_t, \text{resp}.)$.

For any $r_t\in\Omega^{\mathbb{H}}_e(\Omega^{\mathbb{S}}_e, \text{resp}.)$, by symmetry of the regular tetrahedron and Corollary \ref{iop}, we have
\begin{equation}\label{enig}
\begin{cases}
\lambda^t(r_t):=\left.\dfrac{\partial \alpha_i^t}{\partial r_i}\right|_{r_t}\le0,& \text{for all}~i\in V_t,\\
  \\
  \mu^t(r_t):=\left.\dfrac{\partial \alpha_i^t}{\partial r_j}\right|_{r_t}\ge0,& \text{for all}~i\ne j\in V_t.
\end{cases}
\end{equation}

The eigenvalues of the matrix $\left(\left.\dfrac{\partial \alpha_i^t}{\partial r_j}\right|_{r_t}\right)_{i,j\in V_t}$ are $\lambda^t(r_t)+3\mu^t(r_t)$(multiplicity one) and $\lambda^t(r_t)-\mu^t(r_t)$(multiplicity three) on $\Omega^{\mathbb{H}}_e (\Omega^{\mathbb{S}}_e,\text{resp.})$. 

By Schl$\ddot{\text{a}}$fli formula, we have
\begin{equation}\label{fja}
    \sum_{i\in V_t}r_i\dfrac{\partial \alpha_i^t}{\partial r_j}=2\kappa\dfrac{\partial \mathrm{Vol}_t}{\partial r_j},~r_t\in\Omega^{\mathbb{H}}_t (\Omega^{\mathbb{S}}_t,\text{resp.})
\end{equation}
for all $j\in V_t$, where $\kappa=-1(\kappa=1,\text{resp}.)$. Since the matrix $\left(\dfrac{\partial \alpha_i^t}{\partial r_j}\right)_{i,j\in V_t}$ is non-degenerate on the simply connected domain $\Omega^{\mathbb{H}}_t(\Omega^{\mathbb{S}}_t, \text{resp}.)$, then by Corollary \ref{iop}, (\ref{enig}) and (\ref{fja}), we have
\begin{equation}\label{degen}
\lambda^t(r_t)+3\mu^t(r_t)<0(>0,\text{resp}.),~\lambda^t(r_t)-\mu^t(r_t)<0,
\end{equation}
for all $r_t\in\Omega^{\mathbb{H}}_e (\Omega^{\mathbb{S}}_e,\text{resp.})$, 
which implies that the signature of the matrix $\left(\dfrac{\partial \alpha_i^t}{\partial r_j}\right)_{i,j\in V_t}$ is $(0,0,4)$($(1,0,3)$, resp.) on $\Omega^{\mathbb{H}}_e (\Omega^{\mathbb{S}}_e,\text{resp})$ and $\Omega^{\mathbb{H}}_t (\Omega^{\mathbb{S}}_t,\text{resp})$.
\pfd

\section{Dual characterization of conformal tetrahedra in constant curvature spaces}\label{sdual}

In this section, we prove Theorem \ref{main1}, which shows the conformal equivalence of hyperbolic (Euclidean, spherical, ideal hyperbolic, resp.) tetrahedra and dual tetrahedra. As some consequences, we also give the proof of Corollary \ref{sumangle}, Corollary \ref{ck1} and Corollary \ref{rigidityoftetrahedron}.

\begin{lemma}\label{dihesum}
Given a non-degenerate hyperbolic $($Euclidean, spherical, ideal hyperbolic, resp.$)$ tetrahedron $\Delta^3\subset\mathbb{H}^3(\mathbb{E}^3, \mathbb{S}^3, \overline{\mathbb{H}^3}, \text{resp}.)$, if $\Delta^3$ is conformal, then the three pairs of the sums of opposite dihedral angles of $\Delta^3$ are equal.
\end{lemma}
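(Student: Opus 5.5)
The plan is to first prove the identity on an open subset of the admissible space where a clean geometric decomposition is available, and then extend it by real analyticity. Fix the hyperbolic (Euclidean, spherical, resp.) case and consider on $\Omega^{\mathbb{H}}_t(\Omega^{\mathbb{E}}_t,\Omega^{\mathbb{S}}_t,\text{resp}.)$ the functions
$$
F_1(r):=(\theta_{12}+\theta_{34})-(\theta_{13}+\theta_{24}),\quad F_2(r):=(\theta_{12}+\theta_{34})-(\theta_{14}+\theta_{23}),
$$
where $\theta_{ij}=\theta_{ij}(r)$ are the dihedral angles of $\Delta^3(r)$. The edge lengths $\ell_{ij}=r_i+r_j$ are analytic in $r$. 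By Lemma \ref{lengthandangle}, each $\cos\theta_{ij}$ is a real analytic function of the edge lengths. Since $\theta_{ij}\in(0,\pi)$, $\arccos$ is analytic there, so $F_1,F_2$ are real analytic on the admissible space. That space is connected (indeed simply connected) by Lemma \ref{hyperbolicadmissiblespace} and Lemma \ref{sphericaladmissiblespace}. Hence it suffices to show $F_1=F_2=0$ on a non-empty open subset. I will use $\Omega_c^{\mathbb{H}}(\Omega_c^{\mathbb{E}},\Omega_c^{\mathbb{S}},\text{resp}.)$, which is non-empty and open by Lemma \ref{center}.

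\textbf{Step on $\Omega_c$.} Let $o$ be the center and $R$ the radius of the midsphere, and let $v_{ij}\in e_{ij}$ be the tangency points. Let $o_i$ be the orthogonal projection of $o$ onto the face $F_i$, and let $d_i:=d_\kappa(o,F_i)$. As in the proof of Lemma \ref{tangentsphere1}, for every edge $e_{kl}\subset F_i$ we have $ov_{kl}\perp e_{kl}$ and $o_iv_{kl}\perp e_{kl}$ ($o_i$ is the incenter of $F_i$). Thus the plane through $o$, $o_i$, $v_{kl}$ is orthogonal to $e_{kl}$, and the angle $\beta$ at $v_{kl}$ in the right triangle $o\,o_i\,v_{kl}$ (right angle at $o_i$) is the angle between $F_i$ and the half-plane spanned by $e_{kl}$ and $o$. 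Right-triangle trigonometry gives
$$
\sin\beta=\frac{\sinh d_i}{\sinh R}\quad\Big(\frac{d_i}{R},\ \frac{\sin d_i}{\sin R},\ \text{resp}.\Big),
$$
which depends only on $i$, not on the edge $e_{kl}\subset F_i$. Call this angle $\beta_i\in(0,\pi/2)$.

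Because $o$ lies in the interior of $\Delta^3$, the half-plane through $e_{kl}$ and $o$ splits the dihedral angle at $e_{kl}=F_i\cap F_j$ additively. So $\theta_{ij}=\beta_i+\beta_j$, and therefore each of the three sums of opposite dihedral angles equals $\beta_1+\beta_2+\beta_3+\beta_4$. This gives $F_1=F_2=0$ on $\Omega_c$, and analytic continuation gives $F_1\equiv F_2\equiv 0$ on the whole admissible space.

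\textbf{Ideal hyperbolic case.} By definition, a conformal ideal tetrahedron is a Gromov–Hausdorff limit of conformal hyperbolic tetrahedra $\Delta^3_m$. By (\ref{xi20}), the dihedral angles converge, and passing to the limit in the identity for $\Delta^3_m$ gives the claim.

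\textbf{Main obstacle.} The delicate point is the additivity $\theta_{ij}=\beta_i+\beta_j$. It genuinely needs $o$ in the interior; otherwise some $\beta$'s enter with signs, which is why I restrict to $\Omega_c$ rather than $\Omega_m$. One must also check that the foot $o_i$ is the incenter of $F_i$ (from the proof of Lemma \ref{tangentsphere1} and Remark \ref{regh}), so that the right triangles $o\,o_i\,v_{kl}$ share the legs $d_i$ and the hypotenuse $R$. The analytic continuation step itself is routine once connectedness and analyticity are in place.
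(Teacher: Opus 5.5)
Your proposal is correct and follows essentially the paper's own proof. You show $\theta_{ij}=\theta_i+\theta_j$ on $\Omega_c$ using the midsphere center $o$, the face incenters $o_i$ and the right triangles $o\,o_i\,v_{kl}$ with $o$ interior, extend the identity to the connected admissible space by real analyticity, and pass to Gromov--Hausdorff limits in the ideal case. The only difference is cosmetic: you identify the split angle through $\sin\beta_i=\sinh d_i/\sinh R$ (and its analogues), where the paper uses congruence of the three right triangles at $F_i$. In the spherical case it is that congruence (equal hypotenuse $R$ and equal leg $d_i$) that makes $\beta_i$ independent of the edge, since $\sin\beta_i$ alone does not fix the quadrant.
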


\begin{proof}
(Hyperbolic, Euclidean and spherical case.) For any $r=(r_1,r_2,r_3,r_4)$ $\in\Omega_c^{\mathbb{H}}(\Omega_c^{\mathbb{E}},\Omega_c^{\mathbb{S}},\text{resp.})$ and the conformal tetrahedron $\Delta^3(r)$ parameterized by $r$, choose the point $v_{ij}\in e_{ij}$ such that $d_{-1}(d_{0},d_{1},\text{resp}.)(v_i,v_{ij})=r_i$ for all $1\le i\ne j\le4$, note that $e_{ij}=e_{ji}$ and $v_{ij}=v_{ji}$. 

Suppose that $\Delta^3(r)$ has the midsphere $S$ with the center $o$, then add the geodesic segment $e_{o,ij}$ connecting $o$ and $v_{ij}$ for all $1\le i\ne j\le4$. Recall that the $i$-face $F_i$($1\le i\le 4$) of $P$ is the opposite face of the vertex $v_i$.

Project the center $o$ perpendicularly onto the face $F_i$ ($1\le i\le4$) and denote the projection point on the face $F_i$ by $o_i$. Add the geodesic segment $e_{o_i,jk}$ ($e_{o_i,jl}$, $e_{o_i,kl}$, resp.) connecting $o_i$ and $v_{jk}$($v_{jl}$, $v_{kl}$, resp.) for all $1\le i\le 4$, where $(1,2,3,4)=(i,j,k,l)$. 

By the proofs of Lemma \ref{mid} and Lemma \ref{tangentsphere1}, for any $1\le i\le 4$, $o_i$ is the incenter of the face $F_i$ and 
\begin{equation}\label{edge}
d_{\kappa}(o_i,v_{jk})=d_{\kappa}(o_i,v_{jl})=d_{\kappa}(o_i,v_{kl}),~1\le i\le 4,
\end{equation}
where $\kappa=-1 (\kappa=0,\kappa=1,\text{resp}.)$. Moreover, we have
\begin{equation}\label{dihe}
e_{kl}\perp e_{o,kl},~e_{kl}\perp e_{o_i,kl},~e_{kl}\perp e_{o_j,kl},~1\le k\ne l\le4,
\end{equation}
which implies that $o,o_i,o_j,v_{kl}\in H_{kl}$, where $H_{kl}$ is the plane perpendicular to edge $e_{kl}$ at $v_{kl}$.     

Add the geodesic segment $e_{o,o_i}$ connecting $o$ and $o_i$ for all $1\le i\ne j\le4$, by (\ref{edge}), we have $\de oo_iv_{jk}$, $\de oo_iv_{jl}$ and $\de oo_iv_{kl}$ are pairwise congruent and 
\begin{equation}\label{angl}
 \angle ov_{jk}o_i=\angle ov_{kl}o_i=\angle ov_{jl}o_i:=\theta_i
\end{equation}
 for all $1\le i\le4$. Note that the center $o$ lies in the interior of $\Delta^3(r)$, since $r\in\Omega_c^{\mathbb{H}}(\Omega_c^{\mathbb{E}},\Omega_c^{\mathbb{S}},\text{resp.})$. Then by (\ref{dihe}), (\ref{angl}) and the definition of the dihedral angle, we have
\begin{equation}\label{agl}
\theta_{ij}=\angle o_i v_{kl}o_j=\angle ov_{kl}o_i+\angle ov_{kl}o_j=\theta_i+\theta_j
\end{equation}
for all edge $e_{kl}$ of $\Delta^3(r)$, where $\theta_{ij}\in(0,\pi)$ is the dihedral angle of the edge $e_{kl}$. Moreover, by Lemma \ref{center} and (\ref{agl}), we have
\begin{equation}\label{som}
\theta_{ij}+\theta_{kl}=\theta_{ik}+\theta_{jl}=\theta_{il}+\theta_{jk},
\end{equation}
which holds on the open subset $\Omega_c^{\mathbb{H}}(\Omega_c^{\mathbb{E}},\Omega_c^{\mathbb{S}},\text{resp.})\subset\Omega^{\mathbb{H}}_t(\Omega^{\mathbb{E}}_t,\Omega^{\mathbb{S}}_t,\text{resp.})$.

By Lemma \ref{lengthandangle}, Lemma \ref{hyperbolicadmissiblespace} and Lemma \ref{sphericaladmissiblespace}, all dihedral angles of a conformal hyperbolic tetrahedron are the real analytic functions on the connected domain $\Omega^{\mathbb{H}}_t(\Omega^{\mathbb{E}}_t,\Omega^{\mathbb{S}}_t,\text{resp.})$. 

By the property of real analytic functions, the equality (\ref{som}) holds on the admissible space $\Omega^{\mathbb{H}}_t(\Omega^{\mathbb{E}}_t,\Omega^{\mathbb{S}}_t,\text{resp.})$, which implies that the three pairs of the sums of opposite dihedral angles of $\Delta^3$ are equal for all conformal hyperbolic (Euclidean, spherical, resp.) tetrahedra $\Delta^3$.

(Ideal hyperbolic case.) If the ideal hyperbolic tetrahedron $\Delta^3\subset\overline{\mathbb{H}^3}$ is conformal, then by definition, $\Delta^3$ is the Gromov-Hausdorff limit of a sequence of conformal hyperbolic tetrahedra $\{\Delta^3_m\}_{m=1}^{\infty}\subset\mathbb{H}^3$, which implies that 
\begin{equation}\label{xy2}
\theta_{ij}^{(m)}\rightarrow\theta_{ij}(m\rightarrow\infty),~\text{for all}~1\le i\ne j\le 4.
\end{equation}

Since $\Delta^3_m$ is a conformal hyperbolic tetrahedron, then by the proof of hyperbolic case, we have
\begin{equation}\label{xy1}
\theta_{ij}^{(m)}+\theta_{kl}^{(m)}=\theta_{ik}^{(m)}+\theta_{jl}^{(m)}=\theta_{il}^{(m)}+\theta_{jk}^{(m)}
\end{equation}
for all permutations $(i,j,k,l)$ of $(1,2,3,4)$. Moreover, by (\ref{xy2}) and (\ref{xy1}), we have
$$
\theta_{ij}+\theta_{kl}=\theta_{ik}+\theta_{jl}=\theta_{il}+\theta_{jk}.
$$
This complete the proof.
\end{proof}

Now we give the proof of Theorem \ref{main1}.
\pf[Proof of Theorem \ref{main1}]

(Hyperbolic case.) ($\Rightarrow$) If the hyperbolic tetrahedron $\Delta^3\subset\mathbb{H}^3$ is conformal, then by Lemma \ref{dihesum}, we have
\begin{equation}\label{so}
\theta_{ij}+\theta_{kl}=\theta_{ik}+\theta_{jl}=\theta_{il}+\theta_{jk}
\end{equation}
for all permutations $(i,j,k,l)$ of $(1,2,3,4)$ and $\theta_{ij}\in(0,\pi)$ is the dihedral angle of the geodesic edge $e_{kl}$ of $\Delta^3$ for all $1\le k\ne l\le 4$. Moreover, by Corollary \ref{conedge} and (\ref{so}), we have
\begin{equation}\label{duedge}
\ell_{ij}^*+\ell_{kl}^*=\ell_{ik}^*+\ell_{jl}^*=\ell_{il}^*+\ell_{jk}^*,
\end{equation}
where $\ell _{ij}^*=\pi-\theta_{ij}\in (0,\pi)$ is the length of the corresponding geodesic edge $e_{ij}^*$ of the dual tetrahedron $(\Delta^3)^\ast$ for all permutations $(i,j,k,l)$ of $(1,2,3,4)$. Moreover, by (\ref{duedge}) and triangle inequality, we have
$$
\frac{\ell_{ij}^*+\ell_{ik}^*-\ell_{jk}^*}{2}=\frac{\ell_{ij}^*+\ell _{il }^*-\ell_{jl}^*}{2}=\frac{\ell_{ik}^*+\ell _{il }^*-\ell_{kl}^*}{2}:=r_i^*>0
$$
for all $1\le i\le 4$, which implies that $\ell_{ij}^*=r_i^*+r_j^*$ for all edges $e_{ij}^*$ of $(\Delta^3)^*$ and the dual tetrahedron $(\Delta^3)^*$ is conformal.

($\Leftarrow$) If the dual tetrahedron $(\Delta^3)^*\subset\mathrm{d}\mathbb{S}^3$ is conformal, then by Corollary \ref{conedge}, there exist positive numbers $r_1^*,r_2^*,r_3^*,r_4^*>0$ such that 
\begin{equation}\label{dui56}
\ell_{ij}^*=\pi-\theta_{ij}=r_i^*+r_j^*\in(0,\pi),
\end{equation}
for all geodesic edges $e_{ij}^*$ of $(\Delta^3)^*$, note that $r_i^*\in(0,\pi)$ for all $1\le i\le4$. 

Recall that the Gram matrix $G^*=(\langle v_i^*,v_j^*\rangle_{3,1})_{1\le i,j\le 4}$, by (\ref{c4}), (\ref{dui56}) and $(\Delta^3)^*\subset\mathrm{d}\mathbb{S}^3$, we have
\begin{equation}\label{highhyper}
\langle v_i^*,v_j^*\rangle_{3,1}=
\begin{cases}
1=s_i^{*2}(c_i^{*2}+1),&1\le i=j\le 4\\
\cos(r_i^*+r_j^*)=s_i^*s_j^*(c_i^*c_j^*-1),&1\le i\ne j\le 4,\\
\end{cases}
\end{equation}
where $s_i^*=\sin r_i^*>0,c_i^*=\cot r_i^*$ for all $1\le i\le 4$. 

Recall that $G^\ast_{ij}$ is the $(i,j)$-cofactor of $G^{*}$ for all $1\le i,j\le 4$, then by the argument similar to the proof of spherical case of Theorem \ref{highpolyhedron}, we have 
\begin{equation}\label{angleminor1}
G^\ast_{ii}=4s_j^{*2}s_k^{*2}s_l^{*2}(c_j^*c_k^*+c_j^*c_l^*+c_k^*c_l^*-1),
\end{equation}
\begin{equation}\label{angleminor2}
G^\ast_{ij}=2s_i^*s_j^*s_k^{*2}s_l^{*2}(c_k^{*2}+c_l^{*2}-(c_i^*+c_j^*)(c_k^*+c_l^*)+2)
\end{equation}
for all permutations $(i,j,k,l)$ of $(1,2,3,4)$. Moreover, since $r_i^*\in (0,\pi)$ and $r_i^*+r_j^*\in (0,\pi)$ for all $1\le i\ne j\le 4$, then we have 
\begin{equation}\label{oddd}
c_i^*+c_j^*>0,~1\le i\ne j\le 4. 
\end{equation}
Moreover, by Lemma \ref{m3}, (\ref{angleminor1}), (\ref{angleminor2}) and (\ref{oddd}), we have
\begin{align}
\cosh \ell_{ij}&=\frac{2s_i^*s_j^*s_k^{*2}s_l^{*2}(c_k^{*2}+c_l^{*2}-(c_i^*+c_j^*)(c_k^*+c_l^*)+2)}{\sqrt{G^\ast_{ii}G^\ast_{jj}}},\label{angleminor5}\\
\sinh \ell_{ij}&=\frac{2s_i^*s_j^*s_k^{*2}s_l^{*2}|c_k^*+c_l^*|A^*}{\sqrt{G^\ast_{ii}G^\ast_{jj}}}
=\frac{2s_i^*s_j^*s_k^{*2}s_l^{*2}(c_k^*+c_l^*)A^*}{\sqrt{G^\ast_{ii}G^\ast_{jj}}}\label{angleminor6}
\end{align}
for all permutations $(i,j,k,l)$ of $(1,2,3,4)$, where
$$
A^*=\sqrt{2\sum_{i=1}^4c_i^{*2}-(\sum_{i=1}^4c_i^*)^2+4}.
$$
Moreover, by (\ref{angleminor5}) and (\ref{angleminor6}), we have
\begin{align} 
\sinh (\ell_{ij}+\ell_{kl})&=\frac{A^*(B^*-C^*)D^*}{E^*},\label{angleminor7}
\end{align}
for all permutations $(i,j,k,l)$ of $(1,2,3,4)$,
where 
$$
B^*=\sum_{i=1}^4c_i^*,~C^*=\sum_{i=1}^4\hat{c_i^*}c_j^*c_k^*c_l^*,~D^*=8\prod_{i=1}^{4} s_i^{*3},~E^*=\sqrt{\prod_{i=1}^{4}G^\ast_{ii}}
$$
and $\hat{c_i^*}$ means remove the $c_i^*$ from $\{c_i^*,c_j^*,c_k^*,c_l^*\}=\{c_1^*,c_2^*,c_3^*,c_4^*\}$. 

Since $A^*,B^*,C^*,D^*,E^*$ are invariants, then by (\ref{angleminor7}), we have
$$
\sinh (\ell_{ij}+\ell_{kl})=\sinh (\ell_{ik}+\ell_{kl})=\sinh (\ell_{il}+\ell_{jk}),
$$
which implies that
\begin{equation}\label{duedg1}
\ell_{ij}+\ell_{kl}=\ell_{ik}+\ell_{jl}=\ell_{il}+\ell_{jk}
\end{equation}
for all permutations $(i,j,k,l)$ of $(1,2,3,4)$. Moreover, by (\ref{duedg1}) and triangle inequality, we have
$$
\frac{\ell_{ij}+\ell_{ik}-\ell_{jk}}{2}=\frac{\ell_{ij}+\ell _{il }-\ell_{jl}}{2}=\frac{\ell_{ik}+\ell _{il }-\ell_{kl}}{2}:=r_i>0
$$
for all $1\le i\le 4$, which implies that $\ell_{ij}=r_i+r_j$ for all geodesic edges $e_{ij}$ of $\Delta^3$ and the hyperbolic tetrahedron $\Delta^3$ is conformal.

(Euclidean case.) $(\Rightarrow)$  The proof has been omitted since it is quite similar to the proof of hyperbolic case.

$(\Leftarrow)$ If the dual tetrahedron $(\Delta^3)^*\subset\mathbb{S}^2$ is conformal, then by Corollary \ref{conedge}, there exist positive numbers $r_1^*,r_2^*,r_3^*,r_4^*>0$ such that 
\begin{equation}\label{do}
\ell_{ij}^*=\pi-\theta_{ij}=r_i^*+r_j^*\in(0,\pi),
\end{equation}
for all geodesic edges $e_{ij}^*$ of $(\Delta^3)^*$, note that $r_i^*\in(0,\pi)$ for all $1\le i\le4$. 

Denote the incircle of the $i$-face $F_i$ of $\Delta^3$ by $C_i$ and suppose that $C_i$ has radius $R_i$ and is tangent to the geodesic edge $e_{jk}$ at $v_{i,jk}$ for all $1\le i\le 4$ and all permutations $(i,j,k,l)$ of $(1,2,3,4)$. Moreover, we have
\begin{align*}
d_0(v_i,v_{j,ik})&=d_0(v_i,v_{j,il}):=r_{i,j},\\
d_0(v_i,v_{k,ij})&=d_0(v_i,v_{k,il}):=r_{i,k},\\
d_0(v_i,v_{l,ij})&=d_0(v_i,v_{l,ik}):=r_{i,l}
\end{align*}
for all $1\le i\le4$. Moreover, for any geodesic edge $e_{ij}(i\ne j)$ of $\Delta^3$, we have
\begin{equation}\label{ao7}
\ell_{ij}=r_{i,k}+r_{j,k}=r_{i,l}+r_{j,l},
\end{equation}
\begin{equation}\label{ao4}
\begin{aligned}
&\tan \frac{\angle v_i v_j v_l}{2}=\frac{R_k}{r_{j,k}},~\tan \frac{\angle v_j v_i v_l}{2}=\frac{R_k}{r_{i,k}},\\
&\tan \frac{\angle v_i v_j v_k}{2}=\frac{R_l}{r_{j,l}},~\tan \frac{\angle v_j v_i v_k}{2}=\frac{R_l}{r_{i,l}}.
\end{aligned}
\end{equation}

By Lemma \ref{lengthandangle}, we have
\begin{equation}\label{ao1}
\cos\angle v_iv_jv_k=\dfrac{\cos\ell_{il}^*\cos\ell_{kl}^*-\cos\ell_{ik}^*}{\sin\ell_{il}^*\sin\ell_{kl}^*}
\end{equation}
for all permutations $(i,j,k,l)$ of $(1,2,3,4)$. Then by (\ref{do}) and (\ref{ao1}), we have
\begin{equation}\label{ao2}
 \dfrac{\tan^2 \dfrac{\angle v_i v_j v_l}{2}}{\tan^2 \dfrac{\angle v_i v_j v_k}{2}}=\dfrac{\tan^2 \dfrac{\angle v_j v_i v_l}{2}}{\tan^2 \dfrac{\angle v_j v_i v_k}{2}}=\dfrac{\sin^2 r_k^*}{\sin^2 r_l^*}.
\end{equation}

Since $r_i^*\in(0,\pi)$ for all $1\le i\le4$, by (\ref{ao2}), we have
\begin{equation}\label{ao3}
 \dfrac{\tan \dfrac{\angle v_i v_j v_l}{2}}{\tan \dfrac{\angle v_i v_j v_k}{2}}=\dfrac{\tan \dfrac{\angle v_j v_i v_l}{2}}{\tan \dfrac{\angle v_j v_i v_k}{2}}=\dfrac{\sin r_k^*}{\sin r_l^*}.
\end{equation}
By (\ref{ao4}) and (\ref{ao3}), we have
\begin{equation}\label{ao6}
\dfrac{r_{i,k}}{r_{i,l}}=\dfrac{r_{j,k}}{r_{j,l}}.
\end{equation}

Then by (\ref{ao7}) and (\ref{ao6}), we have $r_{i,k}=r_{i,l}$ and $r_{j,k}=r_{j,l}$, which implies that   
$$
r_{i,j}=r_{i,k}=r_{i,l}:=r_i>0
$$
for all $1\le i\le 4$. Then by (\ref{ao7}), we have $\ell_{ij}=r_i+r_j$ for all geodesic edges $e_{ij}$ of $\Delta^3$, which implies that the Euclidean tetrahedron $\Delta^3$ is conformal.

(Spherical case.) $(\Rightarrow)$  The proof has been omitted since it is quite similar to the proof of hyperbolic case.

$(\Leftarrow)$ If the dual tetrahedron $(\Delta^3)^*\subset\mathbb{S}^3$ is conformal, since $(\Delta^3)^*$ is also a non-degenerate spherical tetrahedron, then by Proposition \ref{l1} and $(\Rightarrow)$, then spherical tetrahedron $\Delta^3=((\Delta^3)^*)^*$ is conformal.  

(Ideal hyperbolic case.) $(\Rightarrow)$ The proof has been omitted since it is quite similar to the proof of hyperbolic case.

$(\Leftarrow)$ If the dual tetrahedron $(\Delta^3)^*\subset\mathrm{d}\mathbb{S}^3$ is conformal, then by Corollary \ref{po5}, there exist positive numbers $r_1^*,r_2^*,r_3^*,r_4^*\in(0,\pi)$ such that 
\begin{equation}\label{dui}
\ell_{ij}^*=\pi-\theta_{ij}=r_i^*+r_j^*\in(0,\pi),
\end{equation}
for all geodesic edges $e_{ij}^*$ of $(\Delta^3)^*$.

Define $\theta_i:=\pi/{2}-r_i^*$ for all $1\le i\le4$, then by (\ref{dui}), we have
\begin{equation}\label{xi6}
\theta_{ij}=\theta_i+\theta_j, ~\text{for all geodesic edge}~ e_{kl}~\text{of}~\Delta^3.
\end{equation}
where $\theta_i\in(-\pi/2,\pi/2)$.

For any ideal vertex $v_i\in\partial_{\infty}\mathbb{H}^3$, we have
\begin{equation}\label{xi7}
\theta_{jk}+\theta_{jl}+\theta_{kl}=\pi.
\end{equation} 
Then by (\ref{xi6}) and (\ref{xi7}), we have
\begin{equation}\label{idea1}
\theta_{j}+\theta_k+\theta_l=\pi/2
\end{equation}
and there exists at most a $s\in\{j,k,l\}$ such that $\theta_s<0$. Moreover, since $\Delta^3$ is the Gromov-Hausdorff limit of a sequence of hyperbolic tetrahedra, then by Theorem \ref{admissibleidealangle} and the proof of hyperbolic case, we have
\begin{equation}\label{xu45}
\Theta_{ii}=G_{ii}^*=4c_j^{2}c_k^{2}c_l^{2}(t_jt_k+t_jt_l+t_kt_l-1)=0,
\end{equation}
where $\Theta=(\cos(\pi-\theta_{ij}))_{1\le i,j\le4}$, $c_s=\cos\theta_{s}>0$ and $t_s=\tan\theta_s$ for all $s\in\{j,k,l\}$.  

 Choose a sequence $\{\theta_{i}^{(m)}\}_{m=1}^{\infty}$ contained in a sufficiently small interval of $\theta_{i}$ such that $\theta_s^{(m)}>\theta_i$, $\theta_{i}^{(m)}\to\theta_{i}(m\to \infty)$ for all $m\ge 1$ and $1\le i\le4$. Then by above argument and (\ref{xu45}), we have $\Theta_{ii}^{(m)}>0$ for all ideal vertex $v_i\in\partial_{\infty}\mathbb{H}^3$. 

Define $\theta_{ii}^{(m)}:=\pi$ and $\theta_{ij}^{(m)}:=\theta_{i}^{(m)}+\theta_{j}^{(m)}\in(0,\pi)$ for all $1\le i,j\le4$ and $m\ge1$. Then we have $\Theta^{(m)}\to\Theta(m\to\infty)$ and 
\begin{equation}\label{po2}
|\Theta^{(m)}|<0,~\Theta^{(m)}(i,i)~\text{is positive definite},~\Theta_{ij}^{(m)}>0
\end{equation}
for all $1\le i,j\le4$ and $m\ge1$. 

By Theorem \ref{admissibleidealangle} and (\ref{po2}), there exists a sequence of hyperbolic tetrahedra $\{\Delta^3_m\}_{m=1}^{\infty}$ such that the dihedral angle of the $ij$-face of $\Delta^3_m$ is $\theta_{ij}^{(m)}$ for all $ij$-faces and $m\ge1$. Moreover, since $\theta_{ij}^{(m)}\to \theta_{ij}(m\to\infty)$, then we have $\Delta^3_m\to \Delta^3(m\to\infty)$ up to isometry.

Define $r_i^{(m)*}:=\pi/2-\theta_{i}^{(m)}>0$ for all $1\le i\le4$ and $m\ge1$. By Corollary \ref{conedge}, we have
$$
\ell_{ij}^{(m)*}=\pi-\theta_{ij}^{(m)}=r_i^{(m)*}+r_j^{(m)*},
$$
for all geodesic edges $e_{kl}^{(m)*}$ of the dual tetrahedron $(\Delta^3_m)^*$ of $\Delta^3_m$, which implies that the dual tetrahedron $(\Delta^3_m)^*$ is conformal for all $m\ge1$. 

By the proof of hyperbolic case, $\{\Delta^3_m\}_{m=1}^{\infty}$ is a sequence of conformal hyperbolic tetrahedra, which implies that the ideal hyperbolic tetrahedron $\Delta^3$ is conformal.  
\pfd

Now we give the proofs of Corollary \ref{sumangle}, Corollary \ref{ck1} and Corollary \ref{rigidityoftetrahedron}.

\pf[Proof of Corollary \ref{sumangle}]
$(\Rightarrow)$ We have proved this direction by Lemma \ref{dihesum}. 

$(\Leftarrow)$ If the three pairs of the sums of opposite dihedral angles of $\Delta^3$ are equal, then by the argument similar to proof of $(\Rightarrow)$ of hyperbolic case in the proof of Theorem \ref{main1}, the dual tetrahedron $(\Delta^3)^*$ is conformal. Then by Theorem \ref{main1}, $\Delta^3$ is conformal.
\pfd

\pf[Proof of Corollary \ref{ck1}]
For any conformal ideal tetrahedron $\Delta^3$, by Theorem \ref{main1}, the dual tetrahedron $(\Delta^3)^*$ is conformal, then by the proof of $(\Rightarrow)$ of hyperbolic case in the proof of Theorem \ref{main1}, there exist $\theta_1,\cdots,\theta_4\in(-\pi/2,\pi/2)$ such that 
\begin{equation}\label{idal1}
\theta_{ij}=\theta_i+\theta_j, ~\text{for all geodesic edge}~ e_{kl}~\text{of}~\Delta^3.
\end{equation}

Since $v_i(1\le i\le4)$ is an ideal vertex, then we have
\begin{equation}\label{idal2}
\theta_{jk}+\theta_{jl}+\theta_{kl}=\pi, ~\text{for all }~1\le i\le 4.
\end{equation}
Moreover, by (\ref{idal1}) and (\ref{idal2}), we have
$$
\theta_{j}+\theta_{k}+\theta_{l}=\pi/2, ~\text{for all}~1\le i\le 4,
$$
and $\theta_{1}=\theta_{2}=\theta_{3}=\theta_{4}=\pi/6$. Moreover, by (\ref{xi6}), we obtain
$$
\theta_{ij}=\pi/3,~\text{for all geodesic edges}~ e_{kl}~\text{of}~\Delta^3,
 $$   
 which implies that $\Delta^3$ is the unique regular ideal tetrahedron up to isometry. 
\pfd

\begin{remark}
    By the proof of Corollary \ref{ck1}, the conformal ideal tetrahedron is the Gromov-Hausdorff limit of a sequence of regular hyperbolic tetrahedra.
\end{remark}

\pf[Proof of Corollary \ref{rigidityoftetrahedron}] 
Since $|T|=1$, then we have $\Omega^{\mathbb{H}}(\Omega^{\mathbb{E}},\Omega^{\mathbb{S}},\partial_{\infty}\Omega^{\mathbb{H}},\text{resp}.)$ $=\Omega_t^{\mathbb{H}}(\Omega^{\mathbb{E}}_t,\Omega^{\mathbb{S}}_t,\partial_{\infty}\Omega^{\mathbb{H}}_t,\text{resp}.)$ and $V=V_t$ is a boundary vertex set. Moreover, define $\widetilde{\Omega}^{\mathbb{E}}$ as the set of the equivalence classes of all $r$ in the admissible space $\Omega^{\mathbb{E}}$, up to scaling. Adopting a common abuse of notation, we do not distinguish a $r\in\Omega^{\mathbb{E}}$ and the equivalence class $[r]\in\widetilde{\Omega}^{\mathbb{E}}$ represented by $r$.

For any hyperbolic (Euclidean, spherical, ideal hyperbolic, resp.) sphere packing (metric) $r=(r_i)_{i\in V}\in\Omega^{\mathbb{H}}(\widetilde{\Omega}^{\mathbb{E}},\Omega^{\mathbb{S}}, \partial_{\infty}\Omega^{\mathbb{H}},$ $\text{resp}.)$, $\Delta^3(r)$ is a conformal hyperbolic (Euclidean, spherical, ideal hyperbolic, resp.) tetrahedron with
\begin{equation}\label{ahj}
    \ell_{ij}=r_i+r_j,~\text{for all geodesic edge}~e_{ij}~\text{of}~ \Delta^3(r).
\end{equation}
 Moreover, (\ref{ahj}) implies that 
\begin{equation}\label{ahj1}
   r_i=\frac{\ell_{ij}+\ell_{ik}-\ell_{jk}}{2}=\frac{\ell_{ij}+\ell _{il }-\ell_{jl}}{2}=\frac{\ell_{ik}+\ell _{il }-\ell_{kl}}{2},~i\in V
.\end{equation}

By Gauss-Bonnet formula, we have
\begin{equation}\label{ango}
  \alpha_{i}=\theta_{jk}+\theta_{jl}+\theta_{kl}-\pi  
\end{equation}
for all $i\in V$, where $\alpha_{i}$ is the solid angle at the vertex $v_i$ of $\Delta^3(r)$.

By Lemma \ref{lengthandangle}, (\ref{ahj}) and (\ref{ango}), the solid angle $\alpha=(\alpha_i)_{i\in V}$ of the conformal tetrahedron is the smooth real analytic function of $r\in\Omega^{\mathbb{H}}(\widetilde{\Omega}^{\mathbb{E}}, \Omega^{\mathbb{S}}, \partial_{\infty}\Omega^{\mathbb{H}}$, $\text{resp}.)$, namely 
\begin{equation}\label{real}
\alpha=\alpha(r),~r\in\Omega^{\mathbb{H}}(\widetilde{\Omega}^{\mathbb{E}},\Omega^{\mathbb{S}}, \partial_{\infty}\Omega^{\mathbb{H}}, \text{resp}.),  
\end{equation}

Since the hyperbolic (Euclidean, spherical, ideal hyperbolic, resp.) tetrahedron $\Delta^3=\Delta^3(r)$ is conformal, then by Theorem \ref{main1}, Corollary \ref{conedge} and Corollary \ref{po5}, the dual tetrahedron $(\Delta^3)^*$ is conformal and there exists a $r^*=(r_i^*)_{i\in V}\in\mathbb{R}_+^{V}$ such that
\begin{equation}\label{jio}
    \ell_{ij}^*=\pi-\theta_{ij}=r_i^*+r_j^*,~\text{for all geodesic edge}~e_{ij}^*~\text{of}~(\Delta^3(r))^*.
\end{equation}  
Then by (\ref{ango}) and (\ref{jio}), we have 
\begin{equation}\label{sumsoli1}
\begin{aligned}
\alpha_i&=\theta_{jk}+\theta_{jl}+\theta_{kl}-\pi\\
&=\pi-(r_j^*+r_k^*)+\pi-(r_j^*+r_l^*)\\
&+\pi-(r_k^*+r_l^*)-\pi\\
&=2\pi-2\sum_{s\ne i}r_s^*
\end{aligned}
\end{equation}
for all $i\in V$. Moreover, by (\ref{sumsoli1}), we obtain
\begin{equation}\label{sumsoli}
\begin{aligned}
\sum_{i\in V}\alpha_i&=\sum_{i\in V}(2\pi-2\sum_{s\ne i}r_s^*)\\
&=8\pi-2\sum_{i\in V}\sum_{s\ne i}r_s^*\\
&=8\pi-6\sum_{i\in V}r_i^*.
\end{aligned}
\end{equation}
By (\ref{sumsoli1}) and (\ref{sumsoli}), we have the following explicit formula of $r_i^*$ in terms of the solid angle $\alpha_i$, namely
\begin{equation}\label{sumsoli3}
r_i^*=\frac{2\pi+2\alpha_i-\sum_{s\ne i}\alpha_s}{6},~\text{for all}~i\in V.
\end{equation}

By Lemma \ref{lengthandangle}, (\ref{ahj1}), (\ref{jio}) and (\ref{sumsoli3}), the sphere packing metric $r$ is the smooth real analytic function of the solid angle $\alpha\in\alpha(\Omega^{\mathbb{H}}_t)(\alpha(\widetilde{\Omega}^{\mathbb{E}}),\alpha(\Omega^{\mathbb{S}}),$ $\alpha(\partial_{\infty}\Omega^{\mathbb{H}}), \text{resp}.)$, namely 
\begin{equation}\label{bil}
    r=r(\alpha),~\alpha\in\alpha(\Omega^{\mathbb{H}})(\alpha(\widetilde{\Omega}^{\mathbb{E}}),\alpha(\Omega^{\mathbb{S}}), \alpha(\partial_{\infty}\Omega^{\mathbb{H}}), \text{resp}.).
\end{equation}

By (\ref{q96}), (\ref{real}) and (\ref{bil}), the combinatorial scalar curvature map $\mathbb{K}$ is an injective map on $\Omega^{\mathbb{H}}(\widetilde{\Omega}^{\mathbb{E}},\Omega^{\mathbb{S}}, \partial_{\infty}\Omega^{\mathbb{H}}, \text{resp}.)$. This completes the proof.
\pfd

\begin{remark}
    By Lemma \ref{lengthandangle}, $($\ref{q96}$)$, $($\ref{ahj1}$)$, $($\ref{jio}$)$ and $($\ref{sumsoli3}$)$, the rigidity map, namely the inverse of combinatorial scalar curvature map $\mathbb{K}$, can be expressed explicitly.    
\end{remark}

\section{Admissible space of conformal ideal hyperbolic tetrahedra}\label{boundary}

In this section, we study the ideal hyperbolic admissible space and construct the extended concave functional. Moreover, we use the symbol ``$\Omega_t$'' instead of $\Omega^{\mathbb{H}}_t$ to represent this hyperbolic admissible space for the convenience of notations.

Define $\widehat{\mathbb{R}_{+}^{V_t}}:=\{r_t=(r_i)_{i\in V_t}:0<r_i\le\infty~ \text{for all}~ i\in V_t\}$, $\coth \infty:=1$ and recall that
$$
Q^{\mathbb{H}}_3(r_t):=(\sum\limits_{i\in V_t}\coth r_i)^2-2\sum\limits_{i\in V_t}\coth^2 r_i+4.
$$

For any subset $W_t\subset V_t$, the space $\widehat{\mathbb{R}_{+}^{V_t}}$ has the subspace 
$$
\mathcal{R}^{W_t}:=\left \{ r_t=(r_i)_{i\in V_t}\in\widehat{\mathbb{R}_{+}^{V_t}}:  r_{W_t}=(r_i)_{i\in W_t}\in\mathbb{R}^{W_t}_{+},~ r_{V_t\setminus W_t}=(r_i)_{i\in V_t\setminus W_t}=(\infty)_{i\in V_t\setminus W_t}\right\},
$$
where $\mathcal{R}^{\varnothing_t}=\{{\boldsymbol{\infty}_t}\}$($\boldsymbol{\infty}_t=(\infty)_{i\in V_t}$) if $W_t=\varnothing_t$($\varnothing_t$ is the empty subset of $V_t$) and $\mathcal{R}^{V_t}=\mathbb{R}^{V_t}_+$ if $W_t=V_t$. 

It is not difficult to check that there exists a cellular decomposition of $\widehat{\mathbb{R}_{+}^{V_t}}$ and the subspace $\mathcal{R}^{W_t}$ is a $|W_t|$-dimensional cell, namely
\begin{equation}
\widehat{\mathbb{R}_{+}^{V_t}}=\bigsqcup_{W_t\subset V_t}\mathcal{R}^{W_t}=\mathbb{R}_{+}^{V_t}\bigsqcup\partial_{\infty}\mathbb{R}_{+}^{V_t},
\end{equation}
where $\partial_{\infty}\mathbb{R}_{+}^{V_t}:=\bigsqcup_{W_t\subsetneq V_t}\mathcal{R}^{W_t}$ is called \emph{boundary at infinity of} $\mathbb{R}_{+}^{V_t}$.

For any non-empty subset $W_t\subset V_t$, define the following projection  
\begin{equation}
		\begin{aligned}
			p^{W_t}: \mathcal{R}^{W_t} & \longrightarrow\mathbb{R}^{W_t}_{+}  \\
			r_t^{W_t}=(r_i)_{i\in V_t} & \longmapsto r_{W_t}=(r_i)_{i\in W_t},
		\end{aligned}
\end{equation}
the projections $p^{W_t}$ is a homeomorphism from $\mathcal{R}^{W_t}$ to $\mathbb{R}_{+}^{W_t}$ and define $r^{W_t}_t:=(p^{W_t})^{-1}(r_t)\in\mathcal{R}^{W_t}$ for all $r_{W_t}\in \mathbb{R}^{W_t}_{+}$.

Recall that the extended hyperbolic admissible space of the hyperbolic admissible space $\Omega_t$ is defined as
$$
\widehat{\Omega}_t:=\left \{ r_t=(r_i)_{i\in V_t}\in\widehat{\mathbb{R}_{+}^{V_t}}:Q^{\mathbb{H}}_3(r_t)>0 \right\}.
$$
For any subset $W_t\subset V_t$, the extended space $\widehat{\Omega}_t$ has the subspace 
$$
\Omega^{W_t}:=\left \{ r_t^{W_t}=(r_i)_{i\in V_t}\in\mathcal{R}^{W_t}:  Q^{\mathbb{H}}_3(r_t^{W_t})>0\right\},
$$
where $\Omega^{\varnothing_t}=\{{\boldsymbol{\infty}}_t\}$($\boldsymbol{\infty}=(\infty)_{i\in V_t}$) if $W_t=\varnothing_t$ and $\Omega^{V_t}=\Omega_t$ if $W_t=V_t$. 

It is not difficult to check that there exists a cellular decomposition of $\widehat{\Omega}_t$ and the subspace $\Omega^{W_t}$ is a $|W_t|$-dimensional cell, namely
\begin{equation}\label{cell2}
\widehat{\Omega}_t=\bigsqcup_{W_t\subset V_t}\Omega^{W_t}=\Omega_t\bigsqcup\partial_{\infty}\Omega_t,
\end{equation} 
where $\partial_{\infty}\Omega_t:=\bigsqcup_{W_t\subsetneq V_t}\Omega^{W_t}$ is the ideal hyperbolic admissible space. Moreover, the space $\mathbb{R}^{W_t}_{+}$ has the subspace 
$$
\Omega_{W_t}:=\{r_{W_t}=(r_i)_{i\in W_t}\in\mathbb{R}_+^{W_t}:Q^{\mathbb{H}}_3(r^{W_t}_t)>0\}.
$$
for all non-empty subset $W_t\subset V_t$. Then we have $p^{W_t}(\Omega^{W_t})=\Omega_{W_t}$ and $\Omega^{V_t}=\Omega_{V_t}=\Omega_t$ 

\begin{prop}\label{piece}
    The space $\Omega_{W_t}$ is a simply connected domain with piecewise analytic boundary for all non-empty subset $W_t\subset V_t$.
\end{prop}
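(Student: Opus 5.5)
We will mimic the proof of Lemma \ref{sphericaladmissiblespace}: change variables to $x_i=\coth r_i$, so that each coordinate $r_i\in(0,\infty)$, $i\in W_t$, becomes $x_i\in(1,\infty)$, while each coordinate $r_i=\infty$, $i\in V_t\setminus W_t$, is frozen at $x_i=1$. The map $\Phi_{W_t}:\Omega_{W_t}\to\mathfrak{X}_{W_t}$, $r_{W_t}\mapsto (\coth r_i)_{i\in W_t}$, is then a real analytic diffeomorphism onto
$$
\mathfrak{X}_{W_t}:=\{x=(x_i)_{i\in W_t}\in(1,\infty)^{W_t}:\mathcal{Q}(x)>0\},
$$
where $\mathcal{Q}(x)=(\sum_{i\in V_t}x_i)^2-2\sum_{i\in V_t}x_i^2+4$ with $x_i=1$ for $i\notin W_t$. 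It therefore suffices to prove that $\mathfrak{X}_{W_t}$ is contractible, hence simply connected, with piecewise analytic boundary.

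The boundary statement is the easy part. $\partial\mathfrak{X}_{W_t}$ is contained in the union of the hyperplanes $\{x_i=1\}$ and the real algebraic quadric $\{\mathcal{Q}=0\}$, and the map $\coth$ is analytic. Hence $\partial\Omega_{W_t}$ (within $\mathbb{R}_+^{W_t}$, together with the analytic pieces $\{Q_3^{\mathbb{H}}=0\}$) is piecewise analytic.

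For contractibility we order $W_t=\{1,\dots,m\}$ and treat $\mathcal{Q}$ as a quadratic in the last variable $x_m$. Writing $S=\sum_{i\ne m}x_i$ (including the frozen ones) and $P=\sum_{i<j,\,i,j\ne m}x_ix_j$, we get $\mathcal{Q}=-x_m^2+2Sx_m+(2P-\sum_{i\ne m}x_i^2)+4$. So $\mathcal{Q}>0$ is equivalent to
$$
S-\sqrt{D}<x_m<S+\sqrt{D},\qquad D=S^2+2P-\sum_{i\ne m}x_i^2+4 .
$$
This parallels (\ref{ine}), where $D$ reduces to a multiple of $x_jx_k+x_jx_l+x_kx_l-1$ when three variables remain. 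Intersecting with $x_m>1$, the fiber over $x'=(x_1,\dots,x_{m-1})$ is the interval $(\max\{1,S-\sqrt D\},\,S+\sqrt D)$. This interval is nonempty iff $D>0$ and $S+\sqrt D>1$. Since all $x_i\ge1$ we have $S\ge 1$ whenever $|V_t|\ge2$, so the fiber is nonempty exactly when $D>0$. The projection to $\mathfrak{X}^{(1)}=\{x'\in(1,\infty)^{m-1}:D(x')>0\}$ has a continuous section (the midpoint of the fiber), and the straight-line homotopy in the fiber shows $\mathfrak{X}_{W_t}\simeq\mathfrak{X}^{(1)}$, exactly as with $H_1$ in the proof of Lemma \ref{sphericaladmissiblespace}. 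We then iterate: $D$ is again a quadratic condition in $x_{m-1}$, and we peel off variables one at a time until we reach an interval or a half-line.

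The main obstacle is that at the intermediate stages the condition $D>0$ (for example $x_jx_k+x_jx_l+x_kx_l>1$ in the three-variable stage, in the $\coth$ normalization) might not cut out an interval in the next variable intersected with $(1,\infty)$. For this coth-quadric with all $x_i>1$, I expect it to be automatic: for instance $x_jx_k+\dots>1$ always holds, so those stages give full products $(1,\infty)$. I would verify stage by stage that each fiber is a single interval, which amounts to checking that the relevant leading coefficients have a fixed sign on $(1,\infty)$. If this fails for small $|W_t|$ (the cases $|W_t|=1,2$, where many coordinates are frozen at $1$), I would handle those directly. In those cases $\Omega_{W_t}$ is an explicit interval or planar region cut out by one quadratic inequality, and it can be checked to be star-shaped with respect to the point with all $r_i$ large, i.e.\ all $x_i$ near $1$.
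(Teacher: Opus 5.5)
Your proposal is correct and is essentially the paper's argument: for proper $W_t$ the paper also proves contractibility of $\Omega_{W_t}$ by the interval-fibre projection argument of Lemma \ref{sphericaladmissiblespace}, and it handles the piecewise analytic boundary at the same level of detail you do. The only differences are that you also run this argument for $W_t=V_t$ instead of citing Cooper--Rivin (Lemma \ref{hyperbolicadmissiblespace}), and that your anticipated obstacle never arises: in the $\coth$ normalization $D=S^2+2P-\sum_{i\ne m}x_i^2+4=4(P+1)\ge 16$ (note the $+1$, not the $-1$ of the spherical case), so after a single step the base is already the whole convex box $(1,\infty)^{m-1}$, and $\mathfrak{X}_{W_t}$ is the region between two continuous graphs over it, with no further iteration needed.
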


\begin{proof}
 For any non-empty and proper subset $W_t\subset V_t$, by the argument similar to the proof of Lemma \ref{sphericaladmissiblespace}, the space $\Omega_{W_t}$ is contractible and $\Omega_{W_t}$ is a simply connected domain.  

If $W_t=V_t$, by Lemma \ref{hyperbolicadmissiblespace}, the space $\Omega_{V_t}=\Omega_t$ is a simply connected domain.

For any non-empty subset $W_t\subset V_t$, by definition, the space $\Omega_{W_t}$ is bounded by a piecewise analytic submanifold. This completes the proof.
\end{proof}

Given a non-empty subset $W_t\subset V_t$ and $r_{W_t}\in\Omega_{W_t}$, suppose that $r^{W_t}_t=(r_i)_{i\in V_t}\in \Omega^{W_t}$, then for any $i\in W_t$, define $r_i^{W_t}:=(r_j)_{j\in V_t\setminus\{i\}}$ and 
$$
f(r_i^{W_t}):=\sum_{j\in V_t\setminus\{i\} }x_j+2\sqrt{\sum_{j\ne k\in V_t\setminus\{i\}}x_jx_k+1},
$$
where $x_j=\coth r_j$ for all $j\in V_t\setminus\{i\}$.

\begin{prop}\label{amn}
 For any non-empty subset $W_t\subset V_t$, we have 
\begin{equation}
 \mathbb{R}_+^{W_t}\setminus\Omega_{W_t}=\bigsqcup_{i\in W_t} U_i^{W_t},
\end{equation}
where 
$$
U_i^{W_t}=\{r_{W_t}=(r_j)_{j\in W_t}\in\mathbb{R}_+^{W_t}:0<r_i\le \coth^{-1}f(r_i^{W_t}) \}
$$
is a connected component of $\mathbb{R}_+^{W_t}\setminus\Omega_{W_t}$ for all $i\in W_t$.
\end{prop}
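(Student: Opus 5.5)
We work in the coordinates $x_j=\coth r_j$. For $j\in W_t$ we have $x_j\in(1,\infty)$, and for $j\notin W_t$ we have $x_j=1$. Fix $i\in W_t$. We view $Q:=Q^{\mathbb{H}}_3(r_t^{W_t})$ as a quadratic polynomial in $x_i$, with $S=\sum_{j\ne i}x_j$ and $P=\sum_{j<k,\,j,k\ne i}x_jx_k$. Expanding gives
\begin{equation*}
Q=-x_i^2+2Sx_i+(S^2-2\sum_{j\ne i}x_j^2)+4=-x_i^2+2Sx_i-S^2+4P+4,
\end{equation*}
so that $Q>0$ if and only if $|x_i-S|<2\sqrt{P+1}$. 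The upper root is exactly $f(r_i^{W_t})$. The first step is therefore the pointwise description
\begin{equation*}
Q\le 0\iff x_i\ge f(r_i^{W_t})\ \text{ or }\ x_i\le S-2\sqrt{P+1}.
\end{equation*}
The lower alternative can never occur, because all $x_j\ge1$ give $(S-2\sqrt{P+1})(S+2\sqrt{P+1})=S^2-4P-4=\sum_{j\ne i}x_j^2-2P-4$. Indeed, with three numbers $x_j,x_k,x_l\ge1$, the lower root satisfies $S-2\sqrt{P+1}<1$. I plan to verify this directly: the inequality is equivalent to $(S-1)^2<4(P+1)$ when $S\ge1$, i.e.\ $\sum x_j^2-2S+1<2P+4$, i.e.\ $\sum x_j^2-2P<2S+3$. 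This follows from $x_j^2\le x_jx_k$-type bounds after ordering $x_j\le x_k\le x_l$: we get $x_l^2-2x_kx_l\le x_l(x_l-2x_k)$, and the remaining terms are handled similarly. I expect this elementary inequality to be the main technical nuisance; if it resists, I will instead use that $x_i>1$ always and show the lower root is $<1$ by checking $Q|_{x_i=1}>0$ together with $S\ge 1$ (note $Q|_{x_i=1}=-(1-S)^2+4P+4$). Thus $Q\le0$ iff $x_i\ge f$, i.e.\ iff $r_i\le\coth^{-1}f(r_i^{W_t})$, since $\coth$ is decreasing and $f>1$.

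Next, I show that the complement decomposes as the union $\bigcup_{i\in W_t}U_i^{W_t}$. By the first step, $r_{W_t}\notin\Omega_{W_t}$ iff $x_i\ge f(r_i^{W_t})$ for some $i$. Since $\coth^{-1}$ requires $f>1$, and $f\ge S+2>1$, each $U_i^{W_t}$ is well defined. For disjointness, suppose $x_i\ge S_i+2\sqrt{P_i+1}$ and $x_j\ge S_j+2\sqrt{P_j+1}$ for $i\ne j$. Then $x_i\ge x_j+2$ and $x_j\ge x_i+2$, a contradiction. (For $j\notin W_t$ no such condition is imposed, which is why the union runs over $W_t$ only.)

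Finally, for connectedness, each $U_i^{W_t}$ is the region under the graph of the continuous positive function $r_{W_t\setminus\{i\}}\mapsto\coth^{-1}f(r_i^{W_t})$ over $\mathbb{R}_+^{W_t\setminus\{i\}}$. It is therefore homeomorphic to a product of a convex set with an interval, hence connected, and when $W_t=\{i\}$ it is an interval. Since the $U_i^{W_t}$ are pairwise disjoint and each is relatively closed in $\mathbb{R}_+^{W_t}$, being defined by a non-strict continuous inequality, a finite disjoint union of closed connected sets has these sets as its connected components. This gives the statement.
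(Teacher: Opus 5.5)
\textbf{Gap: the first step is false.} For a fixed $i\in W_t$, the lower root $S-2\sqrt{P+1}$ need not be below $1$, so the equivalence $Q\le 0\iff x_i\ge f(r_i^{W_t})$ fails in general. Take $W_t=\{i,l\}$, so $x_j=x_k=1$, and set $x_l=100$. Then $S=102$, $P=201$ and $S-2\sqrt{P+1}\approx 73.6$. Every $x_i\in(1,73.6]$ therefore gives $Q<0$, although $x_i<f(r_i^{W_t})\approx 130.4$; these points lie in $U_l^{W_t}$, not in $U_i^{W_t}$. Consistently with this, the inequality $\sum x_j^2-2P<2S+3$ you planned to prove fails for $(1,1,100)$ ($9600$ versus $207$). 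Your fallback fails too, since $Q|_{x_i=1}=808-101^2<0$.

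This matters for the rest of the argument. If the equivalence held for every $i$, your own disjointness argument would make $\mathbb{R}_+^{W_t}\setminus\Omega_{W_t}$ empty whenever $|W_t|\ge 2$. So the sentence deriving ``$r_{W_t}\notin\Omega_{W_t}$ iff $x_i\ge f$ for some $i$'' from the first step is unsupported. As a result, the only nontrivial inclusion, $\mathbb{R}_+^{W_t}\setminus\Omega_{W_t}\subset\bigcup_i U_i^{W_t}$, is not proved.

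\textbf{The fix: choose the index first, as the paper does.} Given $r_{W_t}\notin\Omega_{W_t}$, take $i$ with $r_i=\min_{j\in W_t}r_j$, i.e.\ $x_i=\max_j x_j$. This maximum over $V_t$ is attained in $W_t$, since $x_j>1$ there and $x_j=1$ elsewhere. Your fallback idea then works if you evaluate at the right point. Let $x_l=\max_{m\ne i}x_m$ and let $j,k$ be the remaining indices. Then
$Q|_{x_i=x_l}=4(P+1)-(x_j+x_k)^2=2x_jx_k+(4x_jx_l-x_j^2)+(4x_kx_l-x_k^2)+4>0$.
So $x_l$ lies strictly between the two roots of the concave quadratic $x_i\mapsto Q$. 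Hence $x_i\ge x_l>S-2\sqrt{P+1}$, and $Q\le 0$ forces $x_i\ge f(r_i^{W_t})$, i.e.\ $r_{W_t}\in U_i^{W_t}$.

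With this repair, the rest of your proof is correct:
\begin{itemize}
\item well-definedness, from $f\ge S+2>1$;
\item disjointness, from $x_i\ge x_j+2$;
\item the component claim, because the $U_i^{W_t}$ are finitely many pairwise disjoint, relatively closed, connected sets.
\end{itemize}
This is in fact more complete than the paper's proof. The paper only asserts that each $U_i^{W_t}$ is contractible, and leaves both the exclusion of the lower root and the component claim implicit.
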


\begin{proof}
For any non-empty subset $W_t\subset V_t$, by the argument similar to the proof of Lemma \ref{sphericaladmissiblespace}, $U_i^{W_t}$ is  contractible and $U_i^{W_t}$ is a simply connected domain for all $i\in W_t$.    

For any $i\in W_t$ and $r_{W_t}=(r_j)_{j\in W_t}\in U_i^{W_t}$, we have $r_i<\min_{j\in W_t\setminus\{i\}}r_j$, which implies that $U_i^{W_t}$ and $U_j^{W_t}$ are disjoint for all $i\ne j\in W_t$.

For any $r_{W_t}\in\bigsqcup_{i\in W_t} U_i^{W_t}$, by definition, we have $r_{W_t}\in\mathbb{R}_+^{W_t}\setminus\Omega_{W_t}$, which implies that $\bigsqcup_{i\in W_t} U_i^{W_t}\subset\mathbb{R}_+^{W_t}\setminus\Omega_{W_t}$. 

For any $r_{W_t}=(r_i)_{i\in W_t}\in\mathbb{R}_+^{W_t}\setminus\Omega_{W_t}$, we have $Q^{\mathbb{H}}(r^{W_t}_t)\le 0$, then suppose that $r_i=\min_{j\in W_t}r_j$, we have $r_i<\min_{j\in W_t\setminus\{i\}}r_j$ and $\coth r_i\ge f(r_i^{W_t})$, which implies that $r_{W_t}\in U_i^{W_t}$ and $\mathbb{R}_+^{W_t}\setminus\Omega_{W_t}\subset\bigsqcup_{i\in W_t} U_i^{W_t}$. 
\end{proof}

Following Cooper and Rivin \cite{Cooper96}, define the following functional $\mathcal{S}_t$ on $\Omega_{t}$, namely
\begin{equation}\label{func89}
\mathcal{S}_t(r_t):=\sum\limits_{i\in V_t}\alpha_i^t(r_t)r_i+2 \mathrm{Vol}_t(r_t),~r_t=(r_i)_{i\in V_t}\in\Omega_{t},
\end{equation}
where $\alpha_i^t(r_t)$ is the solid angle at the vertex $v_i$ of conformal hyperbolic tetrahedron $\Delta^3(r_t)$ parameterized by $r_t$ and $\mathrm{Vol}_t(r_t)$ is the volume of $\Delta^3(r_t)$. 

The classical Schl$\ddot{\text{a}}$fli formula (see e.g. \cite{Milnor94}) gives the first-order variation of the volume $\mathrm{Vol}$ of a non-degenerate hyperbolic (Euclidean, spherical, resp.) tetrahedron $\Delta^3$, namely
\begin{equation}\label{schla}
2\kappa d\mathrm{Vol}=\sum_{{ij}}\ell_{ij}d\theta_{kl},
\end{equation}
where $\kappa=-1(0, 1, \text{resp}.)$ and the sum is taken over all geodesic edge $e_{ij}$. 
\begin{remark}\label{ki3}
The Schl$\ddot{\text{a}}$fli formula $($\ref{schla}$)$ is also valid if $\Delta^3$ is an ideal hyperbolic tetrahedron and the $\ell_{ij}$ is the length of the edge $e_{ij}$ truncated by the horospheres at the ideal vertices if $e_{ij}$ has ideal vertices. Note the formula $($\ref{schla}$)$ does not depend on the choice of the horospheres at ideal vertices, since the sum of the dihedral angles of the edges with the same ideal vertex is always equal to $\pi$. 
\end{remark}

Then by (\ref{schla}), we have the following first-order variation of $\mathcal{S}_t$:
\begin{equation}
d\mathcal{S}_t=\sum\limits_{i\in V_t}\alpha_i^tdr_i.
\end{equation}

For any non-empty subset $W_t\subset V_t$ and $r_{W_t}=(r_i)_{i\in W_t}\in\Omega_{W_t}$, we have $r^{W_t}_t=(r_i)_{i\in V_t}\in\Omega^{W_t}$ and there exists a sequence $\{r_t^{(m)}\}_{m=1}^{\infty}\subset\Omega_t$ such that $r_t^{(m)}\to r_t^{W_t}$, $\Delta^3(r_t^{(m)})\to \Delta^3(r_t^{W_t})(m\to\infty)$. Then we can define the following functional $\mathcal{S}_{W_t}$ on $\Omega_{W_t}$, namely
\begin{equation}\label{func}
\mathcal{S}_{W_t}(r_{W_t}):=\lim_{m\to\infty}\mathcal{S}_t(r_t^{(m)}),~r_{W_t}\in\Omega_{W_t}.
\end{equation}

The following result implies that the functional $\mathcal{S}_{W_t}$ is well-defined.

\begin{theorem}\label{yu3}
  The functional 
$$
  \mathcal{S}_{W_t}(r_{W_t})=\sum_{i\in W_t}\alpha_i^{t}(r^{W_t}_t)r_i+2 \mathrm{Vol}_{t}(r^{W_t}_t),~r_{W_t}=(r_i)_{i\in W_t}\in\Omega_{W_t}
  $$
and the first-order variation 
\begin{equation}\label{ki4}
d\mathcal{S}_{W_t}=\lim_{m\to\infty}d\mathcal{S}_{t}(r_t^{(m)})=\sum_{i\in W_t}\alpha_i^{t}dr_i
\end{equation}
for all non-empty subset $W_t\subset V_t$, where $\alpha_i^{t}(r^{W_t}_t)$ is the solid angle at the vertex $v_i$ of the (ideal) hyperbolic tetrahedron $\Delta^3(r_t^{W_t})$ parameterized by $r_t^{W_t}$ and $\mathrm{Vol}_{t}(r^{W_t}_t)$ is the volume of $\Delta^3(r_t^{W_t})$. 

In particular, $\mathcal{S}_{V_t}=\mathcal{S}_{t}$ and $d\mathcal{S}_{V_t}=d\mathcal{S}_{t}$ if $W_t=V_t$.
\end{theorem}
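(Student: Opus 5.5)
Since $\mathcal S_{W_t}$ is defined in (\ref{func}) as a limit, the plan is to evaluate that limit term by term along an approximating sequence $r_t^{(m)}\to r_t^{W_t}$ with $r_t^{(m)}\in\Omega_t$ and $\Delta^3(r_t^{(m)})\to\Delta^3(r_t^{W_t})$ in the Gromov--Hausdorff sense. Write
\[
\mathcal S_t(r_t^{(m)})=\sum_{i\in W_t}\alpha_i^t(r_t^{(m)})r_i^{(m)}+\sum_{i\in V_t\setminus W_t}\alpha_i^t(r_t^{(m)})r_i^{(m)}+2\mathrm{Vol}_t(r_t^{(m)}).
\]
The first and third pieces are easy. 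Dihedral angles converge by Lemma \ref{lengthandangle} in its ideal case. By Gauss--Bonnet, $\alpha_i=\theta_{jk}+\theta_{jl}+\theta_{kl}-\pi$, so the solid angles converge. The volume of hyperbolic tetrahedra is continuous under Gromov--Hausdorff convergence of the vertices in $\overline{\mathbb H^3}$, which is standard because volumes are uniformly bounded. This already gives the first and third pieces of the claimed formula, and it makes clear that well-definedness means the limit is independent of the chosen sequence.

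The main obstacle is the middle sum. For $i\in V_t\setminus W_t$ we have $r_i^{(m)}\to\infty$, while $\alpha_i^t(r_t^{(m)})\to 0$, because at an ideal vertex (\ref{xi7}) gives $\theta_{jk}+\theta_{jl}+\theta_{kl}=\pi$. I must show the product $\alpha_i^t r_i^{(m)}$ tends to $0$, and the plan is to get a rate for $\alpha_i^t$.

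\textbf{Rate for the solid angle.} Near an ideal vertex, the link of a vertex $v_i$ at distance about $r_i$ from the other vertices is a small spherical triangle. Its diameter is comparable to that of the Euclidean horospherical cross-section scaled by $e^{-r_i}$. By spherical Gauss--Bonnet its area $\alpha_i$ is $O(e^{-2r_i})$.

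To make this concrete, use the explicit formulas behind Lemma \ref{chara} / Remark \ref{aa2}. Set $x_i=\coth r_i=1+O(e^{-2r_i})$. All edge lengths are $\ell_{ij}=r_i+r_j$, and the face angles at $v_i$ satisfy $1-\cos\angle v_jv_iv_k=O(e^{-2r_i})$ by the hyperbolic law of cosines (compare (\ref{trj1})). Hence the link triangle has side lengths $O(e^{-r_i})$ and area $O(e^{-2r_i})$. This gives $\alpha_i^t r_i^{(m)}=O(r_ie^{-2r_i})\to0$, uniformly along any approximating sequence. So the limit exists, is independent of the sequence, and equals the stated expression.

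\textbf{Variation formula (\ref{ki4}).} On $\Omega_t$, the Schl\"afli formula (\ref{schla}) gives $d\mathcal S_t=\sum_{i\in V_t}\alpha_i^t\,dr_i$. On the cell $\Omega^{W_t}$ I will differentiate directly. The ideal-vertex Schl\"afli formula of Remark \ref{ki3}, with horosphere-truncated lengths, gives
\[
2\,d\mathrm{Vol}=-\sum\ell_{ij}^{\rm tr}\,d\theta_{kl}.
\]
Here I choose the truncating horospheres to be exactly the horospheres $S_i$ of the packing, so that the truncated lengths are $\ell_{ij}^{\rm tr}=r_i+r_j$, with $r_i$ treated as $0$ for $i\notin W_t$. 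This is legitimate since the formula is independent of the horosphere choice. Regrouping by vertices then gives $2\,d\mathrm{Vol}=-\sum_{i\in W_t}r_i\,d\alpha_i$, hence $d\mathcal S_{W_t}=\sum_{i\in W_t}\alpha_i\,dr_i$.

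The identification with $\lim_m d\mathcal S_t(r_t^{(m)})$ follows from continuity of the $\alpha_i^t$, together with $dr_i=0$ along the cell for $i\notin W_t$. Finally, $W_t=V_t$ is the trivial case.
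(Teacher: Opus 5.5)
Your argument is correct and has the same skeleton as the paper's proof. You separate the ideal-vertex terms $\alpha_i^{(m)}r_i^{(m)}$, $i\in V_t\setminus W_t$, and show they vanish via $\alpha_i^{(m)}=O(e^{-2r_i^{(m)}})$. You pass to the limit in the remaining terms by continuity. You then obtain (\ref{ki4}) from the horosphere-truncated Schl\"afli formula of Remark \ref{ki3}.

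The difference is in how you prove the decay. The paper uses comparison geometry. It takes the hyperbolic triangle $\widetilde F_i$ through the tangent points $v_{ij},v_{ik},v_{il}$, whose area is at most $\pi$ and whose diameter is bounded because the face inradii are bounded. It then radially projects the cap $S(v_i,\delta_i)\cap\Delta^3$ onto $\widetilde F_i$, which gives $\alpha_i\sinh^2\delta_i\le\pi$ with $r_i-\delta_i$ bounded.

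You instead bound the link triangle directly. Your law-of-cosines step is in fact the exact identity
\[
1-\cos\angle v_jv_iv_k=\frac{2\sinh r_j\sinh r_k}{\sinh(r_i+r_j)\sinh(r_i+r_k)}\le\frac{2}{\cosh^2 r_i}.
\]
You should write it out. The argument relies on its uniformity in $r_j,r_k$, which may themselves tend to $\infty$, and that uniformity is only visible from the explicit formula.

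Your route is shorter and avoids the auxiliary construction. It also gives $\alpha_i\le Ce^{-2r_i}$ with a universal constant $C$ on all of $\Omega_t$, so the vanishing of the ideal-vertex terms is clearly independent of the approximating sequence. The paper's route needs the extra facts about $\widetilde F_i$ instead.

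Two minor remarks:
\begin{itemize}
\item Uniform boundedness of volumes is not itself a reason for continuity of $\mathrm{Vol}$. You need a genuine continuity statement for hyperbolic volume on possibly ideal tetrahedra. This is standard, and the paper also uses it without comment.
\item Your explicit choice of the packing horospheres as truncating horospheres makes the truncated lengths $r_i+r_j$, with $r_i:=0$ at ideal vertices. This usefully spells out what the paper leaves to a one-line citation of Remark \ref{ki3}.
\end{itemize}
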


\begin{proof}
For any non-empty subset $W_t\subset V_t$ and $r_{W_t}=(r_i)_{i\in W_t}\in\Omega_{W_t}$, suppose that $r_t^{(m)}=(r_i^{(m)})_{i\in V_t}$ for all $k\ge 1$, then by (\ref{func89}) and (\ref{func}), we have
\begin{equation}\label{func1}
    \mathcal{S}_{W_t}(r_{W_t})=\lim_{m\to\infty}\sum_{i\in V_t}\alpha_i^{(m)}r_i^{(m)}+2\lim_{m\to\infty}\mathrm{Vol}^{(m)},
\end{equation}
where $\alpha_i^{(m)}=\alpha_i^{t}(r_t^{(m)})$ and $\mathrm{Vol}^{(m)}=\mathrm{Vol}_t(r_t^{(m)})$.

If $W_t\subset V_t$ is a non-empty and proper subset, then for any $i\in V_t\setminus W_t$, choose the point $v_{ij}^{(m)}\in e_{ij}^{(m)}$ such that $d_{-1}(v_i^{(m)},v_{ij}^{(m)})=r_i^{(m)}$ for all $j\in V_t\setminus\{i\}$, note that $e_{ij}=e_{ji}$ and $v_{ij}=v_{ji}$. Moreover, $\{v_{ij}^{(m)}\}_{m=1}^{\infty}$ converges to the tangent point $v_{ij}\in e_{ij}$ of the four-sphere configuration $\mathcal{P}(r^{W_t}_t)$ of the ideal hyperbolic tetrahedron $\Delta^3(r_t^{W_t})$ for all $j\in V_t\setminus\{i\}$.

Suppose that $V_t=\{i,j,k,l\}$, define $\widetilde{F}_i^{(m)}:=H(v_{ij}^{(m)},v_{ik}^{(m)},v_{il}^{(m)})\bigcap\Delta^3(r_t^{(m)})$ $(m\ge1)$($\widetilde{F}_i:=H(v_{ij},v_{ik},v_{il})\bigcap\Delta^3(r_t^{W_t})$,resp.), where $H(v_{ij}^{(m)},v_{ik}^{(m)},v_{il}^{(m)})$ ($H(v_{ij},v_{ik},v_{il})$, resp.) is the totally geodesic plane containing $v_{ij}^{(m)},v_{ik}^{(m)},v_{il}^{(m)}$ ($v_{ij},v_{ik},v_{il}$, resp.), note that $\text{Area}(\widetilde{F}_i^{(m)})$($\text{Area}(\widetilde{F}_i)$, resp.)$\le\pi$. 

Since $\text{Area}(\Delta v_iv_jv_k)$($\text{Area}(\Delta v_iv_jv_l)$, $\text{Area}(\Delta v_iv_kv_l)$, resp.)$\le\pi$, then the radius of the incircle of $\Delta v_iv_jv_k$($\Delta v_iv_jv_l$, $\Delta v_iv_kv_l$, resp.) has an upper bound, which implies that $\ell_{ij,ik}$, $\ell_{ij,il}$, $\ell_{ik,il}$ and $\text{diam}(\widetilde{F}_i)$ have upper bounds, where $\ell_{ij,ik}$($\ell_{ij,il}$, $\ell_{ik,il}$, resp.) is the length of the geodesic segment $e_{ij,ik}$($e_{ij,il}$, $e_{ik,il}$, resp.) connecting $v_{ij}$($v_{ij}$, $v_{ik}$, resp.) and $v_{ik}$($v_{il}$, $v_{il}$, resp.).  

Choose the point $\widetilde{o}_i^{(m)}\in\widetilde{F}_i^{(m)}(m\ge1)$ such that $\delta^{(m)}_i:=d_{-1}(v_i^{(m)},\widetilde{o}_i^{(m)})=$ $\min_{p\in\widetilde{F}_i^{(m)}}d_{-1}(v_i^{(m)},p)$. By triangle inequality, we have
\begin{equation}\label{kl1}
    r_i^{(m)}-\delta^{(m)}_i\le d_{-1}(v_{ij}^{(m)}, \widetilde{o}_i^{(m)})\le\text{diam}(\widetilde{F}_i^{(m)}),~\text{for all}~m\ge1,
\end{equation}

Denote the intersection of the tetrahedron $\Delta^3(r_t^{(m)})$ and the geodesic sphere $S(v_i^{(m)},\delta^{(m)}_i)$ by $S_i^{(m)}$, then by the corresponding relation of points on the geodesic passing through $v_i^{(m)}$, we have the following diffeomorphism between $S_i^{(m)}$ and $\widetilde{F}_i^{(m)}$ in terms of polar coordinates:
\begin{equation}\label{yingshe}
\begin{aligned}
\Phi:S_i^{(m)}&\longrightarrow \widetilde{F}^{(m)}\\(\delta^{(m)}_i,s^{(m)}_i)&\longmapsto(\rho^{(m)}_i,s^{(m)}_i), 
\end{aligned}
\end{equation}
where $s^{(m)}_i\in \mathbb{S}_i^{(m)}:=S(v_i^{(m)},1)$ and $\rho^{(m)}_i=\rho^{(m)}_i(s^{(m)}_i)$.

By (\ref{yingshe}), we have 
\begin{align*}\label{yu89}
d\text{Area}(\widetilde{F}_i^{(m)})(\rho_i^{(m)},s^{(m)}_i)&\geq \sinh^2(\rho_i^{(m)})d\text{Area}(\mathbb{S}_i^{(m)})(s_i^{(m)})\\
&\geq\sinh^2(\delta^{(m)}_i)d\text{Area}(\mathbb{S}_i^{(m)})(s_i^{(m)})\\
&=d\text{Area}(S_i^{(m)})(\delta^{(m)}_i,s^{(m)}_i),    
\end{align*}
which implies that 
\begin{equation}\label{yu85}
\pi\ge\text{Area}(\widetilde{F}_i^{(m)})\ge\text{Area}(S_i^{(m)})=\alpha_i^{(m)}\sinh^2(\delta^{(m)}_i),~\text{for all}~m\ge1.
\end{equation}

Since $\text{diam}(\widetilde{F}_i)$ has an upper bound, then by (\ref{kl1}) and (\ref{yu85}), there exists a constant $C_i$ such that 
$$
\lim\sup_{m\rightarrow\infty}\alpha_{i}^{(m)}e^{2r_i^{(m)}}\leq C_i<\infty,
$$
which implies that $\lim\sup_{m\rightarrow\infty}\alpha_{i}^{(m)}e^{r_i^{(m)}}\leq C_i<\infty$ and 
\begin{equation}\label{yu2}
\lim_{m\rightarrow\infty}\alpha_{i}^{(m)}r_i^{(m)}=0,~\text{for all}~i\in V_t\setminus W_t.
\end{equation}

By (\ref{func1}) and (\ref{yu2}), we have
\begin{equation}\label{ki1}
 \mathcal{S}_{W_t}(r_{W_t})=\lim_{m\to\infty}\sum_{i\in W_t}\alpha_i^{(m)}r_i^{(m)}+2\lim_{m\to\infty}\mathrm{Vol}^{(m)}=\sum_{i\in W_t}\alpha_i^{t}(r^{W_t}_t)r_i+2 \mathrm{Vol}_{t}(r^{W_t}_t).
\end{equation}
Moreover, by Remark \ref{ki3} and (\ref{ki1}), we have $d\mathcal{S}_{W_t}=\sum_{i\in W_t}\alpha_i^{t}dr_i$.

If $W_t=V_t$, then by (\ref{func1}), we have $\mathcal{S}_{V_t}=\mathcal{S}_{t}$ and $d\mathcal{S}_{V_t}=d\mathcal{S}_{t}$.    
\end{proof}

For any non-empty subset $W_t\subset V_t$, by Theorem \ref{yu3}, we have
$$
\nabla\mathcal{S}_{W_t}=(\alpha_i^{t})_{i\in W_t},~  \text{Hess}~\mathcal{S}_{W_t}=\left(\frac{\partial\alpha_i^{t}}{\partial r_j}\right)_{i,j\in W_t}.
$$
Moreover, by (\ref{ki4}), the functional $\mathcal{S}_{W_t}$ can be rewritten as
\begin{equation}\label{fun3}
 \mathcal{S}_{W_t}(r_{W_t})= \mathcal{S}_{W_t}(\mathbf{1}_{W_t})+\int_{\mathbf{1}_{W_t}}^{r_{W_t}}\sum_{i\in W_t}\alpha_i^{t}dr_i,    
\end{equation}
where $\mathbf{1}_{W_t}=(1)_{i\in W_t}\in\Omega_{W_t}$.

\begin{lemma}\label{bb7}
    For any $r_t\in\partial_{\infty}\Omega_t$ and sequence $\{r_t^{(m)}\}_{m=1}^\infty\subset\Omega_t$ such that $r_t^{(m)}\to r_t(m\to\infty)$, we have $\Delta^3(r_t^{(m)})\to\Delta^3(r_t)(m\to\infty)$ up to isometry.
\end{lemma}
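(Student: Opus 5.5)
The plan is to work in the hyperboloid model and show that, after suitable normalization by isometries, the vertex vectors of $\Delta^3(r_t^{(m)})$ converge in $\mathbb{R}^{3,1}$. For ideal vertices the convergence is of rays, not of vectors. Convergence of vertices then gives Gromov--Hausdorff convergence of the geodesic convex hulls.

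Write $r_t=r_t^{W_t}\in\Omega^{W_t}$ with $W_t\subsetneq V_t$. For $r_t^{(m)}$ the tetrahedron $\Delta^3(r_t^{(m)})$ has Gram matrix $G^{(m)}=(-\cosh(r_i^{(m)}+r_j^{(m)}))_{ij}$.

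\textbf{Step 1: rescale the ideal vertices.} Set $\lambda_i^{(m)}:=2e^{-r_i^{(m)}}$ for $i\in V_t\setminus W_t$ and $\lambda_i^{(m)}:=1$ for $i\in W_t$. Replace $v_i^{(m)}$ by $w_i^{(m)}:=\lambda_i^{(m)}v_i^{(m)}$. The rescaled Gram matrix $\widetilde G^{(m)}=(\lambda_i^{(m)}\lambda_j^{(m)}\langle v_i^{(m)},v_j^{(m)}\rangle_{3,1})$ converges entrywise to an explicit matrix $\widetilde G$, which can be checked case by case:
\begin{itemize}
\item for $i,j\in W_t$ the entry is $-\cosh(r_i+r_j)$;
\item for $i\in W_t$ and $j\notin W_t$ it is $-e^{r_i}$;
\item for $i\ne j\notin W_t$ it is $-2$;
\item for $i\notin W_t$ the diagonal entry is $0$.
\end{itemize}
This $\widetilde G$ is exactly a Gram matrix of the ideal tetrahedron $\Delta^3(r_t)$. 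Its vertices are normalized so that the horosphere $\{x:\langle x,w_i\rangle_{3,1}=-e^{r}\}$ has the right tangency data; the statement before Theorem \ref{aa1} gives the existence of this horosphere packing $\mathcal{P}(r_t)$. The signature claim below should follow from $Q^{\mathbb{H}}_3(r_t)>0$ by the same cofactor computation as in Theorem \ref{highpolyhedron}, or from the existence of $\Delta^3(r_t)$.

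\textbf{Step 2: realize the limit Gram matrix.} Since $\widetilde G$ has signature $(3,0,1)$ and is nondegenerate, there is an invertible $N$ with $N^TI_{3,1}N=\widetilde G$. Its columns $w_1,\dots,w_4$ are the vertices of $\Delta^3(r_t)$, with $w_i\in L^+$ for $i\notin W_t$. Likewise $\widetilde G^{(m)}=M_m^TI_{3,1}M_m$, where $M_m$ has columns $w_i^{(m)}$.

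The key step is a \emph{linear algebra lemma}: if $\widetilde G^{(m)}\to\widetilde G$ and $\widetilde G$ is nondegenerate, then there exist $A_m\in O(3,1)$ with $A_mM_m\to N$. To prove it, apply a Gram--Schmidt-type factorization $\widetilde G=N^TI_{3,1}N$ that depends continuously on the Gram matrix near $\widetilde G$; for instance, choose the LDU factorization after a fixed permutation that makes the leading principal minors nonzero. This produces $N_m\to N$ with $N_m^TI_{3,1}N_m=\widetilde G^{(m)}$. Then $A_m:=N_mM_m^{-1}$ preserves $I_{3,1}$, so $A_m\in O(3,1)$. We can compose with time reversal if needed so that $A_m\in\mathrm{Isom}(\mathbb{H}^3)$, since all $w_i^{(m)}$ and $w_i$ lie in the future cone.

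\textbf{Step 3: pass to convex hulls.} Each $\Delta^3(r_t^{(m)})$ is the projectivized cone spanned by its vertex vectors, so $A_m\Delta^3(r_t^{(m)})$ is the projectivization of $\mathrm{cone}(N_m)$. In the Klein model these are Euclidean simplices whose vertices converge to the vertices of $\Delta^3(r_t)$, some of which lie on the sphere. Hence they converge in the Hausdorff sense in $\overline{\mathbb{K}^3}$, and therefore in the Gromov--Hausdorff sense used in the paper, for example on truncations by horoballs.

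\textbf{Main obstacle.} I expect the hardest part to be making the continuous factorization in the linear algebra lemma rigorous, together with checking that $\widetilde G$ is nondegenerate of the correct signature precisely when $Q^{\mathbb{H}}_3(r_t)>0$. If the LDU route has a vanishing leading minor, I would instead build $N_m$ inductively: first put the hyperbolic vertices in a fixed position, then determine each ideal vertex from its inner products, as in Step 1 of the proof of Theorem \ref{admissibleidealangle}. The solutions of these linear systems depend continuously on the data.
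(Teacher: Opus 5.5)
Your argument is correct, and it differs from the paper's proof in what is shown to converge.

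\textbf{The paper's route.} The paper never normalizes vertices. It takes the cofactor formulas from the proof of Theorem \ref{highpolyhedron} and writes $\cos\theta_{ij}^{(m)}=-Q_{ij}^{(m)}/\sqrt{Q_{ii}^{(m)}Q_{jj}^{(m)}}$, where the $Q$'s are polynomials in $\coth r_k^{(m)}$. These extend continuously to $r_k=\infty$, and $Q_{ii}>0$ in the limit because the faces are conformal triangles. It identifies the limit with the dihedral angles of $\Delta^3(r_t)$ by running the same formula along an auxiliary sequence $\widetilde{r}_t^{(m)}$ supplied by the definition of a conformal ideal tetrahedron. It then asserts convergence up to isometry from convergence of dihedral angles and edge lengths, without spelling out that last step. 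This is short given the machinery already in place, needs no renormalization (the angle formula is invariant under positive rescaling of vertex vectors), and yields exactly the angle formula reused in Theorem \ref{negac}.

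\textbf{Your route.} Your horosphere normalization identifies the limit Gram matrix directly with the configuration $\mathcal{P}(r_t)$, so no auxiliary sequence is needed. Combined with the determinant computation below, it even yields existence of $\Delta^3(r_t)$ from $Q^{\mathbb{H}}_3(r_t)>0$. Your factorization step produces explicit isometries under which the vertices converge in $\overline{\mathbb{H}^3}$. This makes the paper's loosely specified Gromov--Hausdorff convergence precise as Hausdorff convergence in the Klein model, and angle convergence follows as a corollary.

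\textbf{Your two obstacles.} Both are easy to settle.
\begin{itemize}
\item Nondegeneracy. The proof of Theorem \ref{highpolyhedron} gives $|G^{(m)}|=-4\prod_k\sinh^2 r_k^{(m)}\,Q^{\mathbb{H}}_3(r_t^{(m)})$. For $k\notin W_t$ one has $(\lambda_k^{(m)}\sinh r_k^{(m)})^2=(1-e^{-2r_k^{(m)}})^2\to 1$, so $|\widetilde{G}|=-4\prod_{k\in W_t}\sinh^2 r_k\,Q^{\mathbb{H}}_3(r_t)<0$. Being a limit of matrices of signature $(3,0,1)$ with nonzero determinant, $\widetilde{G}$ has signature $(3,0,1)$.
\item Continuous factorization. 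Write $\widetilde{G}^{(m)}=N^TS_mN$ with $S_m\to I_{3,1}$, and set $P_m:=(I_{3,1}S_m)^{1/2}$, the principal square root near the identity. Since $I_{3,1}S_m$ is self-adjoint for $I_{3,1}$, so is $P_m$, i.e.\ $P_m^TI_{3,1}=I_{3,1}P_m$. Hence $P_m^TI_{3,1}P_m=I_{3,1}P_m^2=S_m$, and $N_m:=P_mN\to N$. No LDU pivoting is needed.
\end{itemize}

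\textbf{Two small slips.}
\begin{itemize}
\item For $i\in W_t$ the diagonal entry of $\widetilde{G}$ is $-1$, not $-\cosh 2r_i$.
\item The horosphere at $w_i$ ($i\notin W_t$) should be $\{x:\langle x,w_i\rangle_{3,1}=-1\}$. With this normalization, $-\langle w_j,w_i\rangle_{3,1}=e^{r_j}$ says $S(w_j,r_j)$ is externally tangent to it, and $-\langle w_i,w_k\rangle_{3,1}=2$ says two such horospheres are tangent.
\end{itemize}
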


\begin{proof}
Suppose that $r_t=(r_i)_{i\in V_t}$ and $r_t^{(m)}=(r_i^{(m)})_{i\in V_t}(m\ge1)$. Since $\Delta^3(r_t^{(m)})$ is conformal, by the proof of Theorem \ref{highpolyhedron}, we have
\begin{equation}\label{bb1}
G_{ij}^{(m)}=-2\dfrac{\Pi_{k\in V_t} s_k^{(m)2}}{s_i^{(m)}s_j^{(m)}}Q_{ij}^{(m)},~G_{ii}^{(m)}=-2\dfrac{\Pi_{k\in V_t} s_k^{(m)2}}{s_i^{(m)2}}Q_{ii}^{(m)},~m\ge1
\end{equation}
for all $i\ne j\in V_t$ and $i\in V_t$, where  $s_i^{(m)}=\sinh r_i^{(m)}$, $c_i^{(m)}=\coth r_i^{(m)}$ and  
$$
Q_{ij}^{(m)}=\sum_{k\in V_t}c_k^{(m)2}-(c_i^{(m)}+c_j^{(m)})\sum_{k\in V_t}c_k^{(m)}+2c_i^{(m)}c_j^{(m)}-2,
$$

$$
Q_{ii}^{(m)}=(\sum_{k\in V_t\setminus \{i\}}c_k^{(m)})^2-\sum_{k\in V_t\setminus \{i\}}c_k^{(m)2}+2.
$$

By Lemma \ref{lengthandangle} and (\ref{bb1}), we have
\begin{equation}\label{bb2}
\cos\theta_{ij}^{(m)}=\dfrac{G_{ij}^{(m)}}{\sqrt{G_{ii}^{(m)}G_{jj}^{(m)}}}=-\dfrac{Q_{ij}^{(m)}}{\sqrt{Q_{ii}^{(m)}Q_{jj}^{(m)}}},~m\ge1.
\end{equation}

 Since any 2-dimensional simplex of $\Delta^3(r_t)$ is also conformal, then by Theorem \ref{highpolyhedron}, we have
$$
Q_{ii}:=(\sum_{k\in V_t\setminus \{i\}}c_k)^2-\sum_{k\in V_t\setminus \{i\}}c_k^{2}+2=Q_2(r_{V_t\setminus \{i\}})>0,~\text{for all}~i\in V_t, 
$$
where $c_k=\coth r_k$ and $r_{V_t\setminus \{i\}}=(r_k)_{k\in V_t\setminus \{i\}}$.

Moreover, define 
$$
Q_{ij}:=\sum_{k\in V_t}c_k^{2}-(c_i+c_j)\sum_{k\in V_t}c_k+2c_ic_j-2, 
$$
since $r_t^{(m)}\to r_t(m\to\infty)$, then we have
\begin{equation}\label{bb3}
Q_{ij}^{(m)}\to Q_{ij},~Q_{ii}^{(m)}\to Q_{ii},~Q_{jj}^{(m)}\to Q_{jj}(m\to\infty). 
\end{equation}

By (\ref{bb2}) and (\ref{bb3}), we have
\begin{equation}\label{bb4}
\cos\theta_{ij}^{(m)}\to-\dfrac{Q_{ij}}{\sqrt{Q_{ii}Q_{jj}}}(m\to\infty).
\end{equation}

Since $\Delta^3(r_t)$ is a conformal ideal hyperbolic tetrahedron, then there exists a sequence of conformal hyperbolic tetrahedra $\{\widetilde{\Delta}^3_m\}_{m=1}^{\infty}\subset\mathbb{H}^3$ such that $\widetilde{\Delta}^3_m\to\Delta^3(r_t)(m\to\infty)$, which implies that there exists a sequence $\{\widetilde{r_t}^{(m)}\}_{m=1}^{\infty}\subset\Omega_t$ such that $\widetilde{\Delta}^3_m=\Delta^3(\widetilde{r_t}^{(m)})(m\ge1)$, $\widetilde{r_t}^{(m)}\to r_t$,$\widetilde{\theta_{ij}}^{(m)}\to\theta_{ij}(m\to\infty)$.

By above argument and Lemma \ref{lengthandangle}, we have
\begin{equation}\label{bb5}
\cos\widetilde{\theta_{ij}}^{(m)}\to-\dfrac{Q_{ij}}{\sqrt{Q_{ii}Q_{jj}}}=\cos\theta_{ij}(m\to\infty)
\end{equation}

By (\ref{bb4}) and (\ref{bb5}), we have
\begin{equation}\label{bb6}
\cos\theta_{ij}^{(m)}\to\cos\theta_{ij},~\theta_{ij}^{(m)}\to\theta_{ij}(m\to\infty),~\text{for all}~i\ne j\in V_t.
\end{equation}

Since $r_t^{(m)}\to r_t(m\to\infty)$, then we have $\ell_{ij}^{(m)}\to\ell_{ij}(m\to\infty)$ for all $i\ne j\in V_t$. Moreover, by (\ref{bb6}), we obtain $\Delta^3(r_t^{(m)})\to\Delta^3(r_t)(m\to\infty)$ up to isometry.   
\end{proof}

\begin{remark}
    Lemma \ref{bb7} can also be generalized to ideal hyperbolic $n$-simplices. 
\end{remark}

\begin{prop}\label{ki5}
   For any tetrahedron $t\in T$, non-empty and proper subset $W_t\subset V_t$, there exists a $r_{W_t}\in\Omega_{W_t}$ such that the isogonal geodesic at the vertex $v_i$ of the ideal hyperbolic tetrahedron $\Delta^3(r_t^{W_t})$ passes through its interior for all $i\in W_t$. 
\end{prop}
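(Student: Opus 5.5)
The natural candidate is the symmetric configuration: take $r_{W_t}=s\mathbf{1}_{W_t}$ with all finite radii equal, and $r_i=\infty$ for $i\in V_t\setminus W_t$. The plan is first to check admissibility, then to use symmetry to locate the isogonal geodesics, and finally to confirm by a limiting argument that they enter the interior.

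\textbf{Admissibility.} Set $x_i=\coth r_i$. We need $Q^{\mathbb{H}}_3(r^{W_t}_t)>0$ for suitable $s>0$. If $|W_t|=w$ and $x=\coth s$, then
$$
Q^{\mathbb{H}}_3=(wx+4-w)^2-2(wx^2+4-w)+4.
$$
This is a quadratic in $x$ with leading coefficient $w^2-2w\ge 0$ when $w\ge 2$. For $w=1$ the leading coefficient is $-1$. In every case its value at $x=1$ is $16-8+4=12>0$. Hence by continuity $Q^{\mathbb{H}}_3>0$ for $x$ slightly larger than $1$, that is, for $s$ large. By Theorem \ref{aa1}, $\Delta^3(r^{W_t}_t)$ is then a conformal ideal hyperbolic tetrahedron. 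Alternatively one may take $s$ close to $\infty$ directly, since $\boldsymbol{\infty}_t\in\widehat{\Omega}_t$: the regular ideal tetrahedron has $Q^{\mathbb{H}}_3=16-8+4>0$.

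\textbf{Symmetry.} The configuration is invariant under the isometries permuting $W_t$ and permuting $V_t\setminus W_t$. Fix $i\in W_t$ and let $\mathrm{Stab}(v_i)$ be the subgroup of these isometries fixing $v_i$. It acts on the three unit tangent vectors $u_{ij},u_{ik},u_{il}$ at $v_i$, permuting them within their classes. By the uniqueness in Lemma \ref{angle}, the isogonal vector $u_i$ is fixed by $\mathrm{Stab}(v_i)$, so $u_i$ lies in the fixed subspace of this group.

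\textbf{Entering the interior.} Lemma \ref{angle} gives $u_i=\sum_m x_m u_{im}$ with $\mathbf x_i=c_iU_i^{-1}\mathbf 1$. It therefore suffices to show that all coefficients $x_m$ are positive. Once that holds, $u_i$ lies in the open cone spanned by the three edge directions, and the isogonal geodesic $\gamma_i$ starts into the interior of $\Delta^3(r^{W_t}_t)$.

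In the fully symmetric case $W_t=V_t$ the three coefficients are equal, and positivity is exactly the computation (\ref{tri4}). For proper $W_t$ the plan is a continuity argument, choosing $s$ large so that the tetrahedron is close to the regular ideal one. By Lemma \ref{bb7}, $\Delta^3(r^{W_t}_t)\to\Delta^3(\boldsymbol{\infty}_t)$ as $s\to\infty$.

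\textbf{Main obstacle.} The difficulty is that $v_i$ itself escapes to infinity as $s\to\infty$, so the tangent data at $v_i$ does not converge in the naive sense. Two ways around this seem available.
\begin{itemize}
\item Work with the link of $v_i$: the spherical triangle cut out by the tangent cone, whose side lengths are the face angles $\angle v_jv_iv_k$. These tend to $0$, but after rescaling, the link converges to the Euclidean triangle given by the horospherical cross-section of the regular ideal tetrahedron, which is equilateral. Positivity of the $x_m$ is a strict open condition, namely that the circumcenter-type point of the link lies inside the link triangle. It holds for the equilateral limit, so it persists for large $s$.
\item Alternatively, avoid limits altogether and compute $U_i$ explicitly from the edge lengths $r_a+r_b$. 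The symmetry reduces $U_i$ to a matrix with two distinct off-diagonal entries, and one checks $U_i^{-1}\mathbf 1>0$ directly for large $s$.
\end{itemize}
I would attempt the explicit computation first. It is a $3\times 3$ problem of the form $\begin{pmatrix}1&a&b\\a&1&b\\b&b&1\end{pmatrix}$ (up to ordering), and $U_i^{-1}\mathbf 1>0$ reduces to two inequalities, $1+a>2b$ and $1>b$ type conditions. These are verified asymptotically because all face angles at $v_i$ tend to $0$ at comparable rates.
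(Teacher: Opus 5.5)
Your proposal is correct. It shares the paper's first step, but after that it argues differently.

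\textbf{How the paper argues.} The paper also fixes equal finite radii. It then treats $|W_t|=1,2,3$ separately.
For $|W_t|=1$ it uses symmetry alone.
For $|W_t|=2,3$ it works in the symmetry plane through $v_i$. Using hyperbolic trigonometry, taken as limits of compact conformal approximants, it computes two angles measured from a reference direction (towards $v_{kl}$, resp.\ $v_{jk}$): the angle $\theta_1$ to the two symmetric edges and the angle $\theta_2$ to the third edge. It shows $\theta_2>\theta_1$ and concludes by the intermediate value theorem.
Its estimates force $r$ large when $|W_t|=2$ but $r$ small when $|W_t|=3$.

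\textbf{How your route compares.} Your criterion is the same inequality in algebraic form. With $U_i$ of the shape you describe, $U_i^{-1}\mathbf 1$ has entries $(1-b)/(1+a-2b^2)$ (twice) and $(1+a-2b)/(1+a-2b^2)$. So positivity reduces to $1+a>2b$, which is equivalent to the paper's $\theta_2>\theta_1$.
Your degeneration $s\to\infty$ to the regular ideal tetrahedron replaces the case-by-case trigonometry with a single perturbation argument.
The explicit route is in fact stronger than you suggest:
\begin{itemize}
\item For $|W_t|=3$, $a=\cosh 2s/(\cosh 2s+1)$ and $b=\tanh s$, giving $1+a-2b=(1+2e^{-2s})/(2\cosh^2 s)$.
\item For $|W_t|=2$, $a=1-2e^{-2s}$ and $b=\tanh s$, giving $1+a-2b=2e^{-2s}(1-e^{-2s})/(1+e^{-2s})$.
\end{itemize}
Both are positive for every $s>0$. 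So every symmetric admissible choice works, and neither of the paper's restrictions on $r$ is needed.

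\textbf{Two points to tighten.}
First, Lemma \ref{bb7} concerns approximating sequences from $\Omega_t$, but your family lies in the boundary cell $\Omega^{W_t}$. Use instead that the dihedral angles are given on $\widehat{\Omega}_t$ by the continuous formula (\ref{cc1}), which tends to $\pi/3$ as $s\to\infty$.

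Second, ``face angles tend to $0$ at comparable rates'' is not enough, because $1+a-2b$ is a second-order quantity. Writing $a=\cos\phi_a$ and $b=\cos\phi_b$, it is approximately $\phi_b^2-\phi_a^2/2$. So you need $\phi_b/\phi_a>1/\sqrt2$ eventually; in fact this ratio tends to $1$.

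The cleanest fix avoids rescaling entirely:
\begin{itemize}
\item The angles of the link triangle at $v_i$ are the dihedral angles along the three edges at $v_i$.
\item The spherical circumcenter of that triangle (your $u_i$) lies strictly inside it iff each of these angles is less than the sum of the other two. This follows by decomposing into three isosceles triangles.
\item This strict condition holds at the limit $(\pi/3,\pi/3,\pi/3)$, hence for all large $s$.
\end{itemize}
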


\begin{proof}
For any non-empty and proper subset $W_t\subset V_t$, choose a $r_{W_t}=(r)_{i\in W_t}\in\Omega_{W_t}$, note that $r_t^{W_t}=(r_i)_{i\in V_t}\in\Omega^{W_t}$, where
$$
r_i=\begin{cases}
 r, &  i\in W_t, \\
 \infty, & i\in V_t\setminus W_t.
\end{cases}
$$

We can construct a sequence $\{r_t^{(m)}\}_{m=1}^{\infty}\subset\Omega_{t}$ such that $r_t^{(m)}\to r_t^{W_t}(m\to \infty)$, where $r_t^{(m)}=(r_i^{(m)})_{i\in V_t}$ and 
$$
r_i^{(m)}=\begin{cases}
 r, &  i\in W_t, \\
 R^{(m)}, & i\in V_t\setminus W_t.
\end{cases}
$$
Then by Lemma \ref{bb7}, after an isometry, we have $\Delta^3(r_t^{(m)})\to\Delta^3(r_t^{W_t})(m\to\infty)$, where $\Delta^3(r_t^{(m)})$ and $\Delta^3(r_t^{W_t})$ have the same vertex $v_i$ if $i\in W_t$, and $\Delta^3(r_t^{(m)})(\Delta^3(r_t^{W_t}),\text{resp}.)$ has vertex $v_i^{(m)}(v_i,\text{resp}.)$ if $i\in V_t\setminus W_t$.

Assume that $V_t=\{i,j,k,l\}$, if $|W_t|=1$, then suppose that $W_t=\{i\}$, by symmetry, the isogonal geodesic $\gamma^{(m)}$ at the vertex $v_i$ of the conformal ideal hyperbolic tetrahedron $\Delta^3(r_t^{(m)})$ passes through its interior and the center of $\Delta v_j^{(m)}v_k^{(m)}v_l^{(m)}$ for all $n\ge1$. Note that $\gamma^{(m)}$ is always the same geodesic, denote by $\gamma$, then as $m\to\infty$, the isogonal geodesic at the vertex $v_i$ of the conformal ideal hyperbolic tetrahedron $\Delta^3(r_t^{W_t})$ is $\gamma$ and passes through its interior.  

If $|W_t|=2$, then for any $i\in W_t$, suppose that $W_t=\{i,j\}$, add the geodesic segment $e_{i,kl}^{(m)}$($e_{j,kl}^{(m)}$, $e_{ij,kl}^{(m)}$, resp.) connecting $v_i$ ($v_{j}$, $v_{ij}$, resp.) and $v_{kl}^{(m)}$ for all $m\ge1$, where $v_{ij}\in e_{ij}$($v_{kl}^{(m)}\in e_{kl}^{(m)}$, resp.) such that $d_{-1}(v_i,v_{ij})=d_{-1}(v_j,v_{ij})=r$($d_{-1}(v_k^{(m)},v_{kl}^{(m)})=d_{-1}(v_l^{(m)},v_{kl}^{(m)})=R^{(m)}$, resp.). Then we have
$$
 e_{i,kl}^{(m)}\perp e_{kl}^{(m)},~e_{j,kl}^{(m)}\perp e_{kl}^{(m)},~e_{ij}\perp e_{ij,kl}^{(m)},
$$
$$
\ell_{i,kl}^{(m)}=\ell_{j,kl}^{(m)}:=\ell_1^{(m)},~\theta_{ik,i(kl)}^{(m)}=\theta_{il,i(kl)}^{(m)}:=\theta_1^{(m)}
$$
for all $m\ge 1$, where $\theta_{ik,i(kl)}^{(m)}$($\theta_{il,i(kl)}^{(m)}$, $\theta_{ij,i(kl)}^{(m)}$, resp.) is the angle between the geodesic segments $e_{ik}^{(m)}$($e_{il}^{(m)}$, $e_{ij}^{(m)}$, resp.) and $e_{i,kl}^{(m)}$. 

Define $\theta_{2}^{(m)}:=\theta_{ij,i(kl)}^{(m)}$, then for the hyperbolic triangles $\Delta v_iv_k^{(m)}v_{kl}^{(m)}$ and $\Delta v_iv_{ij}v_{kl}^{(m)}$, by hyperbolic trigonometry, we have
\begin{equation}\label{kop5}
\sin\theta_1^{(m)}=\dfrac{\sinh R^{(m)}}{\sinh (r+R^{(m)})}\to\dfrac{1}{e^r}:=\sin\theta_1,
\end{equation}
\begin{equation}\label{kop6}
\cosh\ell_1^{(m)}=\dfrac{\cosh (r+R^{(m)})}{\cosh R^{(m)}}\to e^r:=\cosh\ell_1,
\end{equation}
\begin{equation}\label{kop7}
\cosh\ell_2^{(m)}=\dfrac{\cosh \ell_1^{(m)}}{\cosh r}\to\dfrac{\cosh \ell_1}{\cosh r}=2\dfrac{e^r}{e^r+e^{-r}}:=\cosh\ell_2,
\end{equation}
\begin{equation}\label{kop2}
\sin\theta_2^{(m)}=\dfrac{\sinh \ell_2^{(m)}}{\sinh \ell_1^{(m)}}\to\dfrac{\sinh \ell_2}{\sinh \ell_1}:=\sin\theta_2
\end{equation}
as $m\to\infty$. Since $\cosh\ell_2=2\dfrac{e^r}{e^r+e^{-r}}\to2(r\to\infty)$, then we can choose a sufficiently large $r$ such that $\cosh\ell_2>\sqrt{2}$. Moreover, by (\ref{kop5}), (\ref{kop6}) and (\ref{kop2}), we have
$$
\sin\theta_2=\dfrac{\sinh \ell_2}{\sinh \ell_1}=\dfrac{\sqrt{\cosh^2 \ell_2-1}}{\sinh \ell_1}>\dfrac{1}{\sinh \ell_1}>\dfrac{1}{\cosh \ell_1}=\sin\theta_1,
$$
which implies that $\theta_2>\theta_1$.

For the conformal ideal hyperbolic tetrahedron $\Delta^3(r_t^{W_t})$, choose a point $s\in e_{j,kl}$ and denote the geodesic passing through $v_i$ and $s$ by $\gamma_{is}$, then as $s$ moves continuously along the geodesic segment $e_{j,kl}$ from $v_{kl}$ to $v_{j}$, the angle $\theta_{ik,is}=\theta_{il,is}$ increases strictly from $\theta_1$ to $\angle v_kv_iv_j$ and $\theta_{ij,is}$ decreases strictly from $\theta_2$ to $0$. 

Since $\theta_2>\theta_1$, then there exists the unique point $s\in e_{j,kl}$ such that $\theta_{ij,is}=\theta_{ik,is}=\theta_{il,is}$, which implies that $\gamma_{is}$ is the isogonal geodesic at the vertex $v_i$ of $\Delta^3(r_t^{W_t})$ and passes through its interior.  

If $|W_t|=3$, then for any $i\in W_t$, suppose that $W_t=\{i,j,k\}$, add the geodesic segment $e_{i,jk}$($e_{ol}^{(m)}$, resp.) connecting $v_i$($v_l^{(m)}$, resp.) and $v_{jk}$($o$, resp.) for all $m\ge1$, where $v_{jk}\in e_{jk}$ such that $d_{-1}(v_j,v_{jk})=d_{-1}(v_k,v_{jk})=r$ and $o\in e_{i,jk}$ is the center of $\Delta v_iv_jv_k$. Then we have
$$
 e_{i,jk}\perp e_{ol}^{(m)},e_{jk},~\theta_{ij,i(jk)}=\theta_{ik,i(jk)}:=\theta_1
$$
for all $m\ge 1$. Define $\ell_1:=\ell_{oi}\le2r$, $\ell_2^{(m)}:=\ell_{ol}^{(m)}$ and $\theta_2^{(m)}:=\theta_{il,i(jk)}^{(m)}$, then for the triangles $\Delta v_iv_jv_{jk}$ and $\Delta ov_iv_l^{(m)}$, by hyperbolic trigonometry, we have
\begin{equation}\label{kop4}
\sin\theta_1=\dfrac{\sinh r}{\sinh 2r}=\dfrac{1}{2\cosh r},~\sin\theta_2^{(m)}=\dfrac{\sinh \ell_2^{(m)}}{\sinh (r+R^{(m)})},
\end{equation}
\begin{equation}\label{kop1}
\cosh\ell_2^{(m)}=\dfrac{\cosh(r+R^{(m)})}{\cosh\ell_1}\to\infty(m\to\infty)
\end{equation}
for all $m\ge1$. Moreover, by (\ref{kop1}), we have $\ell_2^{(m)}\to\infty(m\to\infty)$ and 
\begin{equation}\label{kop3}
\begin{aligned}
\lim_{m\to\infty}\sin\theta_2^{(m)}&=\lim_{m\to\infty}\dfrac{\sinh \ell_2^{(m)}}{\sinh (r+R^{(m)})}=\lim_{m\to\infty}\dfrac{\cosh \ell_2^{(m)}}{\sinh (r+R^{(m)})}\\
&=\lim_{m\to\infty}\dfrac{\cosh(r+R^{(m)})}{\cosh\ell_1\sinh (r+R^{(m)})}=\dfrac{1}{\cosh\ell_1}:=\sin\theta_2\\
&\ge\dfrac{1}{\cosh2r}.
\end{aligned}
\end{equation}

Since $\cosh 2r\to1(r\to 0)$ and $2\cosh r\to 2(r\to0)$, then we can choose a sufficiently small $r$ such that $2\cosh r>\cosh 2r$. Moreover, by (\ref{kop4}) and (\ref{kop3}), we have
$$
\sin\theta_2\ge\dfrac{1}{\cosh2r}>\dfrac{1}{2\cosh r}=\sin\theta_1,
$$
which implies that $\theta_2>\theta_1$.

For the conformal ideal hyperbolic tetrahedron $\Delta^3(r_t^{W_t})$, choose a point $s\in e_{l,jk}$ and denote the geodesic passing through $v_i$ and $s$ by $\gamma_{is}$, then as $s$ moves continuously along the geodesic segment $e_{l,jk}$ from $v_{jk}$ to $v_{l}$, the angle $\theta_{ij,is}=\theta_{ik,is}$ increases strictly from $\theta_1$ to $\angle v_lv_iv_j$ and $\theta_{il,is}$ decreases strictly from $\theta_2$ to $0$. 

Since $\theta_2>\theta_1$, then there exists the unique point $s\in e_{l,jk}$ such that $\theta_{ij,is}=\theta_{ik,is}=\theta_{il,is}$, which implies that $\gamma_{is}$ is the isogonal geodesic at the vertex $v_i$ of $\Delta^3(r_t^{W_t})$ and passes through its interior.      
\end{proof}

The following result is the generalization of Corollary \ref{iop}.

\begin{corollary}\label{ko86}
    For any tetrahedron $t\in T$, non-empty and proper subset $W_t\subset V_t$, there exists a $r_{W_t}\in\Omega_{W_t}$ such that 
     \begin{equation}\label{ju8}  
    \begin{cases}
    \left.\dfrac{\partial \alpha_i^t}{\partial r_i}\right|_{r_{W_t}}\le0,\left.\dfrac{\partial \mathrm{Vol}_t}{\partial r_i}\right|_{r_{W_t}}\ge 0,& \text{for all}~i\in W_t,\\
    \\
  \left.\dfrac{\partial \alpha_i^t}{\partial r_j}\right|_{r_{W_t}}\ge0,&  \text{for all}~i\ne j\in W_t,\\
\end{cases}
    \end{equation}  
where $\alpha_i^t$ is the solid angle at the vertex $i$ of the $($ideal$)$ hyperbolic tetrahedron $\Delta^3(r_t^{W_t})$ and $\mathrm{Vol}_t$ is the hyperbolic volume of $\Delta^3(r_t^{W_t})$.
\end{corollary}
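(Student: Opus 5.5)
We take the $r_{W_t}\in\Omega_{W_t}$ supplied by Proposition \ref{ki5}, for which the isogonal geodesic $\gamma_i$ at each finite vertex $v_i$ ($i\in W_t$) of the ideal tetrahedron $\Delta^3(r_t^{W_t})$ passes through its interior, and then repeat the containment argument of Corollary \ref{iop}.

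\emph{Step 1: the moving vertex.} Fix $i\in W_t$. Keep every $r_j$ with $j\neq i$ fixed, including the infinite ones, and decrease $r_i$ to $r_i-\epsilon$. The other vertices $\{v_j\}_{j\ne i}$ stay in place, some of them ideal. By Proposition \ref{equalangle}, whose computation also applies when some of the $v_j$ are ideal (one replaces $\langle v_j,\gamma(s)\rangle$ by the horospherical Busemann version, or passes to the limit along the sequence $r_t^{(m)}$ used in Proposition \ref{ki5}), the vertex $v_i$ moves along the curve whose tangent makes equal angles with the three edges at $v_i$. That curve is exactly the isogonal geodesic $\gamma_i$. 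Since $\ell_{ij}=r_i+r_j$ decreases, $v_i$ moves toward the opposite face. Because $\gamma_i$ enters the interior at $v_i$, for small $\epsilon$ the new vertex lies strictly inside $\Delta^3(r_t^{W_t})$. Hence $\Delta^3((r_t^{W_t})-\boldsymbol\epsilon_i)$ is strictly contained in $\Delta^3(r_t^{W_t})$ and shares the face $F_i$.

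\emph{Step 2: the inequalities.} The containment immediately gives $\mathrm{Vol}_t(r-\boldsymbol\epsilon_i)<\mathrm{Vol}_t(r)$. The solid angle at the moving vertex increases, $\alpha_i(r-\boldsymbol\epsilon_i)>\alpha_i(r)$, because a vertex moved into the interior sees the fixed opposite face under a larger solid angle. For each other finite vertex $j\in W_t\setminus\{i\}$, the new tetrahedron near $v_j$ is contained in the old cone at $v_j$, so $\alpha_j(r-\boldsymbol\epsilon_i)<\alpha_j(r)$. Dividing by $-\epsilon$ and letting $\epsilon\to0$ yields $\partial\alpha_i/\partial r_i\le0$, $\partial\mathrm{Vol}_t/\partial r_i\ge0$ and $\partial\alpha_j/\partial r_i\ge0$. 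Differentiability at $r_{W_t}$ holds by the real analyticity of the angles in the cell, which follows from Lemma \ref{lengthandangle} and the formulas in the proof of Lemma \ref{bb7}.

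\emph{Main obstacle.} The delicate point is justifying Step 1 in the presence of ideal vertices, namely that the trajectory of $v_i$ is still the isogonal geodesic. I would handle this by approximation. For the sequence $r_t^{(m)}\to r_t^{W_t}$ from Proposition \ref{ki5}, Proposition \ref{equalangle} applies to the compact tetrahedra $\Delta^3(r_t^{(m)})$. Their isogonal directions at $v_i$ converge, by Lemma \ref{bb7} and the continuity of the unit tangent vector $u_i$ in Lemma \ref{angle}, to that of the limit. Passing to the limit in the identity $\langle u_{ij},\gamma'\rangle=-r'$ then gives the claim. A second check is the strictness of the monotonicity of the solid angle at $v_i$; I expect this to follow from the cone at the new position strictly containing a translate-comparison of the old one, or alternatively from the Gauss--Bonnet formula (\ref{ango}) together with the monotonicity of the dihedral angles at the edges of $F_i$.
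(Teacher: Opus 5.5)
Your proposal follows the paper's own proof: take the point supplied by Proposition \ref{ki5} and rerun the containment argument of Corollary \ref{iop}; the paper likewise just asserts the strict containment and reads off the three monotonicity statements from it. One correction: as $r_i$ decreases, Proposition \ref{equalangle} only says that the velocity of $v_i$ is at each instant a positive multiple of the isogonal vector of the \emph{current} tetrahedron, so the trajectory of $v_i$ is in general a curve tangent to $\gamma_i$ at $v_i$ rather than $\gamma_i$ itself (it is a geodesic only in symmetric situations such as $|W_t|=1$) --- but that initial tangency is all you need to put the new vertex strictly inside $\Delta^3(r_t^{W_t})$ for small $\epsilon$.
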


\begin{proof}
For any non-empty and proper subset $W_t\subset V_t$, by Proposition \ref{ki5}, there exists a $r_{W_t}\in\Omega_{W_t}$ and sufficiently small $\epsilon>0$ such that $\Delta^3(r_t^{W_t}-\boldsymbol{\epsilon}_i)$ is strictly contained in $\Delta^3(r_t^{W_t})$ for all $i\in W_t$, where $\Delta^3(r_t^{W_t})$ and $\Delta^3(r_t^{W_t}-\boldsymbol{\epsilon}_i)$ are considered as tetrahedra with the same vertices $\{v_j\}_{j\in V_t\setminus\{i\}}$ and $\boldsymbol{\epsilon}_i=(0,\epsilon(\text{i-th}),0,0)$. By this containment, we have
$$
\begin{cases}
\alpha_i^t(r_t^{W_t}-\boldsymbol{\epsilon}_i)>\alpha_i^t(r_t^{W_t}),& \text{for all}~i\in W_t,\\
 \alpha_i^t(r_t^{W_t}-\boldsymbol{\epsilon}_j)<\alpha_i^t(r_t^{W_t}),& \text{for all}~i\ne j\in W_t,\\
  \mathrm{Vol}_t(r_t^{W_t}-\boldsymbol{\epsilon}_i)<\mathrm{Vol}_t(r_t^{W_t}),& \text{for all}~i\in W_t,
\end{cases}
$$
which implies (\ref{ju8}).    
\end{proof}

The following result is the generalization of Theorem \ref{maintheorem3}.

\begin{theorem}\label{negac}
The signature of the Jacobian matrix $\left(\dfrac{\partial \alpha_i^t}{\partial r_j}\right)_{i,j\in W_t}$ is $(0,0,|W_t|)$ on $ \Omega_{W_t}$ for all tetrahedron $t\in T$ and non-empty subset $W_t\subset V_t$.  
\end{theorem}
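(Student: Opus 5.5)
The argument copies the proof of Theorem \ref{maintheorem3}: nondegeneracy of the Jacobian everywhere, plus a connectedness argument anchored at one good point. For $W_t=V_t$ the statement is exactly the hyperbolic case of Theorem \ref{maintheorem3}, so assume $W_t$ is a non-empty proper subset. Write $J_{W_t}:=\left(\partial\alpha_i^t/\partial r_j\right)_{i,j\in W_t}=\text{Hess}\,\mathcal{S}_{W_t}$, which is symmetric by Theorem \ref{yu3}.

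\textbf{Step 1: $J_{W_t}$ is non-degenerate on all of $\Omega_{W_t}$.} Points of $\Omega^{W_t}$ are conformal ideal hyperbolic tetrahedra. By Theorem \ref{main1} and the proof of Corollary \ref{rigidityoftetrahedron}, $(\Delta^3)^*$ is conformal, $r^*$ is an affine function (\ref{sumsoli3}) of the solid angles, and the dihedral angles $\theta_{ij}=\pi-r_i^*-r_j^*$ determine the edge data by Lemma \ref{lengthandangle}. Within the cell $\Omega^{W_t}$ the coordinates $r_{W_t}$ are therefore real analytic functions of $(\alpha_i)_{i\in W_t}$. This uses that $(\alpha_i)_{i\in V_t\setminus W_t}$ is determined on the cell, since at an ideal vertex $\alpha_i=0$ by (\ref{idea1}). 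Hence the map $r_{W_t}\mapsto(\alpha_i)_{i\in W_t}$ has a real analytic inverse on its image, and the chain rule gives $\det J_{W_t}\neq0$.

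The delicate point is whether $(\alpha_i)_{i\in W_t}$ alone recovers all of $r^*$. I would check this by counting. The unknowns are four $r_i^*$, subject to one linear constraint $r_j^*+r_k^*+r_l^*=\pi/2$ for each ideal vertex $v_i$. There are $|W_t|$ equations (\ref{sumsoli1}), and the combined linear system is invertible.

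\textbf{Step 2: a good base point.} By Proposition \ref{ki5} and Corollary \ref{ko86}, pick $r_{W_t}\in\Omega_{W_t}$ satisfying (\ref{ju8}). Taking the symmetric choice $r_i=r$ for $i\in W_t$ used in that proof, the symmetry of the configuration makes $J_{W_t}$ at this point of the form $\lambda I+\mu(\mathbf{1}\mathbf{1}^T-I)$, with $\lambda\le0$ and $\mu\ge0$. Its eigenvalues are $\lambda+(|W_t|-1)\mu$ and $\lambda-\mu$.

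\textbf{Step 3: sign the eigenvalues.} Apply the Schläfli formula in the form of Remark \ref{ki3} to the functional of Theorem \ref{yu3}: differentiating $\mathcal{S}_{W_t}$ gives $\sum_{i\in W_t}r_i\,\partial\alpha_i^t/\partial r_j=-2\,\partial\mathrm{Vol}_t/\partial r_j\le0$. At the symmetric point this reads $r(\lambda+(|W_t|-1)\mu)\le0$, so $\lambda+(|W_t|-1)\mu\le0$. Also $\lambda-\mu\le0$ because $\lambda\le0\le\mu$. Step 1 excludes zero eigenvalues, so both are strictly negative and $J_{W_t}$ is negative definite at the base point.

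\textbf{Step 4: propagate.} $\Omega_{W_t}$ is connected by Proposition \ref{piece}, and $J_{W_t}$ is a continuous, everywhere non-degenerate symmetric matrix on it. Its eigenvalues therefore cannot cross zero, so its signature is constant, equal to $(0,0,|W_t|)$.

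The main obstacle is Step 1: justifying analytic invertibility of $r_{W_t}\mapsto\alpha_{W_t}$ on the boundary cell, and in particular the invertibility of the mixed linear system in $r^*$.
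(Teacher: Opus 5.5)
Your architecture is the paper's. Non-degeneracy of $J_{W_t}$ comes from an analytic inverse $\alpha_{W_t}\mapsto r_{W_t}$ built from Theorem \ref{main1} and (\ref{sumsoli3}). Definiteness then comes from the symmetric base point of Proposition \ref{ki5} and Corollary \ref{ko86}, the Schl\"afli identity, and connectedness of $\Omega_{W_t}$. Steps 2--4 are fine; they spell out the ``argument similar to Theorem \ref{maintheorem3}'' that the paper leaves implicit, with eigenvalues $\lambda+(|W_t|-1)\mu$ and $\lambda-\mu$.

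The point you flag as delicate is not delicate. (\ref{sumsoli3}) already expresses every $r_i^*$, $i\in V_t$, affinely in $\alpha$. On the cell, $\alpha_i=0$ at an ideal vertex because $\theta_{jk}+\theta_{jl}+\theta_{kl}=\pi$ there. Equivalently $r_j^*+r_k^*+r_l^*=\pi$, not $\pi/2$; the $\pi/2$ in (\ref{idea1}) refers to $\theta_s=\pi/2-r_s^*$.

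The actual gap in Step 1 is the claim that the dihedral angles ``determine the edge data by Lemma \ref{lengthandangle}'', hence $r_{W_t}$. An edge with an ideal endpoint has infinite length, and $\cosh\ell_{ij}=G^*_{ij}/\sqrt{G^*_{ii}G^*_{jj}}$ degenerates because $G^*_{jj}=0$ at an ideal vertex. So the finite edges recover $r_{W_t}$ only when $|W_t|\ge 3$, via $r_i=(\ell_{ij}+\ell_{ik}-\ell_{jk})/2$. For $|W_t|=1$ there is no finite edge at all. For $|W_t|=2$ the single finite edge gives only $r_i+r_j$. In these cases your inverse map is not constructed, so Step 1 fails, and with it the exclusion of zero eigenvalues in Steps 3--4.

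You need a formula for $r_i$ in terms of $\Theta$ that survives at ideal vertices and is analytic on a neighbourhood of the image, so that the chain rule applies. The paper gets it by a limit along approximating conformal hyperbolic tetrahedra. With $i\in W_t$ and $j,k$ ideal, Lemma \ref{lengthandangle} gives
\[
\frac{\cosh(r_i^{(m)}+r_j^{(m)})\cosh(r_i^{(m)}+r_k^{(m)})}{\cosh(r_j^{(m)}+r_k^{(m)})}=\frac{G^{(m)*}_{ij}G^{(m)*}_{ik}}{G^{(m)*}_{ii}G^{(m)*}_{jk}}.
\]
The left side tends to $e^{2r_i}/2$ as $r_j^{(m)},r_k^{(m)}\to\infty$, which yields (\ref{hj10}). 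All cofactors appearing there are positive by Theorem \ref{admissibleidealangle}. For $|W_t|=3$ and $4$ the paper uses (\ref{hj6}) and (\ref{hj7}) instead.
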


\begin{proof}
For any non-empty subset $W_t\subset V_t$ and $r_{W_t}=(r_i)_{i\in W_t}\in\Omega_{W_t}$, we have $r^{W_t}_t\in\Omega^{W_t}_t$. Choose a sequence $\{r_t^{(m)}\}_{m=1}^{\infty}\subset \Omega_t$ such that $r_t^{(m)}\to r^{W_t}_t$, where $r_t^{(m)}=(r_i^{(m)})_{i\in V_t}$, then by Lemma \ref{bb7}, we have $\Delta^3(r_t^{(m)})\to\Delta^3(r^{W_t}_t)(m\to\infty)$ up to isometry. Moreover, we have
\begin{equation}\label{hj8}
\alpha_i^t=\begin{cases}
 \theta_{jk}^t+\theta_{jl}^t+\theta_{kl}^t-\pi, &\text{for all}~i\in W_t, \\
 0 & \text{for all}~i\in V_t\setminus W_t,
\end{cases}
\end{equation}
where $\theta_{jk}^t,\theta_{jl}^t,\theta_{kl}^t$ are the dihedral angles of $\Delta^3(r^{W_t}_t)$. 

By the proof of Lemma \ref{bb7}, we have
\begin{equation}\label{cc1}
\theta_{ij}^t=\arccos\left(-\dfrac{Q_{ij}}{\sqrt{Q_{ii}Q_{jj}}}\right),~\text{for all}~i\ne j\in V_t, 
\end{equation}
where 
$$
Q_{ij}:=\sum_{k\in V_t}c_k^{2}-(c_i+c_j)\sum_{k\in V_t}c_k+2c_ic_j-2,
$$
$$
Q_{ss}=(\sum_{k\in V_t\setminus \{s\}}c_k)^2-\sum_{k\in V_t\setminus \{s\}}c_k^{2}+2>0,~s=i,j,
$$
$c_k=\coth r_k$ if $k\in W_t$ and $c_k=1$ if $k\in V_t\setminus W_t$.

By (\ref{hj8}) and (\ref{cc1}), the solid angle $\alpha_{W_t}^t=(\alpha_i^{t})_{i\in W_t}$ is the smooth real analytic function of $r_{W_t}=(r_i)_{i\in W_t}$.

If $1\le|W_t|\le2$, then for any $i\in W_t$ and $j,k\in V_t\setminus W_t$, by Lemma \ref{lengthandangle}, we have
\begin{equation}\label{hj1}
\begin{aligned}
A^{(m)}:&=c_i^{(m)}c_j^{(m)}(1+t_i^{(m)}t_j^{(m)})=\dfrac{G^{(m)*}_{ij}}{\sqrt{G^{(m)*}_{ii}G^{(m)*}_{jj}}},~m\ge1,\\
B^{(m)}:&=c_i^{(m)}c_k^{(m)}(1+t_i^{(m)}t_k^{(m)})=\dfrac{G^{(m)*}_{ik}}{\sqrt{G^{(m)*}_{ii}G^{(m)*}_{kk}}},~m\ge1,\\
C^{(m)}:&=c_j^{(m)}c_k^{(m)}(1+t_j^{(m)}t_k^{(m)})=\dfrac{G^{(m)*}_{jk}}{\sqrt{G^{(m)*}_{jj}G^{(m)*}_{kk}}},~m\ge1,
\end{aligned}
\end{equation}
where $c_s^{(m)}=\cosh r_s^{(m)}$, $t_s^{(m)}=\tanh r_s^{(m)}$, $s=i,j,k$.

By (\ref{hj1}), we have
\begin{align*}
D^{(m)}:&=\sqrt{A^{(m)}B^{(m)}C^{(m)}}\\
&=c_i^{(m)}c_j^{(m)}c_k^{(m)}\sqrt{(1+t_i^{(m)}t_j^{(m)})(1+t_i^{(m)}t_k^{(m)})(1+t_j^{(m)}t_k^{(m)})}\\
&=\sqrt{\dfrac{G^{(m)*}_{ij}G^{(m)*}_{ik}G^{(m)*}_{jk}}{G^{(m)*}_{ii}G^{(m)*}_{jj}G^{(m)*}_{kk}}},~m\ge1,
\end{align*}
\begin{equation}\label{hj2}
\dfrac{D^{(m)}}{C^{(m)}}=c_i^{(m)}\sqrt{\dfrac{(1+t_i^{(m)}t_j^{(m)})(1+t_i^{(m)}t_k^{(m)})}{1+t_j^{(m)}t_k^{(m)}}}=\sqrt{\dfrac{G^{(m)*}_{ij}G^{(m)*}_{ik}}{G^{(m)*}_{ii}G^{(m)*}_{jk}}},~m\ge1.
\end{equation}

Since $r_i^{(m)}\to r_i$, $r_j^{(m)},r_k^{(m)}\to\infty(m\to\infty)$, then by (\ref{hj2}), we have
\begin{equation}\label{hj10}
r_i=\dfrac{1}{2}\ln 2+\dfrac{1}{2}\ln\dfrac{G^{\ast}_{ij}G^{\ast}_{ik}}{G^{\ast}_{ii}G^{\ast}_{jk}},~\text{for all}~i\in W_t,
\end{equation}
as $m\to\infty$.

If $|W_t|=3$, then for any $i\in W_t$, choose $j\in W_t$ and $k\in V_t\setminus W_t$, by Lemma \ref{lengthandangle}, we also have (\ref{hj1}), then we obtain
\begin{equation}\label{hj3}
\dfrac{B^{(m)}}{C^{(m)}}=\dfrac{c_i^{(m)}(1+t_i^{(m)}t_k^{(m)})}{c_j^{(m)}(1+t_j^{(m)}t_k^{(m)})}=\dfrac{G^{(m)*}_{ik}}{G^{(m)*}_{jk}}\sqrt{\dfrac{G^{(m)*}_{jj}}{G^{(m)*}_{ii}}},~m\ge1.
\end{equation}

Since $r_i^{(m)}\to r_i$, $r_j^{(m)}\to r_j$, $r_k^{(m)}\to\infty(m\to\infty)$, then by (\ref{hj2}), we have
\begin{equation}\label{hj4}
r_i-r_j=\ln\dfrac{G^{\ast}_{ik}}{G^{\ast}_{jk}} +\dfrac{1}{2}\ln\dfrac{G^{\ast}_{jj}}{G^{\ast}_{ii}},
\end{equation}
as $m\to\infty$.

Moreover, by Lemma \ref{lengthandangle}, we have
\begin{equation}\label{hj5}
r_i+r_j=\text{arcosh}\left(\dfrac{G^{\ast}_{ij}}{\sqrt{G^{\ast}_{ii}G^{\ast}_{jj}}}\right).
\end{equation}

By (\ref{hj4}) and (\ref{hj5}), we obtain
\begin{equation}\label{hj6}
r_i=\dfrac{1}{2}\text{arcosh}\left(\dfrac{G^{\ast}_{ij}}{\sqrt{G^{\ast}_{ii}G^{\ast}_{jj}}}\right)+\dfrac{1}{2}\ln\dfrac{G^{\ast}_{ik}}{G^{\ast}_{jk}}+\dfrac{1}{4}\ln\dfrac{G^{\ast}_{jj}}{G^{\ast}_{ii}},~\text{for all}~i\in W_t.
\end{equation}

If $|W_t|=4$, then we have $W_t=V_t$. For any $i\in W_t$, choose $j,k\in W_t$, by Lemma \ref{lengthandangle}, we obtain   
\begin{equation}\label{hj7}
r_i=\dfrac{1}{2}\text{arcosh}\left(\dfrac{G^{\ast}_{ij}}{\sqrt{G^{\ast}_{ii}G^{\ast}_{jj}}}\right)+\dfrac{1}{2}\text{arcosh}\left(\dfrac{G^{\ast}_{ik}}{\sqrt{G^{\ast}_{ii}G^{\ast}_{kk}}}\right)-\dfrac{1}{2}\text{arcosh}\left(\dfrac{G^{\ast}_{jk}}{\sqrt{G^{\ast}_{jj}G^{\ast}_{kk}}}\right),
\end{equation}
for all $i\in W_t$.

Since $\Delta^3(r^{W_t}_t)$ is a conformal ideal hyperbolic tetrahedron, then by Theorem \ref{main1}, the dual tetrahedron $\Delta^3(r^{W_t}_t)^*$ is conformal. Moreover, by Corollary \ref{po5}, there exists a $r_t^*=(r_i^*)_{i\in V_t}\in(0,\pi)^{V_t}$ such that 
\begin{equation}\label{dd1}
\ell_{ij}^{*}=\pi-\theta_{ij}^t=r_i^{*}+r_j^{*}\in(0,\pi).
\end{equation} 

By the proof of Corollary \ref{rigidityoftetrahedron}, we have
\begin{equation}\label{ss1}
r_i^*=\frac{2\pi+2\alpha_i^t-\sum_{j\ne i\in V_t}\alpha_j^t}{6},~\text{for all}~i\in V_t.
\end{equation}

By (\ref{hj8}), (\ref{hj10}), (\ref{hj6}), (\ref{hj7}), (\ref{dd1}) and (\ref{ss1}), $r_{W_t}=(r_i)_{i\in W_t}$ is the smooth real analytic function of $\alpha_{W_t}^t=(\alpha_i^{t})_{i\in W_t}$.  

By above argument, $\text{Hess}~\mathcal{S}_{W_t}=\left(\dfrac{\partial \alpha_i^{t}}{\partial r_j}\right)_{i,j\in W_t}$ is non-degenerate on $\Omega_{W_t}$ for all non-empty subset $W_t\subset V_t$.

By Theorem \ref{maintheorem3}, Proposition \ref{piece}, Remark \ref{ki3}, Corollary \ref{ko86} and the argument similar to the proof of Theorem \ref{maintheorem3}, the signature of the Jacobian matrix $\left(\dfrac{\partial \alpha_i^t}{\partial r_j}\right)_{i,j\in W_t}$ is $(0,0,|W_t|)$ on $\Omega_{W_t}$ for all non-empty subset $W_t\subset V_t$. 
\end{proof}

For any non-empty subset $W_t\subset V_t$, by Theorem \ref{negac}, $\text{Hess}~\mathcal{S}_{W_t}=\left(\dfrac{\partial\alpha_i^{t}}{\partial r_j}\right)_{i,j\in W_t}$ is negative definite on $\Omega_{W_t}$ and we immediately obtain the following corollary.

\begin{corollary}\label{dd1}
    The functional $\mathcal{S}_{W_t}$ is strictly concave on $\Omega_{W_t}$ for all non-empty subset $W_t\subset V_t$ and tetrahedron $t\in T$.
\end{corollary}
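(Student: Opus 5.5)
The plan is to use the second-order criterion for concavity: a $C^2$ function on a convex open set whose Hessian is negative definite is strictly concave. By Theorem \ref{yu3}, $\mathcal{S}_{W_t}$ is well defined on $\Omega_{W_t}$ and satisfies $d\mathcal{S}_{W_t}=\sum_{i\in W_t}\alpha_i^t\,dr_i$. The proof of Theorem \ref{negac} shows, via (\ref{hj8}) and (\ref{cc1}), that each $\alpha_i^t$ is a real analytic function of $r_{W_t}$. Hence $\mathcal{S}_{W_t}$ is real analytic with $\nabla\mathcal{S}_{W_t}=(\alpha_i^t)_{i\in W_t}$. Its Hessian is the Jacobian $\left(\partial\alpha_i^t/\partial r_j\right)_{i,j\in W_t}$, which is symmetric because it is the Hessian of a $C^2$ function. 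Theorem \ref{negac} says this matrix has signature $(0,0,|W_t|)$, i.e.\ it is negative definite at every point of $\Omega_{W_t}$.

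From this I get the following concrete statement. For any two points $x\neq y$ such that the segment $[x,y]$ lies in $\Omega_{W_t}$, the function $g(s)=\mathcal{S}_{W_t}(x+s(y-x))$ satisfies
\begin{equation*}
g''(s)=(y-x)^T\,\mathrm{Hess}\,\mathcal{S}_{W_t}\,(y-x)<0 .
\end{equation*}
Thus $g$ is strictly concave on $[0,1]$. This is exactly strict concavity of $\mathcal{S}_{W_t}$ along every segment contained in its domain. Equivalently, $\nabla\mathcal{S}_{W_t}$ is strictly monotone there: $\langle\nabla\mathcal{S}_{W_t}(x)-\nabla\mathcal{S}_{W_t}(y),x-y\rangle<0$. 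That monotonicity is what the injectivity argument in Theorem \ref{theo1} needs.

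The main obstacle is that $\Omega_{W_t}$ is not convex, so "strictly concave on $\Omega_{W_t}$" only makes sense as local strict concavity, i.e.\ along segments inside the domain. Proposition \ref{piece} only guarantees that $\Omega_{W_t}$ is simply connected with piecewise analytic boundary. I would state the corollary in that local sense. If genuine global concavity is needed later, I would extend $\alpha^t$ continuously to $\mathbb{R}_+^{W_t}$, constantly on each component $U_i^{W_t}$ from Proposition \ref{amn}, as in Luo's extension method. I would then check that the extended $1$-form is closed and that the extended functional stays concave on the convex set $\mathbb{R}_+^{W_t}$. For the corollary as stated, however, the Hessian argument above is sufficient.
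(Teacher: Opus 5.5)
Your argument is correct and matches the paper's, which deduces the corollary immediately from Theorem \ref{negac}: $\mathrm{Hess}\,\mathcal{S}_{W_t}=\left(\partial\alpha_i^t/\partial r_j\right)_{i,j\in W_t}$ is negative definite on $\Omega_{W_t}$. Your caveat that $\Omega_{W_t}$ need not be convex, so the conclusion means strict concavity along segments in the domain, is a fair sharpening of what the paper leaves implicit; the paper obtains global concavity separately, for the extension $\widetilde{\mathcal{S}}_{W_t}$ on the convex set $\mathbb{R}_+^{W_t}$ via Lemma \ref{exten}, and the injectivity in Lemma \ref{o1} rests on that extension together with the local strict concavity you prove.
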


Given a non-empty subset $W_t\subset V_t$, for any $r_{W_t}=(r_i)_{i\in W_t}\in\mathbb{R}^{W_t}_+\setminus\Omega_{W_t}$, there is no conformal (ideal) hyperbolic tetrahedron parameterized by $r^{W_t}_t$, since there is no corresponding four-sphere configuration $\mathcal{P}(r^{W_t}_t)$.      

We have defined the functional $\mathcal{S}_{W_t}$ on $\Omega_{W_t}$, it is a natural idea to extend the functional $\mathcal{S}_{W_t}$ from $\Omega_{W_t}$ to $\mathbb{R}_+^{W_t}$ and analysis the property of functional $\mathcal{S}_{W_t}$ on $\mathbb{R}_+^{W_t}$. By (\ref{func}), it is natural to extend the solid angle $\alpha_i^{t}$ defined on $\Omega_{W_t}$.   

Xu\cite{Xu20} introduce a natural extension of solid angle, for any $i\in W_t$, by Proposition \ref{amn}, the extended solid angle $\widetilde{\alpha}_i^{t}$ is defined as
\begin{equation}\label{soa}
\widetilde{\alpha}_i^{t}(r^{W_t}_t):=\begin{cases}
 \alpha_i^{t}(r^{W_t}_t), & r_{W_t}\in \Omega_{W_t}, \\
  2\pi,& r_{W_t}\in U_i^{W_t},\\
  0,&r_{W_t}\in\bigsqcup_{j\in W\setminus\{i\}} U_j^{W_t}. 
\end{cases}
\end{equation}

Note that the extended solid angle $\widetilde{\alpha}_i^{t}$ is piecewise constant on connected components of $\mathbb{R}_+^{W_t}\setminus\Omega_{W_t}$. Moreover, the extended solid angle $\widetilde{\alpha}_i^{W_t}$ defined on $\mathbb{R}_+^{W_t}$ is a continuous extension of the solid angle $\alpha_i^{t}$ defined on $\Omega_{W_t}$.  
   
We have extended the solid angle $\alpha_i^{t}$, then the next step is to extend the functional $\mathcal{S}_{W_t}$ from $\Omega_{W_t}$ to $\mathbb{R}_+^{W_t}$. We need the following fundamental $C^1$-smooth and convex extension theory, which is pioneered by Luo\cite{Luo11}.   

\begin{definition}
A differential 1-form $\omega=\sum_{i=1}^m f_i(x) d x_i$ on an open subset $U \subset \mathbb{R}^m$ is continuous if $f_i(x)(1\le i\le m)$ is a continuous function on $U$. A continuous 1-form $\omega$ is closed if $\int_{\partial \tau} \omega=0$ for all Euclidean triangle $\tau\subset U$.
\end{definition}

By the standard approximation theory, if $\omega$ is closed and $\gamma$ is a piecewise smooth null homologous loop in $U$, then $\int_\gamma \omega=0$. If $U$ is a simply connected domain, then the integral $F(x)=\int_{x_0}^x \omega$ is well-defined, independent of the choice of piecewise smooth paths in $U$ from $x_0$ to $x$. Moreover, the function $F(x)$ is $C^1$-smooth such that $\frac{\partial F(x)}{\partial x_i}=f_i(x)$ for all $1\le i\le m$.

\begin{lemma}[\cite{Luo11}, Corollary 2.6]\label{exten}
Given an open convex subset $X \subset \mathbb{R}^m$ and an open subset $U \subset X$ bounded by a real analytic codimension-1 submanifold in $X$. If 
\begin{enumerate}[1.]
    \item $\omega=\sum_{i=1}^m f_i(x) d x_i$ is a continuous closed 1-form on $U$ such that $F(x)=\int_{x_{0}}^x \omega$ is locally convex $($concave, resp.$)$ on $U$;
    \item $f_i(1\le i\le m)$ can be extended continuously to $X$ by constant functions to a function $\widetilde{f_i}$ on $X$,
\end{enumerate}
then $\widetilde{F}(x)=\int_{x_0}^{x} \sum_{i=1}^n \widetilde{f_i} d x_i$ is a $C^1$-smooth convex $($concave, resp.$)$ function on $X$, which is a $C^1$-extension of $F$.   
\end{lemma}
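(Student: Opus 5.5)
The plan is to work directly with the extended 1-form $\widetilde{\omega}=\sum_{i=1}^m\widetilde{f_i}\,dx_i$ on $X$. The proof has three steps: first show that $\widetilde{\omega}$ is a continuous closed 1-form on $X$, so that $\widetilde{F}$ is a well-defined $C^1$ function; then reduce convexity (concavity) of $\widetilde{F}$ to a one-variable monotonicity statement along line segments; finally handle the segments that are not in general position by approximation. Continuity of $\widetilde{\omega}$ is given by hypothesis 2. Since $X$ is convex, hence simply connected, the remarks preceding the lemma show that once $\widetilde{\omega}$ is closed, $\widetilde{F}(x)=\int_{x_0}^x\widetilde{\omega}$ is independent of path. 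They also give that $\widetilde{F}$ is $C^1$ with $\partial\widetilde{F}/\partial x_i=\widetilde{f_i}$, and that $\widetilde{F}=F$ on $U$ (up to the constant fixed by $x_0\in U$).

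\textbf{Step 1 (closedness).} On $U$ the form is closed by hypothesis. On each connected component $C$ of $X\setminus\overline{U}$ the coefficients are constants $c_i^C$, so there $\widetilde{\omega}=d\bigl(\sum_i c_i^C x_i\bigr)$ is exact. Now fix a Euclidean triangle $\tau\subset X$. Because $\partial U\cap X$ is a real analytic codimension-1 submanifold, after an arbitrarily small perturbation of $\tau$ the plane of $\tau$ is transverse to $\partial U$. The trace of $\partial U$ in $\tau$ is then a finite union of real analytic arcs, which cut $\tau$ into finitely many pieces $\tau_1,\dots,\tau_N$, each lying in $\overline{U}$ or in the closure of a single complementary component. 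On each piece the integral of $\widetilde{\omega}$ over $\partial\tau_j$ vanishes. For pieces in the closure of a complementary component this is exactness of the constant form. For pieces in $\overline{U}$ it follows from closedness on $U$ plus a limiting argument pushing the boundary arcs slightly into $U$, using continuity of $\widetilde{\omega}$. The interior arcs are traversed twice with opposite orientations, and $\widetilde{\omega}$ is continuous, so these contributions cancel and $\int_{\partial\tau}\widetilde{\omega}=\sum_j\int_{\partial\tau_j}\widetilde{\omega}=0$. Continuity of $\widetilde{\omega}$ then removes the perturbation of $\tau$.

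\textbf{Step 2 (convexity).} Since $\widetilde{F}$ is $C^1$, it suffices to show that for any $x,y\in X$ the function $g(t)=\widetilde{F}(x+t(y-x))$ has nondecreasing derivative
\[
g'(t)=\sum_i\widetilde{f_i}(x+t(y-x))(y_i-x_i)
\]
on $[0,1]$ (nonincreasing in the concave case). Take first a segment in general position with respect to the analytic hypersurface $\partial U$, so that it meets $\partial U$ in finitely many points $0<t_1<\dots<t_k<1$. On each subinterval lying in $U$, $g$ is locally convex because $F$ is, so $g'$ is nondecreasing there. On each subinterval lying in $X\setminus\overline{U}$, $g'$ is constant. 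Since $g'$ is continuous across the break points $t_j$, it is nondecreasing on all of $[0,1]$, and hence $g$ is convex.

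\textbf{Step 3 (non-generic segments).} A segment may instead meet $\partial U$ in infinitely many points, or lie partly inside it. Such segments are limits of generic segments $[x_m,y_m]$ with $x_m\to x$ and $y_m\to y$. The convexity inequality $\widetilde{F}(sx+(1-s)y)\le s\widetilde{F}(x)+(1-s)\widetilde{F}(y)$ holds for each $[x_m,y_m]$ and passes to the limit by continuity of $\widetilde{F}$. I expect Step 1 to be the main obstacle, and specifically its genericity and finiteness claims: that a small perturbation of $\tau$ meets $\partial U$ in finitely many analytic arcs, and that Stokes' theorem applies piecewise on the resulting regions. If this becomes delicate, the fallback is to mollify $\widetilde{\omega}$ or to use the cited standard approximation theory to pass from smooth triangles to arbitrary ones.
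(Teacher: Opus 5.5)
Your proof is correct. It follows the standard argument behind Luo's Corollary 2.6, which the paper quotes without proof. Closedness of $\widetilde{\omega}$ is checked on generic triangles, which $\partial U$ cuts into finitely many pieces on which the form is either closed or constant. Convexity of the $C^1$ function $\widetilde{F}$ then follows because, along a generic segment, its derivative is continuous, nondecreasing on the parts in $U$ and constant on the parts outside; non-generic segments are handled by continuity. The genericity facts you flag as the main obstacle are standard consequences of parametric transversality, so they leave no gap: a slightly perturbed triangle or segment is transverse to $\partial U$, hence meets it in finitely many arcs or points, and such segments are dense.
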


For any non-empty subset $W_t\subset V_t$, the extened functional $\widetilde{\mathcal{S}}_{W_t}$ on $\mathbb{R}_+^{W_t}$ is defined as
\begin{equation}\label{k4}
\widetilde{\mathcal{S}}_{W_t}(r_{W_t}):= \mathcal{S}_{W_t}(\mathbf{1}_{W_t})+\int_{\mathbf{1}_{W_t}}^{r_{W_t}}\sum_{i\in W_t}\widetilde{\alpha}_i^{t}dr_i,~r_{W_t}=(r_i)_{i\in W_t}\in\mathbb{R}_+^{W_t}. 
\end{equation}
It is not difficult to check that $\widetilde{\mathcal{S}}_{W_t}$ is well-defined. Moreover, by Proposition \ref{piece}, Proposition \ref{negac}, Lemma \ref{exten}, (\ref{ki4}), (\ref{fun3}) and (\ref{k4}), $\widetilde{\mathcal{S}}_{W_t}$ is $C^1$-smooth concave on $\mathbb{R}_+^{W_t}$ and is a $C^1$-extension of the functional $\mathcal{S}_{W_t}$.   

\section{Global rigidity of (ideal) hyperbolic sphere packings on 3-manifolds}\label{s8}

In this section, we study the extension of the hyperbolic admissible space on compact triangulated 3-manifolds and prove Theorem \ref{theo1}. As a consequence, we obtain Corollary \ref{copo1}, which gives the global rigidity of (ideal) hyperbolic sphere packings.

Moreover, we use the symbol ``$\Omega$'' instead of $\Omega^{\mathbb{H}}$ to represent this hyperbolic admissible space on 3-manifolds for the convenience of notations.

Given a compact 3-dimensional manifold $M$ with a triangulation $\mathcal{T}=(V,E,F,T)$, where $V,E,F,T$ denote the set of vertices, edges, faces and tetrahedra, respectively.   

Define $\widehat{\mathbb{R}_{+}^{V}}:=\{r=(r_i)_{i\in V}:0<r_i\le\infty~ \text{for all}~ i\in V\}$, then for any subset $W\subset V$, the space $\widehat{\mathbb{R}_{+}^{V}}$ has the subspace 
$$
\mathcal{R}^{W}:=\left \{ r^W=(r_i)_{i\in V}\in\widehat{\mathbb{R}_{+}^{V}}:  r_{W}=(r_i)_{i\in W}\in\mathbb{R}^{W}_{+},~ r_{V\setminus W}=(r_i)_{i\in V\setminus W}=(\infty)_{i\in V\setminus W}\right\},
$$
where $\mathcal{R}^{\varnothing}=\{{\boldsymbol{\infty}}\}$($\boldsymbol{\infty}=(\infty)_{i\in V}$) if $W=\varnothing$ and $\mathcal{R}^{V}=\mathbb{R}^{V}_+$ if $W=V$. 

It is not difficult to check that there exists a cellular decomposition of $\widehat{\mathbb{R}_{+}^{V}}$ and the subspace $\mathcal{R}^{W}$ is a $|W|$-dimensional cell, namely
\begin{equation}
\widehat{\mathbb{R}_{+}^{V}}=\bigsqcup_{W\subset V_t}\mathcal{R}^{W}=\mathbb{R}_{+}^{V}\bigsqcup\partial_{\infty}\mathbb{R}_{+}^{V},
\end{equation}
where $\partial_{\infty}\mathbb{R}_{+}^{V}:=\bigsqcup_{W\subsetneq V}\mathcal{R}^{W}$ is called \emph{boundary at infinity of} $\mathbb{R}_{+}^{V}$.

For any non-empty subset $W\subset V$, define the following projection  
\begin{equation}
		\begin{aligned}
			p^{W}: \mathcal{R}^{W} & \longrightarrow\mathbb{R}^{W}_{+}  \\
			r^W=(r_i)_{i\in V} & \longmapsto r_{W}=(r_i)_{i\in W},
		\end{aligned}
\end{equation}
the projections $p^{W}$ is a homeomorphism from $\mathcal{R}^{W}$ to $\mathbb{R}_{+}^{W}$ and define $r^{W}:=(p^{W})^{-1}(r_W)\in\mathcal{R}^{W}$ for all $r_{W}\in \mathbb{R}^{W}_{+}$.

Recall that the extended hyperbolic admissible space on compact triangulated 3-manifold $(M,\mathcal{T})$ is defined as
$$
\widehat{\Omega}:=\left \{ r=(r_i)_{i\in V}\in\widehat{\mathbb{R}_{+}^{V}}:Q^{\mathbb{H}}_3(r_t)>0,~\text{for all}~t\in T \right\}.
$$
where $r_t=(r_i)_{i\in V_t}$ and $V_t$ is the vertex set of the tetrahedron $t\in T$.

For any subset $W\subset V$, the extended space $\widehat{\Omega}$ has the subspace 
$$
\Omega^{W}:=\left \{ r=(r_i)_{i\in V}\in\mathcal{R}^{W}:  Q^{\mathbb{H}}_3(r_t)>0,~\text{for all}~t\in T\right\},
$$
where $\Omega^{\varnothing}=\{\boldsymbol{\infty}\}$($\boldsymbol{\infty}=(\infty)_{i\in V}$) if $W=\varnothing$ and $\Omega^{V}=\Omega$ if $W=V$. 

It is not difficult to check that there exists a cellular decomposition of $\widehat{\Omega}$ and the subspace $\Omega^{W}$ is a $|W|$-dimensional cell, namely
\begin{equation}\label{aa4}
\widehat{\Omega}=\bigsqcup_{W\subset V}\Omega^{W}=\Omega\bigsqcup\partial_{\infty}\Omega,
\end{equation} 
where $\partial_{\infty}\Omega:=\bigsqcup_{W\subsetneq V}\Omega^{W}$ is the ideal hyperbolic admissible space on compact triangulated 3-manifold $(M,\mathcal{T})$. Moreover, the space $\mathbb{R}^{W}_{+}$ has the subspace 
$$
\Omega_{W}:=\{r_{W}=(r_i)_{i\in W}\in\mathbb{R}_+^{W}:Q^{\mathbb{H}}_3(r^{W}_t)>0,~\text{for all }t\in T\}
$$
for all non-empty subset $W\subset V$. Then we have $p^{W}(\Omega^{W})=\Omega_{W}$ and $\Omega^{V}=\Omega_{V}=\Omega$.

Moreover, for any non-empty subset $W\subset V$, define the following functional $\mathcal{F}_W$ on $\Omega_W$, namely
\begin{equation}
\mathcal{F}_{W}(r_{W}):=\sum\limits_{i\in W\setminus V^b}4\pi r_i+\sum\limits_{i\in W\cap V^b}2\pi r_i-\sum_{t\in T_W}\mathcal{S}_{W\cap V_t},~r_{W}=(r_i)_{i\in W}\in\Omega_{W},
\end{equation} 
where $V^b$ is the boundary vertex set of $V$ and $T_W$ is the set of all tetrahedra with at least one vertex in $W$. Moreover, by (\ref{ki4}), we have
\begin{equation}\label{acala}
\begin{aligned}
d\mathcal{F}_{W}&=\sum\limits_{i\in W\setminus V^b}4\pi dr_i+\sum\limits_{i\in W\cap V^b}2\pi dr_i-\sum_{t\in T_W}\sum_{i\in W\cap V_t}\alpha_i^{t}dr_i\\
&=\sum\limits_{i\in W\setminus V^b}4\pi dr_i+\sum\limits_{i\in W\cap V^b}2\pi dr_i-\sum_{i\in W}\sum_{t\in T_i}\alpha_i^{t}dr_i\\
&=\sum_{i\in W}K_idr_i,
\end{aligned}
\end{equation}
where $T_i$ is the set of all tetrahedra with the vertex $i$ and $K_i$ is the combinatorial scalar curvature at the vertex $i\in W$ defined on $\Omega_W$, namely
\begin{equation}\label{curt}
K_i:=\begin{cases}
 4\pi-\sum_{t\in T_i}\alpha_i^{t}, & \text{ if }i~ \text{is an interior vertex},  \\
  2\pi-\sum_{t\in T_i}\alpha_i^{t},& \text{ if }i~ \text{is a boundary vertex}.
\end{cases}
\end{equation}
Moreover, by (\ref{acala}), we have
\begin{equation}\label{proj3}
\nabla\mathcal{F}_{W}=K_W=(K_i)_{i\in W},~  \text{Hess}~\mathcal{F}_{W}=-\left(\frac{\partial\alpha_i}{\partial r_j}\right)_{i,j\in W},
\end{equation}
where $\alpha_i=\sum_{t\in T_i}\alpha_i^{t}$ for all $i\in W$.

By Proposition \ref{negac}, the functional $\mathcal{S}_{W\cap V_t}$ is strictly concave for all $t\in T_W$, which implies that the functional $\mathcal{F}_{W}$ is strictly convex on $\Omega_{W}$ for all non-empty subset $W\subset V$. Moreover, by (\ref{acala}), the functional $\mathcal{F}_{W}$ can be rewritten as
\begin{equation}\label{k2}
 \mathcal{F}_{W}(r_{W})= \mathcal{F}_{W}(\mathbf{1}_{W})+\int_{\mathbf{1}_{W}}^{r_{W}}\sum_{i\in W}K_idr_i,~r_{W}=(r_i)_{i\in W}\in\Omega_{W},    
\end{equation}
where $\mathbf{1}_{W}=(1)_{i\in W}\in\Omega_{W}$.

In Section \ref{boundary}, we have defined the extended solid angle $\widetilde{\alpha}_i^{t}$, which is a continuous extension of the solid angle $\alpha_i^{t}$. Moreover, by (\ref{curt}), we can define the extended combinatorial scalar curvature $\widetilde{K}_i$ at the vertex $i$ on $\mathbb{R}_+^W$, namely 
\begin{equation}
\widetilde{K}_i:=\begin{cases}
 4\pi-\sum_{t\in T_i}\widetilde{\alpha}_i^{t}, & \text{ if }i~ \text{is an interior vertex},  \\
  2\pi-\sum_{t\in T_i}\widetilde{\alpha}_i^{t},& \text{ if }i~ \text{is a boundary vertex}.
\end{cases}
\end{equation}

It is not difficult to check that $\widetilde{K}_i$ is a continuous extension of the combinatorial scalar curvature $K_i$. Moreover, we can define the following extended functional $\widetilde{\mathcal{F}}_W$ on $\mathbb{R}_+^W$, namely
\begin{equation}\label{k1}
 \widetilde{\mathcal{F}}_{W}(r_{W}):= \mathcal{F}_{W}(\mathbf{1}_{W})+\int_{\mathbf{1}_{W}}^{r_{W}}\sum_{i\in W}\widetilde{K}_idr_i,~r_{W}=(r_i)_{i\in W}\in\mathbb{R}_+^W.    
\end{equation}
It is not difficult to check that the extended functional $\widetilde{\mathcal{F}}_{W}$ is well-defined and we have
\begin{equation}
\nabla\widetilde{\mathcal{F}}_W=\widetilde{K}_W=(\widetilde{K}_i)_{i\in W}.
\end{equation}
Moreover, by Lemma \ref{exten}, (\ref{curt}), (\ref{k2}) and (\ref{k1}), $\widetilde{\mathcal{F}}_{W}$ is $C^1$-smooth convex on $\mathbb{R}_+^{W}$ and is a $C^1$-extension of the functional $\mathcal{F}_{W}$ for all non-empty subset $W\subset V$.

\begin{lemma}\label{o1}
The gradient $\nabla\mathcal{F}_W$ of the functional $\mathcal{F}_W$ is an injective map on $\Omega_W$ for all non-empty subset $W\subset V$.
\end{lemma}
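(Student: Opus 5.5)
The plan is the standard convex-extension argument of Luo \cite{Luo11} and Xu \cite{Xu20}, applied cell by cell. Fix a non-empty subset $W\subset V$. Suppose $r_W^{(1)}\neq r_W^{(2)}\in\Omega_W$ satisfy $\nabla\mathcal{F}_W(r_W^{(1)})=\nabla\mathcal{F}_W(r_W^{(2)})$, that is, $K_W(r_W^{(1)})=K_W(r_W^{(2)})$. We aim for a contradiction.

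We work with the extended functional $\widetilde{\mathcal{F}}_W$ on the convex domain $\mathbb{R}_+^W$. The paper has already shown, via Lemma \ref{exten}, Proposition \ref{piece} and Theorem \ref{negac}, that $\widetilde{\mathcal{F}}_W$ is $C^1$-smooth and convex on $\mathbb{R}_+^W$, with $\nabla\widetilde{\mathcal{F}}_W=\widetilde K_W$. It is also strictly convex on $\Omega_W$, because its Hessian there is $-(\partial\alpha_i/\partial r_j)_{i,j\in W}=-\sum_{t\in T_W}\mathrm{Hess}\,\mathcal{S}_{W\cap V_t}$ (suitably embedded). Each summand is negative semidefinite. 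The sum is negative definite, since every $i\in W$ lies in some tetrahedron and Theorem \ref{negac} gives definiteness on $W\cap V_t$. Hence $-(\partial\alpha_i/\partial r_j)$ is positive definite.

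Define $f(s):=\widetilde{\mathcal{F}}_W\big((1-s)r_W^{(1)}+sr_W^{(2)}\big)$ for $s\in[0,1]$. Then $f$ is $C^1$ and convex, so $f'$ is non-decreasing. Moreover $f'(0)=\langle \widetilde K_W(r_W^{(1)}),r_W^{(2)}-r_W^{(1)}\rangle$ equals $f'(1)$ by assumption. A non-decreasing function with equal endpoint values is constant, so $f'$ is constant on $[0,1]$.

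Since $\Omega_W$ is open, the segment stays inside $\Omega_W$ for $s\in[0,\epsilon)$. On that interval $f$ is $C^2$ and
$$f''(0)=(r_W^{(2)}-r_W^{(1)})^T\Big(-\big(\tfrac{\partial\alpha_i}{\partial r_j}\big)\Big)(r_W^{(2)}-r_W^{(1)})>0,$$
which contradicts $f'$ being constant near $0$. Therefore $r_W^{(1)}=r_W^{(2)}$, and $\nabla\mathcal{F}_W$ is injective on $\Omega_W$.

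The main point to check is positive definiteness of the summed Hessian on $\Omega_W$. One must verify that each $\mathcal{S}_{W\cap V_t}$ depends only on the coordinates in $W\cap V_t$, and that $W\cap V_t\neq\varnothing$ for $t\in T_W$. Then $\sum_t$ of negative definite blocks, each on its own coordinate subset, with the subsets covering $W$, is negative definite: if $x^T H x=0$, every block forces $x_{W\cap V_t}=0$, hence $x=0$.

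One must also confirm that $\widetilde{\mathcal{F}}_W$ is well defined, meaning the 1-form $\sum\widetilde K_i\,dr_i$ is closed on $\mathbb{R}_+^W$. This follows from Lemma \ref{exten} applied tetrahedron-wise to each $\widetilde{\mathcal{S}}_{W\cap V_t}$, since $\Omega_{W\cap V_t}$ is simply connected with analytic boundary by Proposition \ref{piece}. Finally, $\widetilde{\mathcal{F}}_W$ is the sum of linear terms and the $-\widetilde{\mathcal{S}}_{W\cap V_t}$, each pulled back along a coordinate projection.
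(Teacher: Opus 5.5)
Your proof is correct and follows essentially the same route as the paper. The paper also passes to the $C^1$ convex extension $\widetilde{\mathcal{F}}_W$ on $\mathbb{R}_+^W$ and subtracts the linear function $\sum_{i\in W}K_ir_i$, so that both points become critical points of a function that is convex on $\mathbb{R}_+^W$ and strictly convex on $\Omega_W$, and concludes $r_W^1=r_W^2$. Your one-variable argument along the segment, together with your check that the summed Hessian $-\big(\partial\alpha_i/\partial r_j\big)_{i,j\in W}$ is definite because the sets $W\cap V_t$ cover $W$, just spells out the steps the paper leaves implicit.
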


\begin{proof}
For any non-empty subset $W\subset V$, if $r_W^1,r_W^2\in\Omega_W$ and $\nabla\mathcal{F}_W(r_W^1)=\nabla\mathcal{F}_W(r_W^2)$, then suppose that 
\begin{equation}\label{p1}
\nabla\mathcal{F}_W(r_W^1)=\nabla\mathcal{F}_W(r_W^2)=K_W=(K_i)_{i\in W}
\end{equation}
and we define the following functional $\mathcal{E}_W$ on $\mathbb{R}_+^W$, namely 
\begin{equation}\label{p3}
\mathcal{E}_W(r_W):=\widetilde{\mathcal{F}}_W(r_W)-\sum_{i\in W}K_ir_i,~r_W=(r_i)_{i\in W}\in \mathbb{R}_+^W,
\end{equation}
then we have
\begin{equation}\label{p2}
\nabla\mathcal{E}_W=\nabla\widetilde{\mathcal{F}}_W-K_W.
\end{equation}

Since $\widetilde{K}_i$ and $\widetilde{\mathcal{F}}_W$ are the extensions of the combinatorial scalar curvature $K_i$ and the functional $\mathcal{F}_W$, respectively, then by (\ref{p2}), $r_W^1,r_W^2$ are the critical points of $\mathcal{E}_W$. Moreover, by (\ref{p3}), the functional $\mathcal{E}_W$ is convex on $\mathbb{R}^W_+$ and strictly convex on $\Omega_W$, which implies that $r_W^1=r_W^2$ and $\nabla\mathcal{F}_W$ is an injective map on $\Omega_W$.    
\end{proof}

Now we give the proofs of Theorem \ref{theo1} and Corollary \ref{copo1}.

\pf[Proof of Theorem \ref{theo1}]
If $W=\varnothing$, then the proof is trivial since $\Omega^{\varnothing}=\{\boldsymbol{\infty}\}$.

For any non-empty subset $W\subset V$, define the following projection
\begin{equation}\label{project1}
		\begin{aligned}
			p_1^{W}: \Omega^{W} & \longrightarrow\Omega_W  \\
			r=(r_i)_{i\in V} & \longmapsto r_{W}=(r_i)_{i\in W},
		\end{aligned}
\end{equation}
the projection $p_1^W$ is a bijective map from $\Omega^{W}$ to $\Omega_W$. Moreover, define the following projection
\begin{equation}\label{project2}
		\begin{aligned}
			p_2^{W}: \mathbb{R}^V & \longrightarrow\mathbb{R}^W  \\
			r=(r_i)_{i\in V} & \longmapsto r_{W}=(r_i)_{i\in W}.
		\end{aligned}
\end{equation}

By (\ref{proj3}), (\ref{project1}) and (\ref{project2}), we have the following commutative diagram:
\begin{equation}
\begin{tikzcd}\label{tiz1}
  \Omega^W \arrow[r, "\mathbb{K}"]\arrow[d,"p_1^W"] & \mathbb{R}^V\arrow[d,"p_2^W"] \\
  \Omega_W\arrow[r, "\nabla\mathcal{F}_W"]& \mathbb{R}^W.
\end{tikzcd}
\end{equation}

By Lemma \ref{o1} and (\ref{tiz1}), the composite map $ 
p_2^W\circ\mathbb{K}=\nabla\mathcal{F}_W\circ p_1^W$ is an injective map, which implies that $\mathbb{K}$ is an injective map on the cell $\Omega^W$. This completes the proof.
\pfd

\pf[Proof of Corollary \ref{copo1}] By Theorem \ref{theo1}, $\mathbb{K}$ is an injective map on the cell $\Omega^W$ for all subset $W\subset V$. Moreover, by (\ref{aa4}), $\mathbb{K}$ is an injective map on $\widehat{\Omega}^{\mathbb{H}}$. This completes the proof. 
\pfd

\section{Characterization of conformal n-simplices in constant curvature spaces}\label{s9}

In this section, we prove Theorem \ref{highpolyhedron}, which gives the characterization of the conformal hyperbolic (Euclidean, spherical, ideal hyperbolic, resp.) $n$-simplices. 

The following lemma is an important observation on non-degenerate $n$-simplices in constant curvature spaces.

\begin{lemma}[\cite{Taylor05}, Proposition 2.4.1]\label{mat}
Given a dimension $n\ge 3$, then the hyperbolic $(\text{Euclidean},$ $\text{spherical, resp}.)$ $n$-simplex $\Delta^n\subset\mathbb{H}^n(\mathbb{E}^n, \mathbb{S}^n, \text{resp}.)$ with edge length $\ell_{ij}$ for all geodesic edges $e_{ij}$ of $\Delta^n$ is non-degenerate if and only if the matrix $L:=(L_{ij})_{ij}$ satisfies
\begin{align*}
&\text{all principal minors are negative and $|L|<0$}\\
(&\text{all principal minors are positive and $|L|>0$,}\\
&\text{all principal minors are positive and $|L|>0$, resp.}),
\end{align*}
where 
$$
L_{ij}=\begin{cases}
-1(\ell_{in+1}^2,1,\text{resp}.),  & \text{if}~i=j, \\
 -\cosh\ell_{ij}(\dfrac{\ell_{in+1}^2+\ell_{jn+1}^2-\ell_{ij}^2}{2},\cos\ell_{ij},\text{resp}.), & \text{if} ~i\ne j.
\end{cases}
$$
\end{lemma}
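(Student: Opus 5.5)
The plan is to read $L$ in each case as a Gram matrix and reduce non-degeneracy to a signature condition on $L$. In the Euclidean and spherical cases this is Sylvester's criterion. In the hyperbolic case it is a Lorentzian analogue of Sylvester's criterion, proved by induction using Schur complements.

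\emph{Euclidean and spherical cases.} Place $v_{n+1}$ at the origin and set $\mathbf{v}_i=v_i-v_{n+1}$. The law of cosines gives $\langle \mathbf{v}_i,\mathbf{v}_j\rangle=\frac{\ell_{in+1}^2+\ell_{jn+1}^2-\ell_{ij}^2}{2}$, so $L$ is exactly the Gram matrix $(\langle\mathbf{v}_i,\mathbf{v}_j\rangle)_{1\le i,j\le n}$. Similarly, in $\mathbb{S}^n\subset\mathbb{E}^{n+1}$, (\ref{julis}) shows that $L=(\langle v_i,v_j\rangle)$ is the Gram matrix of the unit vectors $v_i$.

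\emph{Forward direction.} Non-degeneracy means the $\mathbf{v}_i$ (resp.\ the $v_i$) are linearly independent. Then $L$ is positive definite, so every principal minor and $|L|$ are positive.

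\emph{Converse.} If all principal minors are positive, $L$ is positive definite by Sylvester's criterion, so $L=M^TM$ with $M$ invertible. The columns of $M$ give independent vectors realizing $L$. In the spherical case their diagonal is $1$, so they lie on $S^n$, and $\langle v_i,v_j\rangle=\cos\ell_{ij}$ recovers $\ell_{ij}\in(0,\pi)$. The resulting simplex is non-degenerate and has the prescribed edge lengths.

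\emph{Hyperbolic forward direction.} By (\ref{juli}), $L=(\langle v_i,v_j\rangle_{n,1})$. If the $v_i\in\mathbb{H}^n$ are linearly independent, then for any index set $I$ with $|I|=k$, the span of $\{v_i\}_{i\in I}$ contains the timelike vector $v_i$. The restricted form on this span is therefore nondegenerate of signature $(k-1,0,1)$, and the corresponding principal minor is negative. For $I=\{1,\dots,n+1\}$ this gives $|L|<0$, exactly as in (\ref{g30}).

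\emph{Hyperbolic converse.} Assume all principal minors are negative. I will show by induction on $k$ that every $k\times k$ principal submatrix $L_I$ has signature $(k-1,0,1)$.
\begin{itemize}
\item The base case $k=1$ holds since $L_{ii}=-1$.
\item For the inductive step, write $L_I=\begin{pmatrix}L_{I'}&b\\ b^T&-1\end{pmatrix}$ with $L_{I'}$ of signature $(k-2,0,1)$.
\item The Schur complement of $L_{I'}$ is $c=-1-b^TL_{I'}^{-1}b$. Since $|L_I|=|L_{I'}|\,c$ and both determinants are negative, $c>0$.
\item Hence $L_I$ is congruent to $L_{I'}\oplus(c)$, which has signature $(k-1,0,1)$.
\end{itemize}

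\emph{Realization and choice of sheet.} Applying the induction to the full matrix, $L$ has signature $(n,0,1)$. By Sylvester's law of inertia, $L=M^TI_{n,1}M$ for some invertible $M$, whose columns $v_i$ are linearly independent with $\langle v_i,v_i\rangle_{n,1}=-1$. Replacing $M$ by $-M$ if needed, we may assume $v_1\in I^n_+$. Since $\langle v_1,v_j\rangle_{n,1}=-\cosh\ell_{1j}<0$ and two timelike vectors on opposite sheets have positive Minkowski product, every $v_j$ lies in $I^n_+=\mathbb{H}^n$. Then (\ref{juli}) gives $d_{-1}(v_i,v_j)=\ell_{ij}$, and linear independence gives non-degeneracy.

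I expect the main obstacle to be the hyperbolic converse: obtaining the signature $(n,0,1)$ from the minor conditions by the Schur complement induction, and then checking that all realized vectors lie on the same sheet of the hyperboloid. The Euclidean and spherical cases reduce directly to Sylvester's criterion.
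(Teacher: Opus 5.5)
Your proof is correct and complete. The paper gives no proof of this lemma: it quotes it from Taylor--Woodward \cite{Taylor05} and uses it as a black box in the proof of Theorem~\ref{highpolyhedron}. So your write-up is a self-contained replacement for the citation, not a variant of an argument in the paper.

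The citation buys brevity. Your version buys transparency, and it fits the Gram-matrix framework the paper already sets up in Section~\ref{s4}; for $I=\{1,\dots,n+1\}$ your hyperbolic forward step is exactly (\ref{g1})--(\ref{g30}). It also exposes two sharpenings. First, your Schur-complement induction only uses minors along one nested chain of index sets, so in the hyperbolic case it suffices that the leading principal minors are negative. This would make the detour through the matrices $P^{(k)}$ in the hyperbolic part of the proof of Theorem~\ref{highpolyhedron} unnecessary, just as Sylvester's criterion does in the Euclidean and spherical parts. Second, your realization step shows that for a matrix of this shape ($L_{ii}=-1$, $L_{ij}=-\cosh\ell_{ij}$), signature $(n,0,1)$ alone already produces a non-degenerate simplex. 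All principal minors are then negative by your forward direction.

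One thing to state explicitly is the standing hypothesis on the data. In the spherical converse, positivity of the $2\times2$ minors only gives $\sin\ell_{ij}\neq 0$. The realized distance is therefore $\arccos(\cos\ell_{ij})$, which equals $\ell_{ij}$ only because spherical edge lengths are taken in $(0,\pi)$. Likewise, you use $\ell_{ij}>0$ in the Euclidean and hyperbolic cases.
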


Now we give the proof of Theorem \ref{highpolyhedron}.

\pf[Proof of Theorem \ref{highpolyhedron}] (Hyperbolic case.) ($\Rightarrow$) Since $\ell_{ij}=r_i+r_j$ for all geodesic edge $e_{ij}$ of $\Delta^n$, then by (\ref{c3}), we have
\begin{equation}\label{highhyperbolic1}
L_{ij}=
\begin{cases}
-1=-s_i^2(c_i^2-1),&1\le i=j\le n+1\\
-\cosh(r_i+r_j)=-s_is_j(c_ic_j+1),&1\le i\ne j\le n+1,\\
\end{cases}
\end{equation}
where $s_i=\sinh r_i,c_i=\coth r_i$ for all $1\le i\le n+1$.

We denote the column vector formed by $c_1,\cdots,c_{n+1}$ by $\delta$, namely $\delta=(c_1,\cdots,c_{n+1})^T$, by (\ref{highhyperbolic1}), the determinant  
\begin{align}\label{highhyperbolic2}
|L|=(-1)^{n+1}s_1^2\cdots s_{n+1}^2|A|,
\end{align}
where the matrix
$$
A=(\alpha_1,\cdots,\alpha_{n+1}),~\alpha_i=c_i\delta+\beta_i
$$ 
and $\beta_i=(1,\cdots,1,-1(\text{i-th}),1,\cdots,1)^T$ for all $1\le i\le n+1$, note that $\beta_1,\cdots,\beta_{n+1}$ form a basis of $\mathbb{R}^{n+1}$. Moreover, we have
\begin{equation}\label{highhyperbolic3}
\begin{aligned}
|A|&=|\alpha_1,\cdots,\alpha_{n+1}|\\
&=|c_1\delta+\beta_1,\cdots,c_{n+1}\delta+\beta_{n+1}|\\
&=\sum\limits_{i=1}^{n+1}c_i|B_{i}|+|B|,
\end{aligned}
\end{equation}
where $B=(\beta_1,\cdots,\beta_{n+1})$ and $B_{i}=(\beta_1,
\cdots,\beta_{i-1},\delta,\beta_{i+1},\cdots,\beta_{n+1})$ for all $1\le i\le n+1$, note that $|B|=(-1)^n(n-1)2^n$.

Choose the column vectors $\gamma_i=(1,\cdots,1,2-n(\text{i-th}),1,\cdots,1)^T$($1\le i\le n+1$), then we have
\begin{align}\label{hyperbolicorthogonal}
\langle\beta_i,\gamma_j\rangle=0,~1\le i\ne j\le n+1.
\end{align}
By (\ref{hyperbolicorthogonal}), $\beta_1,\cdots,\beta_{i-1},\gamma_i,\beta_{i+1},\cdots,\beta_{n+1}$ form a basis of $\mathbb{R}^{n+1}$ for all $1\le i\le n+1$ and there exist $\lambda_1^{(i)},\cdots,\lambda_{n+1}^{(i)}, \mu_1^{(i)},\cdots,$ $\mu_{n+1}^{(i)}\in\mathbb{R}$ such that 
\begin{equation}\label{ver}
\delta=\sum_{j\ne i}\lambda_j^{(i)}\beta_j+\lambda_i^{(i)}\gamma_i,~\beta_i=\sum_{j\ne i}\mu_j^{(i)}\beta_j+\mu_i^{(i)}\gamma_i,~1\le i\le n+1.
\end{equation}
Note that $\mu_i^{(i)}\ne 0$ for all $1\le i\le n+1$, since $\beta_1,\cdots,\beta_{n+1}$ are linearly independent. Moreover, by (\ref{ver}), we have  
\begin{align}
&\delta=\dfrac{\lambda_i^{(i)}}{\mu_i^{(i)}}\beta_i+(\text{linear\ combinations\ of}\ \beta_1,\cdots,\beta_{i-1},\beta_{i+1},\cdots,\beta_{n+1}),\nonumber\\
&|B_{i}|=|\beta_1,
\cdots,\beta_{i-1},\delta,\beta_{i+1},\cdots,\beta_{n+1}|=\dfrac{\lambda_i^{(i)}}{\mu_i^{(i)}}|B|,~1\le i\le n+1.\label{highhyperbolic5}
\end{align}

By (\ref{hyperbolicorthogonal}) and (\ref{ver}), we have
\begin{equation}\label{c}
 \dfrac{\lambda_i^{(i)}}{\mu_i^{(i)}}=\dfrac{\langle \delta,\gamma_i\rangle}{\langle \beta_i,\gamma_i\rangle}=\dfrac{\sum_{j=1}^{n+1}c_j-(n-1)c_i}{2(n-1)},~1\le i\le n+1.  
\end{equation}
Moreover, by (\ref{highhyperbolic5}) and (\ref{c}), we have  
\begin{equation}\label{f}
  |B_{i}|=(-1)^n2^{n-1}[\sum_{j=1}^{n+1}c_j-(n-1)c_i],~1\le i\le n+1. 
\end{equation}

By (\ref{highhyperbolic2}), (\ref{highhyperbolic3}) and (\ref{f}), we have
$$
|A|=(-1)^n2^{n-1}Q^{\mathbb{H}}_{n+1},~|L|=-2^{n-1}s_1^2\cdots s_{n+1}^2\mathcal{Q}^{\mathbb{H}}_{n+1},
$$
where $$
\mathcal{Q}^{\mathbb{H}}_{n+1}=(\sum\limits_{i=1}^{n+1}c_i)^2-(n-1)\sum\limits_{i=1}^{n+1}c_i^2+2(n-1)=Q^{\mathbb{H}}_n(r).
$$

By Lemma \ref{mat}, we have $$|L|=-2^{n-1}s_1^2\cdots s_{n+1}^2\mathcal{Q}^{\mathbb{H}}_{n+1}<0,$$ which implies that $Q^{\mathbb{H}}_n(r)=\mathcal{Q}^{\mathbb{H}}_{n+1}>0$.

$(\Leftarrow)$ For any $1\le k\le n+1$, define the (sub)matrix $L_k:=(L_{ij})_{1\le i,j\le k}$, namely
$$
L=\begin{pmatrix}
  L_{k}& *\\
  *& *
\end{pmatrix}.
$$

Similar to the proof of ``$\Leftarrow$'', we have
\begin{equation}\label{ab1}
|L_k|=-2^{k-2}s_1^2\cdots s_{k}^2\mathcal{Q}^{\mathbb{H}}_k,
\end{equation}
where  
$$
\mathcal{Q}^{\mathbb{H}}_k=(\sum\limits_{i=1}^{k}c_i)^2-(k-2)\sum\limits_{i=1}^{k}c_i^2+2(k-2).
$$
Note that 
\begin{equation}\label{ab2}
|L_1|=-1<0,~|L_2|=-\sinh^2\ell_{12}<0,~ |L_{n+1}|=|L|.
\end{equation}
Moreover, we have
\begin{equation}\label{ab}
\mathcal{Q}^{\mathbb{H}}_k=\frac{k-2}{k-1}\mathcal{Q}^{\mathbb{H}}_{k+1}+(k-1)(c_{k+1}-\frac{\sum_{i=1}^{k+1}c_i}{k-1})^2,~3\le k\le n.
\end{equation}

Since $Q^{\mathbb{H}}_n(r)=\mathcal{Q}^{\mathbb{H}}_{n+1}>0$, by (\ref{ab}), we have $\mathcal{Q}^{\mathbb{H}}_k>0$ for all $3\le k\le n$. Then by (\ref{ab1}) and (\ref{ab2}), we have
$$
 |L_k|=-2^{k-2}s_1^2\cdots s_{k}^2\mathcal{Q}^{\mathbb{H}}_k<0,~1\le k\le n+1.
$$

For any $k\times k(1\le k\le n+1)$ principal (sub)matrix $\widetilde{L}_{k}$ of the matrix $L$, there exists a non-singular matrix $P^{(k)}$ such that 
$$
\widetilde{L}:=P^{(k)T}LP^{(k)}=\begin{pmatrix}
  \widetilde{L}_{k}& *\\
  *& *
\end{pmatrix}.
$$

By the same argument as above, we have $|\widetilde{L}_{k}|<0$ for all $1\le k\le n+1$, which implies that all principal minors of the matrix $L$ are negative and $|L|<0$. Then by Lemma \ref{mat}, we complete the proof. 

(Euclidean case.) The proof is quite similar to the proof of hyperbolic case and we may abuse some notations. 

($\Rightarrow$) Since $\ell_{ij}=r_i+r_j$ for all geodesic edge $e_{ij}$ of $\Delta^n$, then we have
\begin{equation}\label{kf}
L_{ij}=
\begin{cases}
(r_i+r_{n+1})^2=r_i^2r_{n+1}^2(c_i+c_{n+1})^2,&1\le i=j\le n\\
r_ir_jr_{n+1}^2[(c_i+c_{n+1})(c_j+c_{n+1})-2c_{n+1}^2],&1\le i\ne j\le n,\\
\end{cases}
\end{equation}
where $c_i=1/r_i$ for all $1\le i\le n+1$.

We denote the column vector formed by $c_1+c_{n+1},\cdots,c_n+c_{n+1}$ by $\delta$, namely $\delta=(c_1+c_{n+1},\cdots,c_n+c_{n+1})^T$, by (\ref{kf}), the determinant  
\begin{align}\label{k12}
|L|=r_1^2\cdots r_n^2r_{n+1}^{2n}|A|,
\end{align}
where the matrix
$$
A=(\alpha_1,\cdots,\alpha_{n}),~\alpha_i=(c_i+c_{n+1})\delta-2c_{n+1}^2\beta_i 
$$
and $\beta_i=(1,\cdots,1,0(\text{i-th}),1,\cdots,1)^T$ for all $1\le i\le n$, note that $\beta_1,\cdots,\beta_{n}$ form a basis of $\mathbb{R}^{n}$. Moreover, we have
\begin{equation}\label{b1}
\begin{aligned}
|A|&=|\alpha_1,\cdots,\alpha_{n+}|\\
&=|(c_1+c_{n+1})\delta-2c_{n+1}^2\beta_1,\cdots,(c_n+c_{n+1})\delta-2c_{n+1}^2\beta_{n}|\\
&=(-2)^{n-1}c_{n+1}^{2(n-1)}(\sum\limits_{i=1}^{n}(c_i+c_{n+1})|B_{i}|-2c_{n+1}^2|B|),
\end{aligned}
\end{equation}
where $B=(\beta_1,\cdots,\beta_{n})$ and $B_{i}=(\beta_1,
\cdots,\beta_{i-1},\delta,\beta_{i+1},\cdots,\beta_{n})$ for all $1\le i\le n$, note that $|B|=(-1)^{n-1}(n-1)$.

Choose the column vectors $\gamma_i=(1,\cdots,1,2-n(\text{i-th}),1,\cdots,1)^T$($1\le i\le n$), then we have
\begin{align}\label{q1}
\langle\beta_i,\gamma_j\rangle=0,~1\le i\ne j\le n.
\end{align}
By (\ref{q1}), $\beta_1,\cdots,\beta_{i-1},\gamma_i,\beta_{i+1},\cdots,\beta_{n}$ form a basis of $\mathbb{R}^{n}$ for all $1\le i\le n$ and there exist $\lambda_1^{(i)},\cdots,\lambda_{n}^{(i)}, \mu_1^{(i)},\cdots, \mu_{n}^{(i)}\in\mathbb{R}$ such that 
\begin{equation}\label{m1}
\delta=\sum_{j\ne i}\lambda_j^{(i)}\beta_j+\lambda_i^{(i)}\gamma_i,~\beta_i=\sum_{j\ne i}\mu_j^{(i)}\beta_j+\mu_i^{(i)}\gamma_i,~1\le i\le n.
\end{equation}
Note that $\mu_i^{(i)}\ne 0$ for all $1\le i\le n$, since $\beta_1,\cdots,\beta_{n}$ are linearly independent. Moreover, by (\ref{m1}), we have  
\begin{align}
&\delta=\dfrac{\lambda_i^{(i)}}{\mu_i^{(i)}}\beta_i+(\text{linear\ combinations\ of}\ \beta_1,\cdots,\beta_{i-1},\beta_{i+1},\cdots,\beta_{n}),\nonumber\\
&|B_{i}|=|\beta_1,
\cdots,\beta_{i-1},\delta,\beta_{i+1},\cdots,\beta_{n}|=\dfrac{\lambda_i^{(i)}}{\mu_i^{(i)}}|B|,~1\le i\le n.\label{jh1}
\end{align}

By (\ref{q1}) and (\ref{m1}), we have
\begin{equation}\label{bn1}
 \dfrac{\lambda_i^{(i)}}{\mu_i^{(i)}}=\dfrac{\langle \delta,\gamma_i\rangle}{\langle \beta_i,\gamma_i\rangle}=\dfrac{\sum_{j=1}^{n}(c_j+c_{n+1})-(n-1)(c_i+c_{n+1})}{n-1},~1\le i\le n.  
\end{equation}
Moreover, by (\ref{jh1}) and (\ref{bn1}), we have  
\begin{equation}\label{pl1}
  |B_{i}|=(-1)^{n-1}[\sum_{j=1}^{n}(c_j+c_{n+1})-(n-1)(c_i+c_{n+1})],~1\le i\le n. 
\end{equation}

By (\ref{k12}), (\ref{b1}) and (\ref{pl1}), we have
$$
|A|=2^{n-1}c_{n+1}^{2(n-1)}\mathcal{Q}^{\mathbb{E}}_{n+1},~|L|=2^{n-1}r_1^2\cdots r_{n+1}^{2}\mathcal{Q}^{\mathbb{E}}_{n+1},
$$
where
$$
\mathcal{Q}^{\mathbb{E}}_{n+1}=(\sum_{i=1}^{n+1}c_i)^2-(n-1)\sum_{i=1}^{n+1}c_i^2=Q^{\mathbb{E}}_n(r).
$$

By Lemma \ref{mat}, we have $$|L|=2^{n-1}r_1^2\cdots r_{n+1}^{2}\mathcal{Q}^{\mathbb{E}}_{n+1}>0,$$ which implies that $Q^{\mathbb{E}}_n(r)=\mathcal{Q}^{\mathbb{E}}_{n+1}>0$.

$(\Leftarrow)$ For any $1\le k\le n+1$, define the (sub)matrix $L_k:=(L_{ij})_{1\le i,j\le k}$, namely
$$
L=\begin{pmatrix}
  L_{k}& *\\
  *& *
\end{pmatrix}.
$$

Similar to the proof of ``$\Leftarrow$'', we have
\begin{equation}\label{ab3}
|L_k|=2^{k-2}r_1^2\cdots r_{k}^2\mathcal{Q}^{\mathbb{E}}_k,
\end{equation}
where  
$$
\mathcal{Q}^{\mathbb{E}}_{k}=(\sum_{i=1}^{k}c_i)^2-(k-2)\sum_{i=1}^{k}c_i^2.
$$
Note that 
\begin{equation}\label{ab4}
|L_1|=\ell_{1n+1}^2>0,~|L_2|=\ell_{1n+1}^2\ell_{2n+1}^2\sin^2\gamma_{12}>0,~ |L_{n+1}|=|L|.
\end{equation}
Moreover, we have
\begin{equation}\label{ab5}
\mathcal{Q}^{\mathbb{E}}_k=\frac{k-2}{k-1}\mathcal{Q}^{\mathbb{E}}_{k+1}+(k-1)(c_{k+1}-\frac{\sum_{i=1}^{k+1}c_i}{k-1})^2,~3\le k\le n.
\end{equation}

Since $Q^{\mathbb{E}}_n(r)=\mathcal{Q}^{\mathbb{E}}_{n+1}>0$, by (\ref{ab5}), $\mathcal{Q}^{\mathbb{E}}_k>0$ for all $3\le k\le n$. Then by (\ref{ab3}) and (\ref{ab4}), we have
$$
 |L_k|=2^{k-2}r_1^2\cdots r_{k}^2\mathcal{Q}^{\mathbb{E}}_k>0,~1\le k\le n+1,
$$
which implies that all principal minors of the matrix $L$ are positive and $|L|>0$. Then by Lemma \ref{mat}, we complete the proof.

(Spherical case.) The proof is quite similar to that of hyperbolic case and we may abuse some notations. 

Since $\ell_{ij}=r_i+r_j$ for all $1\le i\ne j\le n+1$, then by (\ref{c3}), we have
\begin{equation}\label{k}
L_{ij}=
\begin{cases}
1=s_i^2(c_i^2+1),&1\le i=j\le n+1\\
\cos(r_i+r_j)=s_is_j(c_ic_j-1),&1\le i\ne j\le n+1,\\
\end{cases}
\end{equation}
where $s_i=\sin r_i,c_i=\cot r_i$ for all $1\le i\le n+1$.

We denote the column vector formed by $c_1,\cdots,c_{n+1}$ by $\delta$, namely $\delta=(c_1,\cdots,c_{n+1})^T$, by (\ref{k}), the determinant  
\begin{align}\label{ui}
|L|=s_1^2\cdots s_{n+1}^2|A|,
\end{align}
where the matrix
$$
A=(\alpha_1,\cdots,\alpha_{n+1}),~\alpha_i=c_i\delta+\beta_i
$$ 
and $\beta_i=(-1,\cdots,-1,1(\text{i-th}),-1,$ $\cdots,-1)^T$ for all $1\le i\le n+1$, note that $\beta_1,\cdots,\beta_{n+1}$ form a basis of $\mathbb{R}^{n+1}$. Moreover, we have
\begin{equation}\label{o}
\begin{aligned}
|A|&=|\alpha_1,\cdots,\alpha_{n+1}|\\
&=|c_1\delta+\beta_1,\cdots,c_{n+1}\delta+\beta_{n+1}|\\
&=\sum\limits_{i=1}^{n+1}c_i|B_{i}|+|B|,
\end{aligned}
\end{equation}
where $B=(\beta_1,\cdots,\beta_{n+1})$ and $B_{i}=(\beta_1,
\cdots,\beta_{i-1},\delta,\beta_{i+1},\cdots,\beta_{n+1})$ for all $1\le i\le n+1$, note that $|B|=-(n-1)2^n$.

Choose the column vectors $\gamma_i=(-1,\cdots,-1,n-2(\text{i-th}),-1,\cdots,-1)^T$($1\le i\le n+1$), then we have
\begin{align}\label{q}
\langle\beta_i,\gamma_j\rangle=0,~1\le i\ne j\le n+1.
\end{align}
By (\ref{q}), $\beta_1,\cdots,\beta_{i-1},\gamma_i,\beta_{i+1},\cdots,\beta_{n+1}$ form a basis of $\mathbb{R}^{n+1}$ for all $1\le i\le n+1$ and there exist $\lambda_1^{(i)},\cdots,\lambda_{n+1}^{(i)}, \mu_1^{(i)},\cdots,$ $\mu_{n+1}^{(i)}\in\mathbb{R}$ such that 
\begin{equation}\label{m}
\delta=\sum_{j\ne i}\lambda_j^{(i)}\beta_j+\lambda_i^{(i)}\gamma_i,~\beta_i=\sum_{j\ne i}\mu_j^{(i)}\beta_j+\mu_i^{(i)}\gamma_i,~1\le i\le n+1.
\end{equation}
Note that $\mu_i^{(i)}\ne 0$ for all $1\le i\le n+1$, since $\beta_1,\cdots,\beta_{n+1}$ are linearly independent. Moreover, by (\ref{m}), we have  
\begin{align}
&\delta=\dfrac{\lambda_i^{(i)}}{\mu_i^{(i)}}\beta_i+(\text{linear\ combinations\ of}\ \beta_1,\cdots,\beta_{i-1},\beta_{i+1},\cdots,\beta_{n+1}),\nonumber\\
&|B_{i}|=|\beta_1,
\cdots,\beta_{i-1},\delta,\beta_{i+1},\cdots,\beta_{n+1}|=\dfrac{\lambda_i^{(i)}}{\mu_i^{(i)}}|B|,~1\le i\le n+1.\label{jh}
\end{align}

By (\ref{q}) and (\ref{m}), we have
\begin{equation}\label{bn}
 \dfrac{\lambda_i^{(i)}}{\mu_i^{(i)}}=\dfrac{\langle \delta,\gamma_i\rangle}{\langle \beta_i,\gamma_i\rangle}=-\dfrac{\sum_{j=1}^{n+1}c_j-(n-1)c_i}{2(n-1)},~1\le i\le n+1.  
\end{equation}
Moreover, by (\ref{jh}) and (\ref{bn}), we have  
\begin{equation}\label{pl}
  |B_{i}|=2^{n-1}[\sum_{j=1}^{n+1}c_j-(n-1)c_i],~1\le i\le n+1. 
\end{equation}

By (\ref{ui}), (\ref{o}) and (\ref{pl}), we have
\begin{equation}\label{as1}
|A|=2^{n-1}\mathcal{Q}^{\mathbb{S}}_{n+1},~|L|=2^{n-1}s_1^2\cdots s_{n+1}^2\mathcal{Q}^{\mathbb{S}}_{n+1},
\end{equation}
where
$$
\mathcal{Q}^{\mathbb{S}}_{n+1}=(\sum_{i=1}^{n+1}c_i)^2-(n-1)\sum_{i=1}^{n+1}c_i^2-2(n-1)=Q^{\mathbb{S}}_n(r).
$$

By Lemma \ref{mat} and (\ref{as1}), we have $$|L|=2^{n-1}s_1^2\cdots s_{n+1}^2\mathcal{Q}^{\mathbb{S}}_{n+1}>0,$$ which implies that $Q^{\mathbb{S}}_n(r)=\mathcal{Q}^{\mathbb{S}}_{n+1}>0$.

$(\Leftarrow)$ For any $1\le k\le n+1$, define the (sub)matrix $L_k:=(L_{ij})_{1\le i,j\le k}$, namely
$$
L=\begin{pmatrix}
  L_{k}& *\\
  *& *
\end{pmatrix}.
$$

Similar to the proof of ``$\Leftarrow$'', we have
\begin{equation}\label{ab6}
|L_k|=2^{k-2}s_1^2\cdots s_{k}^2\mathcal{Q}^{\mathbb{S}}_{k},
\end{equation}
where  
$$
\mathcal{Q}^{\mathbb{S}}_{k}=(\sum_{i=1}^{k}c_i)^2-(k-2)\sum_{i=1}^{k}c_i^2-2(k-2).
$$
Note that 
\begin{equation}\label{ab7}
|L_1|=1>0,~|L_2|=\sin^2\ell_{12}>0,~ |L_{n+1}|=|L|.
\end{equation}
Moreover, we have
\begin{equation}\label{ab8}
\mathcal{Q}^{\mathbb{S}}_k=\frac{k-2}{k-1}\mathcal{Q}^{\mathbb{S}}_{k+1}+(k-1)(c_{k+1}-\frac{\sum_{i=1}^{k+1}c_i}{k-1})^2,~3\le k\le n.
\end{equation}

Since $Q^{\mathbb{S}}_n(r)=\mathcal{Q}^{\mathbb{S}}_{n+1}>0$, by (\ref{ab8}), $\mathcal{Q}^{\mathbb{S}}_k>0$ for all $3\le k\le n$. Then by (\ref{ab6}) and (\ref{ab7}), we have
$$
 |L_k|=2^{k-2}s_1^2\cdots s_{k}^2\mathcal{Q}^{\mathbb{S}}_{k}>0,~1\le k\le n+1,
$$
which implies that all principal minors of the matrix $L$ are positive and $|L|>0$. Then by Lemma \ref{mat}, we complete the proof. 

(Ideal hyperbolic case.) ($\Rightarrow$) Since the ideal hyperbolic $n$-simplex $\Delta^n$ is conformal, then by definition, there exists a sequence of conformal hyperbolic $n$-simplices $\{\Delta^n_{m}\}_{m=1}^{\infty}\subset\mathbb{H}^n$ such that $\Delta^n_{m}\to\Delta^n(m\rightarrow\infty)$. 

By the proof of hyperbolic case, the conformal hyperbolic $n$-simplex $\Delta^n_{m}(m\ge1)$ can be parameterized by a $r^{(m)}=(r_i^{(m)})_{1\le i\le n+1}\in\mathbb{R}^{n+1}_+$, which satisfies $Q_{n}^{\mathbb{H}}(r^{(m)})>0$ for all $m\ge1$. Moreover, $\Delta^n_{m}(m\ge1)$ corresponds to a sphere configuration $\mathcal{P}^{(m)}=\{S_i^{(m)}\}_{1\le i\le n+1}$ parameterized by the $r^{(m)}$, which satisfies
\begin{enumerate}[a.]
    \item  $S_i^{(m)}\subset\mathbb{H}^n$ is a hyperbolic $(n-1)$-sphere with radius $r_i^{(m)}$ for all $1\le i\le n+1$; 
    \item The $(n-1)$-spheres $\{S_i^{(m)}\}_{1\le i\le n+1}$ are mutually externally tangent;
     \item The centers of the $(n-1)$-spheres $\{S_i^{(m)}\}_{1\le i\le n+1}$ are not all on the same hyperplane.
\end{enumerate}    

Since $\Delta^n_{m}\to\Delta^n(m\rightarrow\infty)$, then $\mathcal{P}^{(m)}\to\mathcal{P},~r^{(m)}\to r(m\to\infty)$, where $\mathcal{P}=\{S_i\}_{1\le i\le n+1}$ is the sphere configuration parameterized by the $r$, which corresponds to the conformal ideal hyperbolic $n$-simplex $\Delta^n$ and satisfies     
\begin{enumerate}[I.]
    \item  $S_i\subset\mathbb{H}^n$ is a hyperbolic $(n-1)$-sphere with radius $r_i$ if $r_i\in\mathbb{R}_+$; 
    \item $S_i\subset\overline{\mathbb{H}^n}$ is a $(n-1)$-horosphere if $r_i=\infty$;
    \item The $(n-1)$-(horo)spheres $\{S_i\}_{1\le i\le n+1}$ are mutually externally tangent;
     \item The centers of the $(n-1)$-(horo)spheres $\{S_i\}_{1\le i\le n+1}$ are not all on the same hyperplane.
\end{enumerate}    

Since $r^{(m)}\to r(m\to\infty)$ and $Q_{n}^{\mathbb{H}}(r^{(m)})>0$ for all $m\ge1$, then we have $Q_{n}^{\mathbb{H}}(r)\ge0$. If $Q_{n}^{\mathbb{H}}(r)=0$, then by higher-dimensional Descartes circle Theorem (see Remark \ref{aa2}), the centers of $\{S_i\}_{1\le i\le n+1}$ are all on the same hyperplane, which contradicts IV. Then we obtain $Q_{n}^{\mathbb{H}}(r)>0$. 

($\Leftarrow$) If $Q_{n}^{\mathbb{H}}(r)>0$, then by higher-dimensional Descartes circle Theorem, the ideal hyperbolic $n$-simplex $\Delta^n$ corresponds to a sphere configuration $\mathcal{P}$ parameterized by the $r$, which satisfies I, II, III, IV. Moreover, by IV, $\Delta^n$ is non-degenerate.  

For the sphere configuration $\mathcal{P}$, there exists a sequence $\{r^{(m)}\}_{m=1}^{\infty}\subset\mathbb{R}^{n+1}_+$ and a sequence of sphere configurations $\{\mathcal{P}^{(m)}\}_{m=1}^{\infty}$ such that $r^{(m)}\to r$, $\mathcal{P}^{(m)}\to\mathcal{P}(m\to\infty)$, where $\mathcal{P}^{(m)}$ is parameterized by the $r^{(m)}$ and satisfies a, b, c for all $m\ge1$. Moreover, by c and higher-dimensional Descartes circle Theorem, we have $Q(r^{(m)})>0$ for all $m\ge1$.    

Since $\mathcal{P}^{(m)}\to\mathcal{P}(m\to\infty)$, then we have $\Delta_m^n\to\Delta^n(m\to\infty)$, where $\Delta_m^n$ is the hyperbolic $n$-simplex parameterized by the $r^{(m)}$ for all $m\ge1$. Since $Q(r^{(m)})>0(m\ge1)$, then by the proof of hyperbolic case, $\Delta_m^n$ is conformal for all $m\ge1$, which implies that the ideal hyperbolic $n$-simplex $\Delta^n$ is conformal.    
\pfd

\textbf{Acknowledgements.} 
We are deeply grateful to Professor Ze Zhou for his valuable suggestions and comments, such as the statement of Corollary \ref{rigidityoftetrahedron}. We also thank Dr. Xiang Zhu for reminding us that the Euclidean case of Corollary \ref{sumangle} was proved by Katsuura \cite{Katsuura2021Dihedral} after finishing the draft of this paper. The third author would like to thank Professor Jean-Marc Schlenker for several conversations concerning de sitter geometry during the International Congress of Mathematicians (ICM) 2026 breaks.    

Zunwu He is supported by NSF of China (Grant No.12301094), Guangzhou Basic and Applied Basic Research Foundation
(Grant
No.2024A04J3483) and the General Program of Guangdong Basic and Applied Basic Research
Foundation (Grant No.2025A1515010502).
Guangming Hu is supported by NSF of China (Grant No.12671093) and by Natural Science Research Start-up Foundation of Recruiting Talents of Nanjing University of Posts and Telecommunications (Grant No.NY224040).
Ziping Lei is partially supported by NSF of China (Grant No.12122119).


\bibliography{py1(1)}
\bibliographystyle{alpha}

\end{document}